\newtheorem{theorem}{Theorem}[section]
\newtheorem{corollary}[theorem]{Corollary}
\newtheorem{lemma}[theorem]{Lemma}
\newtheorem{definition}[theorem]{Definition}
\newtheorem{remark}[theorem]{Remark}
\newtheorem{example}[theorem]{Example}
\numberwithin{equation}{section}
\newcommand{\midarrow}{\tikz \draw[thin,-angle 45] (0,0) -- +(.25,0);}
\newcommand{\CC}{\mathbb{C}}
\newcommand{\ZZ}{\mathbb{Z}}
\newcommand{\fin}{{\rm fin}}
\newcommand{\res}{{\rm res}}
\begin{document}

\title[A path description for $\varepsilon$-characters of simple modules]{A path description for $\varepsilon$-characters of representations of type $A$ restricted quantum loop algebras at roots of unity}
\author{Xiao-Juan An, Jian-Rong Li, Yan-Feng Luo$^\dag$, and Wen-Ting Zhang}\thanks{$\dag$ Corresponding author}

\address{Xiao-Juan An: School of Mathematics and Statistics, Lanzhou University, Lanzhou 730000, P. R. China.}
\email{anxj0820@163.com}

\address{Jian-Rong Li: Faculty of Mathematics, University of Vienna, Oskar-Morgenstern Platz 1, 1090 Vienna, Austria}
\email{jianrong.li@univie.ac.at}

\address{Yan-Feng Luo: School of Mathematics and Statistics, Lanzhou University, Lanzhou 730000, P. R. China.}
\email{luoyf@lzu.edu.cn}

\address{Wen-Ting Zhang: School of Mathematics and Statistics, Lanzhou University, Lanzhou 730000, P. R. China.}
\email{zhangwt@lzu.edu.cn}
\date{}

\begin{abstract}
Fix $\varepsilon^{2\ell}=1$ with $\ell \geq 2$. In this paper, we show that all finite-dimensional simple modules of any restricted quantum loop algebra $U_{\varepsilon}^{\res}({L\mathfrak{sl}_{n+1}})$ in a certain category can be transformed into snake modules. We obtain an effective and concrete path description for $\varepsilon$-characters of any simple module with highest $l$-weight of degree two and any Kirillov-Reshetikhin module of $U_{\varepsilon}^{\res}({L\mathfrak{sl}_{n+1}})$. As an application of our path description, we obtain a necessary and sufficient condition for the tensor product of two fundamental representations of $U_{\varepsilon}^{\res}({L\mathfrak{sl}_{n+1}})$ to be irreducible. Additionally, we obtain a necessary condition for the tensor product of two or more fundamental representations of $U_{\varepsilon}^{\res}({L\mathfrak{sl}_{n+1}})$ to be irreducible.

\hspace{0.15cm}

\noindent
{\bf Key words}: restricted quantum loop algebras at roots of unity; finite-dimensional modules; $\varepsilon$-characters; path description

\hspace{0.15cm}

\noindent
{\bf 2020 Mathematics Subject Classification}: 17B37

\vskip 0.2in

{\bf Dedicated to the memory of Professor Kar Ping Shum (1941-2023)}
\end{abstract}

\maketitle

\setcounter{tocdepth}{1}
\tableofcontents
\vspace{-1cm}

\section{Introduction}
\subsection{Restricted quantum loop algebras at roots of unity} Let $\mathfrak{g}$ be a simple Lie algebra over $\mathbb{C}$ and let $\mathfrak{\widehat{g}}$ be the associated affine Lie algebra and $U_q(\widehat{\mathfrak{g}})$ the corresponding quantum affine algebra. There are different versions of the quantum affine algebra $U_q(\widehat{\mathfrak{g}})$ when $q$ is specialized to a root $\varepsilon$ of unity: the non-restricted specialization $\widetilde{U}_{\varepsilon}(\widehat{\mathfrak{g}})$ studied by Beck and Kac \cite{BK96}, the restricted specialization $U_{\varepsilon}^{\res}(\widehat{\mathfrak{g}})$ studied by Chari and Pressley \cite{CP94}\,(following the general definition due to Lusztig \cite{L93}), and the small affine quantum group $U_{\varepsilon}^{\fin}(\widehat{\mathfrak{g}})$ \cite{CP94}, which is the image of the natural homomorphism $\widetilde{U}_{\varepsilon}(\widehat{\mathfrak{g}}) \rightarrow U_{\varepsilon}^{\res}(\widehat{\mathfrak{g}})$.

Denote by $U_q(L \mathfrak{g})$ the quantum loop algebra, which is isomorphic to a quotient of $U_q(\widehat{\mathfrak{g}})$ where the central charge is mapped to $1$. Denote by $U_q^{\res}({L\mathfrak{g}})$ the $\mathbb{C}[q,q^{-1}]$-subalgebra of $U_q(L\mathfrak{g})$ generated by the $q$-divided powers of the Chevalley generators {\cite[Section 1]{CP97b}}. Let $U_\varepsilon^{\res}({L\mathfrak{g}})$ be the specialization of $U_q^{\res}({L\mathfrak{g}})$ at root $\varepsilon$ of unity, by setting
\begin{align*}
U_\varepsilon^{\res}({L\mathfrak{g}}):=U_q^{\res}({L\mathfrak{g}}) \otimes_{\mathbb{C}[q, q^{-1}]}\mathbb{C}
\end{align*}
via the algebra homomorphism $f_{\varepsilon}: \mathbb{C}[q, q^{-1}]\rightarrow \mathbb{C}$, that takes $q$ to $\varepsilon$.

In 1997, Chari and Pressley \cite{CP97b} classified the finite-dimensional irreducible modules of $U_{\varepsilon}^{\res}(L\mathfrak{g})$ in terms of highest weights, where $\varepsilon$ is a root of unity of odd order. Frenkel and Mukhin \cite{FM02} extended the Chari and Pressley's result to all roots of unity. Denote by $I$ the set of vertices of the Dynkin diagram of $\mathfrak{g}$. The finite-dimensional simple modules of $U_{\varepsilon}^{\res}(L\mathfrak{g})$ are classified by $(P_i(u))_{i \in I}$, see \cite[Theorem 8.2]{CP97b} and \cite[Theorem 2.4]{FM02}, where every $P_i(u) \in \mathbb{C}[u]$ is a polynomial with constant term $1$ which is called a Drinfeld polynomial. Every $I$-tuple $(P_i(u))_{i \in I}$ of Drinfeld polynomials corresponds to a dominant monomial $m$ in formal variables $Y_{i,a}$, $i \in I$, $a \in \mathbb{C}^{\times}$, where dominant means the exponents of $Y_{i,a}$ in $m$ are non-negative. Denote by $L(m)$ the corresponding simple $U_{\varepsilon}^{\res}(L\mathfrak{g})$-module.

\subsection{$q$-characters and $\varepsilon$-characters}
Frenkel and Reshetikhin \cite{FR98} introduced the theory of $q$-characters of finite-dimensional modules of quantum loop algebras. Denote by ${\rm Rep} U_{q}(L{\mathfrak{g}})$ the category of finite-dimensional $U_q (L{\mathfrak{g}})$-modules and by $\mathcal{K}_0({\rm Rep} U_{q}(L{\mathfrak{g}}))$ the Grothendieck ring of ${\rm Rep} U_{q}(L{\mathfrak{g}})$. The $q$-character map is an injective ring homomorphism from the Grothendieck ring $\mathcal{K}_0({\rm Rep} U_{q}(L{\mathfrak{g}}))$ to the ring $\mathbb{Z}[Y_{i,a}^{\pm1}]_{i \in I}^{a\in\mathbb{C^{\times}}}$ of Laurent polynomials in the variables $Y_{i,a}$, $i \in I$, $a \in \mathbb{C}^{\times}$.

Frenkel and Mukhin \cite{FM02} studied $\varepsilon$-characters of finite-dimensional modules of quantum loop algebras. They defined the $\varepsilon$-character of a $U_{\varepsilon}^{\res}(L{\mathfrak{g}})$-module $V$ via the generalized eigenvalues of a commutative subalgebra of $U_{\varepsilon}^{\res}(L{\mathfrak{g}})$ on $V$. Denote by ${\rm Rep}U_\varepsilon^{\res}({L\mathfrak{g}})$ the category of finite-dimensional $U_\varepsilon^{\res}({L\mathfrak{g}})$-modules and by $\mathcal{K}_0({\rm Rep}U_\varepsilon^{\res}({L\mathfrak{g}}))$ the Grothendieck ring of ${\rm Rep}U_\varepsilon^{\res}({L\mathfrak{g}})$. They showed that the map $\chi_{\varepsilon}: \mathcal{K}_0({\rm Rep}U_\varepsilon^{\res}({L\mathfrak{g}}))\rightarrow \mathbb{Z}[Y_{i,a}^{\pm1}]_{i\in I}^{a\in\mathbb{C^{\times}}}$ is an injective homomorphism of rings.

The theories of $q$-characters and $\varepsilon$-characters are important in the study of quantum loop algebras. It is important to give combinatorial descriptions of $q$-characters and $\varepsilon$-characters.

Mukhin and Young \cite{MY12a,MY12b} gave an explicit path description for $q$-characters of snake modules, which is a family of finite-dimensional modules. The family of minimal affinizations contains the family of Kirillov-Reshetikhin modules. The family of snake modules contains the family of minimal affinizations. Brito and Mukhin \cite{BM17} extended the Mukhin and Young's methods to $q$-characters of the extended snake modules of type $B_n$.

Brito and Chari \cite{BC19} studied a family of finite-dimensional modules called Hernandez-Leclerc modules. These modules first appeared in \cite{HL10,HL13}. In \cite{GDL22}, Guo, Duan, and Luo gave a path description for $q$-characters of Hernandez-Leclerc modules of type $A_n$, where overlapped paths are allowed. In \cite{J24}, Jang gave a path description for $q$-characters of fundamental modules of type $C_n$. In \cite{T23}, Tong, Duan, and Luo gave a path description for $q$-characters of fundamental modules of type $D_n$.

\subsection{Path description of $\varepsilon$-characters}
Path description of $\varepsilon$-characters has not been studied in the literature. In this paper, we study path descriptions for $\varepsilon$-characters of simple modules of type $A$ restricted quantum loop algebras $U_\varepsilon^{\res}({L\mathfrak{sl}_{n+1}})$ at roots of unity.

In this paper, unless we say otherwise, for any integer $\ell\geq 2$, we denote by $\varepsilon$ the root of unity
\begin{align*}
\varepsilon:=
\begin{cases} \text{exp}(\frac{i\pi}{\ell}), & \text{if}{\hskip 0.4em} \ell {\hskip 0.4em}\text{is even},\\
              \text{exp}(\frac{2i\pi}{\ell}), & \text{if}{\hskip 0.4em} \ell {\hskip 0.4em} \text{is odd}.
\end{cases}
\end{align*}
The integer $\ell$ is the order of $\varepsilon^{2}$, and we have $\varepsilon^{2 \ell}=1$. Following \cite{FM02}, let us also write $\varepsilon^{*}:=\varepsilon^{\ell^2}$. For $a\leq b \in \ZZ$, we denote $[a,b]=\{a,a+1,\ldots,b\}$.

A finite-dimensional $U_\varepsilon^{\res}(L\mathfrak{g})$-module $V$ is called \textit{special} if $\chi_{\varepsilon}(V)$ contains exactly one dominant monomial.

For $i\in I$, $k\in \mathbb{Z}_{\geq 0}$, $a\in\mathbb{C^{\times}}$, the simple $U_{\varepsilon}^{\res}({L\mathfrak{sl}_{n+1}})$-module
\begin{align}
W_{i,a}^{(k)}=L(Y_{i,a}Y_{i,a\varepsilon^2}\cdots Y_{i,a\varepsilon^{2(k-1)}})
\end{align}
is called a \textit{Kirillov-Reshetikhin} module. In particular, we have $W_{i,a}^{(1)}=L(Y_{i,a})$. By convention, $W_{i,a}^{(0)}$ is the trivial module for every $i \in I$ and $a \in \CC^{\times}$.

For $\mathfrak{a} \in \{0, 1\}$, we denote
\begin{align*}
\mathcal{X}_{\mathfrak{a}}:=\{(i,k)\in I \times \mathbb{Z}: i-k \equiv \mathfrak{a}  \pmod 2\} \subset I \times \mathbb{Z}.
\end{align*}

In this paper, we fix $a \in \CC^{\times}$ and for convenience we write $Y_{i,s}=Y_{i,a\varepsilon^s}$ for $i \in I$, $s \in \mathbb{Z}$.

For $\mathfrak{a} \in \{0,1\}$, denote by $\mathcal{C}_{\mathcal{X}_{\mathfrak{a}}}$ the full subcategory of ${\rm Rep}U_\varepsilon^{\res}({L\mathfrak{g}})$ whose objects have all their composition factors of the form $L(m)$, where $m$ is a dominant monomial in $Y_{i,s}$, $(i,s) \in \mathcal{X}_{\mathfrak{a}}$. We have the following result.
\begin{theorem}[Theorem \ref{Th:simple modules are snake modules}]
For $\mathfrak{a} \in \{0,1\}$ and $\varepsilon^{2\ell}=1$ with $\ell \ge 2$, any simple module of $U_{\varepsilon}^{\res}({L\mathfrak{sl}_{n+1}})$ in $\mathcal{C}_{\mathcal{X}_{\mathfrak{a}}}$ can be converted to a snake module.
\end{theorem}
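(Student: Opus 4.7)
The plan is to exploit the periodicity of the variables $Y_{i,s}$ at the root of unity $\varepsilon$. Since $\varepsilon^{2\ell}=1$, we have $a\varepsilon^{s}=a\varepsilon^{s+2\ell}$ for every $s\in\mathbb{Z}$, and hence $Y_{i,s}=Y_{i,s+2\ell}$ as formal variables in the $\varepsilon$-character ring. In particular, for any dominant monomial $m$ supported in $\mathcal{X}_{\mathfrak{a}}$, replacing each exponent $s$ by any integer $s'\equiv s\pmod{2\ell}$ keeps $(i,s')\in\mathcal{X}_{\mathfrak{a}}$ (the parity of $i-s$ is preserved) and leaves both the monomial and the simple module $L(m)$ unchanged up to isomorphism. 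The entire argument will consist of choosing the right residues.

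First I would write the dominant monomial as $m=\prod_{k=1}^{K}Y_{i_k,s_k}$ counted with multiplicity and fix an arbitrary ordering of the factors (for instance, any ordering compatible with the chosen convention for snakes in the sense of Mukhin--Young). Next I would construct a new sequence of spectral parameters $s_1',s_2',\dots,s_K'$ inductively: pick any $s_1'\equiv s_1\pmod{2\ell}$, and having chosen $s_k'$, take $s_{k+1}'$ to be the unique element of $s_{k+1}+2\ell\mathbb{Z}$ lying in the interval $[s_k'+|i_{k+1}-i_k|+2,\,s_k'+|i_{k+1}-i_k|+2+2\ell)$. By construction $s_{k+1}'-s_k'\ge |i_{k+1}-i_k|+2$, and since the shift is by a multiple of $2\ell$ we preserve $(i_{k+1},s_{k+1}')\in\mathcal{X}_{\mathfrak{a}}$, which forces the parity condition $s_{k+1}'-s_k'\equiv i_{k+1}-i_k\pmod{2}$ automatically. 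Thus the tuple $((i_1,s_1'),\dots,(i_K,s_K'))$ satisfies the snake condition for type $A_n$ as formulated in Mukhin--Young, and $m=\prod_{k=1}^{K}Y_{i_k,s_k'}$ is the dominant monomial of a snake module whose simple head is isomorphic to $L(m)$.

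The main obstacle is essentially bookkeeping: one has to check that the snake position condition used in the rest of the paper is exactly the lower-bound inequality $s_{k+1}'-s_k'\ge |i_{k+1}-i_k|+2$ together with the parity condition, with no extra upper bound (for snakes as opposed to prime or minimal snakes). If an upper bound were required, the construction above could fail when the gap exceeds $2\ell$, and one would have to argue separately that such a configuration cannot arise from a dominant monomial in $\mathcal{X}_{\mathfrak{a}}$. Assuming the standard Mukhin--Young definition, however, the inductive construction always succeeds and the theorem follows immediately, with the only remaining verification being that the snake module defined by the new data is the same $U_{\varepsilon}^{\res}(L\mathfrak{sl}_{n+1})$-module $L(m)$ we started with; this is tautological because shifting any $s_k$ by $2\ell$ does not change $m$ as an element of the monomial ring at the root of unity.
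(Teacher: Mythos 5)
Your proposal is correct and follows essentially the same route as the paper: both exploit $Y_{i,s}=Y_{i,s+2\ell}$ to shift each spectral parameter by a suitable multiple of $2\ell$ so that consecutive factors land in snake position, the snake condition indeed imposing only the lower bound and the parity constraint (which is automatic within $\mathcal{X}_{\mathfrak{a}}$). The paper's version merely makes the shifts explicit via a closed-form inequality for the increments and orders the factors by the node index, whereas you choose the residues inductively; the two are interchangeable.
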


For a simple $U_{\varepsilon}^{\res}({L\mathfrak{g}})$-module $L(m)$, $m=Y_{i_1,k_1}Y_{i_2,k_2} \cdots Y_{i_z,k_z}$, $i_t\in I$, $k_t \in \mathbb{Z}$, $t \in [1,z]$, $z \in \ZZ_{\ge 0}$, we say that the highest $l$-weight monomial $m$ of $L(m)$ has degree $z$.

For any path $p$, denote by $\mathfrak{m}(p)$ the monomial corresponding to $p$, see (\ref{map: path to monomial}). For any simple $U_{\varepsilon}^{\res}({L\mathfrak{sl}_{n+1}})$-module with highest $l$-weight of degree two, we have the following theorem.

\begin{theorem}[Theorem \ref{Th:path description of degree 2}]\label{thm: introduction path description of module with highest monomial degree 2}
Let $\varepsilon^{2 \ell}=1$ and let $L(Y_{i,k}Y_{j,v})$ be a simple $U_{\varepsilon}^{\res}({L\mathfrak{sl}_{n+1}})$-module, where $i, j \in [1,n]$, $k, v \in \ZZ$. Then $L(Y_{i,k}Y_{j,v})$ is special and
\begin{enumerate}
\item if $|j-i|\equiv |k-v|+1\ (\text{mod}\ 2)$, then
\[
\chi_{\varepsilon}(L(Y_{i,k}Y_{j,v}))=\left(\sum_{p \in \mathscr{P}_{i,k}} \mathfrak{m}(p)\right) \left(\sum_{p \in \mathscr{P}_{j,v}} \mathfrak{m}(p)\right)=\chi_{\varepsilon}(L(Y_{i,k}))\chi_{\varepsilon}(L(Y_{j,v}));
\]
\item if $|j-i|\equiv |k-v|\ (\text{mod}\ 2)$, then
\begin{align}\label{Eq:equation of degree two}
\chi_{\varepsilon}(L(Y_{i,k}Y_{j,v}))=\left(\sum_{(p_{1},p_{2}) \in \overline{\mathscr{P}}_{((i,k),(j,\overline{v}))}} \mathfrak{m}(p_1)\mathfrak{m}(p_2)\right)-\chi_{(i,k), (j,\overline{v})},
 \end{align}
where $\overline{v}$ is defined in $(\ref{Eq:small index})$ and $\chi_{(i,k), (j,\overline{v})}$ is defined in Definition \ref{Def: chi_(i,k,j,v)}. 
\end{enumerate}
\end{theorem}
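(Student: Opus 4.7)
The plan is to split into the two parity cases and reduce each to an already established path description, invoking Theorem \ref{Th:simple modules are snake modules} to pass to snake modules whenever interaction occurs.

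For part (1), under the hypothesis $|j-i| \equiv |k-v|+1 \pmod 2$, the labels $(i,k)$ and $(j,v)$ lie in the distinct parity classes $\mathcal{X}_0$ and $\mathcal{X}_1$. Since the Frenkel--Mukhin algorithm preserves the parity class of its variables, the $\varepsilon$-characters $\chi_{\varepsilon}(L(Y_{i,k}))$ and $\chi_{\varepsilon}(L(Y_{j,v}))$ involve disjoint sets of variables, so their product carries exactly one dominant monomial, namely $Y_{i,k}Y_{j,v}$. I would then use that $\chi_{\varepsilon}$ is an injective ring homomorphism together with the standard argument that any composition factor of a tensor product contributes a dominant monomial to the character, to conclude $L(Y_{i,k}Y_{j,v}) \cong L(Y_{i,k}) \otimes L(Y_{j,v})$. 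The multiplicative formula and speciality follow at once, and substituting the (single-path, fundamental) case of the snake path description already proved in the paper yields the sum over $\mathscr{P}_{i,k}$ and $\mathscr{P}_{j,v}$.

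For part (2), under the hypothesis $|j-i| \equiv |k-v| \pmod 2$ the labels lie in the same parity class. My first step is to apply Theorem \ref{Th:simple modules are snake modules} to convert $L(Y_{i,k}Y_{j,v})$ into a length-two snake module; concretely this is the replacement $v \mapsto \overline{v}$ from (\ref{Eq:small index}), which preserves $\varepsilon$-characters. To the resulting snake module I would apply the path description for snake modules established earlier in the paper, expressing $\chi_{\varepsilon}(L(Y_{i,k}Y_{j,\overline{v}}))$ as a sum of $\mathfrak{m}(p_1)\mathfrak{m}(p_2)$ over the \emph{non-overlapping} pairs in $\overline{\mathscr{P}}_{((i,k),(j,\overline{v}))}$. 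To put this in the stated form (\ref{Eq:equation of degree two}), I would sum instead over all pairs in $\overline{\mathscr{P}}_{((i,k),(j,\overline{v}))}$ and subtract the contribution of the overlapping pairs, whose total monomial contribution is designed to be exactly $\chi_{(i,k),(j,\overline{v})}$ from Definition \ref{Def: chi_(i,k,j,v)}. Speciality will come from the fact that snake modules of type $A$ are special, combined with the observation that the change $v \leftrightarrow \overline{v}$ does not introduce extra dominant monomials in the range of the path description.

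The main obstacle I anticipate is the combinatorial identification in part (2) of the overlap contribution with $\chi_{(i,k),(j,\overline{v})}$. I would address it by an explicit classification of the overlap configurations of two snake paths in type $A$, accounting for the cyclic periodicity forced by $\varepsilon^{2\ell}=1$, and by constructing a term-by-term bijection between such configurations and the indexing data of Definition \ref{Def: chi_(i,k,j,v)}, adapting the overlap analysis of Mukhin--Young \cite{MY12a} to the root-of-unity setting.
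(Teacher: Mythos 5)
Your treatment of part (1) is essentially sound and matches the paper in substance: the paper deduces the factorization at generic $q$ from the Abe--Chari irreducibility result and then checks column by column that no new dominant monomial appears upon specialization, while you argue directly that the two $\varepsilon$-characters live in disjoint (parity-separated) sets of variables, so their product has a unique dominant monomial and the tensor product of the two fundamental modules is irreducible. Either route works.

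Part (2) contains a genuine gap. You claim that the Mukhin--Young path description expresses $\chi_{\varepsilon}(L(Y_{i,k}Y_{j,\overline{v}}))$ as the sum over non-overlapping pairs, and that $\chi_{(i,k),(j,\overline{v})}$ is ``the contribution of the overlapping pairs.'' Both claims are incorrect. First, $\overline{\mathscr{P}}_{((i,k),(j,\overline{v}))}$ is \emph{by definition} (see (\ref{Non-overlapping paths})) the set of non-overlapping pairs, so ``summing over all pairs in $\overline{\mathscr{P}}$ and subtracting the overlapping ones'' is vacuous. Second, and more importantly, the non-overlapping sum $S_{(i,k)(j,\overline{v})}$ is the specialization at $q=\varepsilon$ of the $q$-character of the snake module, and at a root of unity this specialization is in general \emph{strictly larger} than the $\varepsilon$-character of the simple $U_{\varepsilon}^{\res}$-module (this is exactly the phenomenon illustrated in the paper's remark on $L(Y_{1,a}Y_{1,aq^2})$ with $\ell=2$). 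The discrepancy arises because the $2\ell$-periodicity of the second index lets a lower corner $(a,s)$ of one path cancel against an upper corner $(a,s+2z\ell)$ of the other, so products of non-overlapping paths that are non-dominant for generic $q$ become dominant at $q=\varepsilon$. The subtracted term $\chi_{(i,k),(j,\overline{v})}$ of Definition \ref{Def: chi_(i,k,j,v)} is a weighted sum of $\varepsilon$-characters of \emph{other simple modules} $L(Y_{a_t,\alpha_t}Y_{b_t,\beta_t})$ indexed by these extra dominant monomials, not a sum over path configurations.

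Consequently the real work, which your plan does not address, is: (a) to classify all dominant monomials of $S_{(i,k)(j,\overline{v})}$ other than the highest one, showing they arise from path translations or width-$\ell$ raising and lowering moves, and to compute their multiplicities $e(t)$, $f(t)$, $g(t)$ (Lemma \ref{Le:dominant is obtained by path translations or pairs of lowering and raising moves}); and (b) to show that the \emph{entire} $\varepsilon$-character of each corresponding simple module is contained in $S_{(i,k)(j,\overline{v})}$, so that it can legitimately be subtracted (Lemmas \ref{Le: II translation of degree 2}, \ref{Le: two translations}, \ref{Le:dominant is obtained by lowering and raising moves}). A classification of ``overlap configurations of two snake paths'' targets the wrong object. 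Relatedly, speciality in case (2) cannot be imported from the generic-$q$ theory of snake modules; it only falls out of this bookkeeping after the subtraction.
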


The first term on the right-hand side of Equation $(\ref{Eq:equation of degree two})$ is given by the path description for snake modules \cite{MY12a}. In order to characterize $\chi_{(i,k), (j,\overline{v})}$, we introduce path translations, see Definition \ref{Def: translation}.

Theorem \ref{thm: introduction path description of module with highest monomial degree 2} gives an effective path description for the $\varepsilon$-character of any simple module with the highest $l$-weight monomial of degree two.

For a Kirillov-Reshetikhin module $L(Y_{i,k_1}\cdots Y_{i,k_z})$ of $U_{\varepsilon}^{\res}({L\mathfrak{sl}_{n+1}})$, where $i\in I$, $z \in\ZZ_{\geq 1}$, $k_t \in \ZZ$, $t \in [1,z]$, we say that the dominant monomial $Y_{i,k_1}\cdots Y_{i,k_z}$ has small values of indices if $Y_{i,k_j}Y_{i,k_{j+1}}$ has small values of indices, that is $k_{j+1}=k_{j}+2$, for $1\leq j<z$ (see Definition \ref{def:small values of indices}).

For any Kirillov-Reshetikhin module of $U_{\varepsilon}^{\res}({L\mathfrak{sl}_{n+1}})$, we have the following theorem.

\begin{theorem}[Theorem \ref{Th: the path description of K-R module}]\label{TH:k-r}
Let $\varepsilon^{2 \ell}=1$ and let $L(Y_{i,k_1}\cdots Y_{i,k_z})$ be a Kirillov-Reshetikhin module of $U_{\varepsilon}^{\res}({L\mathfrak{sl}_{n+1}})$, where $Y_{i,k_1}\cdots Y_{i,k_z}$ has small values of indices, $i \in I=[1,n]$, $z \in [1, \ell]$, $k_t \in \ZZ$, $t \in [1,z]$. Then
\begin{align*}
\chi_{\varepsilon}(L(Y_{i,k_1}\cdots Y_{i,k_z}))=\sum_{(p_{1},\ldots,p_{z}) \in \overline{\mathscr{P'}}_{(i,k_{t})_{1 \leq t \leq z}}} \prod_{t=1}^{z}\mathfrak{m}(p_{t}),
\end{align*}
where $\overline{\mathscr{P'}}_{(i,k_{t})_{1\leq t\leq z}}$ is defined in $(\ref{def: disjoint path in a tube})$.
\end{theorem}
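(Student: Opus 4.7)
The plan is to reduce the statement to the Mukhin–Young path formula for snake modules over the generic quantum loop algebra $U_q(L\mathfrak{sl}_{n+1})$, and then to pass carefully to the specialization $q=\varepsilon$, reinterpreting the strip of paths as a tube of circumference $2\ell$.

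First I would invoke Theorem \ref{Th:simple modules are snake modules} to convert $L(Y_{i,k_1}\cdots Y_{i,k_z})$ into a snake module. The hypotheses $k_{t+1}=k_t+2$ and constant node $i$ make the tuple $((i,k_1),\ldots,(i,k_z))$ the simplest snake in type $A$ — the one corresponding generically to a Kirillov–Reshetikhin module — so all snake axioms are automatic. Next I would apply the Mukhin–Young theorem \cite{MY12a} to write the generic $q$-character as
\[
\chi_q(L(Y_{i,k_1}\cdots Y_{i,k_z}))=\sum_{(p_1,\ldots,p_z)\text{ non-overlapping}}\prod_{t=1}^{z}\mathfrak{m}(p_t),
\]
where $p_t$ is a path starting at $(i,k_t)$; in particular the module is special.

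The third step is to specialize at $q=\varepsilon$ and identify this sum with the right-hand side of the theorem. Because $\varepsilon^{2\ell}=1$, parameters of the form $a\varepsilon^s$ are periodic with period $2\ell$, so the strip on which Mukhin–Young paths live collapses to a cylinder (the "tube") of circumference $2\ell$. Under this projection, each generic path descends to a path on the tube and each non-overlapping $z$-tuple descends to a tuple in $\overline{\mathscr{P'}}_{(i,k_t)_{1\leq t\leq z}}$ as defined in $(\ref{def: disjoint path in a tube})$. The constraint $z\leq \ell$ is used here precisely to ensure that the $z$ starting points $(i,k_1),\ldots,(i,k_z)$ still lie in a single fundamental domain of the tube, so that no two of them are identified and no path is forced to wrap around and meet itself.

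Finally, combining the equality of the specialized $q$-character with the $\varepsilon$-character (via Frenkel–Mukhin \cite{FM02}, together with the fact that the KR module remains irreducible in this range) yields the claim.

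The main obstacle will be the third step: verifying that the non-overlap condition of Mukhin–Young on the strip coincides exactly with the disjointness condition on the tube, and that the projection strip-to-tube is a bijection on the set of admissible tuples when $z\leq \ell$. In particular, one must rule out (a) the appearance of spurious overlaps after wrap-around and (b) cancellations among monomials coming from paths that become equal only after the identification $s\sim s+2\ell$. A careful length/position argument — or an induction on $z$ comparing path segments near the "seam" of the tube — should settle these points and finish the proof.
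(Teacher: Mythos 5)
There is a genuine gap, and it sits exactly at the step you flag as "the main obstacle" but then wave through: the identification of the specialized $q$-character with $\chi_{\varepsilon}$ of the simple module. Frenkel--Mukhin's result says that setting $q=\varepsilon$ in $\chi_q(V)$ gives the $\varepsilon$-character of the \emph{specialized} module $V_{\varepsilon}^{\res}$, which for Kirillov--Reshetikhin modules in this range is generally \emph{not} irreducible. The paper's own remark in Section 2 gives the smallest counterexample to your premise: for $\ell=2$, $\chi_q(L(Y_{1,a}Y_{1,aq^2}))|_{q=\varepsilon}$ has three monomials while $\chi_{\varepsilon}(L(Y_{1,a}Y_{1,a\varepsilon^2}))$ has only two --- and this is a KR module with $z=2=\ell$ satisfying all hypotheses of the theorem. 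So "the KR module remains irreducible in this range" is false, and the specialized Mukhin--Young sum is in general strictly larger than the $\varepsilon$-character. Relatedly, your claim that $z\le\ell$ prevents wrap-around collisions is not the right dividing line: the relevant condition is $k_z+2i-k_1<2\ell$ (resp.\ $k_z+2(n+1-i)-k_1<2\ell$), i.e.\ whether the \emph{rectangles} ${\bf P}_{i,k_1}$ and ${\bf P}_{i,k_z}$ fit without meeting after the identification $s\sim s+2\ell$, and this can fail even when $z\le\ell$ (Example \ref{Ex:2_12_32_5} has $\ell=3$, $z=3$, $k_z+2i-k_1=8>6$).

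The paper's proof handles exactly the case you cannot: when the rectangles do wrap around, the non-overlapping (strip) sum $S_{(i,k_t)_{1\le t\le z}}$ contains extra dominant monomials beyond the highest $l$-weight. Lemma \ref{Lem: dominant monomial for k-r with z great } classifies all of them as arising from path translations, shows the full $\chi_{\varepsilon}(L(m))$ of each is contained in $S_{(i,k_t)_{1\le t\le z}}$, and the $\varepsilon$-character of the KR module is obtained by subtracting these; only then does one check that the remainder coincides with the disjoint-on-the-tube sum $\overline{\mathscr{P'}}$. In the easy case (Lemma \ref{Lem: the disjoint path for z small}) disjointness on the tube does coincide with non-overlapping on the strip and your argument essentially works, but without the classification-and-subtraction step the proof of the theorem in general does not go through.
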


Let $\varepsilon^{2 \ell}=1$ and let $L(Y_{i,k_1}\cdots Y_{i,k_z})$ be a Kirillov-Reshetikhin module of $U_{\varepsilon}^{\res}({L\mathfrak{sl}_{n+1}})$, where $Y_{i,k_1}\cdots Y_{i,k_z}$ has small values of indices. In the case of $z> \ell$, one can apply Theorem \ref{Th:decomposition} and Theorem \ref{TH:k-r} to obtain the $\varepsilon$-character $\chi_{\varepsilon}(L(Y_{i,k_1}\cdots Y_{i,k_z}))$, see Remark \ref{remark:path description for KR modules with large degree}.

\subsection{Conditions of irreducibility of tensor product of fundamental modules of $U_{\varepsilon}^{\res}({L\mathfrak{sl}_{n+1}})$}
In 1997, Chari and Pressley showed that for any finite-dimensional irreducible $U_{\varepsilon}^{\res}({L\mathfrak{sl}_{n+1}})$-module $L(m)$, there exist integers $\xi_{1},\ldots, \xi_{m}$ and indexes $i_{1},\ldots, i_{m}\in I$ such that $L(m)$ is isomorphic to a subquotient of $L(Y_{i_1,\xi_{1}})\otimes L(Y_{i_2,\xi_2})\otimes \cdots \otimes L(Y_{i_m,\xi_m})$. They gave a necessary and sufficient condition for the tensor product of fundamental modules of $U_{\varepsilon}^{\res}({L\mathfrak{sl}_2})$ to be irreducible, where $\varepsilon$ is a root of unity of odd order, see {\cite[Theorem 9.6]{CP97b}}. In 2007, Abe in \cite{A07} gave a necessary and sufficient condition for the tensor product of fundamental modules of $U_{\varepsilon}^{\res}({L\mathfrak{sl}_{n+1}})$ to be irreducible, where $\varepsilon^{2\ell+1}=1$, $\ell\geq 2$.

Fix $\varepsilon$ such that $\varepsilon^{2\ell}=1$ for some $\ell \ge 2$. As an application of our path description, we obtain a necessary and sufficient condition for the tensor product of two fundamental representations of $U_{\varepsilon}^{\res}({L\mathfrak{sl}_{n+1}})$ to be irreducible. Additionally, we obtain a necessary condition for the tensor product of two or more fundamental representations of $U_{\varepsilon}^{\res}({L\mathfrak{sl}_{n+1}})$ to be irreducible.

\begin{theorem}[Theorem \ref{Th: tensor product}, Corollary \ref{Co:necessary condition for the tensor product}]
Let $m\in \mathbb{Z}_{\geq 2}$, $i_{1},\ldots, i_{m}\in I$, $\xi_{1},\ldots, \xi_{m}\in \mathbb{Z}$, and $\varepsilon^{2 \ell}=1$, $\ell \ge 2$.
\begin{enumerate}
    \item Suppose that $m=2$. The tensor product $L(Y_{i_1,\xi_1})\otimes L(Y_{i_2,\xi_2})$ is a simple module of $U_{\varepsilon}^{\res}(L\mathfrak{sl}_{n+1})$ if and only if $|\xi_{2}-\xi_1 | \not\equiv \pm(2t+|i_2-i_1|)\,(\text{mod } 2 \ell)$, where $1\leq t\leq \min\{i_1, i_2, n+1-i_1, n+1-i_2\}$.
    \item Suppose that $m \geq 2$. If the tensor product
$L(Y_{i_1,\xi_1})\otimes L(Y_{i_2,\xi_2})\otimes \cdots \otimes L(Y_{i_m,\xi_m})$ is a simple module of $U_{\varepsilon}^{\res}(L\mathfrak{sl}_{n+1})$,
then for any $1\leq k\neq k'\leq m$ and $1\leq t\leq \min\{i_k, i_{k'}, n+1-i_k, n+1-i_{k'}\}$, $| \xi_{k'}-\xi_{k} | \not\equiv \pm(2t+|i_{k'}-i_k|)\, (\text{mod } 2 \ell)$.
\end{enumerate}
\end{theorem}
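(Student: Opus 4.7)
The plan is to derive Part (1) from Theorem \ref{thm: introduction path description of module with highest monomial degree 2} together with the standard character criterion for irreducibility, and to derive Part (2) from Part (1) via a Grothendieck-ring argument. The criterion is: for any dominant monomials $m,m'$, the tensor product $L(m)\otimes L(m')$ always contains $L(mm')$ as a composition factor (the one of highest $l$-weight), so it is simple if and only if $\chi_\varepsilon(L(m))\chi_\varepsilon(L(m'))=\chi_\varepsilon(L(mm'))$; by injectivity of $\chi_\varepsilon$ this reduces irreducibility to an identity in $\ZZ[Y_{i,a}^{\pm 1}]$.

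For Part (1), I would split into the two parity cases of Theorem \ref{thm: introduction path description of module with highest monomial degree 2}. When $|i_2-i_1|\equiv|\xi_2-\xi_1|+1\pmod 2$, part (1) of that theorem immediately gives the factorization of $\varepsilon$-characters, so the tensor product is always simple; on the condition side, since $\pm(2t+|i_2-i_1|)$ has the parity of $|i_2-i_1|$ while $2\ell$ is even, the forbidden congruence $|\xi_2-\xi_1|\equiv\pm(2t+|i_2-i_1|)\pmod{2\ell}$ has mismatched parity on the two sides and cannot hold, matching the irreducibility. When $|i_2-i_1|\equiv|\xi_2-\xi_1|\pmod 2$, part (2) of that theorem expresses $\chi_\varepsilon(L(Y_{i_1,\xi_1}Y_{i_2,\xi_2}))$ as a path sum over $\overline{\mathscr{P}}_{((i_1,\xi_1),(i_2,\overline{\xi_2}))}$ minus the correction $\chi_{(i_1,\xi_1),(i_2,\overline{\xi_2})}$; combining this with the character criterion reduces the question to deciding when this correction vanishes, once the discrepancy between the path sum and the full product $\chi_\varepsilon(L(Y_{i_1,\xi_1}))\chi_\varepsilon(L(Y_{i_2,\xi_2}))$ has been accounted for.

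The crux of Part (1) is therefore to show $\chi_{(i_1,\xi_1),(i_2,\overline{\xi_2})}=0$ iff $|\xi_2-\xi_1|\not\equiv\pm(2t+|i_2-i_1|)\pmod{2\ell}$ for every $1\le t\le \min\{i_1,i_2,n+1-i_1,n+1-i_2\}$. I would unpack Definition \ref{Def: translation}: each non-trivial path translation contributing to $\chi_{(i,k),(j,\overline{v})}$ is indexed by an integer $t$ in precisely this range, the endpoints dictated by the requirement that the translated path remain inside the Dynkin strip $[1,n]$, and the cyclic shift it induces on the second coordinate is exactly $\pm(2t+|i_2-i_1|)\pmod{2\ell}$. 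A non-zero contribution arises precisely when such a translation relates $(i_1,\xi_1)$ to $(i_2,\overline{\xi_2})$, i.e., exactly when the congruence holds for some admissible $t$. Verifying this correspondence cleanly --- in particular, checking that distinct translations produce distinct monomials so that no cancellations can hide a non-trivial congruence --- is the main obstacle, and I expect it to require a careful case-by-case analysis of the translation mechanism together with the combinatorics of $\overline{\mathscr{P}}$.

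For Part (2), I would argue by a Grothendieck-ring calculation that does not rely on specialness of $L\bigl(\prod_{j=1}^{m}Y_{i_j,\xi_j}\bigr)$. Assuming $V:=\bigotimes_{j=1}^{m}L(Y_{i_j,\xi_j})$ is simple, so $[V]=[L(M)]$ with $M=\prod_{j}Y_{i_j,\xi_j}$, decompose in the Grothendieck ring
\[
[L(Y_{i_k,\xi_k})\otimes L(Y_{i_{k'},\xi_{k'}})]=[L(Y_{i_k,\xi_k}Y_{i_{k'},\xi_{k'}})]+\sum_{m^{*}}c_{m^{*}}[L(m^{*})]
\]
for each pair $k\neq k'$, with $c_{m^{*}}\ge 0$ and $m^{*}\neq Y_{i_k,\xi_k}Y_{i_{k'},\xi_{k'}}$. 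Multiplying by $\bigl[\bigotimes_{j\neq k,k'}L(Y_{i_j,\xi_j})\bigr]$, expanding each resulting tensor product as a non-negative $\ZZ$-linear combination of classes $[L(\cdot)]$, and cancelling the common $[L(M)]$ on both sides yields $0$ on one side and a non-negative integer combination of classes distinct from $[L(M)]$ on the other; in the free abelian group on irreducibles, each coefficient must then vanish, forcing every $c_{m^{*}}=0$ and hence the irreducibility of the pair. Part (1) applied to each pair yields the stated necessary condition.
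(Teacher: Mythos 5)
Your overall strategy for Part (1) coincides with the paper's: reduce to the character identity via injectivity of $\chi_\varepsilon$, dispose of the mismatched-parity case by the parity of $\pm(2t+|i_2-i_1|)$, convert to small values of indices, and then decide when Theorem \ref{Th:path description of degree 2}(2) degenerates to the product of the fundamental characters. The one genuine problem is your stated ``crux'': it is \emph{not} true that simplicity is equivalent to $\chi_{(i_1,\xi_1),(i_2,\overline{\xi}_2)}=0$, and correspondingly it is not true that both sign families $\pm(2t+|i_2-i_1|)$ are produced by path translations. Under small values of indices ($h_0(i_1,i_2)+\xi_1\leq\overline{\xi}_2<h_0(i_1,i_2)+\xi_1+2\ell$) the translations of Lemma \ref{Le: I translation of degree 2} never apply (they require $h\geq h_0+2\ell$), so the correction term $\chi_{(i_1,\xi_1),(i_2,\overline{\xi}_2)}$ detects only the congruences $|\xi_2-\xi_1|\equiv-(2t+|i_2-i_1|)\pmod{2\ell}$ coming from Lemma \ref{Le: II translation of degree 2}. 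The congruences with the $+$ sign are detected instead by $(i_2,\overline{\xi}_2)\in\mathbf{PS}(i_1,\xi_1)$, i.e.\ by the existence of overlapping path pairs, which is precisely the discrepancy between $S_{(i_1,\xi_1)(i_2,\overline{\xi}_2)}$ and $\chi_\varepsilon(L(Y_{i_1,\xi_1}))\chi_\varepsilon(L(Y_{i_2,\overline{\xi}_2}))$ that you mention only in passing. You must make both halves explicit: simplicity fails if and only if either $(i_2,\overline{\xi}_2)\in\mathbf{PS}(i_1,\xi_1)$ (the $+$ sign, with $1\leq t\leq|\mathbf{PS}^{i_2}(i_1,\xi_1)|=\min\{i_1,i_2,n+1-i_1,n+1-i_2\}$) or a type-II translation exists (the $-$ sign), and then pass from $\overline{\xi}_2$ to $\xi_2$ via $|\overline{\xi}_2-\xi_1|\equiv\pm|\xi_2-\xi_1|\pmod{2\ell}$. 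With that repair your argument is the paper's.

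For Part (2) you take a different but valid route. The paper proves Lemma \ref{Le: simple tensor product} module-theoretically: commutativity of $\mathcal{K}_0$ lets one permute the factors, and a proper nonzero submodule $V\subset S_i\otimes S_j$ yields the proper submodule $V\otimes S_{\sigma_3}\otimes\cdots\otimes S_{\sigma_N}$ of the reordered product. Your coefficient comparison in the Grothendieck group is also correct; it needs only that $[L(mm')]$ occurs with coefficient exactly one in $[L(m)\otimes L(m')]$ (one-dimensionality of the top $l$-weight space) and that a product of classes of nonzero modules is a nonzero non-negative combination of classes of simples. It stays entirely inside $\mathcal{K}_0$, at the cost of invoking these positivity facts rather than a direct submodule construction. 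Either way, Part (1) applied to each pair finishes the corollary.
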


\subsection{Organization of the paper}
In Section \ref{Preliminaries}, we review the quantum loop algebras, restricted quantum loop algebras $U_\varepsilon^{\res}({L\mathfrak{g}})$ at roots of unity, finite-dimensional $U_\varepsilon^{\res}({L\mathfrak{g}})$-modules and their $\varepsilon$-characters. In Section \ref{sec:snake modules and three types of translations of paths}, we show that any finite-dimensional simple module of restricted quantum loop algebra $U_{\varepsilon}^{\res}({L\mathfrak{sl}_{n+1}})$ in a certain category can be transformed into a snake module. Meanwhile, we introduce the concept of path translations. In Section \ref{section of path description of degree two}, we obtain an effective and concrete path description for the $\varepsilon$-character of any simple $U_\varepsilon^{\res}({L\mathfrak{sl}_{n+1}})$-module with highest $l$-weight of degree two. Subsequently, we give an application of our path description to study conditions of irreducibility of tensor products of fundamental modules. In Section \ref{Kirillov-Reshetikhin}, we obtain an effective and concrete path description for $\varepsilon$-characters of Kirillov-Reshetikhin modules of $U_\varepsilon^{\res}({L\mathfrak{sl}_{n+1}})$. In Section \ref{Sec: prove the th path description of degree 2}, we prove Theorem \ref{Th:path description of degree 2}.

\section{Preliminaries}\label{Preliminaries}
In this section, we recall some known results about quantum loop algebras $U_q({L\mathfrak{g}})$ and restricted quantum loop algebras $U_\varepsilon^{\res}({L\mathfrak{g}})$ at roots of unity \cite{CP97b, FM02}.

\subsection{Cartan data}\label{cartan data}
Let $\mathfrak{g}$ be a simple Lie algebra over $\mathbb{C}$ and $I$ the set of indices of the Dynkin diagram of $\mathfrak{g}$. Let $\{\alpha_i\}_{i \in I}$ be the set of simple roots. Let $C=(c_{ij})_{i,j \in I}$ be the Cartan matrix of $\mathfrak{g}$, where $c_{ij}=\frac{2 (\alpha_i, \alpha_j) }{(\alpha_i, \alpha_i)}$. There is a matrix $D=\text{diag}(d_{i}\mid i\in I)$ with positive integer entries $d_i$, $i\in I$, such that $B=DC=(b_{ij})_{i,j \in I}$ is symmetric.

Denote by $L\mathfrak{g}= \mathfrak{g}\otimes\mathbb{C}[t,t^{-1}]$ the loop algebra of $\mathfrak{g}$ and denote by $\widehat{\mathfrak{g}}$ the affine Lie algebra corresponding to $\mathfrak{g}$. Let $\hat{I}=I\cup\{0\}$ and let $(c_{ij})_{i,j \in \hat{I}}$ be the generalized Cartan matrix of $\widehat{\mathfrak{g}}$.

\subsection{Quantum loop algebras}\label{definition of quantum affine algebras}

Let $q$ be an indeterminate, $\mathbb{C}(q)$ the field of rational functions of $q$ with complex coefficients, and $\mathbb{C} [q,q^{-1}]$ the ring of complex Laurent polynomials in $q$. For $m \in \mathbb{Z}_{\geq 0}$, set
\begin{align*}
[m]_{q} := \frac{q^m-q^{-m}}{q-q^{-1}},\quad  [m]_{q} ! := [m]_{q}[m-1]_{q} \cdots [1]_{q}.
\end{align*}
Denote $q_i=q^{d_i}$, $i \in \hat{I}$.

The quantum affine algebra $U_q(\widehat{\mathfrak{g}})$ in the Drinfeld-Jimbo realization \cite{Dri85, Jim85} is an associative algebra over $\mathbb{C}(q)$ with generators $e_i^{\pm}, k_i^{\pm 1}\,(i \in \hat{I})$, subject to certain relations. In Drinfeld's new realization \cite{Dri88}, $U_q(\widehat{\mathfrak{g}})$ is generated by $x_{i, r}^{\pm}$ ($i \in I, r \in \mathbb{Z}$), $k_i^{\pm 1}$ $(i \in I)$, $h_{i, r}$ ($i \in I, r \in \mathbb{Z}\backslash \{0\}$) and central elements $c^{\pm 1/2}$, subject to certain relations.

Denote by $U_q({L\mathfrak{g}})$ the quantum loop algebra, which is isomorphic to a quotient of $U_q(\widehat{\mathfrak{g}})$ where the central charge is mapped to $1$. Therefore, $U_q({L\mathfrak{g}})$ inherits a Hopf algebra structure. For more information on $U_q({L\mathfrak{g}})$, we refer the reader to \cite{CH10, CP94, CP95, Le11}.

Note that the algebra $U_q({L\mathfrak{g}})$ is defined over $\CC(q)$. By Theorem 2.1 in \cite{CP97b}, the irreducible highest weight representation of $U_q({L\mathfrak{g}})$ is finite-dimensional if and only if it corresponds to an $I$-tuple of polynomials $(P_i(u))_{i\in I}$, where $P_i(u) \in \CC(q)[u]$, $P_i(0) \ne 0$. These polynomials are called Drinfeld polynomials.

Let $\mathbb{Z}\mathcal{P} = \mathbb{Z}[Y_{i,a}^{\pm1}]_{i\in I}^{a\in\mathbb{C^{\times}}}$. The $q$-character of a $U_q(L\mathfrak{g})$-module $V$ is given by
\begin{align*}
\chi_q(V) = \sum_{m \in \mathcal{P}} \dim(V_{m}) m \in \mathbb{Z}\mathcal{P},
\end{align*}
where $V_m$ is the $l$-weight space with $l$-weight $m$ \cite{FR98}.

\subsection{Restricted quantum loop algebras $U_\varepsilon^{\res}({L\mathfrak{g}})$ at roots of unity}
For $i \in \hat{I}$, $r \in \mathbb{Z}_{>0}$, denote
\[
(e_i^{\pm})^{(r)}=\frac{(e_i^{\pm})^r}{[r]_{q_i}!}.
\]
Let $U_q^{\res}({L\mathfrak{g}})$ be the $\mathbb{C}[q, q^{-1}]$-subalgebra of $U_q({L\mathfrak{g}})$ generated by the $k_i^{\pm 1}$ and the $(e_i^{\pm})^{(r)}$ for all $i \in \hat{I}$, $r \in \mathbb{Z}_{>0}$, see \cite[Section 1]{CP97b}.
Let $U_\varepsilon^{\res}({L\mathfrak{g}})$ be the specialization of $U_q^{\res}({L\mathfrak{g}})$ at root $\varepsilon$ of unity, by setting
\begin{align*}
U_\varepsilon^{\res}({L\mathfrak{g}}):=U_q^{\res}({L\mathfrak{g}}) \otimes_{\mathbb{C}[q, q^{-1}]}\mathbb{C}
\end{align*}
via the algebra homomorphism $f_{\varepsilon}: \mathbb{C}[q, q^{-1}]\rightarrow \mathbb{C}$, that takes $q$ to $\varepsilon$.
For an element $x$ of $U_q^{\res}({L\mathfrak{g}})$, we denote the corresponding element of $U_\varepsilon^{\res}({L\mathfrak{g}})$ also by $x$.

A finite-dimensional $U_\varepsilon^{\res}({L\mathfrak{g}})$-module $V$ has an $\varepsilon$-character $\chi_{\varepsilon}(V)$ \cite[Section 3]{FM02}, which is an element of $\mathbb{Z}[Y_{i,a}^{\pm1}]_{i\in I}^{a\in\mathbb{C^{\times}}}$. The finite-dimensional simple modules of $U_{\varepsilon}^{\res}(L\mathfrak{g})$ are classified by $I$-tuples $(P_i(u))_{i \in I}$, see \cite[Theorem 8.2]{CP97b} and \cite[Theorem 2.4]{FM02}, where each $P_i(u) \in \mathbb{C}[u]$ is a polynomial with constant term $1$ which is called a Drinfeld polynomial. 

Denote by $\mathcal{P}$ the free abelian multiplicative group of monomials in infinitely many formal variables $Y_{i, a}$, $i \in I, a \in \mathbb{C}^{\times}$. A monomial $m=\prod_{i \in I, a \in \mathbb{C}^{\times}} Y_{i, a}^{u_{i, a}}$, where $u_{i, a}$ are integers, is said to be \textit{dominant} (resp. \textit{anti-dominant}) if $u_{i, a} \geq 0$ (resp. $u_{i, a} \leq 0$) for all $a \in \mathbb{C}^{\times}, i \in I$. Let $\mathcal{P}^+ \subset \mathcal{P}$ denote the set of all dominant monomials. Every $I$-tuple $(P_i(u))_{i \in I}$ of Drinfeld polynomials corresponds to a dominant monomial $m$ in formal variables $Y_{i,a}$, $i \in I$, $a \in \mathbb{C}^{\times}$. Denote by $L(m)$ the corresponding simple $U_{\varepsilon}^{\res}(L\mathfrak{g})$-module.

Fundamental modules of $U_\varepsilon^{\res}({L\mathfrak{g}})$ are the simple modules $L(Y_{i,a})$, where $i \in I, a \in \mathbb{C}^{\times}$, and standard modules are the tensor products of fundamental modules.

For a simple module $V$ of $U_q({L\mathfrak{g}})$, with the highest weight vector $v$, it is known \cite[Proposition 2.5]{FM02} that the $U_q^{\res}({L\mathfrak{g}})$-module $V^{\res}:= U_q^{\res}({L\mathfrak{g}}).v$ is a free $\mathbb{C}[q, q^{-1}]$-module. Put $V_\varepsilon^{\res}=V^{\res}\otimes_{\mathbb{C}[q, q^{-1}]}\mathbb{C}$, where as above $q$ acts on $\mathbb{C}$ by multiplication by $\varepsilon$. This is a $U_\varepsilon^{\res}({L\mathfrak{g}})$-module called the specialization of $V$ at $q=\varepsilon$. Denote by ${\rm Rep}U_\varepsilon^{\res}({L\mathfrak{g}})$ the category of finite-dimensional $U_\varepsilon^{\res}({L\mathfrak{g}})$-modules and by $\mathcal{K}_0({\rm Rep}U_\varepsilon^{\res}({L\mathfrak{g}}))$ the Grothendieck ring of the category ${\rm Rep}U_\varepsilon^{\res}({L\mathfrak{g}})$. Frenkel and Mukhin {\cite[Theorem 3.2]{FM02}} proved that the $\varepsilon$-character map $\chi_{\varepsilon}: \mathcal{K}_0({\rm Rep} U_\varepsilon^{\res}({L\mathfrak{g}})) \rightarrow \mathbb{Z}[Y_{i,a}^{\pm1}]_{i\in I}^{a\in\mathbb{C^{\times}}}$ is an injective homomorphism of rings. Moreover, for any irreducible finite-dimensional $U_q(L\mathfrak{g})$-module $V$, $\chi_{\varepsilon}(V_{\varepsilon}^{\res})$ is obtained from $\chi_{q}(V)$ by setting $q$ equal to $\varepsilon$.

\begin{remark}
In general, for a simple $U_q(L\mathfrak{g})$-module $L(m)$, the polynomial obtained from $\chi_q(L(m))$ by sending $q$ to $\varepsilon$ has more monomials than the $\varepsilon$-character $\chi_{\varepsilon}(L(m))$ of the simple module $L(m)$ considered as a $U_\varepsilon^{\res}({L\mathfrak{g}})$-module. For example, consider the simple $U_q(L\mathfrak{sl}_2)$-module $L(Y_{1,a}Y_{1,aq^2})$, we have that
\[
\chi_q(L(Y_{1,a}Y_{1,aq^2})) = Y_{1,a}Y_{1,aq^2}+Y_{1,a}Y_{1,aq^4}^{-1}+Y_{1,aq^2}^{-1}Y_{1,aq^4}^{-1}.
\]
On the other hand,
\[
\chi_{\varepsilon}(L(Y_{1,a}Y_{1,a\varepsilon^2})) =Y_{1,a}Y_{1,a\varepsilon^2}+Y_{1,a\varepsilon^2}^{-1}Y_{1,a\varepsilon^4}^{-1}
\]
when $L(Y_{1,a}Y_{1,a\varepsilon^2})$ is considered as a $U_\varepsilon^{\res}({L\mathfrak{sl}_2})$-module, where $\varepsilon^{2 \ell}=1$, $\ell=2$.
\end{remark}

Similar to modules of quantum loop algebras \cite[Section 2.3]{MY12a}, a finite-dimensional $U_\varepsilon^{\res}(L\mathfrak{g})$-module $V$ is called \textit{special} if $\chi_{\varepsilon}(V)$ contains exactly one dominant monomial.

Since the Dynkin diagram of $\mathfrak{g}$ is a bipartite graph, we may choose a partition of the vertices $I=I_0\cup I_1$, where each edge connects a vertex of $I_0$ with a vertex of $I_1$. For $i\in I$, set
\begin{align}\label{Def: xi_i}
\xi_i=
\begin{cases} 0, & \text{if}{\hskip 0.4em} i\in I_0,\\
              1, & \text{if}{\hskip 0.4em} i\in I_1.
\end{cases}
\end{align}
Following \cite{FM02}, for $i\in I$ and $a\in \mathbb{C^{\times}}$, denote
\begin{align}\label{mathbf{Y}}
\mathbf{Y}_{i,a}:=\prod_{j=0}^{\ell-1} Y_{i,a\varepsilon^{2j+\xi_i}}.
\end{align}
Since $\varepsilon^{2\ell}=1$, we have $\mathbf{Y}_{i,\varepsilon^{2r}}=\mathbf{Y}_{i,1}$ for any $r\in \mathbb{Z}$. A monomial in the variables $Y_{i,a}$ is said to be $\ell$-\textit{acyclic} if it is not divisible by $\mathbf{Y}_{j,b}$ for any $j \in I, b\in \mathbb{C}^{\times}$, see \cite[Section 2.6]{FM02}.

Frenkel and Mukhin {\cite[Section 4]{FM02}} described a quantum Frobenius map
\[
{\rm Fr}:  U_\varepsilon^{\res}({L\mathfrak{g}})\rightarrow U_{{\varepsilon}^{\ast}}^{\res}({L\mathfrak{g}})
\]
that gives rise to the Frobenius pullback
\begin{align*}
{\rm Fr}^{\ast}: \mathcal{K}_0({\rm Rep} U_{{\varepsilon}^{\ast}}^{\res}({L\mathfrak{g}}))\rightarrow \mathcal{K}_0({\rm Rep} U_{{\varepsilon}}^{\res}({L\mathfrak{g}}))
\end{align*}
and proved that this is an injective ring homomorphism such that ${\rm Fr}^{\ast}([L(Y_{i,a})])=[L(\mathbf{Y}_{i,a})]$.

Let $L(M)$ be a $U_{\varepsilon^{\ast}}^{\res}({L\mathfrak{g}})$-module. The $U_\varepsilon^{\res}({L\mathfrak{g}})$-module
${\rm Fr}^{\ast}(L(M))$ obtained by pullback of $L(M)$ via the quantum Frobenius homomorphism is called the \textit{Frobenius pullback} of $L(M)$, see {\cite[Section 5.2]{FM02}.

Recall that we denote $\varepsilon^{*}:=\varepsilon^{\ell^2}$. The following theorem was proved by Chari and Pressley \cite{CP97b} for roots of unity of odd order and generalized by Frenkel and Mukhin \cite{FM02} to roots of unity of arbitrary order.

\begin{theorem}[{\cite[Theorem 9.1]{CP97b}}, {\cite[Theorem 5.4]{FM02}}]\label{Th:decomposition}
Let $L(m)$ be a simple module of $U_\varepsilon^{\res}({L\mathfrak{g}})$. Then
\begin{align*}
L(m)\cong L(m^{0})\otimes L(m^{1}),
\end{align*}
using the decomposition $m=m^{0}m^{1}$, where $m^{1}$ is a monomial in the variables $\mathbf{Y}_{i,a}$ and $m^{0}$ is $\ell$-acyclic. Moreover, $L(m^1)$ is the Frobenius pullback of an irreducible $U_{{\varepsilon}^{\ast}}^{\res}({L\mathfrak{g}})$-module.
\end{theorem}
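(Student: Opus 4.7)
My plan is to prove the theorem in three stages: first, define the factorization $m = m^0 m^1$ combinatorially at the level of dominant monomials; second, realize $L(m^1)$ as a Frobenius pullback; and third, upgrade the candidate module $L(m^0) \otimes L(m^1)$ to an isomorphism with $L(m)$ via an irreducibility argument.

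For the first two stages, given a dominant monomial $m = \prod_{i,a} Y_{i,a}^{u_{i,a}}$, I would greedily divide out copies of $\mathbf{Y}_{j,b} = \prod_{k=0}^{\ell-1} Y_{j, b\varepsilon^{2k+\xi_j}}$ from $m$, one full $\ell$-cycle at a time in each $\varepsilon^2$-orbit at each node $j$, until no further $\mathbf{Y}_{j,b}$ divides the remainder. Collecting the extracted factors into $m^1$ and the remainder into $m^0$ produces a unique decomposition in which $m^1$ is a monomial in the $\mathbf{Y}_{i,a}$ and $m^0$ is $\ell$-acyclic by construction. Writing $m^1 = \prod_{i,a} \mathbf{Y}_{i,a}^{n_{i,a}}$ and setting $M := \prod_{i,a} Y_{i,a}^{n_{i,a}}$, which is dominant for $U_{\varepsilon^*}^{\res}(L\mathfrak{g})$, the classification theorem supplies a simple module $L(M)$; combining the multiplicativity of ${\rm Fr}^*$ with the identity ${\rm Fr}^*([L(Y_{i,a})]) = [L(\mathbf{Y}_{i,a})]$ yields $[{\rm Fr}^*(L(M))] = [L(m^1)]$, and since pullback along the algebra homomorphism ${\rm Fr}$ preserves simplicity in this framework, I may identify $L(m^1) = {\rm Fr}^*(L(M))$.

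For the third stage, form $V := L(m^0) \otimes L(m^1)$; the tensor of highest $l$-weight vectors has $l$-weight $m^0 m^1 = m$, so $L(m)$ appears as a composition factor of $V$. To conclude $V \cong L(m)$, I would show that $m$ is the unique dominant monomial of the product $\chi_\varepsilon(L(m^0)) \chi_\varepsilon(L(m^1))$, after which the injectivity of $\chi_\varepsilon$ on $\mathcal{K}_0({\rm Rep} U_\varepsilon^{\res}(L\mathfrak{g}))$ forces equality of Grothendieck classes. The structural input is that every monomial of $\chi_\varepsilon(L(m^1))$ is a Laurent monomial in the $\mathbf{Y}_{i,a}$, so its exponent pattern at any fixed node $i$ is constant on full $\ell$-cycles of the $\varepsilon^2$-orbit; by $\ell$-acyclicity of $m^0$, any non-highest monomial of $L(m^0)$ introduces a non-constant deviation on some incomplete $\ell$-cycle that no $\mathbf{Y}$-scale correction from $L(m^1)$ can repair into a dominant total exponent.

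The main obstacle I anticipate is precisely this final uniqueness step: turning the informal ``incomplete $\ell$-cycle'' picture into a rigorous exponent-by-exponent accounting at each $(i,a)$ requires control over the full $\varepsilon$-character of $L(m^0)$ that we do not have a priori. If the direct combinatorial route becomes unwieldy, I would fall back on the Chari--Pressley strategy of testing the Drinfeld currents $h_{i,r}$ on tensor products of highest weight vectors, where the centrality of the image of ${\rm Fr}$ renders the $\mathbf{Y}$-scale variables transparent to the singular-vector analysis on $L(m^0)$, and hence no new highest $l$-weight vector can arise in $V$ beyond the one with $l$-weight $m$.
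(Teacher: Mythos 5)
First, a point of comparison: the paper does not prove Theorem \ref{Th:decomposition} at all --- it is imported with citations to Chari--Pressley and Frenkel--Mukhin --- so there is no in-paper argument to measure yours against, and your attempt must stand on its own. Your first two stages are essentially fine: extracting full $\varepsilon^{2}$-orbits node by node does give the unique factorization $m=m^{0}m^{1}$ with $m^{1}$ a monomial in the $\mathbf{Y}_{i,a}$ and $m^{0}$ $\ell$-acyclic, and ${\rm Fr}^{\ast}(L(M))\cong L(m^{1})$ because ${\rm Fr}$ is surjective (so the pullback is simple) and its $\varepsilon$-character is obtained from $\chi_{\varepsilon^{\ast}}(L(M))$ by the substitution $Y_{i,a^{\ell}}^{\pm1}\mapsto\mathbf{Y}_{i,a\varepsilon^{\xi_i}}^{\pm1}$, which sends the highest monomial to $m^{1}$. (Your stated derivation of $[{\rm Fr}^{\ast}(L(M))]=[L(m^{1})]$ from multiplicativity of ${\rm Fr}^{\ast}$ and its values on fundamental classes is a non sequitur, since $[L(M)]$ is not a product of fundamental classes; the direct argument above is what is needed.)

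The genuine gap is in your third stage. Your primary route requires $m$ to be the \emph{unique} dominant monomial of $\chi_{\varepsilon}(L(m^{0}))\chi_{\varepsilon}(L(m^{1}))$; since this product equals $\chi_{\varepsilon}(L(m))$ once the theorem is known, you are in effect asserting that every simple module is special, which is false. Your ``incomplete $\ell$-cycle'' heuristic only handles pairs $(m_0',m_1')$ in which $m_0'$ is itself non-dominant: if $\chi_{\varepsilon}(L(m^{0}))$ contains a second dominant monomial $m_0'\neq m^{0}$ --- and nothing in the hypotheses forces $L(m^{0})$ to be special; in general it is not, which is exactly why this paper must subtract correction terms such as $\chi_{(i,k),(j,v)}$ from products of characters --- then $m_0'm^{1}$ is a dominant monomial of the product distinct from $m$, and your criterion fails even though the theorem remains true. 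So the combinatorial route cannot close, and everything rests on your fallback, which is only a gesture and whose key phrase is wrong: the image of ${\rm Fr}$ is not central. The actual mechanism in the cited sources is a Steinberg-type tensor product argument: the small affine quantum group $U_{\varepsilon}^{\fin}$ acts through the counit on the Frobenius pullback $L(m^{1})$, while $L(m^{0})$ stays irreducible upon restriction to the subalgebra complementary to ${\rm Fr}$ (this is where $\ell$-acyclicity is genuinely used), so every submodule of $L(m^{0})\otimes L(m^{1})$ has the form $L(m^{0})\otimes W'$ with $W'$ a submodule of the module being pulled back. Supplying that restriction--irreducibility statement is the substantive step missing from your proposal.
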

Note that $L(m^1)$ is the Frobenius pullback of an irreducible $U_{{\varepsilon}^{\ast}}^{\res}({L\mathfrak{g}})$-module $L(\widetilde{m}^1)$. The $\varepsilon$-character $\chi_{\varepsilon}({\rm Fr}^{\ast}(L(\widetilde{m}^1)))$ is obtained from $\chi_{\varepsilon^{\ast}}(L(\widetilde{m}^1))$ by replacing each $Y_{i,a^\ell}^{\pm 1}$ with $\mathbf{Y}_{i,a\varepsilon^{\xi_i}}^{\pm 1}$, where $\xi_i$ is defined in $(\ref{Def: xi_i})$, see \cite[Theorem 5.7]{FM02} and \cite[Section 3]{G16}.

Recall that $\varepsilon^{*}=\varepsilon^{\ell^2}$. If $\ell$ is odd, by the definition of $\varepsilon$ in the Introduction, $\varepsilon^{\ell^2}=1$. If $\ell$ is even, then $\ell^2$ is a multiple of $2\ell$. Since $\varepsilon^{2\ell}=1$, we have that $\varepsilon^{\ell^2}=1$. Therefore $\varepsilon^*=1$. Hence the category of finite-dimensional $U_{{\varepsilon}^{*}}^{\res}({L\mathfrak{g}})$-modules is equivalent to the category of finite-dimensional $L\mathfrak{g}$-modules, see \cite[Section 5.4]{FM02}. For an arbitrary complex Lie algebra $\mathfrak{g}$ and a non-zero constant $a$, we have the evaluation homomorphism
\[\phi_a: L{\mathfrak{g}}=\mathfrak{g}\otimes\mathbb{C}[t,t^{-1}]\rightarrow \mathfrak{g},\quad\,\, g\otimes t^k\mapsto a^kg.\]
For an irreducible $\mathfrak{g}$-module $V_{\lambda}$ with the highest weight $\lambda$, let $V_\lambda(a)$ be its pullback under $\phi_a$ to an irreducible module of $L{\mathfrak{g}}$. Let $\chi(V_\lambda)$ be the ordinary character of the $\mathfrak{g}$-module $V_\lambda$, considered as a polynomial in $y_i^{\pm 1}$, $i\in I$. Then $\chi_{{\varepsilon}^\ast}(V_\lambda(a))$ is obtained from $\chi(V_\lambda)$ by replacing each $y_i^{\pm 1}$ with $Y_{i,a}^{\pm 1}$, see {\cite[Lemma 5.8]{FM02}}.

Therefore, the computation of $\varepsilon$-characters of simple modules of $U_\varepsilon^{\res}({L\mathfrak{g}})$ is reduced, by Theorem \ref{Th:decomposition}, to understanding $\varepsilon$-characters of modules $L(m^0)$, where $L(m^0)$ is $\ell$-acyclic.

From now on, we fix an $a\in \mathbb{C}^{\times}$ and for convenience we write $Y_{i,s}=Y_{i,a\varepsilon^s}$ for $i \in I$, $s \in \mathbb{Z}$.

\section{Snake modules and path translations} \label{sec:snake modules and three types of translations of paths}
In this section, we show that any finite-dimensional simple module of restricted quantum loop algebra $U_{\varepsilon}^{\res}({L\mathfrak{sl}_{n+1}})$ in a certain category, where $\varepsilon$ is a root of unity, can be converted to a snake module. Then we introduce the concept of path translations. These translations of paths will be used in the study of path descriptions for $\varepsilon$-characters.

\subsection{Snake modules }\label{Snake modules}
We first recall the definition of snake modules which was introduced by Mukhin and Young in \cite{MY12a}. For $\mathfrak{a} \in \{0,1\}$, we denote
\begin{align}\label{Eq:X_a}
\mathcal{X}_{\mathfrak{a}}:=\{(i,k)\in I \times \mathbb{Z}: i-k \equiv \mathfrak{a} \pmod 2\} \subset I \times \mathbb{Z}.
\end{align}
For $\mathfrak{a} \in \{0,1\}$, denote by $\mathcal{C}_{\mathcal{X}_{\mathfrak{a}}}$ the full subcategory of ${\rm Rep}U_\varepsilon^{\res}({L\mathfrak{g}})$ whose objects have
all their composition factors of the form $L(m)$, where $m$ is a dominant monomial in $Y_{i,s}$, $(i,s) \in \mathcal{X}_{\mathfrak{a}}$.

For $(i,k) \in \mathcal{X}_{\mathfrak{a}}$, a point $(i',k')$ is said to be in \textit{snake position} with respect to $(i,k)$ if
\[
k'-k \geq |i'-i|+2 \ \text{and} \  k'-k \equiv |i'-i|\pmod 2.
\]
The point $(i',k')$ is in \textit{minimal} snake position with respect to $(i,k)$ if $k'-k$ is equal to the given lower bound. Denote by $h_{0}(i,i')$ the given lower bound, that is,
\[
h_{0}(i,i')=|i'-i|+2.
\]
For $(i,k) \in \mathcal{X}_{\mathfrak{a}}$, a point $(i',k') \in \mathcal{X}_{\mathfrak{a}}$ is said to be in \textit{prime snake position} with respect to $(i,k)$ if
\[
\min \{ 2n+2-i-i', i+i' \} \geq k'-k \geq |i'-i|+2 \ \text{and} \  k'-k \equiv |i'-i| \pmod 2.  \\
\]
For $(i,k) \in \mathcal{X}_{\mathfrak{a}}$, we define
\begin{align}\label{Prime snake}
\mathbf{PS}(i,k)=\{(i',k') \in \mathcal{X}_{\mathfrak{a}}: (i',k')\, \text{is in prime snake position with respect to}\,(i,k)\},
\end{align}
and for $j\in I$, set
\begin{align}\label{Prime snake 2}
\mathbf{PS}^{j}(i,k)=\{(j,k') \in \mathcal{X}_{\mathfrak{a}}: (j,k') \in \mathbf{PS}(i,k)\}.
\end{align}
A finite sequence $(i_{t},k_{t})$, $1 \leq t \leq z$, $z \in \mathbb{Z}_{>0}$, of points in $\mathcal{X}_{\mathfrak{a}}$ is called a \textit{snake} if for all $2 \leq t \leq z$, the point $(i_{t},k_{t})$ is in snake position with respect to $(i_{t-1},k_{t-1})$. It is called a \textit{minimal} (resp. \textit{prime}) snake if all successive points are in minimal (resp. prime) snake position \cite[Section 4]{MY12a}.

The simple module $L(m)$ is called a \textit{snake module} (resp. a \textit{minimal snake module}) if $m=\prod_{t=1}^{z} Y_{i_{t},k_{t}}$ for some snake $(i_{t},k_{t})_{1 \leq t \leq z}$ (resp. for some minimal snake $(i_{t},k_{t})_{1 \leq t \leq z}$). In this case, $(i_{t},k_{t})_{1 \leq t \leq z}$ is called the snake of $L(m)$ \cite[Section 4]{MY12a}.

We have the following result.
\begin{theorem}\label{Th:simple modules are snake modules}
For $\mathfrak{a} \in \{0,1\}$ and $\varepsilon^{2\ell}=1$, any simple module of $U_{\varepsilon}^{\res}({L\mathfrak{sl}_{n+1}})$ in $\mathcal{C}_{\mathcal{X}_{\mathfrak{a}}}$ can be converted to a snake module.
\end{theorem}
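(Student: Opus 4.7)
The plan is to exploit the periodicity $Y_{i,s}=Y_{i,s+2\ell}$ in $U_{\varepsilon}^{\res}(L\mathfrak{sl}_{n+1})$, which is forced by $\varepsilon^{2\ell}=1$, in order to re-express any dominant monomial in $\mathcal{C}_{\mathcal{X}_{\mathfrak{a}}}$ as the product of variables read along a suitably shifted snake.

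First I would write the highest $l$-weight of the simple module as $m=\prod_{t=1}^{z}Y_{i_{t},k_{t}}$, listing each variable with multiplicity, so that every $(i_{t},k_{t})\in\mathcal{X}_{\mathfrak{a}}$; the order of the $z$ factors may be chosen arbitrarily. Next, I would inductively replace $k_{t}$ by $k_{t}':=k_{t}+2\ell N_{t}$ for suitable $N_{t}\in\ZZ_{\geq 0}$: set $k_{1}'=k_{1}$, and having fixed $k_{t-1}'$, pick $N_{t}$ large enough that $k_{t}'-k_{t-1}'\geq |i_{t}-i_{t-1}|+2$. This enforces, by construction, the magnitude requirement for consecutive points to be in snake position.

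I would then verify that the resulting sequence $(i_{t},k_{t}')_{1\leq t\leq z}$ is indeed a snake. The magnitude condition holds by construction, and the parity condition $k_{t}'-k_{t-1}'\equiv |i_{t}-i_{t-1}|\pmod 2$ is automatic: since both $(i_{t},k_{t})$ and $(i_{t-1},k_{t-1})$ lie in $\mathcal{X}_{\mathfrak{a}}$, one has $k_{t}-k_{t-1}\equiv i_{t}-i_{t-1}\pmod 2$, and adding the even integer $2\ell N_{t}$ preserves this parity; moreover $|i_{t}-i_{t-1}|\equiv i_{t}-i_{t-1}\pmod 2$. Finally, because $\varepsilon^{2\ell}=1$, each $Y_{i_{t},k_{t}'}$ equals $Y_{i_{t},k_{t}}$ in $U_{\varepsilon}^{\res}(L\mathfrak{sl}_{n+1})$, so $\prod_{t=1}^{z}Y_{i_{t},k_{t}'}=m$, and hence $L(m)$ is precisely the snake module associated with the snake $(i_{t},k_{t}')_{1\leq t\leq z}$.

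The argument is essentially mechanical, and the only point that warrants genuine verification is the compatibility between the parity built into the snake definition and the parity restriction cutting out $\mathcal{X}_{\mathfrak{a}}$; this is exactly why the subcategory is defined by the condition $i-k\equiv \mathfrak{a}\pmod 2$. I foresee no substantive obstacle beyond the bookkeeping needed when $m$ has repeated factors, which is handled automatically by taking the shifts $N_{t}$ strictly positive for such repeats.
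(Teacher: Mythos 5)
Your proposal is correct and is essentially the same argument as the paper's: both use the periodicity $Y_{i,s}=Y_{i,s+2\ell}$ to shift the second indices by suitable multiples of $2\ell$ so that consecutive points satisfy the snake condition, with the parity requirement holding automatically from membership in $\mathcal{X}_{\mathfrak{a}}$. The only cosmetic difference is that the paper first sorts the factors by their first index and handles repeated factors by spreading consecutive copies $2\ell$ apart, whereas you list factors with multiplicity in arbitrary order and run a single induction; both are valid.
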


\begin{proof}
For $\mathfrak{a} \in \{0,1\}$, we know that any simple $U_{\varepsilon}^{\res}({L\mathfrak{sl}_{n+1}})$-module in $\mathcal{C}_{\mathcal{X}_{\mathfrak{a}}}$ is of the form $L(m)$, where $m=Y_{i_1,k_1}^{a_1}Y_{i_2,k_2}^{a_2} \cdots Y_{i_r,k_r}^{a_r}$, $i_1\leq i_2\leq \cdots \leq i_r$, $(i_t, k_t) \in \mathcal{X}_{\mathfrak{a}}$, $a_t \in \mathbb{Z}_{\geq 0}$, $1\leq t\leq r$, $r\in \mathbb{Z}_{\geq 1}$. Let $b_1=0$ and $b_t$, $2\leq t\leq r$, be an integer such that $b_{t}\geq \frac{k_{t-1}+2(b_{t-1}+a_{t-1}-1) \ell+i_{t}-i_{t-1}+2-k_{t}}{2 \ell}$. Let $m'= \prod_{s=1}^{r} \prod_{t=b_s}^{b_s+a_s-1}Y_{i_s,k_s+2 t\ell}$. Since $\varepsilon^{2 \ell}=1$, we have $m'= m$ and the module $L(m')$ is a snake module.
\end{proof}

\begin{example}
Let $\varepsilon^{2 \ell}=1$ with $\ell=3$ and $\mathfrak{a}=1$. For the $U_{\varepsilon}^{\res}({L\mathfrak{sl}_{5}})$-module $L(Y_{1,0}Y_{2,3}Y_{3,4}^{2}Y_{4,3})$ in $\mathcal{C}_{\mathcal{X}_{\mathfrak{a}}}$, we choose $b_1=0$, $b_2=0$, $b_3=1$, $b_4=2$, $b_5=3$ in the proof of Theorem \ref{Th:simple modules are snake modules}, then we obtain a snake module $L(Y_{1,0}Y_{2,3}Y_{3,10}Y_{3,16}Y_{4,21})$.
\end{example}

Throughout this paper, when we write the highest $l$-weight monomial
\[
m=Y_{i_1,k_1} Y_{i_2,k_2} \cdots Y_{i_N,k_N}
\]
of a snake module $L(m)$, we assume that $k_t$, $1 \leq t \leq N$, are in increasing order, and for $1\leq \xi, \eta \leq N$, we define
\begin{align}\label{Eq:complementing set}
h(i_{\xi},i_{\eta})=|k_{\eta}-k_{\xi}|.
\end{align}

A \textit{path} is a finite sequence of points in the plane $\mathbb{R}^{2}$. We write $(j,l) \in p$ if $(j,l)$ is a point of the path $p$. When we draw paths, we connect consecutive points of the path by line segments, for illustrative purposes, see Figure \ref{F: figure 1} (b).

For $(i,k) \in \mathcal{X}_{\mathfrak{a}}$, let
\begin{align}\label{path}
\mathscr{P}_{i,k}=\{ & ((0,y_{0}),(1,y_{1}),\ldots,(n+1,y_{n+1})): y_{0}=i+k, \nonumber \\
&y_{n+1}=n+1-i+k, \text{ and } y_{j+1}-y_{j}\in \{1,-1\}, \  0\leq j\leq n\}.
\end{align}

For a path $((a, b), (a+1, b+1), \ldots, (a+r, b+r))$, $r \in \ZZ_{\ge 1}$, we also write it as $((a, b), (a+r, b+r))$. Similarly, for a path $((a, b), (a+1, b-1), \ldots, (a+r, b-r))$, $r \in \ZZ_{\ge 1}$, we also write it as $((a, b), (a+r, b-r))$. Sometimes we use this notation to shorten expressions involving paths. For example, we also denote the path $( (0, 4 ), (1, 3), (2, 2), (3, 3), (4, 4), (5, 3), (6, 2) )$ by any of the following: 
\begin{align*}
& ( (0, 4), (2, 2), (3, 3), (4, 4), (5, 3), (6, 2) ), \\
& ( (0, 4), (1, 3), (2, 2), (4, 4), (5, 3), (6, 2) ), \\
& ( (0, 4), (1, 3), (2, 2), (3, 3), (4, 4), (6, 2) ), \\
& ( (0, 4), (2, 2), (4, 4), (5, 3), (6, 2) ),\\
& ( (0, 4), (2, 2), (3, 3), (4, 4), (6, 2) ), \\
& ( (0, 4), (1, 3), (2, 2), (4, 4), (6, 2) ), \\
& ( (0, 4), (2, 2), (4, 4), (6, 2) ). \\ 
\end{align*} 

For two paths $p, p'$, we denote by $p \cdot p'$ their concatenation if the end point of $p$ is the same as the starting point of $p'$, and $p \cdot p'$ is undefined otherwise. For two sets $P, P'$ of paths, we denote 
\begin{align}\label{eq:concatenation of two sets of paths}
P \uplus P' = \{p \cdot p': p \in P, p' \in P' \}.
\end{align}

The sets $C_{p}^{\pm}$ of upper and lower corners of a path $p=((r,y_{r}))_{0\leq r \leq n+1}\in \mathscr{P}_{i,k}$ are defined as follows:
\begin{align*}
C^{+}_{p}=\{(r,y_{r})\in p: r\in I, \ y_{r-1}=y_{r}+1=y_{r+1}\},\\
C^{-}_{p}=\{(r,y_{r})\in p: r\in I, \ y_{r-1}=y_{r}-1=y_{r+1}\}.
\end{align*}

For $(i,k) \in \mathcal{X}_{\mathfrak{a}}$, we denote by ${\bf P}_{i,k}$ a rectangle with four vertices $(i,k), (0,i+k),(n+1-i,n+1+k)$, and $(n+1,n+1-i+k)$, see Figure \ref{F: figure 1} (a).

A map $\mathfrak{m}$ sending paths to monomials is defined by
\begin{align}\label{map: path to monomial}
\mathfrak{m}: \bigsqcup_{(i,k)\in \mathcal{X}_{\mathfrak{a}}}{\hskip -0.5em}\mathscr{P}_{i,k} & \longrightarrow \mathbb{Z}[Y_{j,l}^{\pm 1}]_{(j,l)\in \mathcal{X}_{\mathfrak{a}}} \nonumber \\
p\quad & \longmapsto  \mathfrak{m}(p)=\prod_{(j,l)\in C^{+}_{p}}{\hskip -0.5em}Y_{j,l}{\hskip -0.5em}\prod_{(j,l)\in C^{-}_{p}}{\hskip -0.5em}Y_{j,l}^{-1}.
\end{align}

Let $p,p'$ be paths. We say that $p$ is \textit{above} (resp. \textit{strictly above}) $p'$ or $p'$ is \textit{below} (resp. \textit{strictly below}) $p$ if
\begin{align*}
(x,y)\in p \text{ and } (x,z)\in p' \Longrightarrow y \leq z \quad (\text{resp. } y<z).
\end{align*}
We say that a $z$-tuple of paths $(p_{1},\ldots,p_{z})$ is \textit{non-overlapping} if $p_{s}$ is strictly above $p_{t}$ for all $s<t$. For any snake $(i_{t},k_{t})\in \mathcal{X}_{\mathfrak{a}}$, $1\leq t\leq z$, $z\in \mathbb{Z}_{\geq1}$, let
\begin{align}\label{Non-overlapping paths}
\overline{\mathscr{P}}_{(i_{t},k_{t})_{1\leq t\leq z}}=\{(p_{1},\ldots,p_{z}): p_{t}\in \mathscr{P}_{i_{t},k_{t}}, 1\leq t\leq z, (p_{1},\ldots,p_{z})\text{ is } \text {non-overlapping}\}.
\end{align}

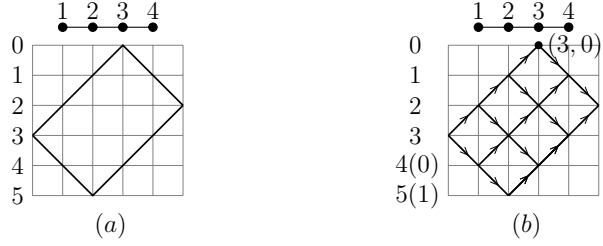
\begin{figure}
\resizebox{0.8\width}{0.8\height}{
\begin{minipage}[b]{0.4\linewidth}
\centerline{
\begin{tikzpicture}
\draw[step=.5cm,gray,thin] (-0.5,5.5) grid (2,8) (-0.5,5.5)--(2,5.5);
\draw[fill] (0,8.3) circle (2pt) -- (0.5,8.3) circle (2pt) --(1,8.3) circle (2pt) --(1.5,8.3) circle (2pt);
\draw[thick] (-0.5,6.5)--(0,7)--(0.5,7.5)--(1,8)--(1.5,7.5)--(2,7);
\draw[thick] (-0.5,6.5)--(0,6)--(0.5,5.5)--(1,6)--(1.5,6.5)--(2,7);
\node [above] at (0,8.3)   {$1$};
\node [above] at (0.5,8.3) {$2$};
\node [above] at (1,8.3)   {$3$};
\node [above] at (1.5,8.3) {$4$};
\node [left] at (-0.5,8)   {$0$};
\node [left] at (-0.5,7.5) {$1$};
\node [left] at (-0.5,7)   {$2$};
\node [left] at (-0.5,6.5) {$3$};
\node [left] at (-0.5,6)   {$4$};
\node [left] at (-0.5,5.5) {$5$};
\node at (0.8,5) {$(a)$};
\end{tikzpicture}}
\end{minipage}
\begin{minipage}[b]{0.4\linewidth}
\centerline{
\begin{tikzpicture}
\draw[step=.5cm,gray,thin] (-0.5,5.5) grid (2,8) (-0.5,5.5)--(2,5.5);
\draw[fill] (0,8.3) circle (2pt) -- (0.5,8.3) circle (2pt) --(1,8.3) circle (2pt) --(1.5,8.3) circle (2pt);
\begin{scope}[thick, every node/.style={sloped,allow upside down}]
\draw (-0.5,6.5)--node {\midarrow}(0,6);
\draw (0,6)--node {\midarrow}(0.5,5.5);
\draw (0.5,5.5)--node {\midarrow}(1,6);
\draw (1,6)--node {\midarrow}(1.5,6.5);
\draw (1.5,6.5)--node {\midarrow}(2,7);
\draw (-0.5,6.5)--node {\midarrow}(0,7);
\draw (0,7)--node {\midarrow}(0.5,7.5);
\draw (0.5,7.5)--node {\midarrow}(1,8);
\draw (1,8)--node {\midarrow}(1.5,7.5);
\draw (1.5,7.5)--node {\midarrow}(2,7);
\draw (0,7)--node {\midarrow}(0.5,6.5);
\draw (0.5,6.5)--node {\midarrow}(1,6);
\draw (0.5,7.5)--node {\midarrow}(1,7);
\draw (1,7)--node {\midarrow}(1.5,6.5);
\draw (1,7)--node {\midarrow}(1.5,7.5);
\draw (0,6)--node {\midarrow}(0.5,6.5);
\draw (0.5,6.5)--node {\midarrow}(1,7);
\draw (0.5,5.5)--node {\midarrow}(1,6);
\draw (1,6)--node {\midarrow}(1.5,6.5);
\draw[fill] (1,8) circle (1.5pt);
\end{scope}
\node [above] at (0,8.3)   {$1$};
\node [above] at (0.5,8.3) {$2$};
\node [above] at (1,8.3)   {$3$};
\node [above] at (1.5,8.3) {$4$};
\node [left] at (-0.8,8)   {$0$};
\node [left] at (-0.8,7.5) {$1$};
\node [left] at (-0.8,7)   {$2$};
\node [left] at (-0.8,6.5) {$3$};
\node [left] at (-0.5,6)   {$4(0)$};
\node [left] at (-0.5,5.5) {$5(1)$};
\node[right] at (1,8)    {$(3,0)$};
\node at (0.8,5) {$(b)$};
\end{tikzpicture}}
\end{minipage}}
\caption{(a) ${\bf P}_{3,0}$. (b)
$\varepsilon^{2\ell}=1, \ell=2$. The paths corresponding to the monomials of $U_{\varepsilon}^{\res}({L\mathfrak{sl}_5})$-module $\chi_\varepsilon(L(Y_{3,0}))$.}\label{F: figure 1}
\end{figure}

For $(i,k) \in \mathcal{X}_{\mathfrak{a}}$, the highest path in $\mathscr{P}_{i,k}$ is the unique path with no lower corners, and the lowest path in $\mathscr{P}_{i,k}$ is the unique path with no upper corners.

When we draw paths, we relabel the row $r$ below $x=2 \ell$ by the remainder of $r$ modulo $2 \ell$, see for example, Figure \ref{F: figure 1} (b).

For any snake module $L(Y_{i_1, k_1}Y_{i_2, k_2}\cdots Y_{i_z,k_z})$, $z \in \mathbb{Z}_{>0}$, of $U_q(L\mathfrak{sl}_{n+1})$, Mukhin and Young {\cite[Theorem 6.1]{MY12a}} proved that
\[
\chi_q(L(Y_{i_1, k_1}Y_{i_2, k_2}\cdots Y_{i_z, k_z}))=\sum_{(p_{1},\ldots,p_{z}) \in \overline{\mathscr{P}}_{(i_t,k_t)_{1 \leq t \leq z}}} \prod_{t=1}^{z}\mathfrak{m}(p_{t}).
\]
When $z=1$, the above formula gives $q$-characters of fundamental modules of $U_q(L\mathfrak{sl}_{n+1})$. The $q$-character of a fundamental module of $U_q(L\mathfrak{sl}_{n+1})$ is the summation of the monomials corresponding to paths in a rectangle. For any path in a rectangle, the first indices of any two corners are different. Therefore when sending $q \mapsto \varepsilon$ in the $q$-character of a fundamental module $L(Y_{i,s})$, the only dominant monomial is the highest weight monomial $Y_{i,s}$. We have the following lemma.
\begin{lemma}
The $\varepsilon$-character of any $U_{\varepsilon}^{\res}({L\mathfrak{sl}_{n+1}})$ fundamental module $L(m)$ is obtained from $\chi_{q}(L(m))$ by setting $q$ equal to $\varepsilon$.
\end{lemma}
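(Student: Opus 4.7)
My plan is to combine the Frenkel--Mukhin specialisation principle with the path description of $q$-characters of fundamental modules. The specialisation result \cite[Theorem 3.2]{FM02} recalled just above Remark 2.1 gives $\chi_{\varepsilon}(V_{\varepsilon}^{\res})=\chi_q(V)|_{q=\varepsilon}$ for every irreducible finite-dimensional $U_q(L\mathfrak{sl}_{n+1})$-module $V$. Taking $V$ to be the $U_q$-fundamental module with highest $l$-weight $m=Y_{i,a}$, the lemma reduces to showing that the specialised module $V_{\varepsilon}^{\res}$ remains irreducible as a $U_\varepsilon^{\res}(L\mathfrak{sl}_{n+1})$-module, since it will then coincide with the simple $U_\varepsilon^{\res}$-module $L(Y_{i,a})$ (being finite-dimensional with highest $l$-weight $Y_{i,a}$), and the claimed identity of $\varepsilon$-characters follows immediately.

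To establish this irreducibility, I would invoke the Mukhin--Young path formula \cite[Theorem 6.1]{MY12a} in the single-point (snake of length one) case, which expresses $\chi_q(L(Y_{i,s}))=\sum_{p\in \mathscr{P}_{i,s}}\mathfrak{m}(p)$ as a sum over paths in the rectangle ${\bf P}_{i,s}$. The key combinatorial observation, already flagged in the excerpt, is that at every column $r\in I$ a path has at most one corner (upper, lower, or neither), so the first indices of the corners of a fixed path are pairwise distinct. Hence each $\mathfrak{m}(p)$ is reduced in the strong sense that no positive factor $Y_{j,l}$ and no negative factor $Y_{j,l'}^{-1}$ share the same first index $j$. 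Substituting $q\mapsto\varepsilon$ merely identifies subscripts congruent modulo $2\ell$ and never deletes or cancels factors, so this strong reducedness is preserved under specialisation.

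Consequently, $\mathfrak{m}(p)|_{q=\varepsilon}$ is dominant if and only if the path $p$ has no lower corner, which singles out the highest path whose monomial is $Y_{i,s}$. Therefore $Y_{i,a}$ is the unique dominant monomial of $\chi_q(L(Y_{i,a}))|_{q=\varepsilon}$, and its coefficient equals one: as the highest $l$-weight it already has multiplicity one in the $q$-character, and by strong reducedness no other monomial can specialise to it. Additivity of $\chi_\varepsilon$ on short exact sequences, together with the fact that every simple $U_\varepsilon^{\res}$-module contributes at least its highest $l$-weight as a dominant monomial to its $\varepsilon$-character, then forces the composition series of $V_\varepsilon^{\res}$ to have length one, giving $V_\varepsilon^{\res}\cong L(Y_{i,a})$.

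The main delicate step is the final deduction, from \emph{unique dominant monomial of multiplicity one} to \emph{simple}. It rests on the classification of simple $U_\varepsilon^{\res}(L\mathfrak{g})$-modules by their highest dominant monomials recalled in Section \ref{Preliminaries}, combined with additivity of $\chi_\varepsilon$ on the Grothendieck ring $\mathcal{K}_0({\rm Rep}U_\varepsilon^{\res}({L\mathfrak{g}}))$. The combinatorial input itself is then immediate from the path model, requiring only inspection of corners rather than any serious computation.
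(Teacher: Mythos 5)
Your proposal is correct and follows essentially the same route as the paper: the Mukhin--Young path formula for fundamental modules, the observation that the corners of a single path in a rectangle have pairwise distinct first indices (so no cancellation can occur under $q\mapsto\varepsilon$ and only the highest path yields a dominant monomial), and the conclusion that a unique dominant monomial forces the specialised module to be simple. You merely make explicit the final Grothendieck-ring step that the paper leaves implicit.
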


\subsection{Lowering and raising moves}\label{subsec:lowdef}
For $i\in I$, $s\in \mathbb{Z}$, define
\begin{align*}
 A_{i,s} = Y_{i,s+1} Y_{i,s-1} \prod_{c_{ji}=-1} Y_{j,s}^{-1}.
\end{align*}
For $(j,y_j)\in I\times \mathbb{Z}$, $r\in \mathbb{Z}_{\geq 1}$, $r<j$, we denote
\begin{align*}
\textbf{A}_{j,y_j+r}^{-1}=&\prod_{t=0}^{r-1}(A_{j-t,y_j+t+1}^{-1}A_{j-t+2,y_j+t+1}^{-1}\cdots A_{j+t,y_j+t+1}^{-1}) \times\\
&\times \prod_{t=0}^{r-2} (A_{j-t,y_j+2r-t-1}^{-1}A_{j-t+2,y_j+2r-t-1}^{-1}\cdots A_{j+t,y_j+2r-t-1}^{-1}).
\end{align*}

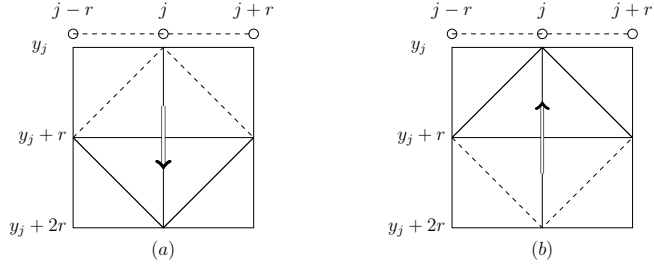
\begin{figure}
\resizebox{0.6\width}{0.6\height}{
\begin{minipage}[b]{0.5\linewidth}
\centerline{
\begin{tikzpicture}
\draw[dashed] (0,10.3)--(2,10.3)--(4,10.3);
\draw (0,10.3) circle (3pt) (2,10.3) circle (3pt) (4,10.3) circle (3pt);
\draw (0,6)--(0,10)--(4,10)--(4,6)--(0,6);
\draw (0,8)--(4,8);
\draw (2,10)--(2,6);
\draw[dashed](0,8)--(2,10)--(4,8);
\draw[thick] (0,8)--(2,6)--(4,8);
\draw[double distance=.6mm,->,shorten <= 3mm,shorten >= 3mm] (2,9) -- (2,7);
\node [above] at (0,10.5) {$j-r$};
\node [above] at (2,10.5) {$j$};
\node [above] at (4,10.5) {$j+r$};
\node [left] at (0,10) {$y_j\quad$};
\node [left] at (0,8) {$y_j+r$};
\node [left] at (0,6) {$y_j+2r$};
\node at (2,5.5) {$(a)$};
\end{tikzpicture}}
\end{minipage}
\begin{minipage}[b]{0.5\linewidth}
\centerline{
\begin{tikzpicture}
\draw[dashed] (0,10.3)--(2,10.3)--(4,10.3);
\draw (0,10.3) circle (3pt) (2,10.3) circle (3pt) (4,10.3) circle (3pt);
\draw (0,6)--(0,10)--(4,10)--(4,6)--(0,6);
\draw (0,8)--(4,8);
\draw (2,10)--(2,6);
\draw[thick](0,8)--(2,10)--(4,8);
\draw[dashed] (0,8)--(2,6)--(4,8);
\draw[double distance=.5mm,->,shorten <= 2mm,shorten >= 2mm] (2,7) -- (2,9);
\node [above] at (0,10.5) {$j-r$};
\node [above] at (2,10.5) {$j$};
\node [above] at (4,10.5) {$j+r$};
\node [left] at (0,10) {$y_j\quad$};
\node [left] at (0,8) {$y_j+r$};
\node [left] at (0,6) {$y_j+2r$};
\node at (2,5.5) {$(b)$};
\end{tikzpicture}}
\end{minipage}}
\caption{(a) Lowering move of a path at $(j,y_j+r)$. (b) Raising move of a path at $(j,y_j+r)$.}\label{F: rasing and lowering moves}
\end{figure}

 Let $(i,k) \in \mathcal{X}_{\mathfrak{a}}$, $\mathfrak{a} \in \{0, 1\}$. We say a path $p\in \mathscr{P}_{i,k}$ can be lowered at point $(j,y_j+r)\in I \times \mathbb{Z}$, $r\in \mathbb{Z}_{\geq 1}$, $r<j$, if $(j,y_j)\in C^{+}_{p}$ and for any $i\in{(j-r,j)}\cup{(j,j+r)}$, $(i,y_i)$ is neither in $C^{+}_{p}$ nor in $C^{-}_{p}$. If so, we define a lowering move on $p$ at $(j,y_j+r)$, resulting in another path in $\mathscr {P}_{i,k}$ which we write as $p\textbf{A}_{j,y_j+r}^{-1}$. We call such lowering move a width $r$ lowering move. That is, if $p={((i,y_i))}_{0\leq i\leq n+1}$, where ${(y_i)_{0\leq i\leq n+1}}\in \mathbb R^{n+2}$, then
\begin{align*}
p\textbf{A}_{j,y_j+r}^{-1}=&((0,y_0), (1,y_1), \ldots, (j-r,y_{j-r}), (j-r+1,y_{j-r}+1), \ldots, (j,y_{j-r}+r), \\
& \ (j+1,y_{j-r}+r-1),\ldots, (j+r-1,y_{j-r}+1), (j+r,y_{j+r}), \ldots, (n+1,y_{n+1})\,),
\end{align*}
where $y_{j-r}=y_j+r=y_{j+r}$, see Figure \ref{F: rasing and lowering moves} $(a)$ for example.

Dually, we say a path $p\in \mathscr{P}_{i,k}$ can be raised at point $(j,y_j+r)\in I \times \mathbb{Z}$, $r\in \mathbb{Z}_{\geq 1}$, $r<j$, if $(j,y_j+2r)\in C^{-}_{p}$ and for any $i\in{(j-r,j)}\cup{(j,j+r)}$, $(i,y_i)$ is neither in $C^{+}_{p}$ nor in $C^{-}_{p}$. If so, we define a raising move on $p$ at $(j,y_j+r)$, resulting in another path $p'$ in $\mathscr{P}_{i,k}$ which we write as $p\textbf{A}_{j,y_j+r}$. We call such raising move a width $r$ raising move. That is, $p=p'\textbf{A}_{j,y_j+r}^{-1}$ for $p'\in \mathscr P_{i,k}$. If $p'$ exists and is unique, then we define $p\textbf{A}_{j,y_j+r}:=p'$, see Figure \ref{F: rasing and lowering moves} $(b)$ for example.

\begin{remark}
Lowering moves and raising moves of width $1$ defined above coincide with the lowering moves and raising moves introduced by Mukhin and Young in \cite[Section 5]{MY12a}. The lowering moves (resp. raising moves) of width $r$, $r \ge 2$, can be obtained by a sequence of lowering moves (resp. raising moves) of width $1$.
\end{remark}

\subsection{Translation of paths }\label{subsection: translations of paths}

Since $\varepsilon^{2\ell}=1$, the value of a path $p$ does not change if we move it vertically to a place with distance $2m\ell$ from $p$ for any $m \in \ZZ_{\ge 1}$, that is, replace $p$ by a path $p'$ which has distance $2m\ell$ from $p$ for some $m \in \ZZ_{\ge 1}$. For any $j \in I$ and any integers $v, v'$, the shape of the rectangle ${\bf P}_{j,v}$ is the same as the shape of the rectangle ${\bf P}_{j,v'}$. For $v \equiv v' \pmod{2\ell}$, every path $p\in \mathscr{P}_{j,v}$ corresponds to a unique path $p'\in \mathscr{P}_{j,v'}$ in the sense that $\mathfrak{m}(p) = \mathfrak{m}(p')$.
\begin{definition}\label{Def: translation}
Let $\varepsilon^{2 \ell}=1$ and let $(i,k),(j,v)\in \mathcal{X}_{\mathfrak{a}}$. We define a translation of paths in $\mathscr{P}_{j,v}$ to paths in $\mathscr{P}_{j,v'}$ with respect to $\mathbf{PS}(i,k)$ as replacing paths in $\mathscr{P}_{j,v}$ by the corresponding paths in $\mathscr{P}_{j,v'}$, where $v'\equiv v \, (\text{mod } 2 \ell)$ and $(j,v')\in\mathbf{PS}(i,k)$.
\end{definition}

For convenience, we denote
\begin{align}\label{Eq:abbreviation for non-overlapping paths}
S_{(i,k)(j,v)} = \sum_{(p_{1},p_{2}) \in \overline{\mathscr{P}}_{((i,k),(j,v))}}\mathfrak{m}(p_1)\mathfrak{m}(p_2).
\end{align}
We say a monomial $m$ is in $S_{(i,k)(j,v)}$ if $m$ is one of the monomials in $S_{(i,k)(j,v)}$. We use the convention that if $i\in \{0, n+1\}$, then $Y_{i,k}$ is the trivial monomial $1$ in $\mathcal{P}$.

\begin{lemma}\label{Le: I translation of degree 2}
Let $\varepsilon^{2 \ell}=1$ and let $L(Y_{i,k}Y_{j,v})$, $k<v$, be a $U_{\varepsilon}^{\res}({L\mathfrak{sl}_{n+1}})$ snake module. Assume that $h(i,j)\equiv h_{0}(i,j)+2s \,(\text{mod } 2 \ell)$, where $0\leq s<|\mathbf{PS}^{j}(i,k)|$, and $h(i,j)\geq h_{0}(i,j)+2 \ell$. Then there exists a translation of paths in $\mathscr{P}_{j,v}$ to paths in $\mathscr{P}_{j,v'}$ with respect to $\mathbf{PS}(i,k)$, where $v'\equiv v \, (\text{mod } 2 \ell)$, $(j,v')\in \mathbf{PS}(i,k)$, and $v'< v$, such that the following properties hold. For $i\leq j$ (resp. $i>j$), we denote $m=Y_{i-r,k+r}Y_{j+r,v'-r}$ (resp. $m=Y_{j-r,v'-r}Y_{i+r,k+r}$), where $r=\frac{v'-k-h_0(i,j)}2+1$. Then
\begin{enumerate}
    \item the dominant monomial $m$ is in $S_{(i,k)(j,v)}$;
    \item there exist paths $\widetilde{p}_1 \in \mathscr{P}_{i,k}$, $\widetilde{p}_2\in \mathscr{P}_{j,v}$ such that $\mathfrak{m}(\widetilde{p}_1)\mathfrak{m}(\widetilde{p}_2)$ is the lowest $\ell$-weight monomial of $L(m)$ and $\mathfrak{m}(\widetilde{p}_1)\mathfrak{m}(\widetilde{p}_2)$ is in $S_{(i,k)(j,v)}$;
    \item the monomials of $\chi_\varepsilon(L(m))$ are contained in $S_{(i,k)(j,v)}$. 
\end{enumerate}
\end{lemma}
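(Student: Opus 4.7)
The plan is to pin down $v'$ explicitly, exhibit a concrete non-overlapping path pair realising $m$, and then transfer that construction to every monomial of $\chi_\varepsilon(L(m))$. I would set $v' := k + h_0(i,j) + 2s$. Then $(j,v') \in \mathbf{PS}^j(i,k) \subset \mathbf{PS}(i,k)$ by the hypothesis $0 \le s < |\mathbf{PS}^j(i,k)|$, and $v' \equiv v \pmod{2\ell}$ by the congruence $h(i,j) \equiv h_0(i,j)+2s \pmod{2\ell}$. Taking $s$ as the canonical residue in $\{0,\dots,\ell-1\}$ determined by this congruence, the assumption $h(i,j) \ge h_0(i,j)+2\ell$ yields $v - v' \ge 2\ell > 0$, so the bijection $\mathscr{P}_{j,v} \to \mathscr{P}_{j,v'}$ of Definition \ref{Def: translation} is well-defined and monomial-preserving (since $Y_{*,s}$ depends only on $s \pmod{2\ell}$).

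For part (1), since $(j,v') \in \mathbf{PS}(i,k)$, the module $L(Y_{i,k}Y_{j,v'})$ is a prime snake module, so by Mukhin-Young specialised at $q=\varepsilon$, $\chi_\varepsilon(L(Y_{i,k}Y_{j,v'})) = S_{(i,k)(j,v')}$. I would exhibit a pair $(p_1^\circ, p_2^\circ) \in \overline{\mathscr{P}}_{((i,k),(j,v'))}$ with $\mathfrak{m}(p_1^\circ)\mathfrak{m}(p_2^\circ) = m$: for $i \le j$, take $p_1^\circ$ to be the highest path in $\mathscr{P}_{i,k}$ after the width-$r$ lowering $\mathbf{A}_{i,k+r}^{-1}$, giving $\mathfrak{m}(p_1^\circ) = Y_{i-r,k+r}Y_{i+r,k+r}Y_{i,k+2r}^{-1}$, and take $p_2^\circ$ to be the specific companion path in $\mathscr{P}_{j,v'}$ whose corner configuration, enforced by the identity $v' = k + |j-i| + 2r$, cancels the auxiliary factors $Y_{i+r,k+r}Y_{i,k+2r}^{-1}$ and contributes $Y_{j+r,v'-r}$. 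Pulling $p_2^\circ$ back to $\mathscr{P}_{j,v}$ shifts it vertically by the positive multiple $v-v'$ of $2\ell$, which cannot destroy the strict-below condition relative to $p_1^\circ$, hence $m \in S_{(i,k)(j,v)}$.

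For parts (2) and (3), the two points $(i-r,k+r), (j+r,v'-r)$ defining $m$ have $y$-gap $|j-i|$ strictly less than $x$-gap $|j-i|+2r$, so they violate the snake inequality. Consequently $L(m)$ is a subquotient of the tensor product $L(Y_{i-r,k+r}) \otimes L(Y_{j+r,v'-r})$, and its $\varepsilon$-character is contained in the product $\chi_\varepsilon(L(Y_{i-r,k+r})) \cdot \chi_\varepsilon(L(Y_{j+r,v'-r})) = \sum_{q_1, q_2}\mathfrak{m}(q_1)\mathfrak{m}(q_2)$, over $q_1 \in \mathscr{P}_{i-r,k+r}$ and $q_2 \in \mathscr{P}_{j+r,v'-r}$. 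I would define a map from each such pair $(q_1, q_2)$ to a non-overlapping pair $(p_1, p_2) \in \overline{\mathscr{P}}_{((i,k),(j,v))}$ with the same monomial product, by grafting $q_1$ onto the arm of the width-$r$ lowering skeleton of $p_1^\circ$ based at $(i-r,k+r)$ (and dually grafting $q_2$ onto the corresponding arm of $p_2^\circ$ before translation). Choosing $q_1, q_2$ as the lowest paths in their respective rectangles yields the lowest $\ell$-weight monomial of $L(m)$, proving (2); ranging over all $(q_1, q_2)$ proves (3). The technical heart is verifying that every grafted pair lies in $\overline{\mathscr{P}}_{((i,k),(j,v))}$, and the decisive slack is the vertical gap $v-v' \ge 2\ell$: it exceeds the displacement produced by any lowering inside either sub-rectangle, keeping the grafted pair strictly non-overlapping.
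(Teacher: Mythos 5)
Your overall strategy coincides with the paper's: fix $v'=k+h_0(i,j)+2s$ so that $(j,v')\in\mathbf{PS}(i,k)$ and $v-v'$ is a positive multiple of $2\ell$, exhibit one non-overlapping pair realising $m$, and then transfer every monomial of $\chi_\varepsilon(L(Y_{i-r,k+r}))\chi_\varepsilon(L(Y_{j+r,v'-r}))$ (which contains $\chi_\varepsilon(L(m))$) into $S_{(i,k)(j,v)}$ by re-splitting a crossing pair of paths and using the slack $v-v'\ge 2\ell$ to force strict non-overlap. Parts (2) and (3) of your sketch are essentially the paper's argument.

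There is, however, a concrete error in your witnessing pair for part (1). The path $p_1^\circ=Y_{i,k}\mathbf{A}_{i,k+r}^{-1}$ has monomial $Y_{i-r,k+r}\,Y_{i,k+2r}^{-1}\,Y_{i+r,k+r}$, so any companion $p_2^\circ\in\mathscr{P}_{j,v'}$ making the product equal to $m$ would need an upper corner at $(i,k+2r)$ to cancel $Y_{i,k+2r}^{-1}$. No such path exists when $i<j$: every path in $\mathscr{P}_{j,v'}$ lies weakly below the highest path of that rectangle, whose height at abscissa $i$ is $j+v'-i=k+2(j-i)+2r>k+2r$. (Your construction is correct only in the degenerate case $i=j$, where $(i,k+2r)=(j,v')$ is the apex of $\mathbf{P}_{j,v'}$.) The correct pair is asymmetric: take $p_1\in\mathscr{P}_{i,k}$ to be the path with a \emph{single lower corner at} $(j,v')$, whose upper corners sit at the two different heights $(i-r,k+r)$ and $(j+r,v'-r)$, so that $\mathfrak{m}(p_1)=Y_{i-r,k+r}Y_{j,v'}^{-1}Y_{j+r,v'-r}$; pair it with the highest path of $\mathscr{P}_{j,v}$, whose monomial $Y_{j,v}=Y_{j,v'}$ supplies exactly the needed cancellation, and the downward shift by $v-v'$ makes the pair non-overlapping. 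Since your grafting in parts (2)--(3) is anchored to the "lowering skeleton of $p_1^\circ$", you should re-anchor it to this corrected $p_1$ (or, as in the paper, simply re-split the union of $q_1\in\mathscr{P}_{i-r,k+r}$ and $q_2\in\mathscr{P}_{j+r,v'-r}$ at their forced intersection point into a path of $\mathscr{P}_{i,k}$ and a path of $\mathscr{P}_{j,v'}$, then shift the latter down to $\mathscr{P}_{j,v}$).
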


\begin{proof}
Assume that $h(i,j)\equiv h_{0}(i,j)+2s \,(\text{mod } 2 \ell)$, where $0\leq s<|\mathbf{PS}^{j}(i,k)|$, and $h(i,j)\geq h_{0}(i,j)+2 \ell$. Then there exists $(j,v')$ such that $v'+2a \ell= v$ for some $a\in \mathbb{Z}_{\geq 1}$ and $(j,v')\in \mathbf{PS}(i,k)$. By Definition \ref{Def: translation}, there is a translation of paths in $\mathscr{P}_{j,v}$ to paths in $\mathscr{P}_{j,v'}$ with respect to $\mathbf{PS}(i,k)$. In the following, we prove (1), (2), and (3) for the case of $i\leq j$, the proof for the case of $i>j$ is similar.

(1) Let $p_1 \in \mathscr{P}_{i,k}$ be the path which has exactly one lower corner $(j, v')$ and $p_2$ be the highest path in $\mathscr{P}_{j,v'}$ with no lower corners. Since $i\leq j$, then $\mathfrak{m}(p_1)=Y_{i-r,k+r}Y_{j,v'}^{-1}Y_{j+r,v'-r}$, $\mathfrak{m}(p_2)=Y_{j,v'}$, where $r=\frac{v'-k-h_0(i,j)}2+1$. Let $p'_2\in\mathscr{P}_{j,v}$ denote the path that corresponds to $p_2$. Then $\mathfrak{m}(p'_2)=Y_{j,v}=Y_{j,v'}$. Since $v>v'$, we have that the paths $p_1$ and $p'_2$ are non-overlapping. Therefore, the dominant monomial $m=Y_{i-r,k+r}Y_{j+r,v'-r}=\mathfrak{m}(p_1)\mathfrak{m}(p'_2)$ is in $S_{(i,k)(j,v)}$.

(2) Following (1), $m=Y_{i-r,k+r}Y_{j+r,v'-r}$. Let $\widetilde{p}_1$ be the lowest path in $\mathscr{P}_{i,k}$ with no upper corners, and let $\widetilde{p}'_2\in \mathscr{P}_{j,v'}$ be the path which has exactly one upper corner $(n+1-i,n+1+k)$. Let $\widetilde{p}_2$ be the path in $\mathscr{P}_{j,v}$ that corresponds to $\widetilde{p}'_2$. Then $\mathfrak{m}(\widetilde{p}'_2)=\mathfrak{m}(\widetilde{p}_2)$. Since $i\leq j$, we have $\mathfrak{m}(\widetilde{p}_1)=Y_{n+1-i,n+1+k}^{-1}$, $\mathfrak{m}(\widetilde{p}_2)=Y_{n+1-j-r,n+1+v-r}^{-1}Y_{n+1-i,n+1+k+2a \ell}Y_{n+1-i+r,n+1+k+r+2a \ell}^{-1}$, where $v=v'+2a \ell$. Clearly the paths $\widetilde{p}_1$ and $\widetilde{p}_2$ are non-overlapping. Therefore, the lowest $l$-weight monomial of the module $L(m)$ is given by
\[
\mathfrak{m}(\widetilde{p}_1)\mathfrak{m}(\widetilde{p}_2)=Y_{n+1-j-r,n+1+v-r}^{-1}Y_{n+1-i+r,n+1+k+r+2a \ell}^{-1}=Y_{n+1-j-r,n+1+v-r}^{-1}Y_{n+1-i+r,n+1+k+r}^{-1},
\]
which is in $S_{(i,k)(j,v)}$.

(3) According to (1), using the translation of paths in $\mathscr{P}_{j,v}$ to paths in $\mathscr{P}_{j,v'}$ with respect to $\mathbf{PS}(i,k)$, we obtain a dominant monomial $Y_{i-r,k+r}Y_{j+r,v'-r}$. Now, we convert $L(Y_{i-r,k+r}Y_{j+r,v'-r})$ into the snake module $L(Y_{j+r,v'-r}Y_{i-r,w})$, where $w\equiv k+r\, (\text{mod } 2 \ell)$ and $w> k+r$, such that $Y_{j+r,v'-r}Y_{i-r,w}$ has small values of indices (see Definition \ref{def:small values of indices}). Assume that $\mathfrak{m}(p)\mathfrak{m}(p')\in S_{(j+r,v'-r)(i-r,w)}$, where $p\in \mathscr{P}_{i-r,w}$, $p'\in \mathscr{P}_{j+r,v'-r}$. Let $\hat{p} \in \mathscr{P}_{i-r,k+r}$ be the path that corresponds to $p$. Then we have $\mathfrak{m}(p)=\mathfrak{m}(\hat{p})$. Furthermore, the starting point of $\hat{p}$ is $(0,i+k)$ and its ending point is $(n+1,n+1-j+v')$. Similarly, the starting point of $p'$ is $(0,j+v')$ and its ending point is $(n+1,n+1-i+k)$.

Therefore, the paths $\hat{p}$ and $p'$ must intersect at some points. This implies there are paths from $(0,i+k)$ to $(n+1,n+1-i+k)$ and from $(0,j+v')$ to $(n+1,n+1-j+v')$. We choose $\hat{p}_1 \in\mathscr{P}_{i,k}$, which is the path from $(0,i+k)$ to $(n+1,n+1-i+k)$ that lies above all other paths connecting these two points (sometimes, there is only one such path). The remaining path is denoted as $\hat{p}_2\in\mathscr{P}_{j,v'}$, which is the path from $(0,j+v')$ to $(n+1,n+1-j+v')$. Let $\hat{p}'_2\in\mathscr{P}_{j,v}$ denote the path that corresponds to $\hat{p}_2$. Then $\mathfrak{m}(\hat{p}_2)=\mathfrak{m}(\hat{p}'_2)$ and we have that $\mathfrak{m}(p)\mathfrak{m}(p')=\mathfrak{m}(\hat{p}_1)\mathfrak{m}(\hat{p}_2)=\mathfrak{m}(\hat{p}_1)\mathfrak{m}(\hat{p}'_2)$. Since $v>v'$, we have that paths $\hat{p}_1$ and $\hat{p}'_2$ are non-overlapping. This indicates that the monomials of $S_{(j+r,v'-r)(i-r,w)}$ are contained in $S_{(i,k)(j,v)}$.

By Theorem \ref{Th:simple modules are snake modules}, we have $\chi_\varepsilon(L(Y_{j+r,v'-r}Y_{i-r,w}))=\chi_\varepsilon(L(Y_{i-r,k+r}Y_{j+r,v'-r}))$. We know that $\chi_\varepsilon(L(Y_{j+r,v'-r}Y_{i-r,w}))$ is a part of $S_{(j+r,v'-r)(i-r,w)}$. Therefore, we conclude that the assertion is valid.
\end{proof}

\begin{remark}\label{R: I type greater}
Let $\varepsilon^{2 \ell}=1$ and let $L(Y_{i,k}Y_{j,v})$, $k<v$, be a $U_{\varepsilon}^{\res}({L\mathfrak{sl}_{n+1}})$ snake module. Assume that there is a translation of paths in $\mathscr{P}_{j,v}$ to paths in $\mathscr{P}_{j,v'}$ with respect to $\mathbf{PS}(i,k)$, where $v'> v$. If $i\leq j$ (resp. $i>j$), then the dominant monomial $m=Y_{i-r,k+r}Y_{j+r,v'-r}$ (resp. $m=Y_{j-r,v'-r}Y_{i+r,k+r}$) is not in $S_{(i,k)(j,v)}$, where $r=\frac{v'-k-h_0(i,j)}2+1$.
\end{remark}

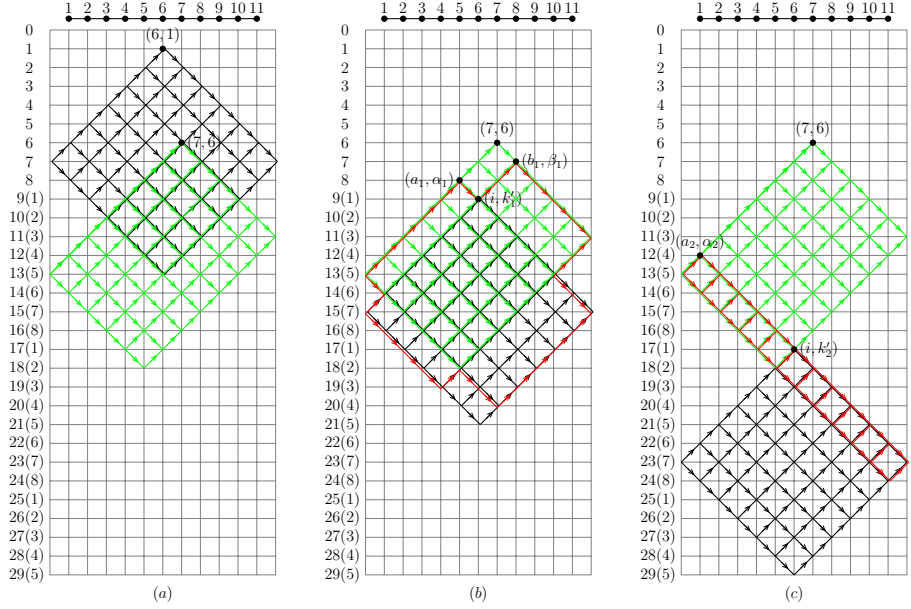
\begin{figure}
\resizebox{0.5\width}{0.5\height}{
\begin{minipage}[b]{0.5\linewidth}
\centerline{
\begin{tikzpicture}
\draw[step=.5cm,gray,thin] (-0.5,5.5) grid (5.5,20) (-0.5,5.5)--(5.5,5.5);
\draw[fill] (0,20.3) circle (2pt) -- (0.5,20.3) circle (2pt) --(1,20.3) circle (2pt) --(1.5,20.3) circle (2pt)--(2,20.3) circle (2pt) -- (2.5,20.3) circle (2pt) --(3,20.3) circle (2pt) --(3.5,20.3) circle (2pt)-- (4,20.3) circle (2pt) -- (4.5,20.3) circle (2pt) --(5,20.3) circle (2pt);
\begin{scope}[thick, every node/.style={sloped,allow upside down}]
\draw (-0.45,16.5)--node {\midarrow}(0.05,17);
\draw (0.05,17)--node {\midarrow}(0.55,17.5);
\draw (0.55,17.5)--node {\midarrow}(1.05,18);
\draw (1.05,18)--node {\midarrow}(1.55,18.5);
\draw (1.55,18.5)--node {\midarrow}(2.05,19);
\draw (2.05,19)--node {\midarrow}(2.55,19.5);
\draw (2.55,19.5)--node {\midarrow}(3.05,19);
\draw (3.05,19)--node {\midarrow}(3.55,18.5);
\draw (3.55,18.5)--node {\midarrow}(4.05,18);
\draw (4.05,18)--node {\midarrow}(4.55,17.5);
\draw (4.55,17.5)--node {\midarrow}(5.05,17);
\draw (5.05,17)--node {\midarrow}(5.55,16.5);
\draw (5.05,16)--node {\midarrow}(5.55,16.5);
\draw (4.55,15.5)--node {\midarrow}(5.05,16);
\draw (4.05,15) --node {\midarrow}(4.55,15.5);
\draw (3.55,14.5) --node {\midarrow}(4.05,15);
\draw (3.05,14) --node {\midarrow}(3.55,14.5);
\draw (2.55,13.5) --node {\midarrow}(3.05,14);
\draw (2.05,14) --node {\midarrow}(2.55,13.5);
\draw (1.55,14.5) --node {\midarrow}(2.05,14);
\draw (1.05,15) --node {\midarrow}(1.55,14.5);
\draw (0.55,15.5)--node {\midarrow}(1.05,15);
\draw (0.05,16)--node {\midarrow}(0.55,15.5);
\draw (-0.45,16.5)--node {\midarrow}(0.05,16);
\draw (0.05,16) --node {\midarrow}(0.55,16.5);
\draw (0.55,16.5) --node {\midarrow}(1.05,17);
\draw (1.05,17) --node {\midarrow}(1.55,17.5);
\draw (1.55,17.5)--node {\midarrow}(2.05,18);
\draw (2.05,18)--node {\midarrow}(2.55,18.5);
\draw (2.55,18.5)--node {\midarrow}(3.05,19);
\draw (0.55,15.5) --node {\midarrow}(1.05,16);
\draw (1.05,16) --node {\midarrow}(1.55,16.5);
\draw (1.55,16.5) --node {\midarrow}(2.05,17);
\draw (2.05,17)--node {\midarrow}(2.55,17.5);
\draw (2.55,17.5)--node {\midarrow}(3.05,18);
\draw (3.05,18)--node {\midarrow}(3.55,18.5);
\draw (1.05,15) --node {\midarrow}(1.55,15.5);
\draw (1.55,15.5) --node {\midarrow}(2.05,16);
\draw (2.05,16) --node {\midarrow}(2.55,16.5);
\draw (2.55,16.5)--node {\midarrow}(3.05,17);
\draw (3.05,17)--node {\midarrow}(3.55,17.5);
\draw (3.55,17.5)--node {\midarrow}(4.05,18);
\draw (1.55,14.5) --node {\midarrow}(2.05,15);
\draw (2.05,15) --node {\midarrow}(2.55,15.5);
\draw (2.55,15.5) --node {\midarrow}(3.05,16);
\draw (3.05,16)--node {\midarrow}(3.55,16.5);
\draw (3.55,16.5)--node {\midarrow}(4.05,17);
\draw (4.05,17)--node {\midarrow}(4.55,17.5);
\draw (2.05,14) --node {\midarrow}(2.55,14.5);
\draw (2.55,14.5) --node {\midarrow}(3.05,15);
\draw (3,15) --node {\midarrow}(3.55,15.5);
\draw (3.55,15.5)--node {\midarrow}(4.05,16);
\draw (4.05,16)--node {\midarrow}(4.55,16.5);
\draw (4.55,16.5)--node {\midarrow}(5.05,17);
\draw (0.05,17) --node {\midarrow}(0.55,16.5);
\draw (0.55,16.5) --node {\midarrow}(1.05,16);
\draw (1.05,16) --node {\midarrow}(1.55,15.5);
\draw (1.55,15.5)--node {\midarrow}(2.05,15);
\draw (2.05,15)--node {\midarrow}(2.55,14.5);
\draw (2.55,14.5)--node {\midarrow}(3.05,14);
\draw (0.55,17.5) --node {\midarrow}(1.05,17);
\draw (1.05,17) --node {\midarrow}(1.55,16.5);
\draw (1.55,16.5) --node {\midarrow}(2.05,16);
\draw (2.05,16)--node {\midarrow}(2.55,15.5);
\draw (2.55,15.5)--node {\midarrow}(3.05,15);
\draw (3.05,15)--node {\midarrow}(3.55,14.5);
\draw (1.05,18) --node {\midarrow}(1.55,17.5);
\draw (1.55,17.5) --node {\midarrow}(2.05,17);
\draw (2.05,17) --node {\midarrow}(2.55,16.5);
\draw (2.55,16.5)--node {\midarrow}(3.05,16);
\draw (3.05,16)--node {\midarrow}(3.55,15.5);
\draw (3.55,15.5)--node {\midarrow}(4.05,15);
\draw (1.55,18.5) --node {\midarrow}(2.05,18);
\draw (2.05,18) --node {\midarrow}(2.55,17.5);
\draw (2.55,17.5) --node {\midarrow}(3.05,17);
\draw (3.05,17)--node {\midarrow}(3.55,16.5);
\draw (3.55,16.5)--node {\midarrow}(4.05,16);
\draw (4.05,16)--node {\midarrow}(4.55,15.5);
\draw (2.05,19) --node {\midarrow}(2.55,18.5);
\draw (2.55,18.5) --node {\midarrow}(3.05,18);
\draw (3.05,18) --node {\midarrow}(3.55,17.5);
\draw (3.55,17.5)--node {\midarrow}(4.05,17);
\draw (4.05,17)--node {\midarrow}(4.55,16.5);
\draw (4.55,16.5)--node {\midarrow}(5.05,16);
\textcolor{green}{
\draw (-0.5,13.5)--node {\midarrow}(0,14);
\draw (0,14)--node {\midarrow}(0.5,14.5);
\draw (0.5,14.5)--node {\midarrow}(1,15);
\draw (1,15)--node {\midarrow}(1.5,15.5);
\draw (1.5,15.5)--node {\midarrow}(2,16);
\draw (2,16)--node {\midarrow}(2.5,16.5);
\draw (2.5,16.5)--node {\midarrow}(3,17);
\draw (3,17)--node {\midarrow}(3.5,16.5);
\draw (3.5,16.5)--node {\midarrow}(4,16);
\draw (4,16)--node {\midarrow}(4.5,15.5);
\draw (4.5,15.5)--node {\midarrow}(5,15);
\draw (5,15)--node {\midarrow}(5.5,14.5);
\draw (-0.5,13.5)--node {\midarrow}(0,13);
\draw (0,13)--node {\midarrow}(0.5,12.5);
\draw (0.5,12.5)--node {\midarrow}(1,12);
\draw (1,12)--node {\midarrow}(1.5,11.5);
\draw (1.5,11.5)--node {\midarrow}(2,11);
\draw (2,11)--node {\midarrow}(2.5,11.5);
\draw (2.5,11.5)--node {\midarrow}(3,12);
\draw (3,12)--node {\midarrow}(3.5,12.5);
\draw (3.5,12.5)--node {\midarrow}(4,13);
\draw (4,13)--node {\midarrow}(4.5,13.5);
\draw (4.5,13.5)--node {\midarrow}(5,14);
\draw (5,14)--node {\midarrow}(5.5,14.5);
\draw (-0.5,13.5)--node {\midarrow}(0,14);
\draw (0,14)--node {\midarrow}(0.5,14.5);
\draw (0.5,14.5)--node {\midarrow}(1,15);
\draw (1,15)--node {\midarrow}(1.5,15.5);
\draw (1.5,15.5)--node {\midarrow}(2,16);
\draw (2,16)--node {\midarrow}(2.5,16.5);
\draw (2.5,16.5)--node {\midarrow}(3,17);
\draw (0,13)--node {\midarrow}(0.5,13.5);
\draw (0.5,13.5)--node {\midarrow}(1,14);
\draw (1,14)--node {\midarrow}(1.5,14.5);
\draw (1.5,14.5)--node {\midarrow}(2,15);
\draw (2,15)--node {\midarrow}(2.5,15.5);
\draw (2.5,15.5)--node {\midarrow}(3,16);
\draw (3,16)--node {\midarrow}(3.5,16.5);
\draw (0.5,12.5)--node {\midarrow}(1,13);
\draw (1,13)--node {\midarrow}(1.5,13.5);
\draw (1.5,13.5)--node {\midarrow}(2,14);
\draw (2,14)--node {\midarrow}(2.5,14.5);
\draw (2.5,14.5)--node {\midarrow}(3,15);
\draw (3,15)--node {\midarrow}(3.5,15.5);
\draw (3.5,15.5)--node {\midarrow}(4,16);
\draw (1,12)--node {\midarrow}(1.5,12.5);
\draw (1.5,12.5)--node {\midarrow}(2,13);
\draw (2,13)--node {\midarrow}(2.5,13.5);
\draw (2.5,13.5)--node {\midarrow}(3,14);
\draw (3,14)--node {\midarrow}(3.5,14.5);
\draw (3.5,14.5)--node {\midarrow}(4,15);
\draw (4,15)--node {\midarrow}(4.5,15.5);
\draw (1.5,11.5)--node {\midarrow}(2,12);
\draw (2,12)--node {\midarrow}(2.5,12.5);
\draw (2.5,12.5)--node {\midarrow}(3,13);
\draw (3,13)--node {\midarrow}(3.5,13.5);
\draw (3.5,13.5)--node {\midarrow}(4,14);
\draw (4,14)--node {\midarrow}(4.5,14.5);
\draw (4.5,14.5)--node {\midarrow}(5,14);
\draw (0,14)--node {\midarrow}(0.5,13.5);
\draw (0.5,13.5)--node {\midarrow}(1,13);
\draw (1,13)--node {\midarrow}(1.5,12.5);
\draw (1.5,12.5)--node {\midarrow}(2,12);
\draw (2,12)--node {\midarrow}(2.5,11.5);
\draw (0.5,14.5)--node {\midarrow}(1,14);
\draw (1,14)--node {\midarrow}(1.5,13.5);
\draw (1.5,13.5)--node {\midarrow}(2,13);
\draw (2,13)--node {\midarrow}(2.5,12.5);
\draw (2.5,12.5)--node {\midarrow}(3,12);
\draw (1,15)--node {\midarrow}(1.5,14.5);
\draw (1.5,14.5)--node {\midarrow}(2,14);
\draw (2,14)--node {\midarrow}(2.5,13.5);
\draw (2.5,13.5)--node {\midarrow}(3,13);
\draw (3,13)--node {\midarrow}(3.5,12.5);
\draw (1.5,15.5)--node {\midarrow}(2,15);
\draw (2,15)--node {\midarrow}(2.5,14.5);
\draw (2.5,14.5)--node {\midarrow}(3,14);
\draw (3,14)--node {\midarrow}(3.5,13.5);
\draw (3.5,13.5)--node {\midarrow}(4,13);
\draw (2,16)--node {\midarrow}(2.5,15.5);
\draw (2.5,15.5)--node {\midarrow}(3,15);
\draw (3,15)--node {\midarrow}(3.5,14.5);
\draw (3.5,14.5)--node {\midarrow}(4,14);
\draw (4,14)--node {\midarrow}(4.5,13.5);
\draw (2.5,16.5)--node {\midarrow}(3,16);
\draw (3,16)--node {\midarrow}(3.5,15.5);
\draw (3.5,15.5)--node {\midarrow}(4,15);
\draw (4,15)--node {\midarrow}(4.5,14.5);
\draw (4.5,14.5)--node {\midarrow}(5,15);}
\draw[fill] (2.5,19.5) circle (2pt) (3,17) circle (2pt);
\end{scope}
\node [above] at (0,20.3)   {$1$};
\node [above] at (0.5,20.3) {$2$};
\node [above] at (1,20.3)   {$3$};
\node [above] at (1.5,20.3) {$4$};
\node [above] at (2,20.3)   {$5$};
\node [above] at (2.5,20.3) {$6$};
\node [above] at (3,20.3)   {$7$};
\node [above] at (3.5,20.3) {$8$};
\node [above] at (4,20.3)   {$9$};
\node [above] at (4.5,20.3) {$10$};
\node [above] at (5,20.3)   {$11$};
\node [left] at (-0.8,20)    {$0$};
\node [left] at (-0.8,19.5)   {$1$};
\node [left] at (-0.8,19)    {$2$};
\node [left] at (-0.8,18.5)   {$3$};
\node [left] at (-0.8,18)    {$4$};
\node [left] at (-0.8,17.5)   {$5$};
\node [left] at (-0.8,17)     {$6$};
\node [left] at (-0.8,16.5)   {$7$};
\node [left] at (-0.8,16)     {$8$};
\node [left] at (-0.5,15.5)   {$9(1)$};
\node [left] at (-0.5,15)    {$10(2)$};
\node [left] at (-0.5,14.5)   {$11(3)$};
\node [left] at (-0.5,14)     {$12(4)$};
\node [left] at (-0.5,13.5)   {$13(5)$};
\node [left] at (-0.5,13)    {$14(6)$};
\node [left] at (-0.5,12.5)   {$15(7)$};
\node [left] at (-0.5,12)     {$16(8)$};
\node [left] at (-0.5,11.5)   {$17(1)$};
\node [left] at (-0.5,11)     {$18(2)$};
\node [left] at (-0.5,10.5)   {$19(3)$};
\node [left] at (-0.5,10)     {$20(4)$};
\node [left] at (-0.5,9.5)   {$21(5)$};
\node [left] at (-0.5,9)     {$22(6)$};
\node [left] at (-0.5,8.5)    {$23(7)$};
\node [left] at (-0.5,8)     {$24(8)$};
\node [left] at (-0.5,7.5)    {$25(1)$};
\node [left] at (-0.5,7)      {$26(2)$};
\node [left] at (-0.5,6.5)    {$27(3)$};
\node [left] at (-0.5,6)     {$28(4)$};
\node [left] at (-0.5,5.5)  {$29(5)$};
\node [above] at (2.5,19.5) {$(6,1)$};
\node [right] at (3,17) {$(7,6)$};
\node at (2.5,5) {$(a)$};
\end{tikzpicture}}
\end{minipage}
\begin{minipage}[b]{0.5\linewidth}
\centerline{
\begin{tikzpicture}
\draw[step=.5cm,gray,thin] (-0.5,5.5) grid (5.5,20) (-0.5,5.5)--(5.5,5.5);
\draw[fill] (0,20.3) circle (2pt) -- (0.5,20.3) circle (2pt) --(1,20.3) circle (2pt) --(1.5,20.3) circle (2pt)--(2,20.3) circle (2pt) -- (2.5,20.3) circle (2pt) --(3,20.3) circle (2pt) --(3.5,20.3) circle (2pt)-- (4,20.3) circle (2pt) -- (4.5,20.3) circle (2pt) --(5,20.3) circle (2pt);
\begin{scope}[thick, every node/.style={sloped,allow upside down}]
\draw (-0.45,12.5)--node {\midarrow}(0.05,13);
\draw (0.05,13)--node {\midarrow}(0.55,13.5);
\draw (0.55,13.5)--node {\midarrow}(1.05,14);
\draw (1.05,14)--node {\midarrow}(1.55,14.5);
\draw (1.55,14.5)--node {\midarrow}(2.05,15);
\draw (2.05,15)--node {\midarrow}(2.55,15.5);
\draw (2.55,15.5)--node {\midarrow}(3.05,15);
\draw (3.05,15)--node {\midarrow}(3.55,14.5);
\draw (3.55,14.5)--node {\midarrow}(4.05,14);
\draw (4.05,14)--node {\midarrow}(4.55,13.5);
\draw (4.55,13.5)--node {\midarrow}(5.05,13);
\draw (5.05,13)--node {\midarrow}(5.55,12.5);
\draw (5.05,12)--node {\midarrow}(5.55,12.5);
\draw (4.55,11.5)--node {\midarrow}(5.05,12);
\draw (4.05,11) --node {\midarrow}(4.55,11.5);
\draw (3.55,10.5) --node {\midarrow}(4.05,11);
\draw (3.05,10) --node {\midarrow}(3.55,10.5);
\draw (2.55,9.5) --node {\midarrow}(3.05,10);
\draw (2.05,10) --node {\midarrow}(2.55,9.5);
\draw (1.55,10.5) --node {\midarrow}(2.05,10);
\draw (1.05,11) --node {\midarrow}(1.55,10.5);
\draw (0.55,11.5)--node {\midarrow}(1.05,11);
\draw (0.05,12)--node {\midarrow}(0.55,11.5);
\draw (-0.45,12.5)--node {\midarrow}(0.05,12);
\draw (0.05,12) --node {\midarrow}(0.55,12.5);
\draw (0.55,12.5) --node {\midarrow}(1.05,13);
\draw (1.05,13) --node {\midarrow}(1.55,13.5);
\draw (1.55,13.5)--node {\midarrow}(2.05,14);
\draw (2.05,14)--node {\midarrow}(2.55,14.5);
\draw (2.55,14.5)--node {\midarrow}(3.05,15);
\draw (0.55,11.5) --node {\midarrow}(1.05,12);
\draw (1.05,12) --node {\midarrow}(1.55,12.5);
\draw (1.55,12.5) --node {\midarrow}(2.05,13);
\draw (2.05,13)--node {\midarrow}(2.55,13.5);
\draw (2.55,13.5)--node {\midarrow}(3.05,14);
\draw (3.05,14)--node {\midarrow}(3.55,14.5);
\draw (1.05,11) --node {\midarrow}(1.55,11.5);
\draw (1.55,11.5) --node {\midarrow}(2.05,12);
\draw (2.05,12) --node {\midarrow}(2.55,12.5);
\draw (2.55,12.5)--node {\midarrow}(3.05,13);
\draw (3.05,13)--node {\midarrow}(3.55,13.5);
\draw (3.55,13.5)--node {\midarrow}(4.05,14);
\draw (1.55,10.5) --node {\midarrow}(2.05,11);
\draw (2.05,11) --node {\midarrow}(2.55,11.5);
\draw (2.55,11.5) --node {\midarrow}(3.05,12);
\draw (3.05,12)--node {\midarrow}(3.55,12.5);
\draw (3.55,12.5)--node {\midarrow}(4.05,13);
\draw (4.05,13)--node {\midarrow}(4.55,13.5);
\draw (2.05,10) --node {\midarrow}(2.55,10.5);
\draw (2.55,10.5) --node {\midarrow}(3.05,11);
\draw (3.05,11) --node {\midarrow}(3.55,11.5);
\draw (3.55,11.5)--node {\midarrow}(4.05,12);
\draw (4.05,12)--node {\midarrow}(4.55,12.5);
\draw (4.55,12.5)--node {\midarrow}(5.05,13);
\draw (0.05,13) --node {\midarrow}(0.55,12.5);
\draw (0.55,12.5) --node {\midarrow}(1.05,12);
\draw (1.05,12) --node {\midarrow}(1.55,11.5);
\draw (1.55,11.5)--node {\midarrow}(2.05,11);
\draw (2.05,11)--node {\midarrow}(2.55,10.5);
\draw (2.55,10.5)--node {\midarrow}(3.05,10);
\draw (0.55,13.5) --node {\midarrow}(1.05,13);
\draw (1.05,13) --node {\midarrow}(1.55,12.5);
\draw (1.55,12.5) --node {\midarrow}(2.05,12);
\draw (2.05,12)--node {\midarrow}(2.55,11.5);
\draw (2.55,11.5)--node {\midarrow}(3.05,11);
\draw (3.05,11)--node {\midarrow}(3.55,10.5);
\draw (1.05,14) --node {\midarrow}(1.55,13.5);
\draw (1.55,13.5) --node {\midarrow}(2.05,13);
\draw (2.05,13) --node {\midarrow}(2.55,12.5);
\draw (2.55,12.5)--node {\midarrow}(3.05,12);
\draw (3.05,12)--node {\midarrow}(3.55,11.5);
\draw (3.55,11.5)--node {\midarrow}(4.05,11);
\draw (1.55,14.5) --node {\midarrow}(2.05,14);
\draw (2.05,14) --node {\midarrow}(2.55,13.5);
\draw (2.55,13.5) --node {\midarrow}(3.05,13);
\draw (3.05,13)--node {\midarrow}(3.55,12.5);
\draw (3.55,12.5)--node {\midarrow}(4.05,12);
\draw (4.05,12)--node {\midarrow}(4.55,11.5);
\draw (2.05,15) --node {\midarrow}(2.55,14.5);
\draw (2.55,14.5) --node {\midarrow}(3.05,14);
\draw (3.05,14) --node {\midarrow}(3.55,13.5);
\draw (3.55,13.5)--node {\midarrow}(4.05,13);
\draw (4.05,13)--node {\midarrow}(4.55,12.5);
\draw (4.55,12.5)--node {\midarrow}(5.05,12);
\textcolor{green}{
\draw (-0.5,13.5)--node {\midarrow}(0,14);
\draw (0,14)--node {\midarrow}(0.5,14.5);
\draw (0.5,14.5)--node {\midarrow}(1,15);
\draw (1,15)--node {\midarrow}(1.5,15.5);
\draw (1.5,15.5)--node {\midarrow}(2,16);
\draw (2,16)--node {\midarrow}(2.5,16.5);
\draw (2.5,16.5)--node {\midarrow}(3,17);
\draw (3,17)--node {\midarrow}(3.5,16.5);
\draw (3.5,16.5)--node {\midarrow}(4,16);
\draw (4,16)--node {\midarrow}(4.5,15.5);
\draw (4.5,15.5)--node {\midarrow}(5,15);
\draw (5,15)--node {\midarrow}(5.5,14.5);
\draw (-0.5,13.5)--node {\midarrow}(0,13);
\draw (0,13)--node {\midarrow}(0.5,12.5);
\draw (0.5,12.5)--node {\midarrow}(1,12);
\draw (1,12)--node {\midarrow}(1.5,11.5);
\draw (1.5,11.5)--node {\midarrow}(2,11);
\draw (2,11)--node {\midarrow}(2.5,11.5);
\draw (2.5,11.5)--node {\midarrow}(3,12);
\draw (3,12)--node {\midarrow}(3.5,12.5);
\draw (3.5,12.5)--node {\midarrow}(4,13);
\draw (4,13)--node {\midarrow}(4.5,13.5);
\draw (4.5,13.5)--node {\midarrow}(5,14);
\draw (5,14)--node {\midarrow}(5.5,14.5);
\draw (-0.5,13.5)--node {\midarrow}(0,14);
\draw (0,14)--node {\midarrow}(0.5,14.5);
\draw (0.5,14.5)--node {\midarrow}(1,15);
\draw (1,15)--node {\midarrow}(1.5,15.5);
\draw (1.5,15.5)--node {\midarrow}(2,16);
\draw (2,16)--node {\midarrow}(2.5,16.5);
\draw (2.5,16.5)--node {\midarrow}(3,17);
\draw (0,13)--node {\midarrow}(0.5,13.5);
\draw (0.5,13.5)--node {\midarrow}(1,14);
\draw (1,14)--node {\midarrow}(1.5,14.5);
\draw (1.5,14.5)--node {\midarrow}(2,15);
\draw (2,15)--node {\midarrow}(2.5,15.5);
\draw (2.5,15.5)--node {\midarrow}(3,16);
\draw (3,16)--node {\midarrow}(3.5,16.5);
\draw (0.5,12.5)--node {\midarrow}(1,13);
\draw (1,13)--node {\midarrow}(1.5,13.5);
\draw (1.5,13.5)--node {\midarrow}(2,14);
\draw (2,14)--node {\midarrow}(2.5,14.5);
\draw (2.5,14.5)--node {\midarrow}(3,15);
\draw (3,15)--node {\midarrow}(3.5,15.5);
\draw (3.5,15.5)--node {\midarrow}(4,16);
\draw (1,12)--node {\midarrow}(1.5,12.5);
\draw (1.5,12.5)--node {\midarrow}(2,13);
\draw (2,13)--node {\midarrow}(2.5,13.5);
\draw (2.5,13.5)--node {\midarrow}(3,14);
\draw (3,14)--node {\midarrow}(3.5,14.5);
\draw (3.5,14.5)--node {\midarrow}(4,15);
\draw (4,15)--node {\midarrow}(4.5,15.5);
\draw (1.5,11.5)--node {\midarrow}(2,12);
\draw (2,12)--node {\midarrow}(2.5,12.5);
\draw (2.5,12.5)--node {\midarrow}(3,13);
\draw (3,13)--node {\midarrow}(3.5,13.5);
\draw (3.5,13.5)--node {\midarrow}(4,14);
\draw (4,14)--node {\midarrow}(4.5,14.5);
\draw (4.5,14.5)--node {\midarrow}(5,14);
\draw (0,14)--node {\midarrow}(0.5,13.5);
\draw (0.5,13.5)--node {\midarrow}(1,13);
\draw (1,13)--node {\midarrow}(1.5,12.5);
\draw (1.5,12.5)--node {\midarrow}(2,12);
\draw (2,12)--node {\midarrow}(2.5,11.5);
\draw (0.5,14.5)--node {\midarrow}(1,14);
\draw (1,14)--node {\midarrow}(1.5,13.5);
\draw (1.5,13.5)--node {\midarrow}(2,13);
\draw (2,13)--node {\midarrow}(2.5,12.5);
\draw (2.5,12.5)--node {\midarrow}(3,12);
\draw (1,15)--node {\midarrow}(1.5,14.5);
\draw (1.5,14.5)--node {\midarrow}(2,14);
\draw (2,14)--node {\midarrow}(2.5,13.5);
\draw (2.5,13.5)--node {\midarrow}(3,13);
\draw (3,13)--node {\midarrow}(3.5,12.5);
\draw (1.5,15.5)--node {\midarrow}(2,15);
\draw (2,15)--node {\midarrow}(2.5,14.5);
\draw (2.5,14.5)--node {\midarrow}(3,14);
\draw (3,14)--node {\midarrow}(3.5,13.5);
\draw (3.5,13.5)--node {\midarrow}(4,13);
\draw (2,16)--node {\midarrow}(2.5,15.5);
\draw (2.5,15.5)--node {\midarrow}(3,15);
\draw (3,15)--node {\midarrow}(3.5,14.5);
\draw (3.5,14.5)--node {\midarrow}(4,14);
\draw (4,14)--node {\midarrow}(4.5,13.5);
\draw (2.5,16.5)--node {\midarrow}(3,16);
\draw (3,16)--node {\midarrow}(3.5,15.5);
\draw (3.5,15.5)--node {\midarrow}(4,15);
\draw (4,15)--node {\midarrow}(4.5,14.5);
\draw (4.5,14.5)--node {\midarrow}(5,15);}
\textcolor{red}{
\draw (-0.5,13.45)--node {\midarrow}(0,13.95)--node {\midarrow}(0.5,14.45)--node {\midarrow}(1,14.95)--node {\midarrow}(1.5,15.45)--node {\midarrow}(2,15.95)--node {\midarrow}(2.5,15.45)--node {\midarrow}(3,15.95)--node {\midarrow}(3.5,16.45)--node {\midarrow}(4,15.95)--node {\midarrow}(4.5,15.45)--node {\midarrow}(5,14.95)--node {\midarrow}(5.5,14.45);
\draw (-0.5,13.45)--node {\midarrow}(0,12.95);
\draw (-0.5,12.45)--node {\midarrow}(0,11.95)--node {\midarrow}(0.5,11.45)--node {\midarrow}(1,10.95)--node {\midarrow}(1.5,10.45)--node {\midarrow}(2,10.95)--node {\midarrow}(2.5,10.45)--node {\midarrow}(3,9.95)--node {\midarrow}(3.5,10.45)--node {\midarrow}(4,10.95)--node {\midarrow}(4.5,11.45)--node {\midarrow}(5,11.95)--node {\midarrow}(5.5,12.45);
\draw (-0.5,12.45)--node {\midarrow}(0,12.95);
\draw (4.5,13.45)--node {\midarrow}(5,13.95)--node {\midarrow}(5.5,14.45);
\draw (4.5,13.45)--node {\midarrow}(5,12.95)--node {\midarrow}(5.5,12.45);}
\draw [fill](2.5,15.5) circle (2pt) (2,16) circle (2pt) (3.5,16.5) circle (2pt) (3,17) circle (2pt) ;
\end{scope}
\node [above] at (0,20.3)   {$1$};
\node [above] at (0.5,20.3) {$2$};
\node [above] at (1,20.3)   {$3$};
\node [above] at (1.5,20.3) {$4$};
\node [above] at (2,20.3)   {$5$};
\node [above] at (2.5,20.3) {$6$};
\node [above] at (3,20.3)   {$7$};
\node [above] at (3.5,20.3) {$8$};
\node [above] at (4,20.3)   {$9$};
\node [above] at (4.5,20.3) {$10$};
\node [above] at (5,20.3)   {$11$};
\node [left] at (-0.8,20)   {$0$};
\node [left] at (-0.8,19.5)   {$1$};
\node [left] at (-0.8,19)     {$2$};
\node [left] at (-0.8,18.5)   {$3$};
\node [left] at (-0.8,18)    {$4$};
\node [left] at (-0.8,17.5)   {$5$};
\node [left] at (-0.8,17)     {$6$};
\node [left] at (-0.8,16.5)   {$7$};
\node [left] at (-0.8,16)     {$8$};
\node [left] at (-0.5,15.5)   {$9(1)$};
\node [left] at (-0.5,15)     {$10(2)$};
\node [left] at (-0.5,14.5)   {$11(3)$};
\node [left] at (-0.5,14)      {$12(4)$};
\node [left] at (-0.5,13.5)   {$13(5)$};
\node [left] at (-0.5,13)     {$14(6)$};
\node [left] at (-0.5,12.5)   {$15(7)$};
\node [left] at (-0.5,12)     {$16(8)$};
\node [left] at (-0.5,11.5)   {$17(1)$};
\node [left] at (-0.5,11)    {$18(2)$};
\node [left] at (-0.5,10.5)   {$19(3)$};
\node [left] at (-0.5,10)    {$20(4)$};
\node [left] at (-0.5,9.5)   {$21(5)$};
\node [left] at (-0.5,9)     {$22(6)$};
\node [left] at (-0.5,8.5)    {$23(7)$};
\node [left] at (-0.5,8)     {$24(8)$};
\node [left] at (-0.5,7.5)    {$25(1)$};
\node [left] at (-0.5,7)     {$26(2)$};
\node [left] at (-0.5,6.5)    {$27(3)$};
\node [left] at (-0.5,6)     {$28(4)$};
\node [left] at (-0.5,5.5)  {$29(5)$};
\node [right] at (2.5,15.5) {$(i, k'_1)$};
\node [left] at  (2,16)    {$(a_1,\alpha_1)$};
\node [right] at (3.5,16.5) {$(b_1,\beta_1)$};
\node [above] at (3,17)     {$(7,6)$};
\node at (2.5,5) {$(b)$};
\end{tikzpicture}}
\end{minipage}
\begin{minipage}[b]{0.5\linewidth}
\centerline{
\begin{tikzpicture}
\draw[step=.5cm,gray,thin] (-0.5,5.5) grid (5.5,20) (-0.5,5.5)--(5.5,5.5);
\draw[fill] (0,20.3) circle (2pt) -- (0.5,20.3) circle (2pt) --(1,20.3) circle (2pt) --(1.5,20.3) circle (2pt)--(2,20.3) circle (2pt) -- (2.5,20.3) circle (2pt) --(3,20.3) circle (2pt) --(3.5,20.3) circle (2pt)-- (4,20.3) circle (2pt) -- (4.5,20.3) circle (2pt) --(5,20.3) circle (2pt);
\begin{scope}[thick, every node/.style={sloped,allow upside down}]
\draw (-0.5,8.5)--node {\midarrow}(0,9);
\draw (0,9)--node {\midarrow}(0.5,9.5);
\draw (0.5,9.5)--node {\midarrow}(1,10);
\draw (1,10)--node {\midarrow}(1.5,10.5);
\draw (1.5,10.5)--node {\midarrow}(2,11);
\draw (2.05,11)--node {\midarrow}(2.55,11.5);
\draw (2.5,11.5)--node {\midarrow}(3,11);
\draw (3,11)--node {\midarrow}(3.5,10.5);
\draw (3.5,10.5)--node {\midarrow}(4,10);
\draw (4,10)--node {\midarrow}(4.5,9.5);
\draw (4.5,9.5)--node {\midarrow}(5,9);
\draw (5,9)--node {\midarrow}(5.5,8.5);
\draw (5,8)--node {\midarrow}(5.5,8.5);
\draw (4.5,7.5)--node {\midarrow}(5,8);
\draw (4,7) --node {\midarrow}(4.5,7.5);
\draw (3.5,6.5) --node {\midarrow}(4,7);
\draw (3,6) --node {\midarrow}(3.5,6.5);
\draw (2.5,5.5) --node {\midarrow}(3,6);
\draw (2,6) --node {\midarrow}(2.5,5.5);
\draw (1.5,6.5) --node {\midarrow}(2,6);
\draw (1,7) --node {\midarrow}(1.5,6.5);
\draw (0.5,7.5)--node {\midarrow}(1,7);
\draw (0,8)--node {\midarrow}(0.5,7.5);
\draw (-0.5,8.5)--node {\midarrow}(0,8);
\draw (0,8) --node {\midarrow}(0.5,8.5);
\draw (0.5,8.5) --node {\midarrow}(1,9);
\draw (1,9) --node {\midarrow}(1.5,9.5);
\draw (1.5,9.5)--node {\midarrow}(2,10);
\draw (2,10)--node {\midarrow}(2.5,10.5);
\draw (2.5,10.5)--node {\midarrow}(3,11);
\draw (0.5,7.5) --node {\midarrow}(1,8);
\draw (1,8) --node {\midarrow}(1.5,8.5);
\draw (1.5,8.5) --node {\midarrow}(2,9);
\draw (2,9)--node {\midarrow}(2.5,9.5);
\draw (2.5,9.5)--node {\midarrow}(3,10);
\draw (3,10)--node {\midarrow}(3.5,10.5);
\draw (1,7) --node {\midarrow}(1.5,7.5);
\draw (1.5,7.5) --node {\midarrow}(2,8);
\draw (2,8) --node {\midarrow}(2.5,8.5);
\draw (2.5,8.5)--node {\midarrow}(3,9);
\draw (3,9)--node {\midarrow}(3.5,9.5);
\draw (3.5,9.5)--node {\midarrow}(4,10);
\draw (1.5,6.5) --node {\midarrow}(2,7);
\draw (2,7) --node {\midarrow}(2.5,7.5);
\draw (2.5,7.5) --node {\midarrow}(3,8);
\draw (3,8)--node {\midarrow}(3.5,8.5);
\draw (3.5,8.5)--node {\midarrow}(4,9);
\draw (4,9)--node {\midarrow}(4.5,9.5);
\draw (2,6) --node {\midarrow}(2.5,6.5);
\draw (2.5,6.5) --node {\midarrow}(3,7);
\draw (3,7) --node {\midarrow}(3.5,7.5);
\draw (3.5,7.5)--node {\midarrow}(4,8);
\draw (4,8)--node {\midarrow}(4.5,8.5);
\draw (4.5,8.5)--node {\midarrow}(5,9);
\draw (0,9) --node {\midarrow}(0.5,8.5);
\draw (0.5,8.5) --node {\midarrow}(1,8);
\draw (1,8) --node {\midarrow}(1.5,7.5);
\draw (1.5,7.5)--node {\midarrow}(2,7);
\draw (2,7)--node {\midarrow}(2.5,6.5);
\draw (2.5,6.5)--node {\midarrow}(3,6);
\draw (0.5,9.5) --node {\midarrow}(1,9);
\draw (1,9) --node {\midarrow}(1.5,8.5);
\draw (1.5,8.5) --node {\midarrow}(2,8);
\draw (2,8)--node {\midarrow}(2.5,7.5);
\draw (2.5,7.5)--node {\midarrow}(3,7);
\draw (3,7)--node {\midarrow}(3.5,6.5);
\draw (1,10) --node {\midarrow}(1.5,9.5);
\draw (1.5,9.5) --node {\midarrow}(2,9);
\draw (2,9) --node {\midarrow}(2.5,8.5);
\draw (2.5,8.5)--node {\midarrow}(3,8);
\draw (3,8)--node {\midarrow}(3.5,7.5);
\draw (3.5,7.5)--node {\midarrow}(4,7);
\draw (1.5,10.5) --node {\midarrow}(2,10);
\draw (2,10) --node {\midarrow}(2.5,9.5);
\draw (2.5,9.5) --node {\midarrow}(3,9);
\draw (3,9)--node {\midarrow}(3.5,8.5);
\draw (3.5,8.5)--node {\midarrow}(4,8);
\draw (4,8)--node {\midarrow}(4.5,7.5);
\draw (2,11) --node {\midarrow}(2.5,10.5);
\draw (2.5,10.5) --node {\midarrow}(3,10);
\draw (3,10) --node {\midarrow}(3.5,9.5);
\draw (3.5,9.5)--node {\midarrow}(4,9);
\draw (4,9)--node {\midarrow}(4.5,8.5);
\draw (4.5,8.5)--node {\midarrow}(5,8);
\textcolor{green}{
\draw (-0.5,13.5)--node {\midarrow}(0,14);
\draw (0,14)--node {\midarrow}(0.5,14.5);
\draw (0.5,14.5)--node {\midarrow}(1,15);
\draw (1,15)--node {\midarrow}(1.5,15.5);
\draw (1.5,15.5)--node {\midarrow}(2,16);
\draw (2,16)--node {\midarrow}(2.5,16.5);
\draw (2.5,16.5)--node {\midarrow}(3,17);
\draw (3,17)--node {\midarrow}(3.5,16.5);
\draw (3.5,16.5)--node {\midarrow}(4,16);
\draw (4,16)--node {\midarrow}(4.5,15.5);
\draw (4.5,15.5)--node {\midarrow}(5,15);
\draw (5,15)--node {\midarrow}(5.5,14.5);
\draw (-0.5,13.5)--node {\midarrow}(0,13);
\draw (0,13)--node {\midarrow}(0.5,12.5);
\draw (0.5,12.5)--node {\midarrow}(1,12);
\draw (1,12)--node {\midarrow}(1.5,11.5);
\draw (1.5,11.5)--node {\midarrow}(2,11);
\draw (2,11)--node {\midarrow}(2.5,11.5);
\draw (2.5,11.5)--node {\midarrow}(3,12);
\draw (3,12)--node {\midarrow}(3.5,12.5);
\draw (3.5,12.5)--node {\midarrow}(4,13);
\draw (4,13)--node {\midarrow}(4.5,13.5);
\draw (4.5,13.5)--node {\midarrow}(5,14);
\draw (5,14)--node {\midarrow}(5.5,14.5);
\draw (-0.5,13.5)--node {\midarrow}(0,14);
\draw (0,14)--node {\midarrow}(0.5,14.5);
\draw (0.5,14.5)--node {\midarrow}(1,15);
\draw (1,15)--node {\midarrow}(1.5,15.5);
\draw (1.5,15.5)--node {\midarrow}(2,16);
\draw (2,16)--node {\midarrow}(2.5,16.5);
\draw (2.5,16.5)--node {\midarrow}(3,17);
\draw (0,13)--node {\midarrow}(0.5,13.5);
\draw (0.5,13.5)--node {\midarrow}(1,14);
\draw (1,14)--node {\midarrow}(1.5,14.5);
\draw (1.5,14.5)--node {\midarrow}(2,15);
\draw (2,15)--node {\midarrow}(2.5,15.5);
\draw (2.5,15.5)--node {\midarrow}(3,16);
\draw (3,16)--node {\midarrow}(3.5,16.5);
\draw (0.5,12.5)--node {\midarrow}(1,13);
\draw (1,13)--node {\midarrow}(1.5,13.5);
\draw (1.5,13.5)--node {\midarrow}(2,14);
\draw (2,14)--node {\midarrow}(2.5,14.5);
\draw (2.5,14.5)--node {\midarrow}(3,15);
\draw (3,15)--node {\midarrow}(3.5,15.5);
\draw (3.5,15.5)--node {\midarrow}(4,16);
\draw (1,12)--node {\midarrow}(1.5,12.5);
\draw (1.5,12.5)--node {\midarrow}(2,13);
\draw (2,13)--node {\midarrow}(2.5,13.5);
\draw (2.5,13.5)--node {\midarrow}(3,14);
\draw (3,14)--node {\midarrow}(3.5,14.5);
\draw (3.5,14.5)--node {\midarrow}(4,15);
\draw (4,15)--node {\midarrow}(4.5,15.5);
\draw (1.5,11.5)--node {\midarrow}(2,12);
\draw (2,12)--node {\midarrow}(2.5,12.5);
\draw (2.5,12.5)--node {\midarrow}(3,13);
\draw (3,13)--node {\midarrow}(3.5,13.5);
\draw (3.5,13.5)--node {\midarrow}(4,14);
\draw (4,14)--node {\midarrow}(4.5,14.5);
\draw (4.5,14.5)--node {\midarrow}(5,14);
\draw (0,14)--node {\midarrow}(0.5,13.5);
\draw (0.5,13.5)--node {\midarrow}(1,13);
\draw (1,13)--node {\midarrow}(1.5,12.5);
\draw (1.5,12.5)--node {\midarrow}(2,12);
\draw (2,12)--node {\midarrow}(2.5,11.5);
\draw (0.5,14.5)--node {\midarrow}(1,14);
\draw (1,14)--node {\midarrow}(1.5,13.5);
\draw (1.5,13.5)--node {\midarrow}(2,13);
\draw (2,13)--node {\midarrow}(2.5,12.5);
\draw (2.5,12.5)--node {\midarrow}(3,12);
\draw (1,15)--node {\midarrow}(1.5,14.5);
\draw (1.5,14.5)--node {\midarrow}(2,14);
\draw (2,14)--node {\midarrow}(2.5,13.5);
\draw (2.5,13.5)--node {\midarrow}(3,13);
\draw (3,13)--node {\midarrow}(3.5,12.5);
\draw (1.5,15.5)--node {\midarrow}(2,15);
\draw (2,15)--node {\midarrow}(2.5,14.5);
\draw (2.5,14.5)--node {\midarrow}(3,14);
\draw (3,14)--node {\midarrow}(3.5,13.5);
\draw (3.5,13.5)--node {\midarrow}(4,13);
\draw (2,16)--node {\midarrow}(2.5,15.5);
\draw (2.5,15.5)--node {\midarrow}(3,15);
\draw (3,15)--node {\midarrow}(3.5,14.5);
\draw (3.5,14.5)--node {\midarrow}(4,14);
\draw (4,14)--node {\midarrow}(4.5,13.5);
\draw (2.5,16.5)--node {\midarrow}(3,16);
\draw (3,16)--node {\midarrow}(3.5,15.5);
\draw (3.5,15.5)--node {\midarrow}(4,15);
\draw (4,15)--node {\midarrow}(4.5,14.5);
\draw (4.5,14.5)--node {\midarrow}(5,15);}
\textcolor{red}{
\draw (-0.45,13.5)--node {\midarrow}(0.05,13)--node {\midarrow}(0.55,13.5);
\draw (0.05,13)--node {\midarrow}(0.55,12.5)--node {\midarrow}(1.05,13);
\draw (0.55,12.5)--node {\midarrow}(1.05,12)--node {\midarrow}(1.55,12.5);
\draw (1.05,12)--node {\midarrow}(1.55,11.5)--node {\midarrow}(2.05,12);
\draw (1.55,11.5)--node {\midarrow}(2.05,11)--node {\midarrow}(2.55,11.5);
\draw (2.05,11)--node {\midarrow}(2.55,10.5)--node {\midarrow}(3.05,11);
\draw (2.55,10.5)--node {\midarrow}(3.05,10)--node {\midarrow}(3.55,10.5);
\draw (3.05,10)--node {\midarrow}(3.55,9.5)--node {\midarrow}(4.05,10);
\draw (3.55,9.5)--node {\midarrow}(4.05,9)--node {\midarrow}(4.55,9.5);
\draw (4.05,9)--node {\midarrow}(4.55,8.5)--node {\midarrow}(5.05,9);
\draw (4.55,8.5)--node {\midarrow}(5.05,8);
\draw (-0.45,13.5)--node {\midarrow}(0.05,14);
\draw (0.05,14)--node {\midarrow}(0.55,13.5);
\draw (0.55,13.5)--node {\midarrow}(1.05,13);
\draw (1.05,13)--node {\midarrow}(1.55,12.5);
\draw (1.55,12.5)--node {\midarrow}(2.05,12);
\draw (2.05,12)--node {\midarrow}(2.55,11.5);
\draw (2.55,11.5)--node {\midarrow}(3.05,11);
\draw (3.05,11)--node {\midarrow}(3.55,10.5);
\draw (3.55,10.5)--node {\midarrow}(4.05,10);
\draw (4.05,10)--node {\midarrow}(4.55,9.5);
\draw (4.55,9.5)--node {\midarrow}(5.05,9);
\draw (5.05,9)--node {\midarrow}(5.55,8.5);
\draw (5.05,8)--node {\midarrow}(5.55,8.5);}
\draw [fill](2.5,11.5) circle (2pt) (0,14) circle (2pt)  (3,17) circle (2pt);
\end{scope}
\node [above] at (0,20.3)   {$1$};
\node [above] at (0.5,20.3) {$2$};
\node [above] at (1,20.3)   {$3$};
\node [above] at (1.5,20.3) {$4$};
\node [above] at (2,20.3)   {$5$};
\node [above] at (2.5,20.3) {$6$};
\node [above] at (3,20.3)   {$7$};
\node [above] at (3.5,20.3) {$8$};
\node [above] at (4,20.3)   {$9$};
\node [above] at (4.5,20.3) {$10$};
\node [above] at (5,20.3)   {$11$};
\node [left] at (-0.8,20)   {$0$};
\node [left] at (-0.8,19.5)   {$1$};
\node [left] at (-0.8,19)     {$2$};
\node [left] at (-0.8,18.5)   {$3$};
\node [left] at (-0.8,18)     {$4$};
\node [left] at (-0.8,17.5)   {$5$};
\node [left] at (-0.8,17)      {$6$};
\node [left] at (-0.8,16.5)   {$7$};
\node [left] at (-0.8,16)     {$8$};
\node [left] at (-0.5,15.5)   {$9(1)$};
\node [left] at (-0.5,15)     {$10(2)$};
\node [left] at (-0.5,14.5)   {$11(3)$};
\node [left] at (-0.5,14)     {$12(4)$};
\node [left] at (-0.5,13.5)   {$13(5)$};
\node [left] at (-0.5,13)     {$14(6)$};
\node [left] at (-0.5,12.5)   {$15(7)$};
\node [left] at (-0.5,12)     {$16(8)$};
\node [left] at (-0.5,11.5)   {$17(1)$};
\node [left] at (-0.5,11)     {$18(2)$};
\node [left] at (-0.5,10.5)   {$19(3)$};
\node [left] at (-0.5,10)     {$20(4)$};
\node [left] at (-0.5,9.5)   {$21(5)$};
\node [left] at (-0.5,9)     {$22(6)$};
\node [left] at (-0.5,8.5)    {$23(7)$};
\node [left] at (-0.5,8)      {$24(8)$};
\node [left] at (-0.5,7.5)    {$25(1)$};
\node [left] at (-0.5,7)      {$26(2)$};
\node [left] at (-0.5,6.5)    {$27(3)$};
\node [left] at (-0.5,6)      {$28(4)$};
\node [left] at (-0.5,5.5)    {$29(5)$};
\node [above] at (3,17) {$(7,6)$};
\node [right] at (2.5,11.5) {$(i,k'_2)$};
\node [above] at (0,14) {$(a_2, \alpha_2)$};
\node at (2.5,5) {$(c)$};
\end{tikzpicture}}
\end{minipage}}
\caption{For the $U_{\varepsilon}^{\res}({L\mathfrak{sl}_{12}})$-module $L(Y_{6,1}Y_{7,6})$, where $\varepsilon^{2 \ell}=1$, $\ell=4$. (a) $\mathscr{P}_{6,1}$ and $\mathscr{P}_{7,6}$. (b) The translation of paths in $\mathscr{P}_{6,1}$ in $(a)$ to paths in $\mathscr{P}_{6,9}$ in (b) with respect to $\mathbf{PS}(7,6)$. We obtain the dominant monomial $Y_{5,8}Y_{8,7}$. Here, we have $(a_1, \alpha_1)=(5,8)$ and $(b_1, \beta_1)=(8,7)$. (c) The translation of paths $\mathscr{P}_{6,1}$ in (a) to paths in $\mathscr{P}_{6,17}$ in (c) with respect to $\mathbf{PS}(7,6)$. We obtain the dominant monomial $Y_{1,12}=Y_{1,4}$. Here, we have $(a_2, \alpha_2)=(1,12)$.}\label{F: tranlation}
\end{figure}

\begin{lemma}\label{Le: II translation of degree 2}
Let $\varepsilon^{2 \ell}=1$ and let $L(Y_{i,k}Y_{j,v})$, $k<v$, be a $U_{\varepsilon}^{\res}({L\mathfrak{sl}_{n+1}})$ snake module. Assume that $h(i,j)\equiv -h_{0}(i,j)-2s\,(\text{mod } 2 \ell)$, where $0\leq s<|\mathbf{PS}^{j}(i,k)|$. Then there is a translation of paths in $\mathscr{P}_{i,k}$ to paths in $\mathscr{P}_{i,k'}$ with respect to $\mathbf{PS}(j,v)$, where $k'\equiv k \, (\text{mod } 2 \ell)$ and $(i,k')\in \mathbf{PS}(j,v)$ such that the following properties hold.  For $i\leq j$ (resp. $i>j$), we denote $m=Y_{i-r,k'-r}Y_{j+r,v+r}$ (resp. $m=Y_{j-r,v+r}Y_{i+r,k'-r}$), where $r=\frac{k'-v-h_0(i,j)}2+1$. Then
\begin{enumerate}
    \item the dominant monomial $m$ is in $S_{(i,k)(j,v)}$;
    \item there exist paths $\widetilde{p}_1 \in \mathscr{P}_{i,k}$, $\widetilde{p}_2\in \mathscr{P}_{j,v}$ such that $\mathfrak{m}(\widetilde{p}_1)\mathfrak{m}(\widetilde{p}_2)$ is the lowest $\ell$-weight monomial of $L(m)$ and $\mathfrak{m}(\widetilde{p}_1)\mathfrak{m}(\widetilde{p}_2)$ is in $S_{(i,k)(j,v)}$;
    \item the monomials of $\chi_\varepsilon(L(m))$ are contained in $S_{(i,k)(j,v)}$.
\end{enumerate}
\end{lemma}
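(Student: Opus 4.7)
The plan is to prove Lemma \ref{Le: II translation of degree 2} by dualizing the proof of Lemma \ref{Le: I translation of degree 2}, with the roles of the $i$-rectangle and $j$-rectangle interchanged. Where Lemma \ref{Le: I translation of degree 2} translates paths in $\mathscr{P}_{j,v}$ downward to $\mathscr{P}_{j,v'}$ so that $(j,v')$ sits in prime snake position with respect to $(i,k)$, here we translate paths in $\mathscr{P}_{i,k}$ upward to $\mathscr{P}_{i,k'}$ so that $(i,k')$ sits in prime snake position with respect to $(j,v)$. From the congruence $h(i,j)=v-k\equiv -h_0(i,j)-2s\pmod{2\ell}$ with $0\le s<|\mathbf{PS}^{j}(i,k)|$, writing $k'=k+2a\ell$ yields $k'-v\equiv h_0(i,j)+2s\pmod{2\ell}$ for every $a$, and the bound $s<|\mathbf{PS}^{j}(i,k)|$ guarantees that for some $a\in\mathbb{Z}_{\geq 1}$ the value $k'-v$ lies in $[h_0(i,j),\min\{2n+2-i-j,i+j\}]$; thus $(i,k')\in\mathbf{PS}(j,v)$ and the translation of Definition \ref{Def: translation} exists. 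I carry out the argument for $i\le j$; the case $i>j$ is entirely symmetric.

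For (1), set $r=(k'-v-|j-i|)/2$ and let $p_2\in\mathscr{P}_{j,v}$ be the path with exactly one lower corner at $(i,k')$. A geometric check forces upper corners of $p_2$ at $(i-r,k'-r)$ and $(j+r,v+r)$, so $\mathfrak{m}(p_2)=Y_{i-r,k'-r}Y_{j+r,v+r}Y_{i,k'}^{-1}$. Take $p_1'\in\mathscr{P}_{i,k'}$ to be the highest path and let $p_1\in\mathscr{P}_{i,k}$ be its translate; since $k'\equiv k\pmod{2\ell}$, $\mathfrak{m}(p_1)=Y_{i,k'}$, and the $Y_{i,k'}^{\pm 1}$ cancel, giving $\mathfrak{m}(p_1)\mathfrak{m}(p_2)=m$. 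For non-overlap I compare $p_1$ (given by $y_c^{p_1}=|c-i|+k$) with $p_2$ column by column over the four intervals $[0,i-r],[i-r,i],[i,j+r],[j+r,n+1]$; each strict inequality reduces to $v-k>j-i$, $v-k>i-j$, or $k'-k>2r$, and all three follow from the snake-position inequality $v-k\ge|j-i|+2$ together with $k'-v=|j-i|+2r$.

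For (2), mirror Lemma \ref{Le: I translation of degree 2}(2) with the rectangles swapped: let $\tilde p_2\in\mathscr{P}_{j,v}$ be the lowest path and $\tilde p_1'\in\mathscr{P}_{i,k'}$ the path with exactly one upper corner at $(n+1-j,n+1+v)$, and let $\tilde p_1$ be its translate in $\mathscr{P}_{i,k}$. A direct computation gives $\mathfrak{m}(\tilde p_1)\mathfrak{m}(\tilde p_2)=Y_{n+1-i+r,\,n+1+k'-r}^{-1}Y_{n+1-j-r,\,n+1+v+r}^{-1}$, which is exactly the lowest $\ell$-weight monomial of $L(m)=L(Y_{i-r,k'-r}Y_{j+r,v+r})$; non-overlap is verified as in (1). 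For (3), I reuse the swap-at-intersection argument of Lemma \ref{Le: I translation of degree 2}(3): paths in $\mathscr{P}_{i-r,k'-r}$ run from $(0,j+v)$ to $(n+1,n+1-i+k')$ while paths in $\mathscr{P}_{j+r,v+r}$ run from $(0,i+k')$ to $(n+1,n+1-j+v)$, so their endpoints are swapped and every such pair must meet. Swapping tails at the highest intersection and translating the resulting $\mathscr{P}_{i,k'}$-path back to $\mathscr{P}_{i,k}$ produces a non-overlapping pair in $\overline{\mathscr{P}}_{((i,k),(j,v))}$ with the same monomial, showing that every monomial of $\chi_\varepsilon(L(Y_{i-r,k'-r}))\chi_\varepsilon(L(Y_{j+r,v+r}))$ lies in $S_{(i,k)(j,v)}$; since $\chi_\varepsilon(L(m))$ is contained in this product, (3) follows.

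The main obstacle I expect is the non-overlap check in (1). Unlike Lemma \ref{Le: I translation of degree 2}, here the rectangles $\mathbf{P}_{i,k}$ and $\mathbf{P}_{j,v}$ genuinely overlap in certain parameter regimes, so the argument cannot simply invoke rectangle disjointness: one must instead track where the ``dip'' of $p_2$ toward $(i,k')$ occurs relative to $p_1$'s peak at $(i,k)$, and verify column by column that the gap $k'-k=|j-i|+2r+(v-k)$ is large enough to keep $p_1$ strictly above $p_2$ even within the shared band. All of these inequalities do ultimately reduce to the snake-position inequality $v-k\ge|j-i|+2$, but identifying the correct shape of $p_2$ after having undone the translation is the most delicate bookkeeping in the proof.
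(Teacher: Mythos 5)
Parts (1) and (2) of your argument are correct and essentially identical to the paper's (your column-by-column verification of non-overlap in (1) is in fact more explicit than the paper's ``clearly''). The existence of the translation is also argued the same way. The problem is in part (3).

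In (3) you assert that swapping tails at an intersection and translating the resulting $\mathscr{P}_{i,k'}$-path back to $\mathscr{P}_{i,k}$ ``produces a non-overlapping pair,'' and you conclude that \emph{every} monomial of the full product $\chi_\varepsilon(L(Y_{i-r,k'-r}))\chi_\varepsilon(L(Y_{j+r,v+r}))$ lies in $S_{(i,k)(j,v)}$. This is exactly the step that cannot be copied from Lemma \ref{Le: I translation of degree 2}(3). There, the swapped path is translated \emph{downward} ($v>v'$), away from its partner, so non-overlap is automatic. Here $k<v<k'$, so the swapped path $\hat p_2\in\mathscr{P}_{i,k'}$ must be translated \emph{upward} by $2a\ell$, through the band occupied by $\hat p_1\in\mathscr{P}_{j,v}$, and the translated pair $(\hat p_2',\hat p_1)$ need not be non-overlapping. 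The paper's proof shows precisely that $\hat p_1$ and $\hat p_2'$ intersect if and only if $p$ and $p''$ intersect, where $p''$ is the translate of $p'$ into $\mathscr{P}_{j+r,v+r+2a\ell}$; consequently only the monomials of $S_{(i-r,k'-r)(j+r,v+r+2a\ell)}$ (non-overlapping pairs for the \emph{separated} snake), not of the full product, are shown to lie in $S_{(i,k)(j,v)}$. Your stronger containment of the full product is unjustified and, given the injectivity of the swap on pairs, would force all pairs — including those that overlap after separation — into $S_{(i,k)(j,v)}$, which the correspondence contradicts.

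The conclusion of (3) is still reachable, but you need the paper's extra step: convert $L(m)$ to the snake module $L(Y_{i-r,k'-r}Y_{j+r,v+r+2a\ell})$ (Theorem \ref{Th:simple modules are snake modules}), note that $\chi_\varepsilon(L(m))$ is contained in $S_{(i-r,k'-r)(j+r,v+r+2a\ell)}$ by the Mukhin--Young path description, and then use the intersection equivalence above to embed that smaller sum into $S_{(i,k)(j,v)}$. As written, your (3) replaces this with an appeal to a containment that does not hold.
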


\begin{proof}
Assume that $h(i,j)\equiv -h_{0}(i,j)-2s\,(\text{mod } 2 \ell)$, where $0\leq s<|\mathbf{PS}^{j}(i,k)|$. Then there exists $(i,k')$ such that $k'\equiv k\,(\text{mod } 2 \ell)$ and $(i,k')\in \mathbf{PS}(j,v)$. By Definition \ref{Def: translation}, there is a translation of paths in $\mathscr{P}_{i,k}$ to paths in $\mathscr{P}_{i,k'}$ with respect to $\mathbf{PS}(j,v)$, where $k'=k+2a \ell$ for some $a\in \ZZ_{\geq 0}$. In the following, we prove (1), (2), and (3) for the case of $i\leq j$, the proof for the case of $i>j$ is similar.

(1) Let $p_1$ be the highest path in $\mathscr{P}_{i,k'}$ with no lower corners and $p_2 \in \mathscr{P}_{j,v}$ be the path which has exactly one lower corner $(i, k')$. Since $i\leq j$, then $\mathfrak{m}(p_1)=Y_{i,k'}$, $\mathfrak{m}(p_2)=Y_{i-r,k'-r}Y_{i,k'}^{-1}Y_{j+r,v+r}$, where $r=\frac{k'-v-h_0(i,j)}2+1$. Let $p'_1\in\mathscr{P}_{i,k}$ denote the path that corresponds to $p_1$. Then $\mathfrak{m}(p'_1)=\mathfrak{m}(p_1)$. Clearly the paths $p'_1$ and $p_2$ are non-overlapping, so the dominant monomial $m=Y_{i-r,k'-r}Y_{j+r,v+r}=\mathfrak{m}(p'_1)\mathfrak{m}(p_2)$ is in $S_{(i,k)(j,v)}$.

(2) Following (1), $m=Y_{i-r,k'-r}Y_{j+r,v+r}$. Let $\widetilde{p}'_1 \in\mathscr{P}_{i,k'}$ be the path which has exactly one upper corner $(n+1-j, n+1+v)$, and let $\widetilde{p}_2$ be the lowest path in $\mathscr{P}_{j,v}$ with no upper corners. Let $\widetilde{p}_1$ be the path in $\mathscr{P}_{i,k}$ that corresponds to $\widetilde{p}'_1$. Then $\mathfrak{m}(\widetilde{p}'_1)=\mathfrak{m}(\widetilde{p}_1)$. Since $i\leq j$, we have $\mathfrak{m}(\widetilde{p}_1)=Y_{n+1-j-r,n+1+v+r-2a \ell}^{-1}Y_{n+1-j,n+1+v-2a \ell}Y_{n+1-i+r,n+1+k'-r-2a \ell}^{-1}$, $\mathfrak{m}(\widetilde{p}_2)=Y_{n+1-j,n+1+v}^{-1}$. Clearly the paths $\widetilde{p}_1$ and $\widetilde{p}_2$ are non-overlapping. Therefore, the lowest $l$-weight monomial of the module $L(m)$ is given by
\[
 \mathfrak{m}(\widetilde{p}_1)\mathfrak{m}(\widetilde{p}_2)=Y_{n+1-j-r,n+1+v+r}^{-1}Y_{n+1-i+r,n+1+k'-r}^{-1}, 
 \]
which is in $S_{(i,k)(j,v)}$.

(3) According to (1), using the translation of paths in $\mathscr{P}_{i,k}$ to paths in $\mathscr{P}_{i,k'}$ with respect to $\mathbf{PS}(j,v)$, where $k'=k+2a \ell$ for some $a\in\ZZ_{>0}$, we obtain a dominant monomial $Y_{i-r,k'-r}Y_{j+r,v+r}$. For any path $p$ in $\mathscr{P}_{i-r,k'-r}$, the starting point is $(0,j+v)$ and the ending point is $(n+1,n+1-i+k')$. Similarly, for any path $p'$ in $\mathscr{P}_{j+r,v+r}$, the starting point is $(0,i+k')$ and the ending point is $(n+1,n+1-j+v)$.

Therefore, the paths $p$ and $p'$ must intersect at some points. This implies there are paths from $(0,j+v)$ to $(n+1,n+1-j+v)$ and from $(0,i+k')$ to $(n+1,n+1-i+k')$. We choose $\hat{p}_1\in \mathscr{P}_{j,v}$, which is the path from $(0,j+v)$ to $(n+1,n+1-j+v)$ that lies below all other paths connecting these two points (sometimes, there is only one such path). The remaining path is denoted as $\hat{p}_2\in\mathscr{P}_{i,k'}$, which is the path from $(0,i+k')$ to $(n+1,n+1-i+k')$. Let $\hat{p}'_2\in\mathscr{P}_{i,k}$ denote the path corresponding to $\hat{p}_2$. Then $\mathfrak{m}(\hat{p}'_2)=\mathfrak{m}(\hat{p}_2)$.

Now, we convert $L(Y_{i-r,k'-r}Y_{j+r,v+r})$ into the snake module $L(Y_{i-r,k'-r}Y_{j+r,v+r+2a \ell})$. Let $p''\in\mathscr{P}_{j+r,v+r+2a \ell}$ denote the path corresponding to $p'$. If $p$ and $p''$ intersect at some point $(c_1,d_1)$, where $c_1\in [0,n+1]$ and $d_1\in \ZZ$, then we have $(c_1,d_1)\in \hat{p}_2$ and $(c_1,d_1-2a \ell) \in \hat{p}_1$. Furthermore, we have $(c_1,d_1-2a \ell)\in \hat{p}_2'$. Therefore, $\hat{p}_1$ and $\hat{p}'_2$ intersect at the point $(c_1,d_1-2a\ell)$.

Assume that $\hat{p}_1$, $\hat{p}'_2$ and $p$, $p''$ satisfy the correspondence relations mentioned above, respectively. In the following, we prove that if $p$ and $p''$ do not intersect, then $\hat{p}_1$ and $\hat{p}'_2$ also do not intersect. Suppose that $\hat{p}_1$ and $\hat{p}'_2$ intersect at some point $(c_2,d_2)$, where $c_2\in [0,n+1]$ and $d_2\in \ZZ$. Then we have $(c_2,d_2)\in p'$ and $(c_2,d_2+2a \ell)\in p$. Furthermore, we have $(c_2,d_2+2a \ell)\in p''$. So $p$ and $p''$ intersect at the point $(c_2,d_2+2a \ell)$. It is a contradiction. Therefore, we conclude that if $p$ and $p''$ do not intersect, then $\hat{p}_1$ and $\hat{p}'_2$ also do not intersect. That is, if $\mathfrak{m}(p)\mathfrak{m}(p'')\in S_{(i-r,k'-r)(j+r,v+r+2a \ell)}$, then $\mathfrak{m}(p)\mathfrak{m}(p'')= \mathfrak{m}(\hat{p}_1)\mathfrak{m}(\hat{p}'_2)\in S_{(i,k)(j,v)}$. Furthermore, $S_{(i-r,k'-r)(j+r,v+r+2a \ell)}$ is a part of $S_{(i,k)(j,v)}$.

By Theorem \ref{Th:simple modules are snake modules}, we have $\chi_\varepsilon(L(Y_{i-r,k'-r}Y_{j+r,v+r}))=\chi_\varepsilon(L(Y_{i-r,k'-r}Y_{j+r,v+r+2a \ell}))$. We know that $\chi_\varepsilon(L(Y_{i-r,k'-r}Y_{j+r,v+r+2a \ell}))$ is a part of $S_{(i-r,k'-r)(j+r,v+r+2a \ell)}$. Therefore, it follows that $\chi_\varepsilon(L(Y_{i-r,k'-r}Y_{j+r,v+r}))$ is also a part of $S_{(i-r,k'-r)(j+r,v+r+2a \ell)}$. Thus, we conclude that the assertion is valid.
\end{proof}

In Lemma \ref{Le: II translation of degree 2}, if $h(i,j)\equiv -h_{0}(i,j)-2s\,(\text{mod } 2 \ell)$, where $0\leq s<|\mathbf{PS}^{j}(i,k)|$, then there are translations of paths in $\mathscr{P}_{i,k}$ to paths in
\begin{align} \label{eq:def of rij}
\mathscr{P}_{i,k'_1}, \mathscr{P}_{i,k'_2}, \ldots, \mathscr{P}_{i,k'_{r(i,j)}}
\end{align}
with respect to $\mathbf{PS}(j,v)$, respectively, for some $r(i,j) \in \ZZ_{\ge 0}$, $k'_1<k'_2<\cdots<k'_{r(i,j)} \in \ZZ$. The number of all such translations is $r(i,j)$. We use the convention that $r(i,j)=0$ if there is no such translation. We denote the dominant monomials obtained by translating paths in $\mathscr{P}_{i,k}$ to paths in $\mathscr{P}_{i,k'_1}$, $\mathscr{P}_{i,k'_2}$, $\ldots$, $\mathscr{P}_{i,k'_{r(i,j)}}$ with respect to $\mathbf{PS}(j,v)$ as
\begin{align}\label{Eq:a_tb_t}
Y_{a_1,\alpha_1}Y_{b_1,\beta_1}, Y_{a_2,\alpha_2}Y_{b_2,\beta_2}, \ldots, Y_{a_{r(i,j)},\alpha_{r(i,j)}}Y_{b_{r(i,j)},\beta_{r(i,j)}},
\end{align}
respectively, where $a_{t}<b_{t}\in \hat{I}\cup \{n+1\}$, $\alpha_t, \beta_t \in \mathbb{Z}$, $1\leq t\leq r(i,j)$, see Example \ref{example:type ii translation of paths} and Figure \ref{F: tranlation}.

\begin{example} \label{example:type ii translation of paths}
Let $\varepsilon^{2 \ell}=1$ with $\ell=4$ and let $L(Y_{6,1}Y_{7,6})$ be a $U_{\varepsilon}^{\res}({L\mathfrak{sl}_{12}})$-module. There are two points $(6,9)$, $(6,17) \in {\bf PS}(7,6)$ and $9\equiv 17\equiv 1 \, (\text{mod } 8)$. That is, there are translations of paths in $\mathscr{P}_{6,1}$ to paths in $\mathscr{P}_{6,9}$ or to paths in $\mathscr{P}_{6,17}$ with respect to $\mathbf{PS}(7,6)$. Translating paths in $\mathscr{P}_{6,1}$ to paths in $\mathscr{P}_{6,9}$, see Figure \ref{F: tranlation} $(b)$, we obtain the dominant monomial $Y_{5,8}Y_{8,7}$, which is the product of $Y_{5,8}Y_{6,9}^{-1}Y_{8,7}$ and $Y_{6,9}$. The monomials of $\chi_\varepsilon(L(Y_{5,8}Y_{8,7}))$ are contained in $S_{(6,1)(7,6)}$. In addition, translating paths in $\mathscr{P}_{6,1}$ to paths in $\mathscr{P}_{6,17}$, see Figure \ref{F: tranlation} $(c)$, we obtain the dominant monomial $Y_{1,12}=Y_{1,4}$, which is the product of $Y_{1,12}Y_{6,17}^{-1}$ and $Y_{6,17}$. The monomials of $\chi_\varepsilon(L(Y_{1,4}))$ are contained in $S_{(6,1)(7,6)}$.
\end{example}

\begin{lemma}\label{Le: two translations}
Let $\varepsilon^{2 \ell}=1$ and let $L(Y_{i,k}Y_{j,v})$, $i\leq j$, $k<v$, be a $U_{\varepsilon}^{\res}({L\mathfrak{sl}_{n+1}})$ snake module. Assume that $Y_{a,\alpha}Y_{b,\beta}$, $a< b \in I$, $\alpha, \beta \in \mathbb{Z}$, is a dominant monomial which can be obtained by a translation of paths in $\mathscr{P}_{i,k}$ to paths in $\mathscr{P}_{i,k'}$ with respect to $\mathbf{PS}(j,v)$ and set $\gamma=\frac{(|j-i|+h(i,j))\,(\text{mod } 2 \ell)}2$. For $|j-i|+h(i,j)>2 \ell$, if $\gamma>0$, $a-\gamma$, $b+\gamma$ are in $\hat{I}\cup\{n+1\}$, then there is a translation of paths in $\mathscr{P}_{a,\alpha}$ to paths in $\mathscr{P}_{a, \overline{\alpha}}$ with respect to $\mathbf{PS}(b,\beta)$, where $\overline{\alpha}\equiv \alpha \, (\text{mod } 2 \ell)$ and $\beta+h_0(a,b)\leq \overline{\alpha}< \beta+h_0(a,b)+2 \ell$, such that the following properties hold:
\begin{enumerate}
    \item the dominant monomial $m=Y_{a-\gamma,\overline{\alpha}-\gamma}Y_{b+\gamma,\beta+\gamma}$ is in $S_{(i,k)(j,v)}$;
    \item the monomials of $\chi_\varepsilon(L(m))$ are contained in $S_{(i,k)(j,v)}$.
\end{enumerate}
\end{lemma}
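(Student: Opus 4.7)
The plan is to apply the construction underlying Lemma \ref{Le: II translation of degree 2} twice -- first, as provided by the hypothesis, to the outer snake $L(Y_{i,k}Y_{j,v})$ producing $Y_{a,\alpha}Y_{b,\beta}$, and then to the snake form of the inner simple module $L(Y_{a,\alpha}Y_{b,\beta})$ producing $m$. The first application already yields an explicit non-overlapping pair $(\hat p_1,\hat p_2)\in \overline{\mathscr{P}}_{((i,k),(j,v))}$ with $\mathfrak{m}(\hat p_1)\mathfrak{m}(\hat p_2)=Y_{a,\alpha}Y_{b,\beta}$, together with the outer containment $\chi_\varepsilon(L(Y_{a,\alpha}Y_{b,\beta}))\subseteq S_{(i,k)(j,v)}$ from part~(3) of that lemma.

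To apply the construction a second time, I first convert $L(Y_{a,\alpha}Y_{b,\beta})$ into snake form. With $a=i-r_1$, $b=j+r_1$, $\alpha=k'-r_1$, $\beta=v+r_1$ (the parameters from the first application, where $r_1=\tfrac{k'-v-h_0(i,j)}{2}+1$), a short computation gives $\alpha-\beta=j-i$ and, after reducing $k'\equiv k\pmod{2\ell}$ and using the definition of $\gamma$, the congruence $r_1+\gamma\equiv 0\pmod\ell$. Consequently the unique $\overline{\alpha}\equiv\alpha\pmod{2\ell}$ lying in $[\beta+h_0(a,b),\beta+h_0(a,b)+2\ell)$ equals $\alpha+2c\ell$ with $c=(r_1+\gamma)/\ell\ge 1$ (positivity of $c$ using $|j-i|+h(i,j)>2\ell$), and satisfies $\overline{\alpha}-\beta=h_0(a,b)+2(\gamma-1)$. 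In particular $(a,\overline{\alpha})\in\mathbf{PS}(b,\beta)$, so the desired translation from $\mathscr{P}_{a,\alpha}$ to $\mathscr{P}_{a,\overline{\alpha}}$ with respect to $\mathbf{PS}(b,\beta)$ exists by Definition~\ref{Def: translation}.

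For~(1), mimicking the construction of Lemma~\ref{Le: II translation of degree 2}(1) inside the inner snake $L(Y_{b,\beta}Y_{a,\overline{\alpha}})$, I take the highest path $q_1\in\mathscr{P}_{b,\beta}$ together with the path $q_2\in\mathscr{P}_{a,\overline{\alpha}}$ differing from the highest path by a single width-$\gamma$ lowering move whose corner structure contributes $Y_{a-\gamma,\overline{\alpha}-\gamma}$ and $Y_{b+\gamma,\beta+\gamma}$ while cancelling one $Y$-factor against $\mathfrak{m}(q_1)$. The hypotheses $\gamma>0$ and $a-\gamma,b+\gamma\in\hat{I}\cup\{n+1\}$ make the width-$\gamma$ move legitimate, and yield $\mathfrak{m}(q_1)\mathfrak{m}(q_2)=m$ with $q_1$ strictly above $q_2$. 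Pulling $q_2$ back through the translation to $q_2'\in\mathscr{P}_{a,\alpha}$ and grafting $(q_1,q_2')$ onto $(\hat p_1,\hat p_2)$ -- so that $\hat p_2\in\mathscr{P}_{j,v}$ simultaneously carries its original width-$r_1$ dip at $(i,k')$ and the new width-$\gamma$ feature inherited from $q_2$ -- produces a non-overlapping pair in $\overline{\mathscr{P}}_{((i,k),(j,v))}$ with product $m$, proving~(1). Part~(2) is then obtained by the same grafting applied termwise: the inner analogue of Lemma~\ref{Le: II translation of degree 2}(3) yields $\chi_\varepsilon(L(m))\subseteq S_{(b,\beta)(a,\overline{\alpha})}$, and the grafting sends each inner non-overlapping pair to an outer non-overlapping pair, whence $\chi_\varepsilon(L(m))\subseteq S_{(i,k)(j,v)}$.

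The principal obstacle is the bookkeeping of the grafting step: one must verify that the new width-$\gamma$ feature sits in a vertical region disjoint modulo~$2\ell$ from the original width-$r_1$ dip at $(i,k')$, so that the two features coexist as a single legitimate path in $\mathscr{P}_{j,v}$ while preserving non-overlapping with $\hat p_1$. The arithmetic input $r_1+\gamma=c\ell$ with $c\ge 1$, together with $|j-i|+h(i,j)>2\ell$, supplies precisely the $2c\ell$ of vertical spacing needed for this separation.
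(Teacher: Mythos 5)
Your reduction to a second application of the mechanism of Lemma \ref{Le: II translation of degree 2} is the right idea, and your arithmetic is sound: with $r_1=\tfrac{k'-v-h_0(i,j)}{2}+1$ one indeed gets $r_1+\gamma\equiv 0\pmod{\ell}$, $\overline{\alpha}=\alpha+2(r_1+\gamma)$, and $\overline{\alpha}-\beta=h_0(a,b)+2(\gamma-1)$, so $(a,\overline{\alpha})\in\mathbf{PS}(b,\beta)$ and the required translation exists; this matches the paper's setup. The problem is with the realization of $m$ inside $\overline{\mathscr{P}}_{((i,k),(j,v))}$, which is the actual content of (1), and it is wrong on two counts. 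First, the inner pair is misdescribed: a width-$\gamma$ lowering move applied to the highest path of $\mathscr{P}_{a,\overline{\alpha}}$ produces $Y_{a-\gamma,\overline{\alpha}+\gamma}Y_{a,\overline{\alpha}+2\gamma}^{-1}Y_{a+\gamma,\overline{\alpha}+\gamma}$, which contains neither $Y_{a-\gamma,\overline{\alpha}-\gamma}$ nor $Y_{b+\gamma,\beta+\gamma}$ and cancels nothing in $Y_{b,\beta}$. The pair realizing $m$ is the highest path of $\mathscr{P}_{a,\overline{\alpha}}$ (monomial $Y_{a,\overline{\alpha}}$) together with the path of $\mathscr{P}_{b,\beta}$ whose unique lower corner is $(a,\overline{\alpha})$, with monomial $Y_{a-\gamma,\overline{\alpha}-\gamma}Y_{a,\overline{\alpha}}^{-1}Y_{b+\gamma,\beta+\gamma}$; the dip lives in $\mathscr{P}_{b,\beta}$, not in $\mathscr{P}_{a,\overline{\alpha}}$.

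Second, and more seriously, the grafting step fails: you cannot keep $\hat p_1$ equal to the highest path of $\mathscr{P}_{i,k}$ and load both the original width-$r_1$ dip and the new width-$\gamma$ feature onto a single path of $\mathscr{P}_{j,v}$. The upper corner carrying $Y_{a-\gamma,\overline{\alpha}-\gamma}$ sits at abscissa $a-\gamma=i-(r_1+\gamma)$; a path of $\mathscr{P}_{j,v}$ can only have its leftmost upper corner there at a height congruent to $j+v-i+(r_1+\gamma)$ modulo $2\ell$, while $\overline{\alpha}-\gamma\equiv k+(r_1+\gamma)\pmod{2\ell}$, and these agree only when $h(i,j)\equiv-|j-i|\pmod{2\ell}$, which is not the general case (and inserting an extra preliminary lower corner to fix the height destroys dominance, since $\hat p_1=Y_{i,k}$ has nothing to cancel it). The paper's proof avoids this by resolving the crossing of the translated highest path of $\mathscr{P}_{a,\overline{\alpha}}$, viewed in $\mathscr{P}_{a,\alpha}$ (running from $(0,j+v)$ to $(n+1,n+1-i+k')$), with the dipping path of $\mathscr{P}_{b,\beta}$ (running from $(0,i+k')$ to $(n+1,n+1-j+v)$) at their intersection $(i+\gamma,k'+\gamma)$: the new feature is thereby split between the two outer paths, $\hat p_1\in\mathscr{P}_{i,k}$ receiving the corner $Y_{a-\gamma,\overline{\alpha}-\gamma}$ and $p_2\in\mathscr{P}_{j,v}$ receiving $Y_{b+\gamma,\beta+\gamma}$, and the non-overlapping condition follows from $\overline{\alpha}-2c\ell<\alpha$, which is where the hypothesis $|j-i|+h(i,j)>2\ell$ enters. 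Since your argument for (2) is carried by the same grafting applied termwise, it inherits the gap.
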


\begin{proof}
Suppose that $|j-i|+h(i,j)>2 \ell$. If $i\leq j$, $\gamma>0$, and $a-\gamma$, $b+\gamma$ are in $\hat{I}\cup\{n+1\}$, then there exists a point $(a,\overline{\alpha})$ which satisfies $\beta+h_0(a,b)\leq \overline{\alpha}< \beta+h_0(a,b)+2 \ell$ and $\alpha \equiv \overline{\alpha} (\text{mod } 2 \ell)$. By Definition \ref{Def: translation}, there is a translation of paths in $\mathscr{P}_{a,\alpha}$ to paths in $\mathscr{P}_{a,\overline{\alpha}}$ with respect to $\mathbf{PS}(b,\beta)$.

(1) Let $p'_1$ be the highest path in $\mathscr{P}_{a,\overline{\alpha}}$ with no lower corners and $p'_2 \in \mathscr{P}_{b,\beta}$ be the path which has exactly one lower corner $(a, \overline{\alpha})$. That is, $\mathfrak{m}(p'_1)=Y_{a,\overline{\alpha}}$, $\mathfrak{m}(p'_2)=Y_{a-\gamma,\overline{\alpha}-\gamma}Y_{a,\overline{\alpha}}^{-1}Y_{b+\gamma,\beta+\gamma}$. Then we obtain a dominant monomial $m=Y_{a-\gamma,\overline{\alpha}-\gamma}Y_{b+\gamma,\beta+\gamma}=\mathfrak{m}(p'_1)\mathfrak{m}(p'_2)$.

Let $p''_1 \in \mathscr{P}_{a,{\alpha}}$ be the path that corresponds to $p'_1$. Then $\mathfrak{m}(p''_1)=\mathfrak{m}(p'_1)$. By assumption, the dominant monomial $Y_{a,\alpha}Y_{b,\beta}$ can be obtained by a translation of paths in $\mathscr{P}_{i,k}$ to paths in $\mathscr{P}_{i,k'}$ with respect to $\mathbf{PS}(j,v)$, where $k'=k+2c \ell$ for some $c\in\ZZ_{\geq 1}$.Thus, the starting point of the path $p''_1$ is $(0,j+v)$, the endpoint is $(n+1,n+1-i+k')$. In addition, the starting point of the path $p'_2$ is $(0,i+k')$, and the endpoint is $(n+1,n+1-j+v)$. Furthermore, the paths $p''_1$ and $p'_2$ intersect at the point $(i+\gamma,k'+\gamma)$. We choose $p_1\in \mathscr{P}_{i,k'}$ to be the path from $(0,i+k')$ to $(n+1,n+1-i+k')$. That is, $\mathfrak{m}(p_1)=Y_{a-\gamma,\overline{\alpha}-\gamma}Y_{a,\overline{\alpha}}^{-1}Y_{i+\gamma,k'+\gamma}$. The remaining path is denoted as $p_2\in \mathscr{P}_{j,v}$, which is the path from $(0,j+v)$ to $(n+1,n+1-j+v)$. That is, $\mathfrak{m}(p_2)=Y_{a,\alpha}Y_{i+\gamma,k'+\gamma}^{-1}Y_{b+\gamma,\beta+\gamma}$. Therefore, $\mathfrak{m}(p_1)\mathfrak{m}(p_2)=\mathfrak{m}(p''_1)\mathfrak{m}(p'_2)$. Let $\hat{p}_1\in \mathscr{P}_{i,k}$ be the path that corresponds to $p_1$. Then $\mathfrak{m}(\hat{p}_1)=\mathfrak{m}(p_1)=Y_{a-\gamma,\overline{\alpha}-\gamma-2c \ell }Y_{a,\overline{\alpha}-2c \ell }^{-1}Y_{i+\gamma,k'+\gamma-2c \ell}$.
Since $|j-i|+h(i,j)>2 \ell$, we have that $\overline{\alpha}-2c \ell<\alpha$. Therefore, the paths $\hat{p}_1$ and $p_2$ are non-overlapping. Hence, the dominant monomial
$\mathfrak{m}(\hat{p}_1)\mathfrak{m}(p_2)=\mathfrak{m}(p'_1)\mathfrak{m}(p'_2)=Y_{a-\gamma,\overline{\alpha}-\gamma}Y_{b+\gamma,\beta+\gamma}$
is in $S_{(i,k)(j,v)}$.

(2) Using the same method as in (2) and (3) of Lemma \ref{Le: II translation of degree 2}, we can obtain that the lowest $l$-weight monomial of $L(m)$ is in $S_{(i,k)(j,v)}$. Furthermore, the monomials of the $\varepsilon$-character of the module $L(m)$ are contained in $S_{(i,k)(j,v)}$.
\end{proof}

\begin{example}\label{Ex:two translations}
Let $\varepsilon^{2 \ell}=1$ with $\ell=2$ and let $L(Y_{3,0}Y_{6,7})$ be a $U_{\varepsilon}^{\res}({L\mathfrak{sl}_{8}})$-module.
There is a point $(3,12) \in {\bf PS}(6,7)$ and $12 \equiv 0 \, (\text{mod } 4)$. That is, there is a translation of paths in $\mathscr{P}_{3,0}$ to paths in $\mathscr{P}_{3,12}$ with respect to ${\bf PS}(6,7)$, see Figure \ref{F:  two translations} $(b)$. We obtain the dominant monomial $Y_{2,11}Y_{7,8}$, which is the product of $Y_{2,11}Y_{3,12}^{-1}Y_{7,8}$ and $Y_{3,12}$. The monomials of $\chi_\varepsilon(L(Y_{2,11}Y_{7,8}))$ are contained in $S_{(3,0)(6,7)}$.

In addition, there exists $(2,15)\in {\bf PS}(7,8)$ and $15\equiv 11 \, (\text{mod } 4)$. That is, there is a translation of paths in $\mathscr{P}_{2,11}$ to paths in $\mathscr{P}_{2,15}$ with respect to ${\bf PS}(7,8)$. We obtain the dominant monomial $Y_{1,14}=Y_{1,2}$, which is the product of $Y_{1,14}Y_{2,15}^{-1}$ and $Y_{2,15}$, see Figure \ref{F:  two translations} $(c)$. The monomials of $\chi_\varepsilon(L(Y_{1,2}))$ are contained in $S_{(3,0)(6,7)}$.
\end{example}

\begin{figure}
\resizebox{0.6\width}{0.6\height}{
\begin{minipage}[b]{0.4\linewidth}
\centerline{
\begin{tikzpicture}
\draw[step=.5cm,gray,thin] (-0.5,5.5) grid (3.5,17) (-0.5,5.5)--(3.5,5.5);
\draw[fill] (0,17.3) circle (2pt) -- (0.5,17.3) circle (2pt) --(1,17.3) circle (2pt) --(1.5,17.3) circle (2pt)--(2,17.3) circle (2pt) -- (2.5,17.3) circle (2pt) --(3,17.3) circle (2pt);
\begin{scope}[thick, every node/.style={sloped,allow upside down}]
\draw (-0.5,15.5)--node {\midarrow}(0,16)--node {\midarrow}(0.5,16.5)--node {\midarrow}(1,17)--node {\midarrow}(1.5,16.5)--node {\midarrow}(2,16)--node {\midarrow}(2.5,15.5)--node {\midarrow}(3,15)--node {\midarrow}(3.5,14.5);
\draw (-0.5,15.5)--node {\midarrow}(0,15)--node {\midarrow}(0.5,14.5)--node {\midarrow}(1,14)--node {\midarrow}(1.5,13.5)--node {\midarrow}(2,13)--node {\midarrow}(2.5,13.5)--node {\midarrow}(3,14)--node {\midarrow}(3.5,14.5);
\draw (0.5,16.5)--node {\midarrow}(1,16)--node {\midarrow}(1.5,15.5)--node {\midarrow}(2,15)--node {\midarrow}(2.5,14.5)--node {\midarrow}(3,14);
\draw (0,16)--node {\midarrow}(0.5,15.5)--node {\midarrow}(1,15)--node {\midarrow}(1.5,14.5)--node {\midarrow}(2,14)--node {\midarrow}(2.5,13.5);
\draw (0,15)--node {\midarrow}(0.5,15.5)--node {\midarrow}(1,16)--node {\midarrow}(1.5,16.5);
\draw (0.5,14.5)--node {\midarrow}(1,15)--node {\midarrow}(1.5,15.5)--node {\midarrow}(2,16);
\draw (1,14)--node {\midarrow}(1.5,14.5)--node {\midarrow}(2,15)--node {\midarrow}(2.5,15.5);
\draw (1.5,13.5)--node {\midarrow}(2,14)--node {\midarrow}(2.5,14.5)--node {\midarrow}(3,15);
\draw[orange](-0.5,10.5)--node {\midarrow}(0,11)--node {\midarrow}(0.5,11.5)--node {\midarrow}(1,12)--node {\midarrow}(1.5,12.5)--node {\midarrow}(2,13)--node {\midarrow}(2.5,13.5)--node {\midarrow}(3,13)--node {\midarrow}(3.5,12.5);
\draw[orange](-0.5,10.5)--node {\midarrow}(0,10)--node {\midarrow}(0.5,9.5)--node {\midarrow}(1,10)--node {\midarrow}(1.5,10.5)--node {\midarrow}(2,11)--node {\midarrow}(2.5,11.5)--node {\midarrow}(3,12)--node {\midarrow}(3.5,12.5);
\draw[orange](0,10)--node {\midarrow}(0.5,10.5)--node {\midarrow}(1,11)--node {\midarrow}(1.5,11.5)--node {\midarrow}(2,12)--node {\midarrow}(2.5,12.5)--node {\midarrow}(3,13);
\draw[orange](0,11)--node {\midarrow}(0.5,10.5)--node {\midarrow}(1,10);
\draw[orange](0.5,11.5)--node {\midarrow}(1,11)--node {\midarrow}(1.5,10.5);
\draw[orange](1,12)--node {\midarrow}(1.5,11.5)--node {\midarrow}(2,11);
\draw[orange](1.5,12.5)--node {\midarrow}(2,12)--node {\midarrow}(2.5,11.5);
\draw[orange](2,13)--node {\midarrow}(2.5,12.5)--node {\midarrow}(3,12);
\draw[thick,red](-0.5,15.55)--node {\midarrow}(0,16.05)--node {\midarrow}(0.5,15.55)--node {\midarrow}(1,15.05)--node {\midarrow}(1.5,14.55)--node {\midarrow}(2,15.05)--node {\midarrow}(2.5,15.55)--node {\midarrow}(3,15.05)--node {\midarrow}(3.5,14.55);
\draw[thick,red](-0.5,10.55)--node {\midarrow}(0,11.05)--node {\midarrow}(0.5,11.55)--node {\midarrow}(1,12.05)--node {\midarrow}(1.5,12.55)--node {\midarrow}(2,12.05)--node {\midarrow}(2.5,11.55)--node {\midarrow}(3,12.05)--node {\midarrow}(3.5,12.55);
\draw[fill](1,17) circle (1.5pt)  (2.5,13.5) circle (1.5pt);
\end{scope}
\node [above] at (0,17.3)   {$1$};
\node [above] at (0.5,17.3) {$2$};
\node [above] at (1,17.3)   {$3$};
\node [above] at (1.5,17.3) {$4$};
\node [above] at (2,17.3)   {$5$};
\node [above] at (2.5,17.3) {$6$};
\node [above] at (3,17.3)   {$7$};
\node [left] at (-0.8,17)   {$0$};
\node [left] at (-0.8,16.5) {$1$};
\node [left] at (-0.8,16)   {$2$};
\node [left] at (-0.8,15.5) {$3$};
\node [left] at (-0.8,15)   {$4$};
\node [left] at (-0.5,14.5) {$5(1)$};
\node [left] at (-0.5,14)   {$6(2)$};
\node [left] at (-0.5,13.5) {$7(3)$};
\node [left] at (-0.5,13)   {$8(4)$};
\node [left] at (-0.5,12.5) {$9(1)$};
\node [left] at (-0.5,12)   {$10(2)$};
\node [left] at (-0.5,11.5) {$11(3)$};
\node [left] at (-0.5,11)   {$12(4)$};
\node [left] at (-0.5,10.5) {$13(1)$};
\node [left] at (-0.5,10)   {$14(2)$};
\node [left] at (-0.5,9.5) {$15(3)$};
\node [left] at (-0.5,9)   {$16(4)$};
\node [left] at (-0.5,8.5) {$17(1)$};
\node [left] at (-0.5,8)   {$18(2)$};
\node [left] at (-0.5,7.5) {$19(3)$};
\node [left] at (-0.5,7)   {$20(4)$};
\node [left] at (-0.5,6.5)  {$21(1)$};
\node [left] at (-0.5,6)   {$22(2)$};
\node [left] at (-0.5,5.5)  {$23(3)$};
\node at (1.5,5) {$(a)$};
\node [right] at (1,17) {$(3,0)$};
\node [right] at (2.5,13.5) {$(6,7)$};
\end{tikzpicture}}
\end{minipage}
\begin{minipage}[b]{0.4\linewidth}
\centerline{
\begin{tikzpicture}
\draw[step=.5cm,gray,thin] (-0.5,5.5) grid (3.5,17) (-0.5,5.5)--(3.5,5.5);
\draw[fill] (0,17.3) circle (2pt) -- (0.5,17.3) circle (2pt) --(1,17.3) circle (2pt) --(1.5,17.3) circle (2pt)--(2,17.3) circle (2pt) -- (2.5,17.3) circle (2pt) --(3,17.3) circle (2pt);
\begin{scope}[thick, every node/.style={sloped,allow upside down}]
\draw (-0.5,9.45)--node {\midarrow}(0,9.95)--node {\midarrow}(0.5,10.45)--node {\midarrow}(1,10.95)--node {\midarrow}(1.5,10.45)--node {\midarrow}(2,10)--node {\midarrow}(2.5,9.5)--node {\midarrow}(3,9)--node {\midarrow}(3.5,8.5);
\draw (-0.5,9.5)--node {\midarrow}(0,9)--node {\midarrow}(0.5,8.5)--node {\midarrow}(1,8)--node {\midarrow}(1.5,7.5)--node {\midarrow}(2,7)--node {\midarrow}(2.5,7.5)--node {\midarrow}(3,8)--node {\midarrow}(3.5,8.5);
\draw (0.5,10.45)--node {\midarrow}(1,9.95)--node {\midarrow}(1.5,9.45)--node {\midarrow}(2,9)--node {\midarrow}(2.5,8.5)--node {\midarrow}(3,8);
\draw (0,9.95)--node {\midarrow}(0.5,9.45)--node {\midarrow}(1,9)--node {\midarrow}(1.5,8.5)--node {\midarrow}(2,8)--node {\midarrow}(2.5,7.5);
\draw (0,9)--node {\midarrow}(0.5,9.5)--node {\midarrow}(1,10)--node {\midarrow}(1.5,10.5);
\draw (0.5,8.5)--node {\midarrow}(1,9)--node {\midarrow}(1.5,9.5)--node {\midarrow}(2,10);
\draw (1,8)--node {\midarrow}(1.5,8.5)--node {\midarrow}(2,9)--node {\midarrow}(2.5,9.5);
\draw (1.5,7.5)--node {\midarrow}(2,8)--node {\midarrow}(2.5,8.5)--node {\midarrow}(3,9);
\draw[orange](-0.5,10.5)--node {\midarrow}(0,11)--node {\midarrow}(0.5,11.5)--node {\midarrow}(1,12)--node {\midarrow}(1.5,12.5)--node {\midarrow}(2,13)--node {\midarrow}(2.5,13.5)--node {\midarrow}(3,13)--node {\midarrow}(3.5,12.5);
\draw[orange](-0.5,10.5)--node {\midarrow}(0,10)--node {\midarrow}(0.5,9.5)--node {\midarrow}(1,10)--node {\midarrow}(1.5,10.5)--node {\midarrow}(2,11)--node {\midarrow}(2.5,11.5)--node {\midarrow}(3,12)--node {\midarrow}(3.5,12.5);
\draw[orange](0,10)--node {\midarrow}(0.5,10.5)--node {\midarrow}(1,11)--node {\midarrow}(1.5,11.5)--node {\midarrow}(2,12)--node {\midarrow}(2.5,12.5)--node {\midarrow}(3,13);
\draw[orange](0,11)--node {\midarrow}(0.5,10.5)--node {\midarrow}(1,10);
\draw[orange](0.5,11.5)--node {\midarrow}(1,11)--node {\midarrow}(1.5,10.5);
\draw[orange](1,12)--node {\midarrow}(1.5,11.5)--node {\midarrow}(2,11);
\draw[orange](1.5,12.5)--node {\midarrow}(2,12)--node {\midarrow}(2.5,11.5);
\draw[orange](2,13)--node {\midarrow}(2.5,12.5)--node {\midarrow}(3,12);
\draw[red](-0.5,10.55)--node {\midarrow}(0,11.05)--node {\midarrow}(0.5,11.55)--node {\midarrow}(1,11.05)--node {\midarrow}(1.5,11.55)--node {\midarrow}(2,12.05)--node {\midarrow}(2.5,12.55)--node {\midarrow}(3,13.05)--node {\midarrow}(3.5,12.55);
\draw[red](-0.5,10.55)--node {\midarrow}(0,10.05);
\draw[red](-0.5,9.55)--node {\midarrow}(0,10.05);
\draw[red] (-0.5,9.55)--node {\midarrow}(0,9.05)--node {\midarrow}(0.5,9.55)--node {\midarrow}(1,9.05)--node {\midarrow}(1.5,8.55)--node {\midarrow}(2,8.05)--node {\midarrow}(2.5,7.55)--node {\midarrow}(3,8.05)--node {\midarrow}(3.5,8.55);
\draw[red] (1.5,10.55)--node {\midarrow}(2,11.05)--node {\midarrow}(2.5,11.55)--node {\midarrow}(3,12.05)--node {\midarrow}(3.5,12.55);
\draw[red] (1.5,10.55)--node {\midarrow}(2,10.05)--node {\midarrow}(2.5,9.55)--node {\midarrow}(3,9.05)--node {\midarrow}(3.5,8.55);
\draw[fill] (0.5,11.5) circle (2pt) (3,13) circle (2pt) (1,11) circle (1.5pt);
\end{scope}
\node [above] at (0,17.3)   {$1$};
\node [above] at (0.5,17.3) {$2$};
\node [above] at (1,17.3)   {$3$};
\node [above] at (1.5,17.3) {$4$};
\node [above] at (2,17.3)   {$5$};
\node [above] at (2.5,17.3) {$6$};
\node [above] at (3,17.3)   {$7$};
\node [left] at (-0.8,17)   {$0$};
\node [left] at (-0.8,16.5) {$1$};
\node [left] at (-0.8,16)   {$2$};
\node [left] at (-0.8,15.5) {$3$};
\node [left] at (-0.8,15)   {$4$};
\node [left] at (-0.5,14.5) {$5(1)$};
\node [left] at (-0.5,14)   {$6(2)$};
\node [left] at (-0.5,13.5) {$7(3)$};
\node [left] at (-0.5,13)   {$8(4)$};
\node [left] at (-0.5,12.5) {$9(1)$};
\node [left] at (-0.5,12)   {$10(2)$};
\node [left] at (-0.5,11.5) {$11(3)$};
\node [left] at (-0.5,11)   {$12(4)$};
\node [left] at (-0.5,10.5) {$13(1)$};
\node [left] at (-0.5,10)   {$14(2)$};
\node [left] at (-0.5,9.5) {$15(3)$};
\node [left] at (-0.5,9)   {$16(4)$};
\node [left] at (-0.5,8.5) {$17(1)$};
\node [left] at (-0.5,8)   {$18(2)$};
\node [left] at (-0.5,7.5) {$19(3)$};
\node [left] at (-0.5,7)   {$20(4)$};
\node [left] at (-0.5,6.5)  {$21(1)$};
\node [left] at (-0.5,6)   {$22(2)$};
\node [left] at (-0.5,5.5)  {$23(3)$};
\node [above] at (0.5,11.5)  {$(2,11)$};
\node [right] at (3,13)  {$(7,8)$};
\node [right] at (1,11)  {$(3,12)$};
\node at (1.5,5) {$(b)$};
\end{tikzpicture}}
\end{minipage}
\begin{minipage}[b]{0.4\linewidth}
\centerline{
\begin{tikzpicture}
\draw[step=.5cm,gray,thin] (-0.5,5.5) grid (3.5,17) (-0.5,5.5)--(3.5,5.5);
\draw[fill] (0,17.3) circle (2pt) -- (0.5,17.3) circle (2pt) --(1,17.3) circle (2pt) --(1.5,17.3) circle (2pt)--(2,17.3) circle (2pt) -- (2.5,17.3) circle (2pt) --(3,17.3) circle (2pt);
\begin{scope}[thick, every node/.style={sloped,allow upside down}]
\draw[red] (-0.5,9.5)--node {\midarrow}(0,10)--node {\midarrow}(0.5,10.5)--node {\midarrow}(1,11)--node {\midarrow}(1.5,11.5)--node {\midarrow}(2,12)--node {\midarrow}(2.5,12.5)--node {\midarrow}(3,13)--node {\midarrow}(3.5,12.5);
\draw[red] (-0.5,9.5)--node {\midarrow}(0,9)--node {\midarrow}(0.5,9.5)--node {\midarrow}(1,10)--node {\midarrow}(1.5,10.5)--node {\midarrow}(2,11)--node {\midarrow}(2.5,11.5)--node {\midarrow}(3,12)--node {\midarrow}(3.5,12.5);
\draw[red](0,10)--node {\midarrow}(0.5,9.5);
\draw[red](0.5,10.5)--node {\midarrow}(1,10);
\draw[red](1,11)--node {\midarrow}(1.5,10.5);
\draw[red](1.5,11.5)--node {\midarrow}(2,11);
\draw[red](2,12)--node {\midarrow}(2.5,11.5);
\draw[red](2.5,12.5)--node {\midarrow}(3,12);
\draw[blue] (-0.5,8.5)--node {\midarrow}(0,9)--node {\midarrow}(0.5,9.5)--node {\midarrow}(1,9)--node {\midarrow}(1.5,8.5)--node {\midarrow}(2,8)--node {\midarrow}(2.5,7.5)--node {\midarrow}(3,7)--node {\midarrow}(3.5,6.5);
\draw[blue] (-0.5,8.5)--node {\midarrow}(0,8)--node {\midarrow}(0.5,7.5)--node {\midarrow}(1,7)--node {\midarrow}(1.5,6.5)--node {\midarrow}(2,6)--node {\midarrow}(2.5,5.5)--node {\midarrow}(3,6)--node {\midarrow}(3.5,6.5);
\draw[blue] (0,9)--node {\midarrow}(0.5,8.5)--node {\midarrow}(1,8)--node {\midarrow}(1.5,7.5)--node {\midarrow}(2,7)--node {\midarrow}(2.5,6.5)--node {\midarrow}(3,6);
\draw[blue](0,8)--node {\midarrow}(0.5,8.5)--node {\midarrow}(1,9);
\draw[blue](0.5,7.5)--node {\midarrow}(1,8)--node {\midarrow}(1.5,8.5);
\draw[blue](1,7)--node {\midarrow}(1.5,7.5)--node {\midarrow}(2,8);
\draw[blue](1.5,6.5)--node {\midarrow}(2,7)--node {\midarrow}(2.5,7.5);
\draw[blue](2,6)--node {\midarrow}(2.5,6.5)--node {\midarrow}(3,7);
\draw[green](-0.5,9.55)--node {\midarrow}(0,10.05)--node {\midarrow}(0.5,9.55)--node {\midarrow}(1,9.05)--node {\midarrow}(1.5,8.55)--node {\midarrow}(2,8.05)--node {\midarrow}(2.5,7.55)--node {\midarrow}(3,7.05)--node {\midarrow}(3.5,6.55);
\draw[green](-0.5,9.55)--node {\midarrow}(0,9.05)--node {\midarrow}(0.5,8.55)--node {\midarrow}(1,8.05)--node {\midarrow}(1.5,7.55)--node {\midarrow}(2,7.05)--node {\midarrow}(2.5,6.55)--node {\midarrow}(3,6.05)--node {\midarrow}(3.5,6.55);
\draw[fill] (0.5,9.5) circle (2pt) (0,10) circle (2pt);
\end{scope}
\node [above] at (0,17.3)   {$1$};
\node [above] at (0.5,17.3) {$2$};
\node [above] at (1,17.3)   {$3$};
\node [above] at (1.5,17.3) {$4$};
\node [above] at (2,17.3)   {$5$};
\node [above] at (2.5,17.3) {$6$};
\node [above] at (3,17.3)   {$7$};
\node [left] at (-0.8,17)   {$0$};
\node [left] at (-0.8,16.5) {$1$};
\node [left] at (-0.8,16)   {$2$};
\node [left] at (-0.8,15.5) {$3$};
\node [left] at (-0.8,15)   {$4$};
\node [left] at (-0.5,14.5) {$5(1)$};
\node [left] at (-0.5,14)   {$6(2)$};
\node [left] at (-0.5,13.5) {$7(3)$};
\node [left] at (-0.5,13)   {$8(4)$};
\node [left] at (-0.5,12.5) {$9(1)$};
\node [left] at (-0.5,12)   {$10(2)$};
\node [left] at (-0.5,11.5) {$11(3)$};
\node [left] at (-0.5,11)   {$12(4)$};
\node [left] at (-0.5,10.5) {$13(1)$};
\node [left] at (-0.5,10)   {$14(2)$};
\node [left] at (-0.5,9.5) {$15(3)$};
\node [left] at (-0.5,9)   {$16(4)$};
\node [left] at (-0.5,8.5) {$17(1)$};
\node [left] at (-0.5,8)   {$18(2)$};
\node [left] at (-0.5,7.5) {$19(3)$};
\node [left] at (-0.5,7)   {$20(4)$};
\node [left] at (-0.5,6.5)  {$21(1)$};
\node [left] at (-0.5,6)   {$22(2)$};
\node [left] at (-0.5,5.5)  {$23(3)$};
\node [right] at (0.5,9.5)  {$(2,15)$};
\node [above] at (0,10)  {$(1,14)$};
\node at (1.5,5) {$(c)$};
\end{tikzpicture}}
\end{minipage}}
\caption{For the $U_{\varepsilon}^{\res}({L\mathfrak{sl}_{8}})$-module $L(Y_{3,0}Y_{6,7})$, where $\varepsilon^{2 \ell}=1$, $\ell=2$. (a) $\mathscr{P}_{3,0}$ and $\mathscr{P}_{6,7}$. (b) The translation of paths in $\mathscr{P}_{3,0}$ in (a) to paths in $\mathscr{P}_{3,12}$ in (b) with respect to ${\bf PS}(6,7)$. We obtain the dominant monomial $Y_{2,11}Y_{7,8}$. (c) The translation of paths in $\mathscr{P}_{2,11}$ in $(b)$ to paths in $\mathscr{P}_{2,15}$ in $(c)$ with respect to ${\bf PS}(7,8)$. We obtain the dominant monomial $Y_{1,14}=Y_{1,2}$.}\label{F:  two translations}
\end{figure}
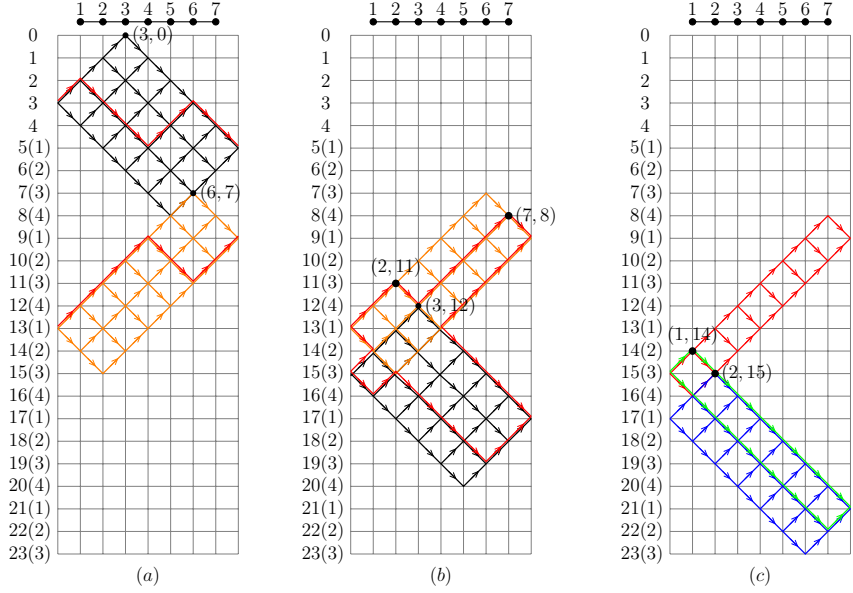

Dually, we have the following lemma.

\begin{lemma}\label{Le: dual two translations}
Let $\varepsilon^{2 \ell}=1$ and let $L(Y_{i,k}Y_{j,v})$, $i>j$, $k<v$, be a $U_{\varepsilon}^{\res}({L\mathfrak{sl}_{n+1}})$ snake module. Assume that $Y_{a,\alpha}Y_{b,\beta}$, $a< b \in I$, $\alpha, \beta \in \mathbb{Z}$, is a dominant monomial which can be obtained by a translation of paths in $\mathscr{P}_{i,k}$ to paths in $\mathscr{P}_{i,k'}$ with respect to ${\bf PS}(j,v)$ and set $\gamma=\frac{(|j-i|+h(i,j))\,(\text{mod } 2 \ell)}2$. For $|j-i|+h(i,j)>2 \ell$, if $\gamma>0$, and $a-\gamma$, $b+\gamma\in \hat{I}\cup\{n+1\}$, then there is a translation of paths in $\mathscr{P}_{b,\beta}$ to paths in $\mathscr{P}_{b,\overline{\beta}}$ with respect to ${\bf PS}(a,\alpha)$, where $\overline{\beta}\equiv \beta\, (\text{mod } 2 \ell)$ and $\alpha+h_0(a,b)\leq \overline{\beta}< \alpha+h_0(a,b)+2 \ell$, such that the following properties hold:
\begin{enumerate}
\item the dominant monomial $m=Y_{a-\gamma,\alpha+\gamma}Y_{b+\gamma,\overline{\beta}-\gamma}$ is in $S_{(i,k)(j,v)}$;
\item The monomials of $\chi_\varepsilon(L(m))$ are contained in $S_{(i,k)(j,v)}$.
\end{enumerate}
\end{lemma}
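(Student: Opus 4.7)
The plan is to mirror the proof of Lemma \ref{Le: two translations} almost verbatim, but with the roles of the two points interchanged to accommodate the hypothesis $i>j$. Since $\overline{\beta}\equiv\beta\,(\text{mod } 2\ell)$ and $(b,\overline{\beta})\in\mathbf{PS}(a,\alpha)$ by hypothesis, Definition \ref{Def: translation} directly produces the required translation of paths in $\mathscr{P}_{b,\beta}$ to paths in $\mathscr{P}_{b,\overline{\beta}}$ with respect to $\mathbf{PS}(a,\alpha)$. The key asymmetry is that for $i>j$ (and hence $a>$ the analogous coordinate in the earlier lemma), the lower corner produced by the translation must lie ``on the $b$-side'' rather than on the $a$-side.

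For part (1), I would first exhibit an explicit non-overlapping pair realizing $m$ in the translated picture. Take $p'_1\in\mathscr{P}_{a,\alpha}$ to be the unique path having exactly one lower corner at $(b,\overline{\beta})$, and let $p'_2\in\mathscr{P}_{b,\overline{\beta}}$ be the highest path with no lower corners. Using $a<b$ together with $\gamma=\frac{(|j-i|+h(i,j))\,(\text{mod } 2\ell)}{2}$, one computes $\mathfrak{m}(p'_1)=Y_{a-\gamma,\alpha+\gamma}Y_{b,\overline{\beta}}^{-1}Y_{b+\gamma,\overline{\beta}-\gamma}$ and $\mathfrak{m}(p'_2)=Y_{b,\overline{\beta}}$, so $\mathfrak{m}(p'_1)\mathfrak{m}(p'_2)=m$. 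Then, letting $p''_2\in\mathscr{P}_{b,\beta}$ correspond to $p'_2$ under the translation (so $\mathfrak{m}(p''_2)=\mathfrak{m}(p'_2)$), I would pull this pair back to $\mathscr{P}_{i,k}\times\mathscr{P}_{j,v}$ by tracing through the hypothesis: because $Y_{a,\alpha}Y_{b,\beta}$ arose from translating $\mathscr{P}_{i,k}$ to $\mathscr{P}_{i,k'}$ with $k'=k+2c\ell$ for some $c\geq 1$, the two paths $p'_1$ and $p''_2$ come with a distinguished intersection point $(j+\gamma,v+\gamma)$ (the analogue of the intersection point $(i+\gamma,k'+\gamma)$ used in Lemma \ref{Le: two translations}, reflected because $i>j$). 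Splitting the combined picture along that intersection point yields paths $\hat{p}_1\in\mathscr{P}_{i,k}$ and $p_2\in\mathscr{P}_{j,v}$ with $\mathfrak{m}(\hat{p}_1)\mathfrak{m}(p_2)=m$; the hypothesis $|j-i|+h(i,j)>2\ell$ forces the vertical separation to be strictly positive, so this pair is non-overlapping, placing $m$ inside $S_{(i,k)(j,v)}$.

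For part (2), I would follow the template of parts (2) and (3) of Lemma \ref{Le: II translation of degree 2}. Concretely, one chooses the dual extremal paths (the lowest path in $\mathscr{P}_{a,\alpha}$ with no upper corners and the appropriate path in $\mathscr{P}_{b,\overline{\beta}}$ with a single upper corner at $(n+1-a,n+1+\alpha)$), pulls them back to $\mathscr{P}_{i,k}\times\mathscr{P}_{j,v}$ by the same translation-plus-intersection procedure, and verifies non-overlapping to produce the lowest $l$-weight monomial of $L(m)$ inside $S_{(i,k)(j,v)}$. A general pair of paths contributing to $\chi_\varepsilon(L(m))$ is then handled by the translation-preserves-intersection argument given at the end of the proof of Lemma \ref{Le: II translation of degree 2}: if two paths in the translated picture do not overlap, then their pullbacks to $\mathscr{P}_{i,k}\times\mathscr{P}_{j,v}$ do not overlap either, so every monomial of $\chi_\varepsilon(L(m))$ lies in $S_{(i,k)(j,v)}$.

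The main obstacle I anticipate is bookkeeping: with $i>j$ the geometric positions of the corners are mirrored relative to Lemma \ref{Le: two translations}, so one must be careful to verify that $a-\gamma,b+\gamma\in\hat{I}\cup\{n+1\}$ really ensures the exhibited paths stay inside the strip $[0,n+1]$ and that the intersection point used to split the pulled-back picture is indeed $(j+\gamma,v+\gamma)$ rather than its $i$-side analogue. Once this indexing is set up correctly, the rest of the argument is formally identical to the proofs of Lemmas \ref{Le: II translation of degree 2} and \ref{Le: two translations}.
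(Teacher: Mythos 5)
Your approach is exactly what the paper intends: the paper gives no proof of this lemma at all, stating it only as the dual of Lemma \ref{Le: two translations}, and your plan of rerunning that proof with the translation applied to the $(b,\beta)$-point and the single lower corner placed on the path in $\mathscr{P}_{a,\alpha}$ is the correct dualization. One concrete slip in your bookkeeping: with $a=j-r$, $\alpha=v+r$, $b=i+r$, $\beta=k'-r$ and $\overline{\beta}=\beta+2\ell$, the crossing of the pulled-back pair occurs at $(b-\ell,\beta+\ell)=(i-\gamma,\,k'+\gamma)$, i.e.\ the reflection of the point $(i+\gamma,k'+\gamma)$ from Lemma \ref{Le: two translations}, not at $(j+\gamma,v+\gamma)$ as you guessed. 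Since the argument only uses the existence of a crossing to split and recombine the two paths into elements of $\mathscr{P}_{i,k}$ and $\mathscr{P}_{j,v}$ (with non-overlapping guaranteed by $|j-i|+h(i,j)>2\ell$), this does not affect the validity of your proof, but the coordinates should be corrected.
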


\section{\texorpdfstring{A path description for $\varepsilon$-characters of degree two modules and irreducibility of tensor products of fundamental modules}{A path description for epsilon-characters of degree two modules and irreducibility of tensor products of fundamental modules}}\label{section of path description of degree two}

In this section, using path translations or raising and lowering moves, we obtain an effective and concrete path description for the $\varepsilon$-character of any simple $U_{\varepsilon}^{\res}({L\mathfrak{sl}_{n+1}})$-module with highest $l$-weight of degree two. As an application of our path description, we obtain a necessary and sufficient condition for the tensor product of two fundamental representations for $U_{\varepsilon}^{\res}({L\mathfrak{sl}_{n+1}})$ to be irreducible, where $\varepsilon^{2 \ell}=1$, $\ell \geq 2$. Subsequently, we obtain a necessary condition for the tensor product of fundamental representations for $U_{\varepsilon}^{\res}({L\mathfrak{sl}_{n+1}})$ to be irreducible, where $\varepsilon^{2 \ell}=1$, $\ell \geq 2$.

\subsection{\texorpdfstring{A path description for the $\varepsilon$-character of any simple module with highest $l$-weight of degree two}{A path description for the epsilon-character of any simple module with highest l-weight of degree two}}\label{subsection of path description of degree two}
\begin{definition} \label{def:small values of indices}
Let $\varepsilon^{2 \ell}=1$. We say that a dominant monomial $Y_{i,k}Y_{j,v}$, where $i,j \in I$, $k, v\in \ZZ$, and $|j-i|\equiv |k-v|\ (\text{mod}\ 2)$, has small values of indices if $h_0(i,j)+k\leq v<h_0(i,j)+k+2 \ell$.
\end{definition}

\begin{remark} \label{remark:any degree 2 monomial satisfying certain condition can be converted to small value indices}
Let $\varepsilon^{2 \ell}=1$. For any dominant monomial $Y_{i,k}Y_{j,v}$, where $i,j \in I$, $k,v\in \ZZ$ and $|j-i|\equiv |k-v|\ (\text{mod}\ 2)$, we define $\overline{v}$ by
\begin{align}\label{Eq:small index}
\overline{v}\equiv v\,(\text{mod } 2 \ell)\quad \text{and}\quad h_0(i,j)+k\leq \overline{v}<h_0(i,j)+k+2 \ell.
\end{align}
Then $Y_{i,k}Y_{j,\overline{v}}$ has small values of indices and $L(Y_{i,k}Y_{j,\overline{v}})$ is a snake module.
\end{remark}

\begin{remark}\label{Re:non-snake module}
Let $Y_{i,k}Y_{j,v}$ be a dominant monomial, where $i,j \in I$, $k,v\in \ZZ$. If $|j-i|\equiv |k-v|+1\ (\text{mod}\ 2)$, then $L(Y_{i,k}Y_{j,v})$ cannot be converted to a snake module.
\end{remark}

We use the convention that $0!=1$ and $\prod_{t=s}^{s'} \phi(t) = 1$ for $s' < s$, where $\phi(t)$ is some polynomial in $t$.
\begin{definition}\label{Def: chi_(i,k,j,v)}
Let $\varepsilon^{2 \ell}=1$ and let $L(Y_{i,k}Y_{j,v})$, $i,j \in I$, $k<v\in \ZZ$, be a $U_{\varepsilon}^{\res}({L\mathfrak{sl}_{n+1}})$ snake module such that $Y_{i,k}Y_{j,v}$ has small values of indices. Let $r(i,j)$ be the integer defined by $(\ref{eq:def of rij})$ and let $a_t, \alpha_t, b_t, \beta_t$ be the integers defined in (\ref{Eq:a_tb_t}).
For $t, \rho \in \ZZ_{\ge 1}$, we denote
\begin{align*}
e(t) = \frac{\prod_{s=1}^{t-2} (t+s)}{(t-1)!}, \ f(t) = \frac{ \rho \prod_{s=1}^{t-1}(\rho+t+s)}{t!}, \ g(t) = \frac{ (\rho+1)\prod_{s=2}^t (\rho+t+s)}{t!},
\end{align*}
and we denote $g(0)=1$. For $r(i,j)=0$, we define $\chi_{(i,k),(j,v)}=0$. For $r(i,j) \ge 1$, we define $\chi_{(i,k),(j,v)}$ as follows.
\begin{enumerate}
\item For $|j-i|+h(i,j) < 2\,\ell$, $\chi_{(i,k),(j,v)}=\sum_{t=1}^{r(i,j)}e(t)\chi_\varepsilon(L(Y_{a_t, \alpha_t}Y_{b_t, \beta_t}))$.
\item For $|j-i|+h(i,j) =2\rho\,\ell$, $\rho\in\mathbb{Z}_{\geq 1}$, $\chi_{(i,k),(j,v)}=\sum_{t=1}^{r(i,j)} f(t)\chi_\varepsilon(L(Y_{a_t, \alpha_t}Y_{b_t, \beta_t}))$.
\item For $2\rho\,\ell<|j-i|+h(i,j) < 2(\rho+1)\,\ell$, $\rho\in\mathbb{Z}_{\geq1}$, $\gamma=\frac{(|j-i|+h(i,j))\,(\text{mod } 2 \ell)}2$,
\begin{align}\label{D:the sum 3}
\chi_{(i,k),(j,v)}= \sum_{t=1}^{b'}f(t)\chi_\varepsilon(L (m_{t}))+\sum_{t=0}^{r(i,j)-1}g(t)\chi_\varepsilon(L(Y_{a_{t+1}, \alpha_{t+1}}Y_{b_{t+1}, \beta_{t+1}})),
\end{align}
where if $a_{r(i,j)}-\gamma$, $b_{r(i,j)}+\gamma\in \hat{I}\cup\{n+1\}$, then $b'= r(i,j)$ and otherwise $b'= r(i,j)-1$.
If $i\leq j$, the dominant monomial $m_{t}$ is obtained by translating paths in $\mathscr{P}_{a_{t}, \alpha_{t}}$ to paths in $\mathscr{P}_{a_{t}, \overline{\alpha_t}}$ with respect to ${\bf PS}(b_t,\beta_t)$; if $i>j$, the dominant monomial $m_{t}$ is obtained by translating paths in $\mathscr{P}_{b_{t}, \beta_{t}}$ to paths in $\mathscr{P}_{b_{t}, \overline{\beta_t}}$ with respect to ${\bf PS}(a_t,\alpha_t)$, where $\overline{\alpha_t}, \overline{\beta_t}$ are defined in Equation $(\ref{Eq:small index})$.
\end{enumerate}
\end{definition}

Recall that for any path $p$, $\mathfrak{m}(p)$ is the monomial corresponding to $p$, see (\ref{map: path to monomial}).

\begin{theorem} \label{Th:path description of degree 2}
Let $\varepsilon^{2 \ell}=1$ and let $L(Y_{i,k}Y_{j,v})$ be a simple $U_{\varepsilon}^{\res}({L\mathfrak{sl}_{n+1}})$-module, where $i, j \in [1,n]$, $k, v \in \ZZ$. Then $L(Y_{i,k}Y_{j,v})$ is special and
\begin{enumerate}
\item if $|j-i|\equiv |k-v|+1\ (\text{mod}\ 2)$, then
\[
\chi_{\varepsilon}(L(Y_{i,k}Y_{j,v}))= \left(\sum_{p \in \mathscr{P}_{i,k}} \mathfrak{m}(p)\right) \left(\sum_{p \in \mathscr{P}_{j,v}} \mathfrak{m}(p)\right)=\chi_{\varepsilon}(L(Y_{i,k}))\chi_{\varepsilon}(L(Y_{j,v}));
\]
\item if $|j-i|\equiv |k-v|\ (\text{mod}\ 2)$, then
\begin{align*}
\chi_{\varepsilon}(L(Y_{i,k}Y_{j,v}))=\left(\sum_{(p_{1},p_{2}) \in \overline{\mathscr{P}}_{((i,k),(j,\overline{v}))}} \mathfrak{m}(p_1)\mathfrak{m}(p_2)\right)-\chi_{(i,k), (j,\overline{v})},
 \end{align*}
where $\overline{v}$ is defined in $(\ref{Eq:small index})$ and $\chi_{(i,k), (j,\overline{v})}$ is defined in Definition \ref{Def: chi_(i,k,j,v)}. 
\end{enumerate}
\end{theorem}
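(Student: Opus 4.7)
The plan is to prove the two cases separately using very different ideas, and to obtain specialness simultaneously with the character formula.

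For part (1), note that the parity condition $|j-i| \equiv |k-v|+1 \pmod 2$ is equivalent to $i+k \not\equiv j+v \pmod 2$. First I would observe that along any path $p \in \mathscr{P}_{i,k}$ consecutive $y$-values differ by one, so every corner $(a,\alpha) \in C_p^+ \cup C_p^-$ satisfies $a+\alpha \equiv i+k \pmod 2$. Consequently, no variable $Y_{a,\alpha}^{\pm 1}$ appearing in $\chi_{\varepsilon}(L(Y_{i,k}))$ can coincide with any variable appearing in $\chi_{\varepsilon}(L(Y_{j,v}))$, so no cancellation of negative exponents is possible in the product. This forces the unique dominant monomial of $\chi_{\varepsilon}(L(Y_{i,k}))\cdot\chi_{\varepsilon}(L(Y_{j,v}))$ to be $Y_{i,k}Y_{j,v}$. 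By the injectivity of $\chi_{\varepsilon}$ on the Grothendieck ring and the classification of simple modules by dominant monomials, it follows that $[L(Y_{i,k}Y_{j,v})] = [L(Y_{i,k})]\cdot[L(Y_{j,v})]$, giving both specialness and the product formula.

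For part (2), I would first apply Theorem \ref{Th:simple modules are snake modules} to replace $v$ by $\overline v$ as in $(\ref{Eq:small index})$, so that $L(Y_{i,k}Y_{j,\overline v})$ is a snake module with small values of indices. By the Mukhin--Young path description at generic $q$, $\chi_q(L(Y_{i,k}Y_{j,\overline v})) = \sum_{(p_1,p_2) \in \overline{\mathscr{P}}_{((i,k),(j,\overline v))}} \mathfrak{m}(p_1)\mathfrak{m}(p_2)$, which contains a unique dominant monomial. Specialising $q \mapsto \varepsilon$ gives the first term of $(\ref{Eq:equation of degree two})$. The key new phenomenon at a root of unity is that $Y_{a,\alpha}$ and $Y_{a,\alpha'}$ with $\alpha \equiv \alpha' \pmod{2\ell}$ become identified, so that pairs of paths whose monomials were non-dominant at generic $q$ may produce dominant monomials after specialisation. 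I expect to show that these extra dominant monomials are in bijective correspondence with the path translations of Section \ref{sec:snake modules and three types of translations of paths}: each single translation (Lemmas \ref{Le: I translation of degree 2} and \ref{Le: II translation of degree 2}) and, when $|j-i|+h(i,j) > 2\ell$, each secondary translation (Lemmas \ref{Le: two translations} and \ref{Le: dual two translations}) yields a dominant monomial of the prescribed form $Y_{a_t,\alpha_t}Y_{b_t,\beta_t}$.

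The next step is to invoke the injectivity of $\chi_{\varepsilon}$: any Laurent polynomial lying in its image is uniquely a $\mathbb{Z}$-linear combination of $\chi_{\varepsilon}(L(M))$ over dominant $M$. Applying this to the specialised Mukhin--Young sum, I would peel off the contributions $\chi_{\varepsilon}(L(Y_{a_t,\alpha_t}Y_{b_t,\beta_t}))$ one extra dominant monomial at a time. Each $L(Y_{a_t,\alpha_t}Y_{b_t,\beta_t})$ is again a degree-two simple module, so the formula can be applied recursively. The recursion terminates because the relevant parameter (either $r(i,j)$ or the residue $(|j-i|+h(i,j)) \bmod 2\ell$) strictly decreases. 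By induction, $L(Y_{i,k}Y_{j,\overline v})$ is special, and the residual polynomial is precisely $\chi_{\varepsilon}(L(Y_{i,k}Y_{j,\overline v}))$.

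The principal obstacle is the combinatorial determination of the multiplicities $e(t), f(t), g(t)$ in Definition \ref{Def: chi_(i,k,j,v)}. Each extra dominant monomial may be produced in several ways: a pair of translated paths can also be obtained from further pairs by raising/lowering moves across rows spaced by $2\ell$. Carefully counting how many pairs $(p_1,p_2) \in \overline{\mathscr{P}}_{((i,k),(j,\overline v))}$ specialise to a given new dominant monomial, and reconciling this count with the multiplicity with which $\chi_{\varepsilon}(L(Y_{a_t,\alpha_t}Y_{b_t,\beta_t}))$ occurs, requires three separate combinatorial arguments corresponding to $|j-i|+h(i,j)$ being less than, equal to, or strictly greater than a multiple of $2\ell$. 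Verifying that the resulting counts reproduce the binomial-like expressions $e(t)$, $f(t)$, $g(t)$ is where the bulk of the technical work lies, and is carried out in Section \ref{Sec: prove the th path description of degree 2}.
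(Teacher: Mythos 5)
Your proposal is correct and follows essentially the same route as the paper: part (1) via a no-cancellation/unique-dominant-monomial argument for the product of the two fundamental characters (the paper routes this through the Abe--Chari simplicity of the generic-$q$ tensor product before specialising, whereas you argue directly at $\varepsilon$ via the parity of corners, but the substance is the same), and part (2) via reduction to small values of indices, specialisation of the Mukhin--Young sum, identification of the extra dominant monomials with path translations together with raising/lowering moves, and the three-case multiplicity count yielding $e(t)$, $f(t)$, $g(t)$ before peeling off the lower simple characters. The combinatorial verification you defer is exactly the content of the paper's Lemmas \ref{Le:dominant is obtained by path translations or pairs of lowering and raising moves} and \ref{Le:dominant is obtained by lowering and raising moves}.
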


Theorem \ref{Th:path description of degree 2} will be proved in Section \ref{Sec: prove the th path description of degree 2}.

\begin{example}
Let $\varepsilon^{2 \ell}=1$ with $\ell=2$. For the $U_{\varepsilon}^{\res}({L\mathfrak{sl}_{3}})$-module $L(Y_{1,0}Y_{2,0})$, we have that $\chi_\varepsilon(L(Y_{1,0}Y_{2,0}))=\chi_\varepsilon(L(Y_{1,0})) \chi_\varepsilon(L(Y_{2,0}))$.
\end{example}

\begin{example}
Let $\varepsilon^{2 \ell}=1$ with $\ell=2$. We consider the $U_{\varepsilon}^{\res}({L\mathfrak{sl}_{8}})$-module $L(Y_{3,0}Y_{6,7})$.
Recall from Example \ref{Ex:two translations} that by translating paths in $\mathscr{P}_{3,0}$ to paths in $\mathscr{P}_{3,12}$ with respect to ${\bf PS}(6,7)$, we obtain the dominant monomial $Y_{2,11}Y_{7,8}$. The monomials of $\chi_\varepsilon(L(Y_{2,11}Y_{7,8}))$ are contained in $S_{(3,0)(6,7)}$. In addition, for the module $L(Y_{2,11}Y_{7,8})$, one can translate paths in $\mathscr{P}_{2,11}$ to paths in $\mathscr{P}_{2,15}$ with respect to ${\bf PS}(7,8)$, consequently obtain the dominant monomial $Y_{1,2}$. The monomials of $\chi_\varepsilon(L(Y_{1,2}))$ are contained in $S_{(3,0)(6,7)}$.

There is another copy of $Y_{1,2}$ which appears in $S_{(3,0)(6,7)}$. It is the product of $Y_{1,2}Y_{4,5}^{-1}Y_{6,3}$ and $Y_{4,9}Y_{6,11}^{-1}$, see the red lines in Figure \ref{F:  two translations} $(a)$. The lowest weight monomial $Y_{7,10}^{-1}$ of $\chi_{\varepsilon}(L(Y_{1,2}))$ is also in $S_{(3,0)(6,7)}$. Therefore,
\[
\chi_\varepsilon(L(Y_{3,0}Y_{6,7}))=S_{(3,0)(6,7)}-\chi_{\varepsilon}(L(Y_{7,8}Y_{2,11}))-2\chi_{\varepsilon}(L(Y_{1,2})).
\]
Using the same method as above, we have $\chi_\varepsilon(L(Y_{7,8}Y_{2,11}))=\chi_\varepsilon(L(Y_{7,0}Y_{2,7}))=S_{(7,0)(2,7)}$.
In conclusion,
\[
\chi_\varepsilon(L(Y_{3,0}Y_{6,7}))=S_{(3,0)(6,7)}-S_{(7,0)(2,7)}-2\chi_{\varepsilon}(L(Y_{1,2})).
\]
\end{example}

\begin{remark}
In the case of $q$-characters, for $U_q(L\mathfrak{sl}_{n+1})$-module $L(Y_{i,s})$ where $i \in I$, $s \in \ZZ$, we have that $\chi_q(L(Y_{i,s}^2))=\chi_q(L(Y_{i,s}))^2$. Unlike $q$-characters, for $U_{\varepsilon}^{\res}(L\mathfrak{sl}_{n+1})$-module $L(Y_{i,s})$, it is possible that $\chi_\varepsilon(L(Y_{i,s}^2)) \ne \chi_\varepsilon(L(Y_{i,s}))^2$. For example, let $\varepsilon^{2 \ell}=1$ with $\ell=2$ and consider $U_{\varepsilon}^{\res}({L\mathfrak{sl}_{4}})$-module $L(Y_{2,1})$. On the one hand, we have
\begin{align*}
&\chi_{\varepsilon}(L(Y_{2,1}^2))=\chi_{\varepsilon}(L(Y_{2,1}Y_{2,5}))=\big(\sum_{(p_{1},p_{2}) \in \overline{\mathscr{P}}_{((2,1),(2,5))}} \mathfrak{m}(p_1)\mathfrak{m}(p_2)\big)-\chi_{(2,1), (2,5)}=S_{(2,1)(2,5)}-1,
\end{align*}
where $S_{(2,1)(2,5)}$ is equal to
\begin{align*} 
&Y_{2,1}^2+Y_{1,2}^2Y_{3,4}^{-2}+Y_{3,2}^2Y_{1,4}^{-2}+ 2Y_{1,2}Y_{2,1}^{-1}Y_{3,4}^{-1} + 2Y_{3,2}Y_{1,4}^{-1}Y_{2,1}^{-1}+Y_{1,2}^2Y_{3,2}^2Y_{2,3}^{-2} \\
& + Y_{2,3}^2Y_{1,4}^{-2}Y_{3,4}^{-2} +2Y_{1,2}Y_{2,1}Y_{3,4}^{-1}+2Y_{2,1}Y_{3,2}Y_{1,4}^{-1}+2Y_{1,2}Y_{2,1}Y_{3,2}Y_{2,3}^{-1} \\
& +2Y_{1,2}Y_{2,3}Y_{1,4}^{-1}Y_{3,4}^{-2}+2Y_{1,2}Y_{3,2}Y_{2,1}^{-1}Y_{2,3}^{-1} +4Y_{1,2}Y_{3,2}Y_{1,4}^{-1}Y_{3,4}^{-1} + 2Y_{2,1}Y_{2,3}Y_{1,4}^{-1}Y_{3,4}^{-1} \\
& +2Y_{2,3}Y_{3,2}Y_{1,4}^{-2}Y_{3,4}^{-1} +2Y_{1,2}Y_{3,2}^2Y_{1,4}^{-1}Y_{2,3}^{-1} +2Y_{2,3}Y_{1,4}^{-1}Y_{2,1}^{-1}Y_{3,4}^{-1}+2Y_{1,2}^2Y_{3,2}Y_{2,3}^{-1}Y_{3,4}^{-1}+Y_{2,1}^{-2} + 1.
\end{align*}
On the other hand, $\chi_{\varepsilon}(L(Y_{2,1})) =Y_{2,1}+Y_{1,2}Y^{-1}_{2, 3}Y_{3,2}+Y_{1, 4}^{-1}Y_{3,2}+Y_{1,2}Y_{3, 4}^{-1}+Y_{1, 4}^{-1}Y_{2,3}Y_{3, 4}^{-1}+Y_{2, 1}^{-1}$. We have that $\chi_{\varepsilon}(L(Y_{2,1}^2))=\chi_{\varepsilon}(L(Y_{2,1}))^2-2$.
\end{remark}

\subsection{Tensor product of the fundamental representations of restricted quantum loop algebras at roots of unity}\label{tensor products}

\begin{theorem}\label{Th: tensor product}
Let $\varepsilon^{2 \ell}=1$ and $i_{1}, i_{2}\in I$, $\xi_{1}, \xi_{2}\in \mathbb{Z}$. The following conditions are equivalent.
\begin{enumerate}
\item The tensor product $L(Y_{i_1,\xi_1})\otimes L(Y_{i_2,\xi_2})$ is a simple module of $U_{\varepsilon}^{\res}(L\mathfrak{sl}_{n+1})$.
\item For $1\leq t\leq \min\{i_1, i_2, n+1-i_1, n+1-i_2\}$, $|\xi_{2}-\xi_1| \not\equiv \pm(2t+|i_2-i_1|)\,(\text{mod } 2 \ell)$.
\end{enumerate}
\end{theorem}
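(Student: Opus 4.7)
The plan is to use the injectivity of the $\varepsilon$-character map on the Grothendieck ring (Theorem 3.2 of [FM02]) to reduce simplicity of $L(Y_{i_1,\xi_1})\otimes L(Y_{i_2,\xi_2})$ to the identity $\chi_{\varepsilon}(L_1)\chi_{\varepsilon}(L_2) = \chi_{\varepsilon}(L(m_1 m_2))$, where $m_k = Y_{i_k,\xi_k}$ and $L_k = L(m_k)$, and then to compare the two sides via the explicit path description of Theorem \ref{Th:path description of degree 2} together with the translation machinery of Section \ref{sec:snake modules and three types of translations of paths}. Assume without loss of generality that $\xi_1 \leq \xi_2$.

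If $|i_2-i_1| \not\equiv |\xi_2-\xi_1| \pmod{2}$, Theorem \ref{Th:path description of degree 2}(1) immediately yields the factorization $\chi_{\varepsilon}(L(m_1 m_2)) = \chi_{\varepsilon}(L_1)\chi_{\varepsilon}(L_2)$, so $L_1\otimes L_2$ is simple; the condition in (2) is vacuous because $|\xi_2-\xi_1|$ and $2t+|i_2-i_1|$ then have opposite parities and hence cannot be congruent mod $2\ell$. In the same-parity case, the first step is to replace $\xi_2$ by $\overline{\xi_2}$ via $(\ref{Eq:small index})$, so that $L(Y_{i_1,\xi_1}Y_{i_2,\overline{\xi_2}})$ is a snake module with small values of indices (the $\varepsilon$-character is unaffected because $\varepsilon^{2\ell}=1$); then Theorem \ref{Th:path description of degree 2}(2) yields
\[
\chi_{\varepsilon}(L(m_1 m_2)) = S_{(i_1,\xi_1)(i_2,\overline{\xi_2})} - \chi_{(i_1,\xi_1),(i_2,\overline{\xi_2})}.
\]

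The core of the argument is to show that, in this same-parity case, $\chi_{\varepsilon}(L_1)\chi_{\varepsilon}(L_2) = \chi_{\varepsilon}(L(m_1 m_2))$ if and only if the condition in (2) holds. The two signs reflect two distinct mechanisms producing extra dominant monomials in $\chi_{\varepsilon}(L_1)\chi_{\varepsilon}(L_2)$. The $-$ sign $|\xi_2-\xi_1|\equiv -(2t+|i_2-i_1|)\pmod{2\ell}$ rewrites as $h(i_1,i_2)\equiv -h_0(i_1,i_2) - 2(t-1)\pmod{2\ell}$, which is precisely the criterion of Lemma \ref{Le: II translation of degree 2} for the existence of an upward path translation of $\mathscr{P}_{i_1,\xi_1}$; equivalently $r(i_1,i_2)\geq 1$ and hence $\chi_{(i_1,\xi_1),(i_2,\overline{\xi_2})}\neq 0$. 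The $+$ sign $|\xi_2-\xi_1|\equiv 2t+|i_2-i_1|\pmod{2\ell}$ reduces, after the small-indices conversion, to $(i_2,\overline{\xi_2})\in\mathbf{PS}(i_1,\xi_1)$, i.e., prime snake position, in which overlapping pairs of paths in $\mathscr{P}_{i_1,\xi_1}\times\mathscr{P}_{i_2,\overline{\xi_2}}$ at the prime snake boundary contribute dominant monomials to $\chi_{\varepsilon}(L_1)\chi_{\varepsilon}(L_2)$ beyond $S_{(i_1,\xi_1)(i_2,\overline{\xi_2})}$. In each failure case an explicit extra dominant monomial of $\chi_{\varepsilon}(L_1)\chi_{\varepsilon}(L_2)$ not appearing in $\chi_{\varepsilon}(L(m_1 m_2))$ is produced, forcing $L_1\otimes L_2$ to decompose. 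Conversely, when both signs are excluded for every admissible $t$, one has simultaneously $r(i_1,i_2)=0$ and $(i_2,\overline{\xi_2})\notin\mathbf{PS}(i_1,\xi_1)$; a corner-by-corner analysis of overlapping pairs in $\mathscr{P}_{i_1,\xi_1}\times\mathscr{P}_{i_2,\overline{\xi_2}}$, using the identifications forced by $\varepsilon^{2\ell}=1$, shows that every such pair contributes a monomial with a negative exponent, giving $\chi_{\varepsilon}(L_1)\chi_{\varepsilon}(L_2) = S_{(i_1,\xi_1)(i_2,\overline{\xi_2})} = \chi_{\varepsilon}(L(m_1 m_2))$. The main technical obstacle is this last corner-matching analysis: precisely tracking which lower/upper corner pairs of two paths can cancel under the mod $2\ell$ identification when $(i_2,\overline{\xi_2})$ lies strictly outside $\mathbf{PS}(i_1,\xi_1)$ and no translation criterion is met.

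Part (2), the necessary condition for $m\geq 2$, follows from (1) by a dominant-monomial lifting argument: if $L_1\otimes\cdots\otimes L_m$ is simple, then $\prod_{k=1}^{m}\chi_{\varepsilon}(L(Y_{i_k,\xi_k}))$ has $\prod_{k=1}^{m}Y_{i_k,\xi_k}$ as its unique dominant monomial. If the pairwise condition of (1) failed for some $k\neq k'$, part (1) would provide an extra dominant monomial $M$ in the pairwise product $\chi_{\varepsilon}(L(Y_{i_k,\xi_k}))\chi_{\varepsilon}(L(Y_{i_{k'},\xi_{k'}}))$; then $M\cdot\prod_{j\neq k,k'}Y_{i_j,\xi_j}$ would be an extra dominant monomial of the full product, yielding an extra composition factor of the full tensor product and contradicting simplicity. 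Hence the pairwise condition of (1) must hold for every pair.
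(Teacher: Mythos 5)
Your proposal follows essentially the same route as the paper's proof: reduce simplicity to the character identity via injectivity of $\chi_{\varepsilon}$, normalize to small values of indices, invoke Theorem \ref{Th:path description of degree 2}, and match the two congruence signs with, respectively, the prime-snake-position condition $(i_2,\overline{\xi}_2)\in\mathbf{PS}(i_1,\xi_1)$ and the translation criterion of Lemma \ref{Le: II translation of degree 2} (i.e.\ $\chi_{(i_1,\xi_1),(i_2,\overline{\xi}_2)}\neq 0$), using $|\mathbf{PS}^{i_2}(i_1,\xi_1)|=\min\{i_1,i_2,n+1-i_1,n+1-i_2\}$. One simplification: the ``corner-by-corner analysis'' you flag as the main obstacle is unnecessary, and as phrased it would not suffice --- if overlapping pairs existed, their contributions (dominant or not) would still make $\sum_{\text{all pairs}}$ strictly larger than $S_{(i_1,\xi_1)(i_2,\overline{\xi}_2)}$; the correct observation is that once $(i_2,\overline{\xi}_2)$ is in snake but not prime snake position, the rectangles $\mathbf{P}_{i_1,\xi_1}$ and $\mathbf{P}_{i_2,\overline{\xi}_2}$ do not meet, so there are \emph{no} overlapping pairs and the two sums coincide trivially. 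Finally, your closing paragraph addresses the $m\ge 2$ statement (Corollary \ref{Co:necessary condition for the tensor product}, not part of the theorem as stated), and there the inference ``extra dominant monomial $\Rightarrow$ extra composition factor'' is not valid, since simple $U_{\varepsilon}^{\res}$-modules need not be special; the paper instead uses Lemma \ref{Le: simple tensor product}, showing a proper submodule of $S_i\otimes S_j$ propagates to the full tensor product.
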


\begin{proof}
$(1)\Rightarrow(2)$. Suppose that $L(Y_{i_1,\xi_1})\otimes L(Y_{i_2,\xi_2})$ is a simple module. Then
\[
\chi_\varepsilon(L(Y_{i_1,\xi_1})) \chi_\varepsilon(L(Y_{i_2,\xi_2}))=\chi_\varepsilon(L(Y_{i_1,\xi_1}Y_{i_2,\xi_2})).
\]
If $|i_2-i_1|\equiv |\xi_2-\xi_1|+1\ (\text{mod}\ 2)$, then for any $t \in \ZZ$, $|\xi_{2}-\xi_1| \not\equiv \pm(2t+|i_2-i_1|)\,(\text{mod } 2 \ell)$. In particular, (2) is true. If $|i_2-i_1|\equiv |\xi_2-\xi_1|\ (\text{mod}\ 2)$, then by Remark \ref{remark:any degree 2 monomial satisfying certain condition can be converted to small value indices}, there exists $\overline{\xi}_2$ such that $Y_{i_1,\xi_1}Y_{i_2,\overline{\xi}_2}$ has small values of indices and $L(Y_{i_1,\xi_1}Y_{i_2,\overline{\xi}_2})$ is a snake module, where $\overline{\xi}_2\equiv \xi_2\,(\text{mod } 2 \ell)$. Since $L(Y_{i_1,\xi_1})\otimes L(Y_{i_2,\xi_2})$ is simple, we have that $L(Y_{i_1,\xi_1})\otimes L(Y_{i_2,\overline{\xi}_2})$ is simple. Following Theorem \ref{Th:path description of degree 2}, we obtain that $(i_2,\overline{\xi}_2) \notin {\bf PS}(i_1,\xi_1)$ and
$\chi_{(i_1,\xi_1),(i_{2},\overline{\xi}_2)}=0$. By Lemma \ref{Le: I translation of degree 2} and \ref{Le: II translation of degree 2}, we have
\begin{align*}
h(i_1,i_2) \not\equiv h_{0}(i_1,i_2)+2s\,(\text{mod } 2 \ell) \ \text{and } \  h(i_1,i_2) \not\equiv -h_{0}(i_1,i_2)-2s\,(\text{mod } 2 \ell),
\end{align*}
where $0\leq s<|\mathbf{PS}^{i_2}(i_1,\xi_1)|$. We know that $h_{0}(i_1,i_2)+2s=|i_2-i_1|+2(s+1)$, $h(i_1,i_2)=|\overline{\xi}_2-\xi_1|$, and $|\overline{\xi}_2-\xi_1|\equiv \pm|\xi_2-\xi_1|\,(\text{mod } 2 \ell)$. By the definition of $\mathbf{PS}^{i_2}(i_1,\xi_1)$, see Equation $(\ref{Prime snake 2})$, we have
\[
|\mathbf{PS}^{i_2}(i_1,\xi_1)|= \min\{i_1, i_2, n+1-i_1, n+1-i_2\}.
\]
Let $s+1=t$. Thus, we have that $|\xi_2-\xi_1| \not\equiv \pm(2t+|i_2-i_1|)\,(\text{mod } 2 \ell)$.

$(2)\Rightarrow(1)$. Suppose that $|\xi_2-\xi_1| \not\equiv \pm(2t+|i_2-i_1|)\,(\text{mod } 2 \ell)$, where $1\leq t\le \min\{i_1, i_2, n+1-i_1, n+1-i_2\}$. If $|\xi_2-\xi_1| \equiv |i_2-i_1|+1 \,(\text{mod } 2 )$, then by Theorem \ref{Th:path description of degree 2}, we have that $L(Y_{i_1,\xi_1})\otimes L(Y_{i_2,\xi_2})$ is simple. Assume that $|\xi_2-\xi_1| \equiv |i_2-i_1| \,(\text{mod } 2 )$. Then by Remark \ref{remark:any degree 2 monomial satisfying certain condition can be converted to small value indices}, there exists $\overline{\xi}_2$ such that $Y_{i_1,\xi_1}Y_{i_2,\overline{\xi}_2}$ has small values of indices and $L(Y_{i_1,\xi_1}Y_{i_2,\overline{\xi}_2})$ is a snake module, where $\overline{\xi}_2 \equiv \xi_2\,(\text{mod } 2 \ell)$. Since $|\xi_2-\xi_1| \not\equiv \pm(2t+|i_2-i_1|)\,(\text{mod } 2 \ell)$, where $1\leq t\leq \min\{i_1, i_2, n+1-i_1, n+1-i_2\}$, we have $|\overline{\xi}_2-\xi_1| \not\equiv \pm(2t+|i_2-i_1|)\,(\text{mod } 2 \ell)$. That is,
\begin{align*}
h(i_1,i_2) \not\equiv h_{0}(i_1,i_2)+2s\,(\text{mod } 2 \ell) \ \text{and } \  h(i_1,i_2) \not\equiv -h_{0}(i_1,i_2)-2s\,(\text{mod } 2 \ell),
\end{align*}
where $0\leq s<|\mathbf{PS}^{i_2}(i_1,\xi_1)|$. So $(i_2, \overline{\xi}_2) \notin {\bf PS}(i_1,\xi_1)$ and $\chi_{(i_1,\xi_1),(i_{2},\overline{\xi}_2)}=0$. By Theorem \ref{Th:path description of degree 2}, we obtain that $L(Y_{i_1,\xi_1})\otimes L(Y_{i_2, \overline{\xi}_2})$ is simple. Thus $L(Y_{i_1,\xi_1})\otimes L(Y_{i_2, \xi_2})$ is simple.
\end{proof}

We will need the following lemma. The proof of the lemma is similar to the proof of the corresponding result for $U_q(L\mathfrak{g})$-modules in \cite{H10}: for simple $U_q(L\mathfrak{g})$-modules $S_1, \ldots, S_N$, if the tensor product $S_1 \otimes \cdots \otimes S_N$ is simple, then for any $1\leq i\neq j\leq N$, $S_i \otimes S_j$ is simple.
\begin{lemma}\label{Le: simple tensor product}
Let $\mathfrak{g}$ be a simple Lie algebra over $\CC$ and let $\varepsilon$ be a root of unity. For simple $U_{\varepsilon}^{\res}({L\mathfrak{g}})$-modules $S_1,\ldots, S_N$, $N\in \ZZ_{\geq 2}$, if the tensor product $S_1 \otimes \cdots \otimes S_N$ is simple, then for any $1\leq i\neq j\leq N$, $S_i \otimes S_j$ is simple.
\end{lemma}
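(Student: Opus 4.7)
The plan is to argue by contradiction via a dimension count that compares $L(M)$ to a highest-$l$-weight submodule of a suitably regrouped tensor product. Write $m_k$ for the highest $l$-weight of $S_k$, so that $S_k \cong L(m_k)$, and set $M := m_1 m_2 \cdots m_N$.

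First, I would observe that the tensor product of the highest $l$-weight vectors $v_1 \otimes \cdots \otimes v_N$ in $V := S_1 \otimes \cdots \otimes S_N$ is itself a highest $l$-weight vector with $l$-weight $M$: under the coproduct of $U_{\varepsilon}^{\res}(L\mathfrak{g})$, the positive Drinfeld generators annihilate such a pure tensor, while the $l$-weight generators act by the product of their eigenvalues. Since $V$ is simple by hypothesis, it must therefore coincide with $L(M)$, and in particular
\[
\dim L(M) \;=\; \prod_{k=1}^{N} \dim L(m_k).
\]

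Next, suppose for contradiction that $S_i \otimes S_j$ is not simple for some $1 \leq i \neq j \leq N$. The cyclic submodule of $S_i \otimes S_j$ generated by $v_i \otimes v_j$ admits $L(m_i m_j)$ as its unique simple quotient, yet $S_i \otimes S_j$ is strictly larger by assumption, so $\dim L(m_i m_j) < \dim L(m_i) \cdot \dim L(m_j)$. Form
\[
W \;:=\; L(m_i m_j) \,\otimes\, \bigotimes_{k \neq i, j} S_k,
\]
taken in any fixed order. The product of the respective highest $l$-weight vectors is a highest $l$-weight vector of $W$ with $l$-weight $M$, and hence the cyclic submodule it generates surjects onto $L(M)$. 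Therefore
\[
\dim L(M) \;\leq\; \dim W \;=\; \dim L(m_i m_j) \prod_{k \neq i,j} \dim L(m_k) \;<\; \prod_{k=1}^{N} \dim L(m_k) \;=\; \dim L(M),
\]
a contradiction, so $S_i \otimes S_j$ is simple.

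The main obstacle is not combinatorial but structural: one has to confirm that the standard highest-$l$-weight-vector machinery transfers verbatim to the restricted specialization $U_{\varepsilon}^{\res}(L\mathfrak{g})$. Specifically, one needs that the coproduct still sends a pure tensor of highest $l$-weight vectors to a highest $l$-weight vector whose $l$-weight is the product of the individual $l$-weights, and that every cyclic module generated by a highest $l$-weight vector of $l$-weight $M$ surjects onto $L(M)$ through its simple head. Both facts are standard in this setting and parallel the $U_q(L\mathfrak{g})$ argument of \textsc{Hernandez} in \cite{H10}; no idea beyond translating the coproduct computation is needed.
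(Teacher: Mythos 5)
Your proof is correct, but it takes a genuinely different route from the paper's. The paper exploits the injectivity of the $\varepsilon$-character map $\chi_{\varepsilon}$ on the Grothendieck ring: since $\mathcal{K}_0({\rm Rep}\,U_{\varepsilon}^{\res}(L\mathfrak{g}))$ is therefore commutative, $S_1\otimes\cdots\otimes S_N$ and any reordering $S_{\sigma_1}\otimes\cdots\otimes S_{\sigma_N}$ have the same composition factors, so simplicity of one forces simplicity of all; placing $S_i$ and $S_j$ in the first two slots, a proper nonzero submodule $V\subset S_i\otimes S_j$ yields the proper nonzero submodule $V\otimes S_{\sigma_3}\otimes\cdots\otimes S_{\sigma_N}$, a contradiction. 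You instead run a dimension count: identifying $S_1\otimes\cdots\otimes S_N$ with $L(M)$ via its highest $l$-weight vector, and comparing with $L(m_im_j)\otimes\bigotimes_{k\neq i,j}S_k$, whose cyclic highest $l$-weight submodule still surjects onto $L(M)$. Both arguments are sound. The paper's approach buys economy: it needs only the injectivity of $\chi_{\varepsilon}$ (already quoted from Frenkel--Mukhin) and no coproduct computation at all. Yours needs the highest-$l$-weight machinery at roots of unity -- that a pure tensor of highest $l$-weight vectors is again a highest $l$-weight vector with $l$-weight the product, and that any cyclic highest $l$-weight module of $l$-weight $M$ has $L(M)$ as its unique simple quotient; both are established in the restricted setting (Chari--Pressley, Section 9 of \cite{CP97b}, and \cite{FM02}), but you should cite them explicitly rather than wave at them, since the coproduct of the Drinfeld generators is only ``upper triangular'' modulo correction terms and the divided powers require a word. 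In exchange your argument is self-contained with respect to the Grothendieck ring and gives the quantitative byproduct $\dim L(m_im_j)=\dim L(m_i)\dim L(m_j)$ for each pair. One cosmetic point: the strict inequality $\dim L(m_im_j)<\dim L(m_i)\cdot\dim L(m_j)$ does not follow merely ``by assumption''; it follows because equality would force the cyclic submodule generated by $v_i\otimes v_j$ to be all of $S_i\otimes S_j$ and the surjection onto $L(m_im_j)$ to be an isomorphism, contradicting non-simplicity -- worth spelling out.
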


\begin{proof}
Since the $\varepsilon$-character morphism $\chi_{\varepsilon}: \mathcal{K}_0({\rm Rep} U_\varepsilon^{\res}({L\mathfrak{g}})) \rightarrow \mathbb{Z}[Y_{i,a}^{\pm1}]_{i\in I}^{a\in\mathbb{C^{\times}}}$ is injective \cite{FM02}, the ring $\mathcal{K}_0({\rm Rep} U_\varepsilon^{\res}({L\mathfrak{g}}))$ is commutative. So the irreducibility of $S_1 \otimes \cdots \otimes S_N$ is equivalent to the irreducibility of $S_\sigma =S_{\sigma_1}\otimes \cdots \otimes S_{\sigma_N}$ for any permutation $\sigma$ of $[1,N]$. Let $i\neq j$ and take $\sigma$ such that $\sigma(i)=1$ and $\sigma(j)=2$. If $S_i\otimes S_j$ is not simple, then there is a proper submodule $V \subset S_i \otimes S_j$. Therefore there is a proper submodule $V\otimes S_{\sigma_3}\otimes \cdots \otimes S_{\sigma_N}\subset S_\sigma$. Hence $S_1 \otimes \cdots \otimes S_N$ is not simple.
\end{proof}

\begin{corollary}\label{Co:necessary condition for the tensor product}
Let $m\in \mathbb{Z}_{\geq 2}$, $i_{1},\ldots, i_{m}\in I$, $\xi_{1},\ldots, \xi_{m}\in \mathbb{Z}$, and $\varepsilon^{2 \ell}=1$. Assume that the tensor product
$L(Y_{i_1,\xi_1})\otimes L(Y_{i_2,\xi_2})\otimes \cdots \otimes L(Y_{i_m,\xi_m})$ is a simple module of $U_{\varepsilon}^{\res}(L\mathfrak{sl}_{n+1})$.
Then for any $1\leq k\neq k'\leq m$ and $1\leq t\leq \min\{i_k, i_{k'}, n+1-i_k, n+1-i_{k'}\}$,
\[
 | \xi_{k'}-\xi_{k} | \not\equiv \pm(2t+|i_{k'}-i_k|)\,(\text{mod } 2 \ell).
\]
\end{corollary}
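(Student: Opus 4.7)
The plan is to derive the necessary condition by reducing the many-factor case to the two-factor case and then quoting the pairwise criterion already established.

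First, I would invoke Lemma \ref{Le: simple tensor product} applied to the simple fundamental modules $S_k := L(Y_{i_k,\xi_k})$ of $U_{\varepsilon}^{\res}(L\mathfrak{sl}_{n+1})$ for $1 \le k \le m$. By hypothesis the full tensor product $S_1 \otimes S_2 \otimes \cdots \otimes S_m$ is simple, so the lemma yields that for every pair of indices $1 \le k \neq k' \le m$ the two-factor tensor product
\[
L(Y_{i_k,\xi_k}) \otimes L(Y_{i_{k'},\xi_{k'}})
\]
is again a simple $U_{\varepsilon}^{\res}(L\mathfrak{sl}_{n+1})$-module.

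Second, for each such pair I would apply the equivalence in Theorem \ref{Th: tensor product} (with $(i_1,\xi_1)$ and $(i_2,\xi_2)$ replaced by $(i_k,\xi_k)$ and $(i_{k'},\xi_{k'})$). The implication $(1) \Rightarrow (2)$ of that theorem says that simplicity of $L(Y_{i_k,\xi_k}) \otimes L(Y_{i_{k'},\xi_{k'}})$ forces
\[
|\xi_{k'}-\xi_k| \not\equiv \pm \bigl(2t + |i_{k'}-i_k|\bigr) \pmod{2\ell}
\]
for all $1 \le t \le \min\{i_k,\,i_{k'},\,n+1-i_k,\,n+1-i_{k'}\}$. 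Collecting these constraints across all pairs $(k,k')$ gives exactly the claimed necessary condition, finishing the proof.

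There is no substantive obstacle here: the corollary is a direct two-step combination of Theorem \ref{Th: tensor product} and Lemma \ref{Le: simple tensor product}. The only point worth checking is that Lemma \ref{Le: simple tensor product} is stated for arbitrary simple Lie algebras $\mathfrak{g}$ and arbitrary finite-dimensional simple $U_{\varepsilon}^{\res}(L\mathfrak{g})$-modules, which covers the present setting verbatim, so no adaptation is required.
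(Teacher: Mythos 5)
Your proposal is correct and matches the paper's intended argument exactly: the paper introduces Lemma \ref{Le: simple tensor product} immediately before the corollary precisely so that the many-factor case reduces to pairwise simplicity, after which Theorem \ref{Th: tensor product} gives the congruence conditions. No gaps.
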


\section{\texorpdfstring{A path description for $\varepsilon$-characters of Kirillov-Reshetikhin modules}{A path description for epsilon-characters of Kirillov-Reshetikhin modules}}\label{Kirillov-Reshetikhin}
In this section, we introduce a concept of glue of the lines $x=0$ with $x=2 \ell$ in a lattice with paths, and then we give an effective and concrete path description for $\varepsilon$-characters of any Kirillov-Reshetikhin module of $U_{\varepsilon}^{\res}({L\mathfrak{sl}_{n+1}})$.

\subsection{A path description for $\varepsilon$-characters of Kirillov-Reshetikhin modules}

\begin{definition}\label{D:glue}
Let $\varepsilon^{2 \ell}=1$. For the lines $x=0$ with $x=2 \ell$ in a lattice with paths, the glue of these two lines is the identification of the lines $x=c$ and $x=2d \ell+c$, $0\leq c<2 \ell$, $d\in \mathbb{Z}_{\geq 1}$.
\end{definition}

Gluing the lines $x=0$ with $x=2 \ell$ of a lattice, the lattice becomes a tube.

Let $p_1, p_2$ be paths. Define $p_1\cap p_2= \{(a, b):(a, b)\in p_1, (a, b) \in p_2\}$.
We say that $p_1$ and $p_2$ are \textit{disjoint} if $p_1\cap p_2=\emptyset$. We say that a $z$-tuple of paths $(p_{1},\ldots,p_{z})$ is disjoint if $p_{s}\cap p_{t}=\emptyset$ for all $1\leq s\neq t\leq z$. Note that ``disjoint'' is slightly different from ``non-overlapping'' defined in Section \ref{sec:snake modules and three types of translations of paths}.

In a tube, for $\mathfrak{a} \in \{0,1\}$ and any snake $(i,k_t)\in \mathcal{X}_{\mathfrak{a}}$, $1\leq t\leq z$, $z\in \mathbb{Z}_{\geq1}$, we denote
\begin{align}\label{def: disjoint path in a tube}
\overline{\mathscr{P'}}_{{{(i,k_t)}_{1\leq t\leq z}}}=\{(p_{1},\ldots,p_{z}): p_{t}\in \mathscr{P}_{i,k_t}, 1\leq t\leq z, (p_{1},\ldots,p_{z})\text { is disjoint} \},
\end{align}
where $\mathscr{P}_{i,k_t}$ is defined in $(\ref{path})$.

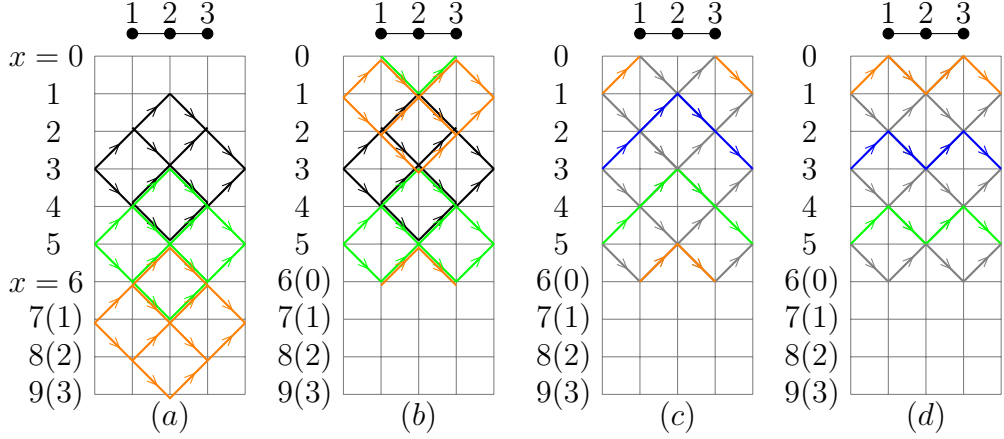
\begin{figure}
\resizebox{1.0\width}{1.0\height}{
\begin{minipage}[b]{0.2\linewidth}
\centerline{
\begin{tikzpicture}
\draw[step=.5cm,gray,thin] (-0.5,4.5) grid (1.5,9) (-0.5,4.5)--(1.5,4.5);
\draw[fill] (0,9.3) circle (2pt) -- (0.5,9.3) circle (2pt) --(1,9.3) circle (2pt);
\begin{scope}[thick, every node/.style={sloped,allow upside down}]
\draw (-0.5,7.5)--node {\midarrow}(0,8);
\draw (0,8)--node {\midarrow}(0.5,8.5);
\draw (0.5,8.5)--node {\midarrow}(1,8);
\draw (1,8)--node {\midarrow}(1.5,7.5);
\draw (-0.5,7.5)--node {\midarrow}(0,7);
\draw (0,7.05)--node {\midarrow}(0.5,6.55);
\draw (0.5,6.55)--node {\midarrow}(1,7.05);
\draw (1,7)--node {\midarrow}(1.5,7.5);
\draw (0,7.05)--node {\midarrow}(0.5,7.55);
\draw (0.5,7.55)--node {\midarrow}(1,8.05);
\draw (0,8.05)--node {\midarrow}(0.5,7.55);
\draw (0.5,7.55)--node {\midarrow}(1,7.05);
\textcolor{green}{
\draw (-0.5,6.5)--node {\midarrow}(0,7)--node {\midarrow}(0.5,7.5)--node {\midarrow}(1,7)--node {\midarrow}(1.5,6.5);
\draw (-0.5,6.5)--node {\midarrow}(0,6)--node {\midarrow}(0.5,5.5)--node {\midarrow}(1,6)--node {\midarrow}(1.5,6.5);
\draw (0,6)--node {\midarrow}(0.5,6.5)--node {\midarrow}(1,7);
\draw (0,7)--node {\midarrow}(0.5,6.5)--node {\midarrow}(1,6);}
\draw[orange] (-0.5,5.45)--node {\midarrow}(0,5.95)--node {\midarrow}(0.5,6.45)--node {\midarrow}(1,5.95)--node {\midarrow}(1.5,5.45);
\draw[orange] (-0.5,5.45)--node {\midarrow}(0,4.95)--node {\midarrow}(0.5,4.45)--node {\midarrow}(1,4.95)--node {\midarrow}(1.5,5.45);
\draw[orange](0,4.95)--node {\midarrow}(0.5,5.45)--node {\midarrow}(1,5.95);
\draw[orange] (0,5.95)--node {\midarrow}(0.5,5.45)--node {\midarrow}(1,4.95);
\end{scope}
\node [above] at (0,9.3) {$1$};
\node [above] at (0.5,9.3) {$2$};
\node [above] at (1,9.3) {$3$};
\node [left] at (-0.5,9) {$x=0$};
\node [left] at (-0.8,8.5) {$1$};
\node [left] at (-0.8,8) {$2$};
\node [left] at (-0.8,7.5) {$3$};
\node [left] at (-0.8,7) {$4$};
\node [left] at (-0.8,6.5) {$5$};
\node [left] at (-0.5,6) {$x=6$};
\node [left] at (-0.5,5.5) {$7(1)$};
\node [left] at (-0.5,5) {$8(2)$};
\node [left] at (-0.5,4.5) {$9(3)$};
\node at (0.5,4.2)  {$(a)$};
\end{tikzpicture}}
\end{minipage}
\begin{minipage}[b]{0.2\linewidth}
\centerline{
\begin{tikzpicture}
\draw[step=.5cm,gray,thin] (-0.5,4.5) grid (1.5,9) (-0.5,4.5)--(1.5,4.5);
\draw[fill] (0,9.3) circle (2pt) -- (0.5,9.3) circle (2pt) --(1,9.3) circle (2pt);
\begin{scope}[thick, every node/.style={sloped,allow upside down}]
\draw (-0.5,7.5)--node {\midarrow}(0,8);
\draw (0,8)--node {\midarrow}(0.5,8.5);
\draw (0.5,8.5)--node {\midarrow}(1,8);
\draw (1,8)--node {\midarrow}(1.5,7.5);
\draw (-0.5,7.5)--node {\midarrow}(0,7);
\draw (0,7.05)--node {\midarrow}(0.5,6.55);
\draw (0.5,6.55)--node {\midarrow}(1,7.05);
\draw (1,7)--node {\midarrow}(1.5,7.5);
\draw (0,7.05)--node {\midarrow}(0.5,7.55);
\draw (0.5,7.55)--node {\midarrow}(1,8.05);
\draw (0,8.05)--node {\midarrow}(0.5,7.55);
\draw (0.5,7.55)--node {\midarrow}(1,7.05);
\textcolor{green}{
\draw (-0.5,6.5)--node {\midarrow}(0,7);
\draw (0,7)--node {\midarrow}(0.5,7.5);
\draw (0.5,7.5)--node {\midarrow}(1,7);
\draw (1,7)--node {\midarrow}(1.5,6.5);
\draw (-0.5,6.5)--node {\midarrow}(0,6);
\draw (0,9)--node {\midarrow}(0.5,8.5);
\draw (0.5,8.5)--node {\midarrow}(1,9);
\draw (1,6)--node {\midarrow}(1.5,6.5);
\draw (0,6)--node {\midarrow}(0.5,6.5);
\draw (0.5,6.5)--node {\midarrow}(1,7);
\draw (0,7)--node {\midarrow}(0.5,6.5);
\draw (0.5,6.5)--node {\midarrow}(1,6);}
\draw[orange] (-0.5,8.45)--node {\midarrow}(0,8.95);
\draw[orange] (1,8.95)--node {\midarrow}(1.5,8.45);
\draw[orange] (-0.5,8.45)--node {\midarrow}(0,7.95)--node {\midarrow}(0.5,7.45)--node {\midarrow}(1,7.95)--node {\midarrow}(1.5,8.45);
\draw[orange](0,7.95)--node {\midarrow}(0.5,8.45)--node {\midarrow}(1,8.95);
\draw[orange] (0,8.95)--node {\midarrow}(0.5,8.45)--node {\midarrow}(1,7.95);
\draw[orange](0,5.95)--node {\midarrow}(0.5,6.45)--node {\midarrow}(1,5.95);
\end{scope}
\node [above] at (0,9.3) {$1$};
\node [above] at (0.5,9.3) {$2$};
\node [above] at (1,9.3) {$3$};
\node [left] at (-0.8,9) {$0$};
\node [left] at (-0.8,8.5) {$1$};
\node [left] at (-0.8,8) {$2$};
\node [left] at (-0.8,7.5) {$3$};
\node [left] at (-0.8,7) {$4$};
\node [left] at (-0.8,6.5) {$5$};
\node [left] at (-0.5,6) {$6(0)$};
\node [left] at (-0.5,5.5) {$7(1)$};
\node [left] at (-0.5,5) {$8(2)$};
\node [left] at (-0.5,4.5) {$9(3)$};
\node at (0.5,4.2)  {$(b)$};
\end{tikzpicture}}
\end{minipage}
\begin{minipage}[b]{0.2\linewidth}
\centerline{
\begin{tikzpicture}
\draw[step=.5cm,gray,thin] (-0.5,4.5) grid (1.5,9) (-0.5,4.5)--(1.5,4.5);
\draw[fill] (0,9.3) circle (2pt) -- (0.5,9.3) circle (2pt) --(1,9.3) circle (2pt);
\begin{scope}[thick, every node/.style={sloped,allow upside down}]
\draw [gray](-0.5,7.5)--node {\midarrow}(0,7);
\draw [gray](0,7)--node {\midarrow}(0.5,6.5);
\draw [gray](0.5,6.5)--node {\midarrow}(1,7);
\draw [gray](1,7)--node {\midarrow}(1.5,7.5);
\draw [gray](0,7)--node {\midarrow}(0.5,7.5);
\draw [gray](0,8)--node {\midarrow}(0.5,7.5);
\draw [gray](0.5,7.5)--node {\midarrow}(1,7);
\draw [green](-0.5,6.5)--node {\midarrow}(0,7)--node {\midarrow}(0.5,7.5)--node {\midarrow}(1,7)--node {\midarrow}(1.5,6.5);
\draw [gray](-0.5,6.5)--node {\midarrow}(0,6);
\draw [gray](1,6)--node {\midarrow}(1.5,6.5);
\draw [gray](0,6)--node {\midarrow}(0.5,6.5);
\draw [gray](0.5,6.5)--node {\midarrow}(1,6);
\draw [orange] (-0.5,8.5)--node {\midarrow}(0,9);
\draw [orange] (0,6)--node {\midarrow}(0.5,6.5)--node {\midarrow}(1,6);
\draw [orange] (1,9)--node {\midarrow}(1.5,8.5);
\draw [gray](-0.5,8.5)--node {\midarrow}(0,8)--node {\midarrow}(0.5,7.5)--node {\midarrow}(1,8)--node {\midarrow}(1.5,8.5);
\draw[gray](0,8)--node {\midarrow}(0.5,8.5)--node {\midarrow}(1,9);
\draw [gray](0,9)--node {\midarrow}(0.5,8.5)--node {\midarrow}(1,8);
\draw [blue] (-0.5,7.5)--node {\midarrow}(0,8)--node {\midarrow}(0.5,8.5)--node {\midarrow}(1,8)--node {\midarrow}(1.5,7.5);
\end{scope}
\node [above] at (0,9.3) {$1$};
\node [above] at (0.5,9.3) {$2$};
\node [above] at (1,9.3) {$3$};
\node [left] at (-0.8,9) {$0$};
\node [left] at (-0.8,8.5) {$1$};
\node [left] at (-0.8,8) {$2$};
\node [left] at (-0.8,7.5) {$3$};
\node [left] at (-0.8,7) {$4$};
\node [left] at (-0.8,6.5) {$5$};
\node [left] at (-0.5,6) {$6(0)$};
\node [left] at (-0.5,5.5) {$7(1)$};
\node [left] at (-0.5,5) {$8(2)$};
\node [left] at (-0.5,4.5) {$9(3)$};
\node at (0.5,4.2)  {$(c)$};
\end{tikzpicture}}
\end{minipage}\begin{minipage}[b]{0.2\linewidth}
\centerline{
\begin{tikzpicture}
\draw[step=.5cm,gray,thin] (-0.5,4.5) grid (1.5,9) (-0.5,4.5)--(1.5,4.5);
\draw[fill] (0,9.3) circle (2pt) -- (0.5,9.3) circle (2pt) --(1,9.3) circle (2pt);
\begin{scope}[thick, every node/.style={sloped,allow upside down}]
\draw [gray](-0.5,7.5)--node {\midarrow}(0,7);
\draw [gray](0,7)--node {\midarrow}(0.5,6.5);
\draw [gray](0.5,6.5)--node {\midarrow}(1,7);
\draw [gray](1,7)--node {\midarrow}(1.5,7.5);
\draw [gray](0,7)--node {\midarrow}(0.5,7.5);
\draw [gray](0,8)--node {\midarrow}(0.5,7.5);
\draw [gray](0.5,7.5)--node {\midarrow}(1,7);
\draw [green](-0.5,6.5)--node {\midarrow}(0,7)--node {\midarrow}(0.5,6.5)--node {\midarrow}(1,7)--node {\midarrow}(1.5,6.5);
\draw [gray](-0.5,6.5)--node {\midarrow}(0,6);
\draw [gray](1,6)--node {\midarrow}(1.5,6.5);
\draw [gray](0,6)--node {\midarrow}(0.5,6.5);
\draw [gray](0.5,6.5)--node {\midarrow}(1,6);
\draw [gray](-0.5,8.5)--node {\midarrow}(0,8)--node {\midarrow}(0.5,7.5)--node {\midarrow}(1,8)--node {\midarrow}(1.5,8.5);
\draw[gray](0,8)--node {\midarrow}(0.5,8.5)--node {\midarrow}(1,9);
\draw [gray](0,9)--node {\midarrow}(0.5,8.5)--node {\midarrow}(1,8);
\draw [blue] (-0.5,7.5)--node {\midarrow}(0,8)--node {\midarrow}(0.5,7.5)--node {\midarrow}(1,8)--node {\midarrow}(1.5,7.5);
\draw [orange] (-0.5,8.5)--node {\midarrow}(0,9)--node {\midarrow}(0.5,8.5)--node {\midarrow}(1,9)--node {\midarrow}(1.5,8.5);
\end{scope}
\node [above] at (0,9.3) {$1$};
\node [above] at (0.5,9.3) {$2$};
\node [above] at (1,9.3) {$3$};
\node [left] at (-0.8,9) {$0$};
\node [left] at (-0.8,8.5) {$1$};
\node [left] at (-0.8,8) {$2$};
\node [left] at (-0.8,7.5) {$3$};
\node [left] at (-0.8,7) {$4$};
\node [left] at (-0.8,6.5) {$5$};
\node [left] at (-0.5,6) {$6(0)$};
\node [left] at (-0.5,5.5) {$7(1)$};
\node [left] at (-0.5,5) {$8(2)$};
\node [left] at (-0.5,4.5) {$9(3)$};
\node at (0.5,4.2)  {$(d)$};
\end{tikzpicture}}
\end{minipage}}
\caption{For the $U_{\varepsilon}^{\res}({L\mathfrak{sl}_4})$-module $L(Y_{2,1}Y_{2,3}Y_{2,5})$, where $\varepsilon^{2 \ell}=1$, $\ell=3$. (a) $\mathscr{P}_{2,1}$, $\mathscr{P}_{2,3}$ and $\mathscr{P}_{2,5}$, respectively. (b) The tube obtained by gluing the lines $x=0$ with $x=6$ in figure (a). (c) Paths corresponding to monomials $Y_{2,1}$, $Y_{2,3}$, and $Y_{2,5}$ are drawn with blue, green, and orange lines, respectively. (d) Paths corresponding to monomials $Y_{1,2}Y_{2,3}^{-1}Y_{3,2}$, $Y_{1,4}Y_{2,5}^{-1}Y_{3,4}$, and $Y_{1,0}Y_{2,1}^{-1}Y_{3,0}$ are drawn with blue, green, and orange lines, respectively.}\label{F: k-r module}
\end{figure}

We have the following result.
\begin{theorem}\label{Th: the path description of K-R module}
Let $\varepsilon^{2 \ell}=1$ and let $L(Y_{i,k_1}\cdots Y_{i,k_z})$ be a Kirillov-Reshetikhin module of $U_{\varepsilon}^{\res}({L\mathfrak{sl}_{n+1}})$, where $Y_{i,k_1}\cdots Y_{i,k_z}$ has small values of indices, $i \in I=[1,n]$, $z \in [1,\ell]$, $k_t \in \ZZ$, $t\in [1,z]$. Then
\begin{align*}
\chi_{\varepsilon}(L(Y_{i,k_1}\cdots Y_{i,k_z}))=\sum_{(p_{1},\ldots,p_{z}) \in \overline{\mathscr{P'}}_{(i,k_{t})_{1 \leq t \leq z}}} \prod_{t=1}^{z}\mathfrak{m}(p_{t}),
\end{align*}
where $\mathfrak{m}(p_t)$ is the monomial of the path $p_t$, $1\leq t\leq z$, which is given in $(\ref{map: path to monomial})$, and $\overline{\mathscr{P'}}_{{(i,k_{t})_{ 1\leq t\leq z}}}$ is defined in $(\ref{def: disjoint path in a tube})$.
\end{theorem}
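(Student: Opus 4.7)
\medskip

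\noindent\textbf{Proof proposal.} The plan is to combine the Mukhin--Young path description of $q$-characters with the specialization $q \mapsto \varepsilon$ and to reinterpret the resulting combinatorics on the tube obtained by gluing $x=0$ to $x=2\ell$.

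First, since $Y_{i,k_1}\cdots Y_{i,k_z}$ has small values of indices, the sequence $(i,k_t)_{1\leq t\leq z}$ is a minimal snake in the sense of Section~\ref{Snake modules}, so $L(Y_{i,k_1}\cdots Y_{i,k_z})$ viewed as a $U_q(L\mathfrak{sl}_{n+1})$-module is a (minimal) snake module. By \cite[Theorem~6.1]{MY12a},
\[
\chi_q\bigl(L(Y_{i,k_1}\cdots Y_{i,k_z})\bigr)=\sum_{(p_{1},\ldots,p_{z}) \in \overline{\mathscr{P}}_{(i,k_{t})_{1 \leq t \leq z}}} \prod_{t=1}^{z}\mathfrak{m}(p_{t}),
\]
where the tuples are non-overlapping in $\mathbb{R}^2$. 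Snake modules are special over $U_q$, and I would first verify that the specialness is preserved under the specialization to $U_\varepsilon^{\res}$; for $z\le\ell$ and small indices this follows because any extra dominant monomial would have to arise from a wrap-around cancellation, which the height budget $2(z-1)<2\ell$ rules out.

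Second, using that the $\varepsilon$-character of $L(m)$ is determined by the Frenkel--Mukhin algorithm applied to the dominant monomial when the module is special, I would show that running the algorithm inside the tube (i.e.\ using lowering moves modulo the identification $y\sim y+2\ell$) produces exactly the disjoint-path formula asserted in the theorem. The highest weight corresponds to the $z$-tuple of highest paths on the tube, which are pairwise disjoint because the starting heights $i+k_t$ are separated by $2$ modulo $2\ell$ and $z\le\ell$. Each lowering move of Section~\ref{subsec:lowdef}, now performed on the tube, either produces a new disjoint configuration or is blocked exactly by the disjointness constraint; conversely, every disjoint tube configuration is reached by a sequence of lowering moves from the top configuration.

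Third, I would set up a lift/descent correspondence between $\overline{\mathscr{P'}}_{(i,k_t)_{1\leq t\leq z}}$ and $\overline{\mathscr{P}}_{(i,k_t)_{1\leq t\leq z}}$ to reconcile steps one and two. A non-overlapping planar tuple descends to a disjoint tube tuple, and for $z\le\ell$ each disjoint tube tuple admits a \emph{unique} non-overlapping planar lift once we fix that $p_s$ is placed strictly above $p_t$ for $s<t$ within one fundamental domain of height $2\ell$. Under $q\mapsto \varepsilon$, the monomial attached to a planar tuple depends only on its image on the tube, so the $q$-character formula collapses to the claimed disjoint-path sum.

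The main obstacle will be the third step: namely, ensuring the bijection is multiplicity-free and that no planar non-overlapping tuple gets erased or doubly counted under the tube projection. The hypothesis $z\le\ell$ is crucial here, since at $z=\ell+1$ two paths separated by exactly $2\ell$ would project to coincident tube paths. I expect to handle this by induction on $z$, using Lemma~\ref{Le: I translation of degree 2} and Lemma~\ref{Le: II translation of degree 2} to control how adding one more highest-weight factor $Y_{i,k_z}$ interacts with the existing $z-1$ paths, and using the raising/lowering move framework of Section~\ref{subsec:lowdef} to match both sides one monomial at a time.
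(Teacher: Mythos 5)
Your proposal breaks down at the first step, and the error propagates into the third. The claim that specialness of the specialized planar sum ``follows because any extra dominant monomial would have to arise from a wrap-around cancellation, which the height budget $2(z-1)<2\ell$ rules out'' uses the wrong budget: a path in $\mathscr{P}_{i,k_t}$ reaches down to row $k_t+2i$ at column $i$ (resp.\ row $k_t+2(n+1-i)$ at column $n+1-i$), so the condition that forbids wrap-around intersections on the tube is $k_z-k_1+2\min\{i,n+1-i\}<2\ell$, not $k_z-k_1<2\ell$. This is precisely the dichotomy between Lemma~\ref{Lem: the disjoint path for z small} and Lemma~\ref{Lem: dominant monomial for k-r with z great }. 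When the condition fails — which already happens for small $z$ once $\min\{i,n+1-i\}$ is comparable to $\ell$ — the specialized Mukhin--Young sum $S_{(i,k_t)_{1\le t\le z}}$ does contain dominant monomials besides the highest one. Example~\ref{Ex:2_12_32_5} ($\ell=3$, $n=3$, $i=2$, $z=3\le\ell$, small indices) is a case in point: $k_z+2i-k_1=8\ge 2\ell$, the planar non-overlapping sum has $50$ terms while the $\varepsilon$-character has $6$. Hence your third step is false as stated: both $\overline{\mathscr{P}}_{(i,k_t)}$ and $\overline{\mathscr{P'}}_{(i,k_t)}$ consist of tuples of the \emph{same} planar paths (only the compatibility condition differs), one has $\overline{\mathscr{P'}}\subseteq\overline{\mathscr{P}}$ with strict inclusion in this regime, and no multiplicity-free ``lift/descent'' bijection exists. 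Your diagnosis that the difficulty first appears at $z=\ell+1$ is therefore misplaced.

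What actually has to be done — and what the paper does in Lemma~\ref{Lem: dominant monomial for k-r with z great } — is to classify every dominant monomial $m\ne Y_{i,k_1}\cdots Y_{i,k_z}$ in $S_{(i,k_t)_{1\le t\le z}}$ whose full $\varepsilon$-character is contained in $S_{(i,k_t)_{1\le t\le z}}$ (these arise from translations of paths in $\mathscr{P}_{i,k_j}$ to $\mathscr{P}_{i,k_j+2x\ell}$ with respect to $\mathbf{PS}(i,k_{\mu+j-1})$), to rule out all other candidate dominant monomials by checking that their lowest $\ell$-weight monomials fail to lie in $S$, and then to identify $S_{(i,k_t)_{1\le t\le z}}-\sum\chi_\varepsilon(L(m))$ with the disjoint-tube sum. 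None of this case analysis is supplied by your outline, and the inductive scheme you sketch (adding one factor $Y_{i,k_z}$ at a time via the degree-two translation lemmas) does not obviously control the multi-path configurations that occur in Subcases 1.2, 1.3, and Case 2 of that lemma. Separately, your second step invokes the Frenkel--Mukhin algorithm for $\varepsilon$-characters of $U_\varepsilon^{\res}$-modules; the paper never establishes that this algorithm is valid at roots of unity, so that step would require independent justification even granting specialness.
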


Theorem \ref{Th: the path description of K-R module} will be proved in Sections \ref{subsec:lemmas for proving theorem of path description of KR modules} and \ref{subsec: prove Th K-R module}.

\begin{remark}\label{remark:path description for KR modules with large degree}
Let $\varepsilon^{2 \ell}=1$ and let $L(Y_{i,k_1}\cdots Y_{i,k_z})$ be a Kirillov-Reshetikhin module of $U_{\varepsilon}^{\res}({L\mathfrak{sl}_{n+1}})$, where $Y_{i,k_1}\cdots Y_{i,k_z}$ has small values of indices. In the case of $z>\ell$, $z=a \ell+b$ for some $a\in \mathbb{Z}_{\geq 1}$, $0\leq b< \ell$. By Theorem \ref{Th:decomposition}, we have
\[
\chi_{\varepsilon}(L(Y_{i,k_1}\cdots Y_{i,k_z}))=\chi_{\varepsilon}(L( (Y_{i,k_1}\cdots Y_{i,k_{\ell}})^a )) \chi_{\varepsilon}(L(Y_{i,k_1}\cdots Y_{i,k_b})).
\]
The $\varepsilon$-characters $\chi_{\varepsilon}(L(Y_{i,k_1}\cdots Y_{i,k_{\ell}}))$ and $\chi_{\varepsilon}(L(Y_{i,k_1}\cdots Y_{i,k_b}))$ can be obtained using Theorem \ref{Th: the path description of K-R module}.
\end{remark}

\begin{example}\label{Ex:2_12_32_5}
Let $\varepsilon^{2 \ell}=1$ with $\ell=3$. We consider the $U_{\varepsilon}^{\res}({L\mathfrak{sl}_{4}})$-module $L(Y_{2,1}Y_{2,3}Y_{2,5})$. In Figure \ref{F: k-r module} $(a)$, gluing the lines $x=0$ with $x=6$, we obtain a tube in Figure \ref{F: k-r module} $(b)$. We have
\begin{align}\label{Eq: glue}
&\chi_{\varepsilon}(L(Y_{2,1}Y_{2,3}Y_{2,5}))=Y_{2,1}Y_{2,3}Y_{2,5}+Y_{1,0}Y_{1,2}Y_{1,4}Y_{2,1}^{-1}Y_{2,3}^{-1}Y_{2,5}^{-1}Y_{3,0}Y_{3,2}Y_{3,4}+Y_{2,1}^{-1}Y_{2,3}^{-1}Y_{2,5}^{-1}+\nonumber\\
&Y_{1,0}Y_{1,2}Y_{1,4}Y_{3,0}^{-1}Y_{3,2}^{-1}Y_{3,4}^{-1}+Y_{1,0}^{-1}Y_{1,2}^{-1}Y_{1,4}^{-1}Y_{3,0}Y_{3,2}Y_{3,4}
+Y_{1,0}^{-1}Y_{1,2}^{-1}Y_{1,4}^{-1}Y_{2,1}Y_{2,3}Y_{2,5}Y_{3,0}^{-1}Y_{3,2}^{-1}Y_{3,4}^{-1}.
\end{align}
The first term on the right-hand side of the Equation $(\ref{Eq: glue})$ is $\mathfrak{m}(p_1)\mathfrak{m}(p_2)\mathfrak{m}(p_3)=Y_{2,1}Y_{2,3}Y_{2,5}$, where $p_1 \in \mathscr{P}_{2,1}$, $\mathfrak{m}(p_1)=Y_{2,1}$, $p_2 \in \mathscr{P}_{2,3}$, $\mathfrak{m}(p_2)=Y_{2,3}$, and $p_3 \in \mathscr{P}_{2,5}$, $\mathfrak{m}(p_3)=Y_{2,5}$. Paths $p_1$, $p_2$, and $p_3$ drawn with blue, green, and orange lines, respectively, in Figure \ref{F: k-r module} $(c)$ are disjoint. The second term on the right-hand side of the Equation $(\ref{Eq: glue})$ is $\mathfrak{m}(p_1)\mathfrak{m}(p_2)\mathfrak{m}(p_3)=Y_{1,0}Y_{1,2}Y_{1,4}Y_{2,1}^{-1}Y_{2,3}^{-1}Y_{2,5}^{-1}Y_{3,0}Y_{3,2}Y_{3,4}$, where $p_1 \in \mathscr{P}_{2,1}$, $\mathfrak{m}(p_1)=Y_{1,2}Y_{2,3}^{-1}Y_{3,2}$, $p_2\in \mathscr{P}_{2,3}$, $\mathfrak{m}(p_2)=Y_{1,4}Y_{2,5}^{-1}Y_{3,4}$, and $p_3\in \mathscr{P}_{2,5}$, $\mathfrak{m}(p_3)=Y_{1,0}Y_{2,1}^{-1}Y_{3,0}$. Paths $p_1$, $p_2$, and $p_3$ drawn with blue, green, and orange lines, respectively, in Figure \ref{F: k-r module} (d) are disjoint. The other terms are similarly obtained.
\end{example}

\begin{example}
Let $\varepsilon^{2 \ell}=1$ with $\ell=3$. For the $U_{\varepsilon}^{\res}({L\mathfrak{sl}_{4}})$-module $L(Y_{2,1}Y_{2,3}Y_{2,5}Y_{2,7})$, we have that
\[
\chi_{\varepsilon}(L(Y_{2,1}Y_{2,3}Y_{2,5}Y_{2,7}))=\chi_{\varepsilon}(L(Y_{2,1}^2Y_{2,3}Y_{2,5}))=\chi_{\varepsilon}(L(Y_{2,1}Y_{2,3}Y_{2,5}))\chi_{\varepsilon}(L(Y_{2,1})).
\]
The $\varepsilon$-character $\chi_{\varepsilon}(L(Y_{2,1}Y_{2,3}Y_{2,5}))$ is computed in Example \ref{Ex:2_12_32_5}.
\end{example}

\subsection{Preparation for proving Theorem \ref{Th: the path description of K-R module}} \label{subsec:lemmas for proving theorem of path description of KR modules}

For $a \leq b \in \mathbb{Z}$, we denote $[a,b] = \{a, a+1, a+2, \ldots, b\}$ and $(a,b] = \{a+1, a+2, \ldots, b\}$. When $a=b$, $(a,b] = \emptyset$. For $a\in \mathbb{R}$, denote $\lceil a\rceil$ the smallest integer greater than or equal to $a$, and denote $\lfloor a\rfloor$ the largest integer less than or equal to $a$.

\begin{lemma}\label{Lem: the disjoint path for z small}
Let $\varepsilon^{2 \ell}=1$ and let $L(Y_{i,k_1}\cdots Y_{i,k_z})$ be a $U_{\varepsilon}^{\res}({L\mathfrak{sl}_{n+1}})$ Kirillov-Reshetikhin module, where $Y_{i,k_1}\cdots Y_{i,k_z}$ has small values of indices, $i\in I$, $k_t\in\ZZ$, $t\in[1,z]$, $1\leq z< \ell$. If
\begin{align*}
\begin{cases} k_z+2i-k_1<2 \ell, & {\hskip 2em} i\in [1, \lfloor \frac{n+1}2 \rfloor],\\
              k_z+2(n+1-i)-k_1<2 \ell, & {\hskip 2em} i\in (\lfloor \frac{n+1}2 \rfloor, n],
\end{cases}
\end{align*}
then
\[
\sum_{(p_{1},\ldots,p_{z}) \in \overline{\mathscr{P'}}_{(i,k_t)_{1 \leq t \leq z}}} \prod_{t=1}^{z}\mathfrak{m}(p_{t})\quad=\sum_{(p_{1},\ldots,p_{z}) \in \overline{\mathscr{P}}_{(i,k_t)_{1 \leq t \leq z}}} \prod_{t=1}^{z}\mathfrak{m}(p_{t}).
\]
\end{lemma}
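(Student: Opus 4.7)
The strategy is to prove the stronger statement that, under the hypothesis, the two index sets $\overline{\mathscr{P'}}_{(i,k_{t})_{1 \leq t \leq z}}$ and $\overline{\mathscr{P}}_{(i,k_{t})_{1 \leq t \leq z}}$ coincide as subsets of $\prod_{t=1}^{z} \mathscr{P}_{i,k_{t}}$; the claimed equality of sums then follows term-by-term, since each tuple produces the same monomial on both sides. As both ``disjoint'' and ``non-overlapping'' are pairwise conditions, it suffices to compare them for an arbitrary pair $p_{s} \in \mathscr{P}_{i,k_{s}}$, $p_{t} \in \mathscr{P}_{i,k_{t}}$ with $s<t$ (so $k_{s} < k_{t}$).

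I would first record two elementary properties of paths in $\mathscr{P}_{i,k}$. The \emph{parity property}: every path $p$ satisfies $y(x) \equiv i+k+x \pmod{2}$, because $y(0) = i+k$ and each step changes $y$ by $\pm 1$. Since all $(i,k_{t}) \in \mathcal{X}_{\mathfrak{a}}$ share the parity $i - k_{t} \equiv \mathfrak{a} \pmod{2}$, the integers $k_{t}$ have a common parity, and hence $y_{t}(x) - y_{s}(x)$ is always even. The \emph{range property}: a brief up/down-step count shows that the set of $y$-values achievable at column $x$ by a path in $\mathscr{P}_{i,k}$ is an interval $[k+m(x),\, k+M(x)]$ whose width $W(x) := M(x)-m(x)$ is independent of $k$ and satisfies $W(x) \leq 2 \min(i, n+1-i)$, the maximum being attained for the middle columns. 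This is the only place where the split on $i \leq \lfloor (n+1)/2 \rfloor$ versus $i > \lfloor (n+1)/2 \rfloor$ enters, and it matches the two clauses of the hypothesis.

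Combining the range property with the hypothesis gives, for any $s<t$ and every column $x$,
\[
\bigl|y_{t}(x) - y_{s}(x)\bigr| \;\leq\; (k_{t}-k_{s}) + W(x) \;\leq\; (k_{z}-k_{1}) + 2\min(i, n+1-i) \;<\; 2\ell.
\]
Therefore two values $y_{s}(x)$ and $y_{t}(x)$ are congruent modulo $2\ell$ if and only if they are equal. The gluing of rows at distance $2\ell$ thus creates no new vertex coincidences, and so $(p_{1},\ldots,p_{z})$ is disjoint in the tube if and only if it is disjoint in the plane.

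Finally, I would use the parity property to upgrade ``disjoint in the plane'' to ``non-overlapping''. For $s<t$ the paths start strictly ordered, $y_{s}(0) = i+k_{s} < i+k_{t} = y_{t}(0)$. If strict order failed, the first column $x_{1}$ at which $y_{s}(x_{1}) \geq y_{t}(x_{1})$ would force $y_{t} - y_{s}$ to change sign; but $y_{t}-y_{s}$ is even and changes by at most $2$ between adjacent columns, so such a sign change is only possible via $y_{s}(x_{1}) = y_{t}(x_{1})$, contradicting planar disjointness. The reverse implication (non-overlapping implies disjoint in the tube) is immediate from the same bound. The main technical point is therefore the verification of $W(x) \leq 2 \min(i, n+1-i)$, which is a short three-case check over $x \leq \min(i,n+1-i)$, $\min(i,n+1-i) \leq x \leq \max(i,n+1-i)$, and $x \geq \max(i,n+1-i)$; everything else is parity or inequality bookkeeping.
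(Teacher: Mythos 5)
Your proposal is correct and follows essentially the same route as the paper: the hypothesis forces the entire configuration of paths to fit in a vertical window of height less than $2\ell$, so the gluing creates no new intersections, and planar disjointness then coincides with the non-overlapping ordering. You simply supply the details (the explicit width bound $W(x)\leq 2\min(i,n+1-i)$ and the parity argument upgrading disjointness to strict ordering) that the paper's very terse proof leaves implicit.
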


\begin{proof}
We prove the lemma for the case of $k_z+2i-k_1< 2 \ell$, $i\in [1, \lfloor \frac{n+1}2 \rfloor ]$, the proof for the case of $k_z+2(n+1-i)-k_1< 2 \ell$, $i\in (\lfloor \frac{n+1}2 \rfloor, n]$ is similar. 

According to Equation $(\ref{Non-overlapping paths})$, $\overline{\mathscr{P}}_{(i,k_{t})_{1\leq t\leq z}}$ is defined as the set of $(p_{1},\ldots,p_{z})$, where $p_{t}\in \mathscr{P}_{i,k_{t}}$, $1\leq t\leq z$, and $(p_{1},\ldots,p_{z})$ is non-overlapping. By gluing the lines $x=0$ and $x=2 \ell$, we obtain a tube (see Definition \ref{D:glue}). In this tube, $\overline{\mathscr{P'}}_{{{(i,k_t)}_{1\leq t\leq z}}}$ is defined as the set of $(p_{1},\ldots,p_{z})$, where $p_{t}\in \mathscr{P}_{i,k_t}$, $1\leq t\leq z$, and $(p_{1},\ldots,p_{z})$ is disjoint, see Equation (\ref{def: disjoint path in a tube}). Since $k_z+2i<k_1+2 \ell$, $i\in [1, \lfloor \frac{n+1}2 \rfloor ]$, the highest path in $\mathscr{P}_{i,k_1}$ and any path in $\mathscr{P}_{i,k_z}$ within this tube are disjoint. Therefore, the condition of $(p_{1},\ldots,p_{z})$ being disjoint is equivalent to being non-overlapping. Thus, 
\[
\overline{\mathscr{P'}}_{{{(i,k_t)}_{1\leq t\leq z}}} = \overline{\mathscr{P}}_{(i,k_{t})_{1\leq t\leq z}}.
\]
Therefore, the conclusion follows.
\end{proof}

Recall that $\hat{I}=I \cup \{0\}$ and we identify $Y_{0,s}$ and $Y_{n+1,s}$ with $1$. For convenience, we denote
\begin{align}
S_{(i,k_t)_{1 \leq t \leq z}}=\sum_{(p_{1},\ldots,p_{z}) \in \overline{\mathscr{P}}_{(i,k_t)_{1 \leq t \leq z}}} \prod_{t=1}^{z}\mathfrak{m}(p_{t}).
\end{align}

\begin{lemma}\label{Lem: dominant monomial for k-r with z great }
Let $\varepsilon^{2 \ell}=1$ and let $L(Y_{i,k_1}\cdots Y_{i,k_z})$ be a $U_{\varepsilon}^{\res}({L\mathfrak{sl}_{n+1}})$ Kirillov-Reshetikhin module, where $Y_{i,k_1}\cdots Y_{i,k_z}$ has small values of indices, $i\in I$, $k_t\in\ZZ$, $t\in[1,z]$, $1\leq z\leq \ell$. If
\begin{align*}
\begin{cases} k_z+2i-k_1\geq 2 \ell, & {\hskip 2em} i\in [1, \lfloor \frac{n+1}2 \rfloor],\\
              k_z+2(n+1-i)-k_1\geq 2 \ell, & {\hskip 2em} i\in (\lfloor \frac{n+1}2 \rfloor, n],
\end{cases}
\end{align*}
then the modules $L(m)$ whose $\varepsilon$-characters are contained in $S_{(i,k_t)_{1 \leq t \leq z}}$ are those for which the highest $l$-weights $m$ can be obtained by a series of translations of paths in $\mathscr{P}_{i,k_j}$ to paths in $\mathscr{P}_{i,k_j+2x \ell}$ with respect to ${\bf PS}(i,k_{\mu+j-1})$, where $1\leq j\leq z-\mu+1$, $x\in[1, \lfloor\frac{k_\mu-k_1+2i}{2 \ell}\rfloor]$, and $\xi \leq \mu\leq z$. Here, $\xi$ is the smallest integer satisfying the inequalities
\[
k_\xi+2i-k_1\geq 2 \ell \quad\text{and}\quad \xi \geq \lceil \frac{z+2}2 \rceil.
\]
\end{lemma}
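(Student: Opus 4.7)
The plan is to combine two ingredients: an existence direction that produces, for every triple $(j,x,\mu)$ in the stated range, a dominant monomial appearing in $S_{(i,k_t)_{1\le t\le z}}$ via the prescribed translation; and a converse direction showing that no other dominant monomial can arise in $S_{(i,k_t)_{1\le t\le z}}$. Throughout I work in the case $i\in[1,\lfloor(n+1)/2\rfloor]$, the other case being dual under the symmetry $j\mapsto n+1-j$.

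For the existence direction, fix $j,x,\mu$ as in the statement, and set $q=\mu+j-1$. By hypothesis $(i,k_j+2x\ell)\in\mathbf{PS}(i,k_q)$, so by Definition \ref{Def: translation} we have a translation of paths in $\mathscr{P}_{i,k_j}$ to paths in $\mathscr{P}_{i,k_j+2x\ell}$ with respect to $\mathbf{PS}(i,k_q)$. Imitating the construction in part (1) of Lemma \ref{Le: II translation of degree 2}, I take the highest path $p_1'\in\mathscr{P}_{i,k_j+2x\ell}$ with no lower corners and the path $p_2\in\mathscr{P}_{i,k_q}$ with exactly one lower corner at $(i,k_j+2x\ell)$; then $\mathfrak{m}(p_1')\mathfrak{m}(p_2)$ is dominant and equal to a monomial of the form $Y_{i-r,k_j+2x\ell-r}Y_{i+r,k_q+r}$ with $r=\frac{k_j+2x\ell-k_q-h_0(i,i)}{2}+1$. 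For every other index $s\in[1,z]\setminus\{j,q\}$ I insert the highest path in $\mathscr{P}_{i,k_s}$; the small-index hypothesis $k_{t+1}=k_t+2$ together with the bounds $\xi\le\mu$ and $x\le\lfloor(k_\mu-k_1+2i)/(2\ell)\rfloor$ guarantees that the resulting $z$-tuple is non-overlapping, so the product lies in $S_{(i,k_t)_{1\le t\le z}}$ and is a dominant $l$-weight. The requirement $\xi\ge\lceil(z+2)/2\rceil$ is exactly what ensures that there are enough indices on both sides of $j$ and $q$ to accommodate the highest paths without collision. Iterating the argument, using Lemmas \ref{Le: two translations} and \ref{Le: dual two translations} to handle successive translations, produces the full list claimed.

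For the converse direction, let $\prod_t\mathfrak{m}(p_t)$ be a dominant monomial in $S_{(i,k_t)_{1\le t\le z}}$. Dominance forces every lower corner $(c,d)$ of any $p_t$ to be cancelled by an upper corner $(c,d)$ of some $p_{t'}$. I induct on $z$: the base case $z=2$ is exactly Lemmas \ref{Le: I translation of degree 2} and \ref{Le: II translation of degree 2}, which say the only pairwise cancellations are those produced by the translations with respect to $\mathbf{PS}$. For the inductive step, I pick the path $p_{t_0}$ that is not the highest path in its rectangle with smallest index and track its outermost lower corner; the bound $k_z+2i-k_1\ge 2\ell$ and the small-index hypothesis imply that this corner can only be cancelled by some $p_{t'}$ with $t'-t_0\ge\mu-1$, which identifies a translation in the allowed range. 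Removing the pair $(p_{t_0},p_{t'})$ and replacing them by the highest paths in their rectangles leaves a shorter non-overlapping tuple, to which the inductive hypothesis applies.

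The main obstacle will be the converse direction, specifically verifying that no simultaneous three-path cancellation produces a dominant monomial outside the enumerated list. To rule this out, I plan to use the rigidity provided by the small-index hypothesis $k_{t+1}=k_t+2$: all rectangles ${\bf P}_{i,k_t}$ are vertical shifts of ${\bf P}_{i,k_1}$ by even integers, so any cancellation diagram is invariant under a $\mathbb{Z}/2\ell\mathbb{Z}$-action, and the inequality $\xi\ge\lceil(z+2)/2\rceil$ precisely bounds the number of simultaneous rotations that can occur. This rigidity reduces the problem, modulo a careful case analysis, to the pairwise situation already handled in Section \ref{section of path description of degree two}, at which point Lemmas \ref{Le: I translation of degree 2}--\ref{Le: dual two translations} close the argument.
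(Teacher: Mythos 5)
Your existence direction is broadly consistent with the paper (which realizes each enumerated $m$ as a product of highest paths together with one translated path per block, and then verifies via the analogue of Lemma \ref{Le: II translation of degree 2}(2)--(3) that the whole character of $L(m)$ sits inside $S_{(i,k_t)_{1\le t\le z}}$). The converse direction, however, contains a genuine gap: you set out to show that ``no other dominant monomial can arise in $S_{(i,k_t)_{1\le t\le z}}$,'' but that is not what the lemma asserts, and it is in fact false. The lemma characterizes the modules $L(m)$ whose \emph{entire} $\varepsilon$-character is contained in $S_{(i,k_t)_{1\le t\le z}}$. The paper's own proof explicitly constructs dominant monomials in $S_{(i,k_t)_{1\le t\le z}}$ outside the enumerated list (its Subcases 1.2 and 1.3 and Case 2: e.g.\ a path $p_{\mu+1}$ whose lower corner sits at $(i,k_2+2x_1\ell)$ with $x_1>x$, or at $(i,k_\theta+2x_1\ell)$ with $\theta>2$, or a first non-highest path whose corner cancels against $Y_{i,k_s}$ with $s\ge 2$). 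These products are dominant and do occur in $S_{(i,k_t)_{1\le t\le z}}$; what excludes them is that the \emph{lowest $l$-weight monomial} of the corresponding $L(m)$ is not in $S_{(i,k_t)_{1\le t\le z}}$, because the associated lowest-path tuple is forced to self-intersect. Your proposal never performs this lowest-weight check, which is the essential mechanism of the proof, so your induction would terminate with a false intermediate claim.

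Two further points would need repair even after fixing the target. First, your inductive step ``remove the pair $(p_{t_0},p_{t'})$ and replace them by highest paths'' does not obviously preserve dominance of the remaining product, nor does it reduce to the degree-two lemmas, since those treat a single pair $(i,k),(j,v)$ and not the interaction of a cancelled pair with the other $z-2$ rectangles; the paper instead argues directly that $\mathfrak{m}(p_1)=Y_{i,k_1}$ (a lower corner on $p_1$ forces lower corners on all later paths by non-overlapping), then fixes $\mathfrak{m}(p_t)=Y_{i,k_t}$ for $t<\mu$ and runs a case analysis on the first non-highest path. Second, the appeal to a ``$\mathbb{Z}/2\ell\mathbb{Z}$-action'' and to $\xi\ge\lceil(z+2)/2\rceil$ bounding ``simultaneous rotations'' is too vague to substitute for the explicit computation of the lowest $l$-weight monomials in each subcase.
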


\begin{proof}
We prove the lemma for the case of $k_z+2i-k_1\geq 2 \ell$, $i\in [1, \lfloor \frac{n+1}2 \rfloor ]$, the proof for the case of $k_z+2(n+1-i)-k_1\geq 2 \ell$, $i\in (\lfloor \frac{n+1}2 \rfloor, n ]$ is similar. To find out all modules $L(m)$ whose $\varepsilon$-characters are contained in $S_{(i,k_t)_{1 \leq t \leq z}}$, we need to find all dominant monomials $m=\mathfrak{m}(p_1)\cdots \mathfrak{m}(p_z)$ in $S_{(i,k_t)_{1 \leq t \leq z}}$ such that the monomials of $\chi_{\varepsilon}(L(m))$ are contained in $S_{(i,k_t)_{1 \leq t \leq z}}$, where $p_t \in \mathscr{P}_{i,k_t}$, $t\in[1,z]$.

By assumption, $L(Y_{i,k_1}\cdots Y_{i,k_z})$ is a Kirillov-Reshetikhin module, where $Y_{i,k_1}\cdots Y_{i,k_z}$ has small values of indices. If $p_1 \in \mathscr{P}_{i,k_1}$ has at least one lower corner, then by the definition of $\overline{\mathscr{P}}_{(i,k_{t})_{1\leq t\leq N}}$ (see Equation $(\ref{Non-overlapping paths})$), the path $p_t \in \mathscr{P}_{i,k_t}$, $t\in(1,z]$, also has at least one lower corner. So $\mathfrak{m}(p_1)\cdots \mathfrak{m}(p_z)$ is not a dominant monomial. Consequently, we have $\mathfrak{m}(p_1)=Y_{i,k_1}$.

Assume that $k_z+2i-k_1\geq 2 \ell$, where $i\in [1, \lfloor \frac{n+1}2 \rfloor]$. Then, there exists an integer $\xi\in[1,z]$ that satisfies the inequalities
\[
k_\xi+2i-k_1\geq 2 \ell \quad\text{and}\quad \xi \geq \lceil \frac{z+2}2 \rceil.
\]
We define $\xi$ as the smallest integer that satisfies these inequalities. For $1<t<\mu$, $\xi \leq \mu\leq z$, we take $\mathfrak{m}(p_t)=Y_{i,k_t}$. That is,
\[
\mathfrak{m}(p_1)\cdots \mathfrak{m}(p_{\mu-1})=Y_{i,k_1}Y_{i,k_2}\cdots Y_{i,k_{\mu-1}}.
\]

{\bf Case 1}. Assume that $\mathfrak{m}(p_{\mu})=Y_{u_0,\eta_0}Y_{i,k_1+2x \ell}^{-1}Y_{v_0,\eta_0}$, where $x\in[1,\lfloor\frac{k_\mu-k_1+2i}{2 \ell}\rfloor]$, $\eta_0\in \mathbb{Z}$, $u_0<v_0\in \hat{I} \cup \{n+1\}$. If $\mu=z$, then we obtain a dominant monomial $m=Y_{i,k_2}\cdots Y_{i,k_{z-1}}Y_{u_0,\eta_0}Y_{v_0,\eta_0}$. 
In fact, $m$ can be obtained by a translation of paths in $\mathscr{P}_{i,k_1}$ to $\mathscr{P}_{i,k_1+2x \ell}$ with respect to ${\bf PS}(i,k_z)$.

Similar to (2) of Lemma \ref{Le: II translation of degree 2}, let $\widetilde{p}_1\in \mathscr{P}_{i,k_1}$ be the path that has exactly one upper corner $(n+1-i,n+1+k_z-2x \ell)$. Let $\widetilde{p}_t$, $t\in[2, z]$, denote the lowest path in $\mathscr{P}_{i,k_t}$ with no upper corners. Then we have $\mathfrak{m}(\widetilde{p}_{1})=Y_{n+1-v_0,n+1+\eta_0-2x \ell}^{-1}Y_{n+1-i,n+1+k_z-2x \ell}Y_{n+1-u_0,n+1+\eta_0-2x \ell}^{-1}$ and for $t\in[2, z]$, $\mathfrak{m}(\widetilde{p}_{t})=Y_{n+1-i,n+1+k_t}^{-1}$. Thus, the paths $\widetilde{p}_{1}, \widetilde{p}_{2}, \ldots, \widetilde{p}_{z}$ are non-overlapping. Therefore, the lowest $l$-weight monomial of the module $L(m)$ is given by
\[
\mathfrak{m}(\widetilde{p}_{1})\cdots \mathfrak{m}(\widetilde{p}_{z})=
Y_{n+1-v_0,n+1+\eta_0-2x \ell}^{-1}Y_{n+1-u_0,n+1+\eta_0-2x \ell}^{-1}Y_{n+1-i,n+1+k_2}^{-1}\cdots Y_{n+1-i,n+1+k_{z-1}}^{-1},
\]
which is in $S_{(i,k_t)_{1 \leq t \leq z}}$. Utilizing a similar approach to that in part (3) of Lemma \ref{Le: II translation of degree 2}, we conclude that the monomials of $\chi_\varepsilon(L(m))$ are contained in $S_{(i,k_t)_{1 \leq t \leq z}}$. If $\mu<z$, then by the definition of $\overline{\mathscr{P}}_{(i,k_{t})_{1\leq t\leq N}}$, there are three possible values for $\mathfrak{m}(p_{\mu+1})$.

Subcase 1.1. Suppose that $\mathfrak{m}(p_{\mu+1})=Y_{u_1,\eta_1}Y_{i,k_2+2x\ell}^{-1}Y_{v_1,\eta_1}$, where $u_1<v_1 \in \hat{I} \cup \{n+1\}$, and $\eta_1\in \mathbb{Z}$. Then by Equation $(\ref{Non-overlapping paths})$, we take $\mathfrak{m}(p_{\mu+t})=Y_{u_t,\eta_t}Y_{i,k_{t+1}+2x\ell}^{-1}Y_{v_t,\eta_t}$, where $u_t<v_t \in \hat{I}\cup \{n+1\}$, $\eta_t\in \mathbb{Z}$, $2\leq t\leq z-\mu$. Therefore, we have
\[
m=Y_{i,k_{z-\mu+2}}Y_{i,k_{z-\mu+3}}\cdots Y_{i,k_{\mu-1}}Y_{u_0,\eta_0}Y_{v_0,\eta_0}Y_{u_1,\eta_1}Y_{v_1,\eta_1}\cdots Y_{u_{z-\mu},\eta_{z-\mu}}Y_{v_{z-\mu},\eta_{z-\mu}}.
\]
In fact, $m$ can be obtained by translations of paths in $\mathscr{P}_{i,k_j}$ to $\mathscr{P}_{i,k_j+2x \ell}$ with respect to ${\bf PS}(i,k_{\mu+j-1})$, respectively, where $j\in[1,z-\mu+1]$.

Similar to (2) of Lemma \ref{Le: II translation of degree 2}, let $\widetilde{p}_t\in \mathscr{P}_{i,k_t}$ for $t \in [1, z-\mu+1]$ be the path that has exactly one upper corner $(n+1-i,n+1+k_{\mu+t-1}-2x \ell)$. Let $\widetilde{p}_t$ for $t\in[z-\mu+2, z]$ denote the lowest path in $\mathscr{P}_{i,k_t}$ with no upper corners. That is, for $t \in [1, z-\mu+1]$, $\mathfrak{m}(\widetilde{p}_{t})=Y_{n+1-v_{t-1},n+1+\eta_{t-1}-2x \ell}^{-1}Y_{n+1-i,n+1+k_{\mu+t-1}-2x \ell}Y_{n+1-u_{t-1}, n+1+\eta_{t-1}-2x \ell}^{-1}$, and for $t\in[z-\mu+2, z]$, $\mathfrak{m}(\widetilde{p}_{t})=Y_{n+1-i,n+1+k_t}^{-1}$. Thus, the paths $\widetilde{p}_{1}, \widetilde{p}_{2}, \ldots, \widetilde{p}_{z}$ are non-overlapping. Therefore, the lowest $l$-weight monomial of the module $L(m)$ is given by 
\begin{align*}
\mathfrak{m}(\widetilde{p}_{1})\cdots \mathfrak{m}(\widetilde{p}_{z})=\left(\prod_{t=0}^{z-\mu}Y_{n+1-v_t,n+1+\eta_t-2x \ell}^{-1}Y_{n+1-u_t,n+1+\eta_t-2x \ell}^{-1}\right)\left(\prod_{t=z-\mu+2}^{\mu-1}Y_{n+1-i,n+1+k_{t}}^{-1}\right),
\end{align*}
which is in $S_{(i,k_t)_{1 \leq t \leq z}}$. Using a method analogous to that in (3) of Lemma \ref{Le: II translation of degree 2}, we find that the monomials of $\chi_\varepsilon(L(m))$ are contained in $S_{(i,k_t)_{1 \leq t \leq z}}$.

Subcase 1.2. Assume that $\mathfrak{m}(p_{\mu+1})=Y_{u_1,\eta_1}Y_{i,k_2+2x_1\ell}^{-1}Y_{v_1,\eta_1}$, where $x_1\in (x,\lfloor\frac{k_\mu-k_1+2i}{2 \ell}\rfloor]$, $u_1<v_1 \in \hat{I} \cup \{n+1\}$, and $\eta_1\in \mathbb{Z}$. Consequently, $\mathfrak{m}(p_1)\cdots \mathfrak{m}(p_{\mu+1})$ is the dominant monomial
\[
m'=Y_{i,k_{3}}Y_{i,k_4}\cdots Y_{i,k_{\mu-1}} Y_{u_0,\eta_0}Y_{v_0,\eta_0} Y_{u_1,\eta_1}Y_{v_1,\eta_1}.
\]
In fact, $m'$ can be obtained by translating paths in $\mathscr{P}_{i,k_1}$ to paths in $\mathscr{P}_{i,k_1+2x \ell}$ with respect to ${\bf PS}(i,k_{\mu})$ and paths in $\mathscr{P}_{i,k_2}$ to paths in $\mathscr{P}_{i,k_2+2x_1 \ell}$ with respect to ${\bf PS}(i,k_{\mu+1})$.

Similar to (2) of Lemma \ref{Le: II translation of degree 2}, let $\widetilde{p}_1\in \mathscr{P}_{i,k_1}$ be the path that has exactly one upper corner $(n+1-i,n+1+k_{\mu}-2x \ell)$. Let $\widetilde{p}_2\in \mathscr{P}_{i,k_2}$ be the path that has exactly one upper corner $(n+1-i,n+1+k_{\mu+1}-2x_1 \ell)$. For $t\in[3, \mu+1]$, let $\widetilde{p}_t$ denote the lowest path in $\mathscr{P}_{i,k_t}$ with no upper corners. That is, we have 
\[\mathfrak{m}(\widetilde{p}_{1})=Y_{n+1-v_0,n+1+\eta_0-2x \ell}^{-1}Y_{n+1-i,n+1+k_{\mu}-2x \ell}Y_{n+1-u_0, n+1+\eta_0-2x \ell}^{-1},
\]
\[
\mathfrak{m}(\widetilde{p}_{2})=Y_{n+1-v_1,n+1+\eta_1-2x_1 \ell}^{-1}Y_{n+1-i,n+1+k_{\mu+1}-2x_1 \ell}Y_{n+1-u_1, n+1+\eta_1-2x_1 \ell}^{-1},
\]
and for $t\in[3, \mu+1]$, $\mathfrak{m}(\widetilde{p}_{t})=Y_{n+1-i,n+1+k_t}^{-1}$. Since $x_1\in (x,\lfloor\frac{k_\mu-k_1+2i}{2 \ell}\rfloor]$, we conclude that the paths $\widetilde{p}_{1}$ and $\widetilde{p}_{2}$ must interest at some points. Thus, the lowest $l$-weight monomial of $L(m')$ is given by 
\begin{align*}
 \mathfrak{m}(\widetilde{p}_{1})\cdots \mathfrak{m}(\widetilde{p}_{\mu+1})=&\big( \prod_{t=3}^{\mu-1} Y_{n+1-i,n+1+k_{t}}^{-1} \big)
 Y_{n+1-v_0,n+1+\eta_0-2x \ell}^{-1} Y_{n+1-u_0,n+1+\eta_0-2x \ell}^{-1} \times \\
 & \times Y_{n+1-v_1,n+1+\eta_1-2x_1 \ell}^{-1}Y_{n+1-u_1,n+1+\eta_1-2x_1 \ell}^{-1}, 
\end{align*}
which is not in $S_{(i,k_t)_{1 \leq t \leq \mu+1}}$.

Therefore, for $1\leq t<\mu$, we have $\mathfrak{m}(p_t)=Y_{i,k_t}$. For $t=\mu$, $\mathfrak{m}(p_{\mu})=Y_{u_0,\eta_0}Y_{i,k_1+2x\ell}^{-1}Y_{v_0,\eta_0}$, where $x\in[1,\lfloor\frac{k_\mu-k_1+2i}{2 \ell}\rfloor]$, $u_0<v_0 \in \hat{I} \cup \{n+1\}$, and $\eta_0\in \mathbb{Z}$. For $t=\mu+1$, $\mathfrak{m}(p_{\mu+1})=Y_{u_1,\eta_1} Y_{i,k_2+2x_1\ell}^{-1}Y_{v_1,\eta_1}$, where $x_1\in (x,\lfloor\frac{k_{\mu}-k_1+2i}{2 \ell}\rfloor]$, $u_1<v_1\in \hat{I} \cup \{n+1\}$, and $\eta_1\in \mathbb{Z}$. For $\mu+1<t\leq z$, we choose appropriate $p_t\in\mathscr{P}_{i,k_t}$ such that $m=\mathfrak{m}(p_1)\cdots \mathfrak{m}(p_z)$ is a dominant monomial. Then, it always holds that the lowest $l$-weight monomial of $L(m)$ is not in $S_{(i,k_t)_{1 \leq t \leq z}}$.

Subcase 1.3. Suppose that there exists $\theta$ such that $k_\theta+2x_1\ell\leq k_{\mu+1}+2i$ and $2<\theta \leq 2\mu-z$, where $x_1\in [x,\lfloor\frac{k_{\mu+1}-k_\theta+2i}{2 \ell}\rfloor]$. We take $\mathfrak{m}(p_{\mu+1})=Y_{u'_1,\eta'_1} Y_{i,k_\theta+2x_1\ell}^{-1}Y_{v'_1,\eta'_1}$, $u'_1<v'_1\in \hat{I} \cup \{n+1\}$, and $\eta'_1\in \mathbb{Z}$. Consequently, $\mathfrak{m}(p_1)\cdots \mathfrak{m}(p_{\mu+1})$ is the dominant monomial
\[
m'=Y_{i,k_{2}}\cdots Y_{i,k_{\theta-1}}Y_{i,k_{\theta+1}}\cdots Y_{i,k_{\mu-1}} Y_{u_0,\eta_0}Y_{v_0,\eta_0} Y_{u'_1,\eta'_1}Y_{v'_1,\eta'_1}.
\]
In fact, $m'$ can be obtained by translating paths in $\mathscr{P}_{i,k_1}$ to paths in $\mathscr{P}_{i,k_1+2x \ell}$ with respect to ${\bf PS}(i,k_{\mu})$ and paths in $\mathscr{P}_{i,k_\theta}$ to $\mathscr{P}_{i,k_\theta+2x_1 \ell}$ with respect to ${\bf PS}(i,k_{\mu+1})$.

Similar to (2) of Lemma \ref{Le: II translation of degree 2}, let $\widetilde{p}_1\in \mathscr{P}_{i,k_1}$ be the path that has exactly one upper corner $(n+1-i,n+1+k_{\mu}-2x \ell)$. Let $\widetilde{p}_{\theta}\in \mathscr{P}_{i,k_{\theta}}$ be the path that has exactly one upper corner $(n+1-i,n+1+k_{\mu+1}-2x_1 \ell)$. For $t\in[2, \mu+1]$ and $t\neq \theta$, let $\widetilde{p}_t$ denote the lowest path in $\mathscr{P}_{i,k_t}$ with no upper corners. That is, we have 
\[\mathfrak{m}(\widetilde{p}_{1})=Y_{n+1-v_0,n+1+\eta_0-2x \ell}^{-1}Y_{n+1-i,n+1+k_{\mu}-2x \ell}Y_{n+1-u_0, n+1+\eta_0-2x \ell}^{-1},
\]
\[
\mathfrak{m}(\widetilde{p}_{\theta})=Y_{n+1-v'_1,n+1+\eta'_1-2x_1 \ell}^{-1}Y_{n+1-i,n+1+k_{\mu+1}-2x_1 \ell}Y_{n+1-u'_1, n+1+\eta'_1-2x_1 \ell}^{-1}.
\]
For $t\in[2, \mu+1]$ and $t\neq \theta$, $\mathfrak{m}(\widetilde{p}_{t})=Y_{n+1-i,n+1+k_t}^{-1}$. Since $\mathfrak{m}(\widetilde{p}_{\theta-1})=Y_{n+1-i,n+1+k_{\theta-1}}^{-1}$, we conclude that the paths $\widetilde{p}_{\theta-1}$ and $\widetilde{p}_{\theta}$ must interest at some points. Thus, the lowest $l$-weight monomial of $L(m')$ is given by 
\begin{align*}
& \mathfrak{m}(\widetilde{p}_{1})\cdots \mathfrak{m}(\widetilde{p}_{\mu+1})= \left( \prod_{t=2}^{\theta-1}  Y_{n+1-i,n+1+k_{t}}^{-1} \right)\left( \prod_{t=\theta+1}^{\mu-1}Y_{n+1-i,n+1+k_{t}}^{-1} \right) \times \\
& \quad \times Y_{n+1-v_0,n+1+\eta_0-2x \ell}^{-1} Y_{n+1-u_0,n+1+\eta_0-2x \ell}^{-1}  Y_{n+1-v'_1,n+1+\eta'_1-2x_1 \ell}^{-1}Y_{n+1-u'_1,n+1+\eta'_1-2x _1 \ell}^{-1},
\end{align*}
which is not in $S_{(i,k_t)_{1 \leq t \leq \mu+1}}$.

Therefore, for $1\leq t<\mu$, we have $\mathfrak{m}(p_t)=Y_{i,k_t}$. For $t=\mu$, $\mathfrak{m}(p_{\mu})=Y_{u_0,\eta_0}Y_{i,k_1+2x \ell}^{-1}Y_{v_0,\eta_0}$, where $x\in[1,\lfloor\frac{k_\mu-k_1+2i}{2 \ell}\rfloor]$, $u_0<v_0 \in \hat{I} \cup \{n+1\}$, and $\eta_0\in \mathbb{Z}$. For $t=\mu+1$, $\mathfrak{m}(p_{\mu+1})=Y_{u'_1,\eta'_1} Y_{i,k_\theta+2x_1\ell}^{-1}Y_{v'_1,\eta'_1}$, where $2<\theta \leq 2\mu-z$, $x_1\in [x,\lfloor\frac{k_{\mu+1}-k_\theta+2i}{2 \ell}\rfloor]$, $u'_1<v'_1\in \hat{I} \cup \{n+1\}$, and $\eta'_1\in \mathbb{Z}$. For $\mu+1<t\leq z$, we choose appropriate $p_t\in\mathscr{P}_{i,k_t}$ such that $m=\mathfrak{m}(p_1)\cdots \mathfrak{m}(p_z)$ is a dominant monomial. Then, it always holds that the lowest $l$-weight monomial of $L(m)$ is not in $S_{(i,k_t)_{1 \leq t \leq z}}$.

{\bf Case 2}. Assume that $\mathfrak{m}(p_{\mu})=Y_{u,\eta}Y_{i,k_s+2x \ell}^{-1}Y_{v,\eta}$, where $2\leq s<\mu$, $x\in[1,\lfloor\frac{k_\mu-k_s+2i}{2 \ell}\rfloor]$, $u<v \in \hat{I} \cup \{n+1\}$, and $\eta\in \mathbb{Z}$. Then $\mathfrak{m}(p_1)\cdots \mathfrak{m}(p_{\mu})$ is the dominant monomial
\[
m''=Y_{i,k_{1}}\cdots Y_{i,k_{s-1}}Y_{i,k_{s+1}}\cdots Y_{i,k_{\mu-1}} Y_{u,\eta}Y_{v,\eta}.
\]
In fact, $m'$ can be obtained by translating paths in $\mathscr{P}_{i,k_s}$ to paths in $\mathscr{P}_{i,k_s+2x \ell}$ with respect to ${\bf PS}(i,k_{\mu})$.

Similar to (2) of Lemma \ref{Le: II translation of degree 2}, for $t\in [1,\mu]$ and $t\neq s$, let $\widetilde{p}_t$ denote the lowest path in $\mathscr{P}_{i,k_t}$ with no upper corners. Let $\widetilde{p}_{s}\in \mathscr{P}_{i,k_{s}}$ be the path that has exactly one upper corner $(n+1-i,n+1+k_{\mu}-2x\ell)$. That is, $\mathfrak{m}(\widetilde{p}_{s})= Y_{n+1-v,n+1+\eta-2x \ell}^{-1}Y_{n+1-i,n+1+k_\mu -2x \ell} Y_{n+1-u,n+1+\eta-2x \ell}^{-1}$. For $t\in[1, \mu]$ and $t\neq s$, $\mathfrak{m}(\widetilde{p}_{t})=Y_{n+1-i,n+1+k_t}^{-1}$. Since $\mathfrak{m}(\widetilde{p}_{s-1})=Y_{n+1-i,n+1+k_{s-1}}^{-1}$, we conclude that the paths $\widetilde{p}_{s-1}$ and $\widetilde{p}_{s}$ must interest at some points. Thus, the lowest $l$-weight monomial of $L(m')$ is given by
\begin{align*}
 \mathfrak{m}(\widetilde{p}_{1})\cdots \mathfrak{m}(\widetilde{p}_{\mu})=\left( \prod_{t=1}^{s-1} Y_{n+1-i,n+1+k_{t}}^{-1} \right) \left( \prod_{t=s+1}^{\mu-1}Y_{n+1-i,n+1+k_{t}}^{-1} \right) Y_{n+1-v,n+1+\eta-2x \ell}^{-1} Y_{n+1-u,n+1+\eta-2x \ell}^{-1},
\end{align*}
which is not in $S_{(i,k_t)_{1 \leq t \leq \mu}}$.

Therefore, for $1\leq t<\mu$, we have $\mathfrak{m}(p_t)=Y_{i,k_t}$. For $t=\mu$, $\mathfrak{m}(p_{\mu})=Y_{u,\eta}Y_{i,k_s+2x \ell}^{-1}Y_{v,\eta}$, where $2\leq s<\mu$, $x\in[1,\lfloor\frac{k_\mu-k_s+2i}{2 \ell}\rfloor]$, $u<v \in \hat{I} \cup \{n+1\}$, and $\eta\in \mathbb{Z}$. For $\mu<t\leq z$, we choose appropriate $p_t\in\mathscr{P}_{i,k_t}$ such that $m=\mathfrak{m}(p_1)\cdots \mathfrak{m}(p_z)$ is a dominant monomial. Then, it always holds that the lowest $l$-weight monomial of $L(m)$ is not in $S_{(i,k_t)_{1 \leq t \leq z}}$.

In conclusion, the modules $L(m)$ whose $\varepsilon$-characters are contained in $S_{(i,k_t)_{1 \leq t \leq z}}$ are those for which the highest $l$-weights $m$ can be obtained by a series of translations of paths in $\mathscr{P}_{i,k_j}$ to paths in $\mathscr{P}_{i,k_j+2x \ell}$ with respect to ${\bf PS}(i,k_{\mu+j-1})$, respectively, where $1\leq j\leq z-\mu+1$, $x\in [1,\lfloor\frac{k_\mu-k_1+2i}{2 \ell}\rfloor$], $\xi \leq \mu\leq z$.

\end{proof}

\subsection{Proof of Theorem \ref{Th: the path description of K-R module}}\label{subsec: prove Th K-R module}

Let $\varepsilon^{2 \ell}=1$ and let $L(Y_{i,k_1}\cdots Y_{i,k_z})$ be a $U_{\varepsilon}^{\res}({L\mathfrak{sl}_{n+1}})$ Kirillov-Reshetikhin module, where $Y_{i,k_1}\cdots Y_{i,k_z}$ has small values of indices, $1\leq z\leq \ell$.

On the one hand, if
\begin{align*}
\begin{cases} k_z+2i-k_1<2 \ell, & {\hskip 2em} i\in [1, \lfloor \frac{n+1}2 \rfloor],\\
              k_z+2(n+1-i)-k_1<2 \ell, & {\hskip 2em} i\in (\lfloor \frac{n+1}2 \rfloor, n],
\end{cases}
\end{align*}
then, by Lemma \ref{Lem: the disjoint path for z small}, there is exactly one dominant monomial $Y_{i,k_1}\cdots Y_{i,k_z}$ in $S_{(i,k_t)_{1 \leq t \leq z}}$. Therefore, we have
\begin{align*}
\chi_{\varepsilon}(L(Y_{i,k_1}\cdots Y_{i,k_z}))=\sum_{(p_{1},\ldots, p_{z}) \in \overline{\mathscr{P}}_{(i,k_t)_{1 \leq t \leq z}}} \prod_{t=1}^{z}\mathfrak{m}(p_{t})=\sum_{(p_{1},\ldots,p_{z}) \in \overline{\mathscr{P'}}_{(i,k_t)_{1 \leq t \leq z}}} \prod_{t=1}^{z}\mathfrak{m}(p_{t}).
\end{align*}

On the other hand, if
\begin{align*}
\begin{cases} k_z+2i-k_1\geq 2 \ell, & {\hskip 2em} i\in [1, \lfloor \frac{n+1}2 \rfloor],\\
              k_z+2(n+1-i)-k_1\geq 2 \ell, & {\hskip 2em} i\in (\lfloor \frac{n+1}2 \rfloor, n],
\end{cases}
\end{align*}
then, by Lemma \ref{Lem: dominant monomial for k-r with z great }, we have found all modules $L(m)$ whose $\varepsilon$-characters are contained in $S_{(i,k_t)_{1 \leq t \leq z}}$. Furthermore, we proved that $m$ can be obtained by a series of translations of paths in $\mathscr{P}_{i,k_j}$ to paths in $\mathscr{P}_{i,k_j+2x \ell}$ with respect to ${\bf PS}(i,k_{\mu+j-1})$, where $1\leq j\leq z-\mu+1$, $x\in[1,\lfloor\frac{k_\mu-k_1+2i}{2 \ell}\rfloor]$, $\xi \leq \mu\leq z$, $\xi$ is the smallest integer that satisfies the inequalities $k_\xi+2i-k_1\geq 2 \ell$ and $\xi \geq \lceil \frac{z+2}2 \rceil$.
Since the module $L(Y_{i,k_1}\cdots Y_{i,k_z})$ is irreducible, we obtain that
\[
\chi_{\varepsilon}(L(Y_{i, k_1}\cdots Y_{i, k_z}))=S_{(i,k_t)_{1 \leq t \leq z}}-\sum\chi_{\varepsilon}(L(m)).
\]
According to $(\ref{def: disjoint path in a tube})$, we know that
$\sum_{(p_{1}, \ldots, p_{z}) \in \overline{\mathscr{P'}}_{(i, k_t)_{1 \leq t \leq z}}} \prod_{t=1}^{z}\mathfrak{m}(p_{t})=S_{(i,k_t)_{1 \leq t \leq z}}-\sum\chi_{\varepsilon}(L(m))$.
Therefore, we have
\begin{align*}
\chi_{\varepsilon}(L(Y_{i, k_1}\cdots Y_{i, k_z}))=\sum_{(p_{1}, \ldots, p_{z}) \in \overline{\mathscr{P'}}_{(i, k_{t})_{1 \leq t \leq z}}} \prod_{t=1}^{z}\mathfrak{m}(p_{t}).
\end{align*}

\section{Proof of Theorem \ref{Th:path description of degree 2}}\label{Sec: prove the th path description of degree 2}
In this section, we prove Theorem \ref{Th:path description of degree 2}.

\subsection{Proof of Theorem \ref{Th:path description of degree 2} (1)}

Let $\varepsilon^{2\ell}=1$ and let $L(Y_{i,k}Y_{j,v})$ be a $U_{\varepsilon}^{\res}({L\mathfrak{sl}_{n+1}})$ simple module, where $i, j \in I$, $k, v \in \ZZ$. Suppose that $|j-i|\equiv |k-v|+1\ (\text{mod}\ 2)$.
\begin{lemma}[{\cite[Theorem 5.8]{A07}}, {\cite[Theorem 2]{C02}}]\label{Le:tensor product q-character}
Let $i, j\in I$, $k, v\in \mathbb{Z}$ such that $|j-i|\equiv |k-v|+1\ (\text{mod}\ 2)$. Then the tensor product $L(Y_{i,k})\otimes L(Y_{j,v})$ is a simple module of $U_{q}(L\mathfrak{sl}_{n+1})$.
\end{lemma}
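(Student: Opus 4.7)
The plan is to prove this lemma using the parity structure of $q$-characters for fundamental modules of type $A$. The strategy has three pieces: (i) identify a parity invariant carried by every monomial in the $q$-character of a fundamental module, (ii) use the parity mismatch under the hypothesis to isolate a unique dominant monomial in the product of $q$-characters, (iii) translate that information into irreducibility via the Grothendieck ring.

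First I would establish the parity invariant. For $(s,l) \in I \times \mathbb{Z}$, call $s+l \bmod 2$ the color of $Y_{s,l}$. I claim that every monomial appearing in $\chi_{q}(L(Y_{i,k}))$ is a Laurent monomial in variables $Y_{s,l}^{\pm 1}$ with $s+l \equiv i+k \pmod{2}$. The starting monomial $Y_{i,k}$ has color $i+k$. By the Frenkel--Mukhin algorithm, every other monomial is obtained from $Y_{i,k}$ by successive multiplication by inverses of the simple roots $A_{j,s} = Y_{j,s-1}Y_{j,s+1}\prod_{c_{ji}=-1}Y_{i,s}^{-1}$; in $A_{j,s}$, the two $Y_{j,s\pm 1}$ contribute color $j+s+1$, and each neighbor $Y_{i,s}$ with $|i-j|=1$ also contributes color $j+s+1$. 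So $A_{j,s}^{\pm 1}$ is monochromatic, and only those with color $i+k$ can be applied without destroying the color-monochromaticity of the monomials in $\chi_{q}(L(Y_{i,k}))$. An induction on the length of the chain of lowering/raising moves gives the claim.

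Next, under the hypothesis $|j-i| \equiv |k-v|+1 \pmod{2}$, the colors $i+k$ and $j+v$ differ. Consequently the variable sets appearing in $\chi_{q}(L(Y_{i,k}))$ and $\chi_{q}(L(Y_{j,v}))$ are disjoint, so every monomial in the product $\chi_{q}(L(Y_{i,k}))\chi_{q}(L(Y_{j,v}))$ factorizes uniquely as $m_{1}m_{2}$ with $m_{1}$ a monomial of the first factor and $m_{2}$ of the second. For such a product to be dominant, each of $m_{1}$ and $m_{2}$ must be separately dominant. Since each fundamental module $L(Y_{i,k})$ of $U_{q}(L\mathfrak{sl}_{n+1})$ is thin (known explicitly: its $q$-character is the sum over paths in a rectangle, where in each column the column index is pinned, so there is a unique dominant monomial $Y_{i,k}$), we conclude that the only dominant monomial in the product is $Y_{i,k}Y_{j,v}$, appearing with multiplicity one.

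Finally, since $\chi_{q}$ is an injective ring homomorphism on the Grothendieck ring (Frenkel--Reshetikhin), this forces $[L(Y_{i,k}) \otimes L(Y_{j,v})] = [L(Y_{i,k}Y_{j,v})]$, whence the tensor product, having a unique Jordan--H\"older constituent appearing with multiplicity one, is simple and isomorphic to $L(Y_{i,k}Y_{j,v})$. The part of this argument requiring the most care is the parity claim in the first paragraph: one must rule out the appearance of cross-color cancellations when applying the Frenkel--Mukhin algorithm, which amounts to checking that no admissible lowering move mixes colors. Once that observation is in hand, the remainder is essentially bookkeeping on the dominant monomial.
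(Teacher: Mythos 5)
Your argument is correct, but it is not the paper's route: the paper does not prove this lemma at all, it imports it verbatim from Chari \cite{C02} and Abe \cite{A07}, whose proofs proceed via braid group actions on extremal vectors (and their root-of-unity analogue), not via $q$-characters. What you give instead is the standard self-contained ``parity separation'' argument: every monomial of $\chi_q(L(Y_{i,k}))$ involves only variables $Y_{s,l}$ with $s+l\equiv i+k\pmod 2$; under the hypothesis the two colors differ, so no cancellation can occur between the two factors, the unique dominant monomial of the product of $q$-characters is $Y_{i,k}Y_{j,v}$ with multiplicity one (using that fundamental modules are special), and the bound ``number of Jordan--H\"older factors $\leq$ number of dominant monomials counted with multiplicity'' forces irreducibility. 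This is sound. Two remarks. First, your justification of the parity claim via the Frenkel--Mukhin algorithm is phrased circularly (``only those with color $i+k$ can be applied without destroying monochromaticity''); the clean version is that the algorithm only applies $A_{j,s}^{-1}$ to a monomial already containing $Y_{j,s\mp 1}$ with nonzero exponent, which forces $j+s+1\equiv i+k$ by induction --- or, simpler and already available in the paper, read it off the Mukhin--Young path formula: consecutive points of a path in $\mathscr{P}_{i,k}$ differ by $(1,\pm1)$, so every corner $(r,y_r)$ satisfies $r+y_r\equiv i+k\pmod 2$. Second, your route yields slightly more than the lemma, namely the identity $\chi_q(L(Y_{i,k}Y_{j,v}))=\chi_q(L(Y_{i,k}))\chi_q(L(Y_{j,v}))$ and the absence of any non-highest dominant monomial, which is exactly what the paper re-derives from the lemma when proving Theorem \ref{Th:path description of degree 2}(1); the citation buys the authors brevity, your argument buys self-containedness.
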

By Lemma \ref{Le:tensor product q-character}, since $|j-i|\equiv |k-v|+1\ (\text{mod}\ 2)$, we have that $\chi_{q}(L(Y_{i,k}Y_{j,v}))=\chi_{q}(L(Y_{i,k}))\chi_{q}(L(Y_{j,v}))$.
Take any path $p \in \mathscr{P}_{i,k}$ and $p'\in \mathscr{P}_{j,v}$. Suppose that at least one of $p, p'$ is not the highest path. Let $C$ be an upper (resp. lower) corner of $p$ at some position $(a, s)$ and let $C'$ be a lower (resp. upper) corner of $p'$ at some position $(a',s')$. For $a=a'$, we have that $s \not\equiv s' \pmod {2 \ell}$. Therefore when $q \mapsto \varepsilon$, $\mathfrak{m}(p)\mathfrak{m}(p')$ is not a dominant monomial. It follows that when sending $q \mapsto \varepsilon$ in the $q$-character of $L(Y_{i,k}Y_{j,v})$, the only dominant monomial is the highest weight monomial $Y_{i,k}Y_{j,v}$. Thus, we have
\[
\chi_{\varepsilon}(L(Y_{i,k}Y_{j,v}))=\left(\sum_{p \in \mathscr{P}_{i,k}} \mathfrak{m}(p)\right) \left(\sum_{p \in \mathscr{P}_{j,v}} \mathfrak{m}(p)\right)=\chi_{\varepsilon}(L(Y_{i,k}))\chi_{\varepsilon}(L(Y_{j,v})).
\]
Theorem \ref{Th:path description of degree 2} (1) is proved.

For convenience, in the following, we identify a path $p$ with the monomial $\mathfrak{m}(p)$. Moreover, we define the product of paths $p$ and $\widetilde{p}$ as the product of $\mathfrak{m}(p)$ and $\mathfrak{m}(\widetilde{p})$. Recall that in Equation (\ref{Eq:abbreviation for non-overlapping paths}), we denote $S_{(i,k)(j,v)} = \sum_{(p_{1},p_{2}) \in \overline{\mathscr{P}}_{((i,k),(j,v))}}\mathfrak{m}(p_1)\mathfrak{m}(p_2)$.
\subsection{Strategy of proof of Theorem \ref{Th:path description of degree 2} (2)}
From now on to the end of Section \ref{Sec: prove the th path description of degree 2}, we assume that $\varepsilon^{2 \ell}=1$, $L(Y_{i,k}Y_{j,v})$ is a simple module of $U_{\varepsilon}^{\res}({L\mathfrak{sl}_{n+1}})$, and $|j-i| \equiv |k-v| \pmod {2}$.

For a monomial $\mathfrak{m}(p)\mathfrak{m}(p')$ corresponding to a pair of paths $p$, $p'$, where $p$ is strictly above $p'$, we say that a monomial $m$ is obtained from $\mathfrak{m}(p)\mathfrak{m}(p')$ by raising and lowering moves if $m=\mathfrak{m}(p'')\mathfrak{m}(p''')$, where $p''$ is obtained from $p$ by several lowering moves of width $\ell$, $p'''$ is obtained from $p'$ by several raising moves of width $\ell$, see Section \ref{subsec:lowdef}.

Firstly, we prove that all dominant monomials except the highest $l$-weight monomial in $S_{(i, k)(j, \overline{v})}$ can be obtained by path translations or raising and lowering moves from dominant monomials obtained by translations of paths. Then we describe dominant monomials $m$ in $S_{(i, k)(j, \overline{v})}$ such that all monomials of $\chi_{\varepsilon}(L(m))$ are contained in $S_{(i, k)(j, \overline{v})}$. We prove that these dominant monomials are exactly all dominant monomials except the highest $l$-weight monomial in $S_{(i, k)(j, \overline{v})}$. The $\varepsilon$-character of $L(Y_{i,k}Y_{j,v})$ is obtained from $S_{(i, k)(j, \overline{v})}$ by removing all these $\chi_{\varepsilon}(L(m))$.

\subsection{Preparation for proving Theorem \ref{Th:path description of degree 2} (2)}
Since $|j-i| \equiv |k-v| \pmod {2}$, there is some $\mathfrak{a} \in \{0,1\}$ such that $(i,k),(j,v)\in \mathcal{X}_{\mathfrak{a}}$. The dominant monomials in $S_{(i, k)(j, v)}$ can be obtained by raising and lowering moves of width $\ell$ (see Section \ref{subsec:lowdef}) from dominant monomials which are obtained by translations of paths. To clarify the description of these dominant monomials and simplify the calculation of their multiplicities, we introduce the following sets of paths.  

Recall that (below Formula (\ref{path})), for a path $((a, b), (a+1, b+1), \ldots, (a+r, b+r))$, $r \in \ZZ_{\ge 1}$, we also write it as $((a, b), (a+r, b+r))$. Similarly, for a path $((a, b), (a+1, b-1), \ldots, (a+r, b-r))$, $r \in \ZZ_{\ge 1}$, we also write it as $((a, b), (a+r, b-r))$. Also we use the notation $P \uplus P'$ to concatenate paths in two sets of paths $P, P'$ in (\ref{eq:concatenation of two sets of paths}). 

\begin{figure}
\resizebox{0.6\width}{0.6\height}{
\begin{minipage}[b]{0.2\linewidth}
\centerline{
\begin{tikzpicture}
\begin{scope}[thick, every node/.style={sloped,allow upside down}]
\draw (-0.5,5.5)--node {\midarrow}(0,6)--node {\midarrow}(1,7)--node {\midarrow}(2,8)--node {\midarrow}(3,9);
\draw (0,6)--node {\midarrow}(0.5,5.5);
\draw (1,7)--node {\midarrow}(1.5,6.5);
\draw (2,8)--node {\midarrow}(2.5,7.5);
\draw (3,9)--node {\midarrow}(3.5,8.5);
\draw [fill] (-0.5,5.5) circle (1pt) (0,6) circle (1pt) (1,7) circle (1pt) (3,9) circle (1pt) (0.5,5.5) circle (1pt) (1.5,6.5) circle (1pt) (3.5,8.5) circle (1pt) (2,8) circle (1pt) (2.5,7.5) circle (1pt);
\end{scope}
\node [left] at (-0.5,5.5) {\tiny $A~(0,i+k)$};
\node [left] at (0,6) {\tiny $(\sigma_0, \varsigma_0)$};
\node [left] at (1,7) {\tiny $(\sigma_2, \varsigma_2)$};
\node [left] at (2,8) {$\cdots$};
\node [left] at (3,9) {\tiny $(\sigma_{2r}, \varsigma_{2r})$};
\node [right] at (0.5,5.5) {\tiny $A_1~(\sigma_1, \varsigma_1)$};
\node [right] at (1.5,6.5) {\tiny $A_2~(\sigma_3, \varsigma_3)$};
\node [right] at (2.5,7.5) {$\cdots$};
\node [right] at (3.5,8.5) {\tiny $A_{r+1}$};
\node at (0.5,5)  {$(a)$};
\end{tikzpicture}}
\end{minipage}
\begin{minipage}[b]{0.8\linewidth}
\centerline{
\begin{tikzpicture}
\begin{scope}[thick, every node/.style={sloped,allow upside down}]
\draw (0,9)--node {\midarrow}(1,8)--node {\midarrow}(2,7)--node {\midarrow}(3,6)--node {\midarrow}(3.5,5.5);
\draw (-0.5,8.5)--node {\midarrow}(0,9);
\draw (0.5,7.5)--node {\midarrow}(1,8);
\draw (1.5,6.5)--node {\midarrow}(2,7);
\draw (2.5,5.5)--node {\midarrow}(3,6);
\draw [fill] (-0.5,8.5) circle (1pt) (0,9) circle (1pt) (1,8) circle (1pt) (3,6) circle (1pt) (3.5,5.5) circle (1pt) (0.5,7.5) circle (1pt) (2.5,5.5) circle (1pt) (1.5,6.5) circle (1pt) (2,7) circle (1pt);
\end{scope}
\node [left] at (-0.5,8.5) {\tiny $B_1~(\sigma_1, \varsigma_1)$};
\node [left] at (0.5,7.5) {\tiny $B_2~(\sigma_3, \varsigma_3)$};
\node [left] at (1.5,6.5) {$\cdots$};
\node [left] at (2.5,5.5) {\tiny$B_{r+1}~(\sigma_{2r+1}, \varsigma_{2r+1})$};
\node [right] at (0,9) {\tiny$(\sigma_0, \varsigma_0)$};
\node [right] at (1,8) {\tiny$(\sigma_2, \varsigma_2)$};
\node [right] at (2,7) {$\cdots$};
\node [right] at (3,6) {\tiny$(\sigma_{2r}, \varsigma_{2r})$};
\node [right] at (3.5,5.5) {\tiny$B~(n+1, n+1-i+k)$};
\node at (0.5,5)  {$(b)$};
\end{tikzpicture}}
\end{minipage}
\begin{minipage}[b]{0.2\linewidth}
\centerline{
\begin{tikzpicture}
\begin{scope}[thick, every node/.style={sloped,allow upside down}]
\draw (-0.5,7.9)--node {\midarrow}(0.3,8.7)--node {\midarrow}(1.1,9.5)--node {\midarrow}(1.9,10.3)--node {\midarrow}(2.7,9.5)--node {\midarrow}(3.5,8.7)--node {\midarrow}(4.3,7.9);
\draw (-0.5,7.9)--node {\midarrow}(0.3,7.1)--node {\midarrow}(1.1,6.3)--node {\midarrow}(1.9,5.5);
\draw (0.3,8.7)--node {\midarrow}(1.1,7.9)--node {\midarrow}(1.9,7.1)--node {\midarrow}(2.7,6.3);
\draw (1.1,9.5)--node {\midarrow}(1.9,8.7)--node {\midarrow}(2.7,7.9)--node {\midarrow}(3.5,7.1);
\draw (0.3,7.1)--node {\midarrow}(1.1,7.9)--node {\midarrow}(1.9,8.7)--node {\midarrow}(2.7,9.5);
\draw (1.1,6.3)--node {\midarrow}(1.9,7.1)--node {\midarrow}(2.7,7.9)--node {\midarrow}(3.5,8.7);
\draw (1.9,5.5)--node {\midarrow}(2.7,6.3)--node {\midarrow}(3.5,7.1)--node {\midarrow}(4.3,7.9);
\draw [fill] (-0.5,7.9) circle (1pt) (0.3,8.7) circle (1pt) (1.1,9.5) circle (1pt) (1.9,10.3) circle (1pt) (2.7,9.5) circle (1pt) (3.5,8.7) circle (1pt) (4.3,7.9) circle (1pt);
\end{scope}
\node [left] at (-0.5,7.9) {\tiny $C_1(\sigma_0, \varsigma_0)$};
\node [left] at (0.3,8.7) {\tiny $C_2$};
\node [left] at (1.1,9.5) {\tiny $C_3$};
\node [left] at (1.9,10.3) {\tiny $C_4$};
\node [right] at (2.7,9.5) {\tiny $C_5$};
\node [right] at (3.5,8.7) {\tiny $C_6$};
\node [right] at (4.3,7.9) {\tiny $C_7(\sigma_6, \varsigma_6)$};
\node at (1,5.5)  {$(c)$};
\end{tikzpicture}}
\end{minipage}}
\caption{(a) The set $\mathscr{G}_{\sigma_0, \varsigma_0, r, \gamma}(i,k)$ consists of all paths from $A$ to $A_1$, $A_2$, $\ldots$, $A_{r+1}$, respectively. In Figure \ref{figure grid 1}, the $CF_1F_2F_3F_4F_1$ area corresponds to this set. (b) The set $\mathscr{G}^{\sigma_0, \varsigma_0, r,\gamma}(i,k)$ consists of all paths from $B_1,B_2,\ldots,B_{r+1}$ to $B$, respectively. In Figure \ref{figure grid 1}, the $F_6F_7F_3F_5F_6D$ area corresponds to this set. (c) The set $\mathfrak{G}_{\sigma_0, \varsigma_0, r, r'}(i,k)$, where $r=3,r'=3$, consists of all paths from $C_1,C_2,C_3,C_4$ to $C_4,C_5,C_6,C_7$, respectively. In Figure \ref{figure grid 1}, the $F_4F_3F_7F_8F_4$ area corresponds to this set.} 
\label{Fig: uplus}
\end{figure}
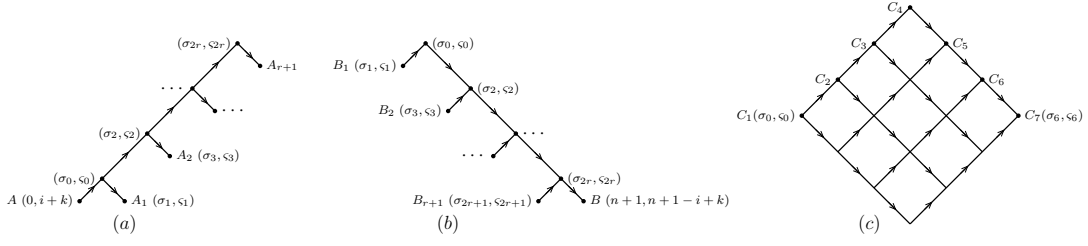

For $(\sigma_0, \varsigma_0)\in p$, $p\in \mathscr{P}_{i,k}$, $r, r', \gamma \in \mathbb{Z}_{\geq 0}$, we denote
\begin{align*}
\mathscr{G}_{\sigma_0, \varsigma_0, r, \gamma}(i,k)=\{ & ((0,i+k), (\sigma_0,\varsigma_{0}))\}\\
&\uplus \{((\sigma_0,\varsigma_0),(\sigma_2,\varsigma_2), \ldots, (\sigma_{2x},\varsigma_{2x}) ): x\in [0,r], \sigma_{2t}-\sigma_{2t-2}= \ell,\\
&\quad \quad \varsigma_{2t}-\varsigma_{2t-2}=- \ell, 1\leq t\leq x \}\\
&\uplus \{((\sigma_{2t},\varsigma_{2t}),(\sigma_{2t+1},\varsigma_{2t+1})):
\sigma_{2t+1}-\sigma_{2t}= \gamma, \varsigma_{2t+1}-\varsigma_{2t}=\gamma, 0\leq t\leq r\},
\end{align*}
where $\varsigma_{0}=i+k-\sigma_0$, see Figure \ref{Fig: uplus} (a) (note that the $y$-coordinate is decreasing when going up, see Figure \ref{F: figure 1});

\begin{align*}
\mathscr{G}^{\sigma_0, \varsigma_0, r,\gamma}(i,k)=\{ & ((\sigma_{2t+1},\varsigma_{2t+1}),(\sigma_{2t},\varsigma_{2t})): \sigma_{2t+1}-\sigma_{2t}= -\gamma, \varsigma_{2t+1}-\varsigma_{2t}=\gamma, 0\leq t\leq r\} \\
&\uplus\{((\sigma_{2x},\varsigma_{2x}), (\sigma_{2x+2},\varsigma_{2x+2}), \ldots, (\sigma_{2r},\varsigma_{2r})):  x\in [0,r], \sigma_{2t}-\sigma_{2t-2}= \ell, \\
&\quad\quad \varsigma_{2t}-\varsigma_{2t-2}= \ell, x\leq t\leq r\} \\
&\uplus\{((\sigma_{2r},\varsigma_{2r}), (n+1,n+1-i+k))\},
\end{align*}
see Figure \ref{Fig: uplus} (b); and
\begin{align*}
\mathfrak{G}_{\sigma_0, \varsigma_0, r, r'}(i,k)=\{ & ((\sigma_x,\varsigma_{x}), (\sigma_{x+1}, \varsigma_{x+1}), (\sigma_{x+ 2}, \varsigma_{x+2}), \ldots, (\sigma_y,\varsigma_{y})): x\in[0,r], y\in[r,r+r'], \\
&\sigma_x= \sigma_0+x \ell, \varsigma_{x}=\varsigma_0-x \ell,\sigma_y= \sigma_0+y \ell, \varsigma_{y}=\varsigma_0+(y-2r) \ell, \\
& \sigma_t-\sigma_{t-1}=\ell \text{ and } \varsigma_t-\varsigma_{t-1} \in \{ \ell, -\ell\}, t \in [x+1, y] \},
\end{align*}
see Figure \ref{Fig: uplus} (c).

By Remark \ref{Re:non-snake module}, since $\varepsilon^{2\ell}=1$ and $|j-i|\equiv |k-v|\ (\text{mod}\ 2)$, the monomial $Y_{i,k}Y_{j,v}$ can be converted to $Y_{i,k}Y_{j,\overline{v}}$, $k <\overline{v}$, which has small values of indices (see Definition \ref{def:small values of indices}), and $L(Y_{i, k}Y_{j, \overline{v}})$ is a snake module.

\begin{lemma}\label{Le:dominant is obtained by path translations or pairs of lowering and raising moves}
If $m \neq Y_{i,k}Y_{j,\overline{v}}$ in $S_{(i, k)(j, \overline{v})}$ is a dominant monomial, then $m$ can be obtained by path translations or raising and lowering moves of width $\ell$ from dominant monomials obtained by translations of paths.
\end{lemma}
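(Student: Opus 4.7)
My plan is to analyze the structure of an arbitrary non-highest dominant monomial in $S_{(i,k)(j,\overline v)}$ and show it decomposes canonically into a ``base'' piece given by path translations and a sequence of width-$\ell$ raising/lowering moves.

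Fix $m=\mathfrak{m}(p_1)\mathfrak{m}(p_2)\neq Y_{i,k}Y_{j,\overline v}$ with $(p_1,p_2)\in\overline{\mathscr{P}}_{((i,k),(j,\overline v))}$. Writing out $m$ as a product of $Y_{a,b}^{\pm 1}$ over all corners of $p_1\cup p_2$, the hypothesis that $m$ is dominant (after identifying $Y_{a,b}$ with $Y_{a,b'}$ whenever $b\equiv b'\pmod{2\ell}$) means that every lower corner $(a,b)$ in $p_1\cup p_2$ is cancelled by an upper corner $(a,b')$ in $p_1\cup p_2$ with $b\equiv b'\pmod{2\ell}$. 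I will organize the proof around the dichotomy between \emph{cross cancellations} (where the lower and upper corners lie on different paths) and \emph{self cancellations} (where they lie on the same path).

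For each cross cancellation, I plan to show that sliding $p_2$ down by a multiple of $2\ell$ (or $p_1$ up) until the cancelling corners coincide produces a non-overlapping pair of paths whose product is genuinely dominant in the unspecialized sense; by construction, this vertical slide is precisely the translation of paths in Definition~\ref{Def: translation}, and the resulting base monomial is of the form $Y_{a_t,\alpha_t}Y_{b_t,\beta_t}$ (single translation, Lemmas~\ref{Le: I translation of degree 2}--\ref{Le: II translation of degree 2}) or $Y_{a-\gamma,\overline\alpha-\gamma}Y_{b+\gamma,\beta+\gamma}$ (composed translation when $|j-i|+h(i,j)>2\ell$, Lemmas~\ref{Le: two translations}--\ref{Le: dual two translations}). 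For each self cancellation, say a lower corner $(c,d)$ of $p_1$ matched with an upper corner $(c,d-2\ell)$ of $p_1$, I will use the explicit segment structures $\mathscr{G}_{\sigma_0,\varsigma_0,r,\gamma}(i,k)$, $\mathfrak{G}_{\sigma_0,\varsigma_0,r,r'}(i,k)$, $\mathscr{G}^{\sigma_0,\varsigma_0,r,\gamma}(i,k)$ introduced above to exhibit $p_1$ as the image under a width-$\ell$ lowering move of a path $p_1'$ whose upper corner at $(c,d-2\ell)$ has been replaced by a dip-and-rise pattern of width $\ell$; an analogous argument gives a width-$\ell$ raising move on $p_2$.

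I then intend to induct on the total number of cancelling pairs. At each step I peel off either a self cancellation, replacing $(p_1,p_2)$ by a pair whose monomial is still dominant modulo $2\ell$ but has two fewer matched corners, or else, when no self cancellations remain, I read off from the remaining cross cancellations that $(p_1,p_2)$ differs from a base translation pair only by width-$\ell$ moves already applied, completing the reduction. The main obstacle I anticipate is twofold: first, verifying that peeling off one width-$\ell$ move does not disrupt another cancellation or violate the non-overlapping condition of $(p_1,p_2)$, which requires a local commutativity check between width-$\ell$ moves and translations using the explicit data in the segment sets $\mathscr{G}, \mathscr{G}^{\cdot},\mathfrak{G}$; second, handling the composed-translation case $|j-i|+h(i,j)>2\ell$, where the base dominant monomial itself is obtained by two successive translations and one must keep track of the parameter $\gamma=\tfrac{(|j-i|+h(i,j))\bmod 2\ell}{2}$ correctly so that the peeled moves land in the admissible range.
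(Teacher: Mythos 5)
There is a genuine gap, and it sits at the foundation of your decomposition. A path $p=((0,y_0),(1,y_1),\dots,(n+1,y_{n+1}))$ in $\mathscr{P}_{i,k}$ visits each column exactly once, so it has at most one corner per column; hence a ``self cancellation'' --- a lower corner $(c,d)$ of $p_1$ matched with an upper corner $(c,d-2\ell)$ of the \emph{same} path --- can never occur. Your dichotomy therefore collapses: every cancellation forced by dominance of $m=\mathfrak{m}(p_1)\mathfrak{m}(p_2)$ is a cross cancellation, and the mechanism you propose for extracting the width-$\ell$ raising/lowering moves (peeling off self cancellations) has nothing to act on. In the actual structure of the problem, the raising/lowering moves manifest precisely as \emph{extra} cross cancellations beyond the single one defining the translation: e.g.\ starting from the base pair ($p_1$ the highest path with upper corner $(i,k)$, $p_2$ with one lower corner $(i,k+2t\ell)$), one simultaneous width-$\ell$ lowering of $p_1$ and raising of $p_2$ produces cancelling corner pairs in columns $i$ and $i\pm\ell$ whose vertical offsets are $2(t-2)\ell$ and $2(t-1)\ell$ respectively. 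This also breaks your treatment of cross cancellations: since different cancelling pairs sit at \emph{different} multiples of $2\ell$, there is no single vertical slide of $p_2$ (i.e.\ no single translation in the sense of Definition~\ref{Def: translation}) that makes all cancelling corners coincide, so ``slide until the corners coincide'' is not well defined for a non-base realization of $m$.

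The paper's proof takes a different, and essentially unavoidable, route: it superimposes a grid of $\sqrt{2}\ell\times\sqrt{2}\ell$ boxes on the rectangles ${\bf P}_{i,k}$ and ${\bf P}_{j,\overline{v}}$, defines explicit subsets ${\bf G}(i,k)\subseteq\mathscr{P}_{i,k}$ and ${\bf G}(j,\overline{v})\subseteq\mathscr{P}_{j,\overline{v}}$ of paths whose corners lie on box boundaries, proves that any dominant product forces $p_1\in{\bf G}(i,k)$ and $p_2\in{\bf G}(j,\overline{v})$ (ruling out corners interior to a box), and then identifies the admissible pairs as exactly those obtained from the finitely many translation pairs by matched width-$\ell$ moves; the case split on $|j-i|+h(i,j)$ versus $2\rho\ell$ and the multiplicity counts $e(t),f(t),g(t)$ come out of this box combinatorics. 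If you want to rescue an induction on the number of cancelling pairs, you would first need the box-boundary constraint (your Subcase-1.1-type exclusion) and a correct local statement that an extra cross-cancelling pair in columns $c,c\pm\ell$ can be removed by inverting one matched lowering/raising move; as written, your proposal does not supply either ingredient.
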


\begin{proof}
Since $Y_{i,k}Y_{j,\overline{v}}$ has small values of indices, there is no translation of paths in $\mathscr{P}_{j,\overline{v}}$ to paths in $\mathscr{P}_{j,v'}$ with respect to ${\bf PS}(i,k)$ for any $v'<\overline{v}$. If $m\in S_{(i, k)(j, \overline{v})}$, then $m=\mathfrak{m}(p)\mathfrak{m}(\widetilde{p})$ for some $p\in \mathscr{P}_{i, k}$, $\widetilde{p}\in \mathscr{P}_{j, \overline{v}}$. Based on the values of $|j-i|+h(i,j)$, the proof of the lemma is divided into three cases. Without loss of generality, we may assume that $i\leq j$.

For convenience, we denote $\gamma=\frac{(|j-i|+h(i,j))\, (\text{mod} 2 \ell)}2$, $\gamma'=\ell-\gamma$, $\gamma''=\frac{-|j-i|+h(i,j)}2$, and $\gamma_0=\ell-\gamma''$. Since $Y_{i,k}Y_{j,\overline{v}}$ has small values of indices, we have that $\gamma,\gamma'' \in \ZZ$, $0\leq \gamma<\ell$, and $1\leq \gamma''\leq \ell$. Moreover, $\gamma', \gamma_0 \in \ZZ_{\geq 0}$.

\textbf{Case 1}. $|j-i|+h(i,j)< 2 \ell$. Assume that $i=a \ell+a_0$ for some $a, a_0\in \mathbb{Z}_{\geq 0}$, $0\leq a_0< \ell$, and $n+1-i=a' \ell+a'_0$ for some $a', a_0'\in \mathbb{Z}_{\geq 0}$, $0\leq a'_0< \ell$. Let $y_0=i+k-a_0$ and
\begin{align*}
{\bf G}(i,k)=&\{((0,i+k),(a_0,y_0))\}\uplus \mathfrak{G}_{a_0,y_0,a,a'}(i,k)\\
&\uplus \{((n+1-a'_0, n+1-i+k-a'_0),(n+1,n+1-i+k))\} \subseteq \mathscr{P}_{i, k}.
\end{align*}
For example, in Figure \ref{figure grid 1}, ${\bf G}(i,k)$ is the set of all black or red paths from $A$ to $B$ and the sizes of these square boxes are $\sqrt{2}\ell\times \sqrt{2}\ell$.
By the definition of $r(i,j)$ in $(\ref{eq:def of rij})$, if $r(i,j)=0$, then there is no translation of paths. Furthermore, if $r(i, j) > 0$, there exist $b_0, b_0' \in \ZZ_{\ge 0}$ such that $j-\gamma_0=b \ell+b_0$ and $n+1-j-\gamma'=b \ell+b'_0$, where $b=r(i,j)-1$. In Figure \ref{figure grid 1}, we have $r(i,j)=4$. Denote $E=(j,\overline{v})$. We have $|E_1E_2|=b_0$, $|E_3E|=\gamma_0$, $|EE_4|=\gamma'$, and $|E_5E_6|=b'_0$. Let
\begin{align*}
{\bf G}(j,\overline{v})=\mathscr{G}_{b_0, j+\overline{v}-b_0, b, \gamma'}(j,\overline{v})\uplus \mathfrak{G}_{b_0+\gamma',j+\overline{v}-b_0+ \gamma', b, b}(j,\overline{v}) \uplus \mathscr{G}^{j+\gamma', \overline{v}+\gamma', b, \gamma_0 }(j,\overline{v}) \subseteq \mathscr{P}_{j, \overline{v}}.
\end{align*}
For example, in Figure \ref{figure grid 1}, ${\bf G}(j,\overline{v})$ is the set of all red or black paths from $C$ to $D$ and the sizes of these square boxes are $\sqrt{2}\ell\times \sqrt{2}\ell$.

Subcase 1.1. Assume that $p\notin{\bf G}(i,k)$. Then there is at least one lower or upper corner that is an interior point in either a $\sqrt{2}\ell \times \sqrt{2}\ell$ box or a $\sqrt{2}a_0 \times \sqrt{2}\ell$ box.
In the following, we prove that if $p\notin{\bf G}(i,k)$, then there exists no path $\widetilde{p}\in \mathscr{P}_{j, \overline{v}}$ such that $\mathfrak{m}(p)\mathfrak{m}(\widetilde{p})$ in $S_{(i, k)(j, \overline{v})}$ is a dominant monomial.

Let us consider a pair of points $(c_1,d_1)$ and $(c_2,d_2)$, $c_2<c_1 \in I$, $d_1, d_2 \in \ZZ$. Suppose that $(c_1,d_1)$ is an interior point in either a $\sqrt{2}\ell \times \sqrt{2}\ell$ or $\sqrt{2}a_0 \times \sqrt{2}\ell$ box, and $(c_2,d_2)$ lies on the boundary of a $\sqrt{2}\ell \times \sqrt{2}\ell$ or $\sqrt{2}a_0 \times \sqrt{2}\ell$ box. Additionally, let $Y_{c_2, d_2}Y_{c_1, d_1}^{-1}$ or $Y_{c_2, d_2}^{-1}Y_{c_1, d_1}$ be a subpath of $p$. Without loss of generality, we may assume that $(c_1, d_1)\in C^{-}_p$. Suppose that there exists a point $(c_1, d_1+2z_1 \ell)\in C^{+}_{\widetilde{p}}$ for some $z_1 \in\ZZ_{>0}$, where $\widetilde{p}\in \mathscr{P}_{j,\overline{v}}$. Then $Y_{c_3,d_3}^{-1}Y_{c_1,d_1+2z_1 \ell}$, $c_3<c_1 \in I$, $d_3 \in \ZZ$, is a subpath of $\widetilde{p}$. If $(c_3,d_3-2z_3 \ell) \not\in C^{+}_p$ for any $z_3 \in\ZZ_{>0}$, then $\mathfrak{m}(p)\mathfrak{m}(\widetilde{p})$ is not a dominant monomial. Conversely, if $(c_3, d_3-2z_3 \ell) \in C^{+}_p$ for some $z_3 \in\ZZ_{>0}$, then $c_3\leq c_2$. If $c_3=c_2$, then $d_3 \not\equiv d_2\ (\text{mod}\ 2 \ell)$. That is, $\mathfrak{m}(p)\mathfrak{m}(\widetilde{p})$ is not a dominant monomial. If $c_3<c_2$, then the points $(c_2,d_2)$ and $(c_3,d_3-2z_3 \ell)$ are in $C^{+}_p$ and there must exist a point $(c_4,d_4)\in C^{-}_p$, where $c_3<c_4<c_2\in I$, $d_4 \in \ZZ$. Since $Y_{c_3,d_3}^{-1}Y_{c_1,d_1+2z_1 \ell}$ is a subpath of $\widetilde{p}$, it follows that $\mathfrak{m}(p)\mathfrak{m}(\widetilde{p})$ is not a dominant monomial. Therefore, the conclusion is valid.

Subcase 1.2. Suppose that $p \in {\bf G}(i,k)$. Then $p=Y_{i,k}$ or $p$ has at least one lower corner. In the following, we prove that if $p \in {\bf G}(i,k)$, then there exists no path $\widetilde{p}\in \mathscr{P}_{j, \overline{v}}\setminus{\bf G}(j,\overline{v})$ such that $\mathfrak{m}(p)\mathfrak{m}(\widetilde{p}) \in S_{(i, k)(j, \overline{v})}$ is a dominant monomial.

If $p=Y_{i,k}$ and there exists some $z\in \ZZ_{>0}$ such that $\widetilde{p}\in \mathscr{P}_{j, \overline{v}}$ has exactly one lower corner $(i,k+2z \ell)$, then, by the Definition of ${\bf G}(j,\overline{v})$, we conclude that $\widetilde{p}\in {\bf G}(j,\overline{v})$. If $p \in {\bf G}(i,k)$ has at least one lower corner $(c_1,d_1)$, $c_1\in I$, $d_1\in \ZZ$, and $\widetilde{p}\in \mathscr{P}_{j, \overline{v}}$ has the upper corner $(c_1,d_1+2z_1 \ell)$ for some $z_1 \in \ZZ_{>0}$, then $Y_{c_0, d_0}^{-1}Y_{c_1,d_1+2z_1 \ell}Y_{c_2,d_2}^{-1}$ is a subpath of $\widetilde{p}$, where $c_0<c_1<c_2 \in \hat{I}\cup \{n+1\}$, $d_0,d_2\in \ZZ$. To obtain a dominant monomial, we need to remove the monomial $Y_{c_0, d_0}^{-1}Y_{c_2,d_2}^{-1}$. If for any $z_0,z_2 \in \ZZ_{>0}$, $(c_0, d_0-2z_0 \ell)$ and $(c_2,d_2-2z_2 \ell)$ are not in $C^{+}_{p}$, then $\mathfrak{m}(p)\mathfrak{m}(\widetilde{p})$ is not a dominant monomial. If for some $z_0,z_2 \in \ZZ_{>0}$, $(c_0, d_0-2z_0 \ell)$ and $(c_2,d_2-2z_2 \ell)$ are in $C^{+}_{p}$, then $\widetilde{p}\in {\bf G}(j,\overline{v})$. Therefore, the conclusion is valid.

\begin{figure}
\centering
\resizebox{0.55\width}{0.55\height}{
\begin{minipage}[b]{0.7\textwidth}
\begin{tikzpicture}
\begin{scope}[thick, every node/.style={sloped,allow upside down}]
\draw (-0.5,13.5)--node {\midarrow}(0.5,14.5);
\draw (2,16)--node {\midarrow}(3.5,17.5)--node {\midarrow}(5,19)--node {\midarrow}(6.5,17.5)--node {\midarrow}(8,16);
\draw (-0.5,13.5)--(6,7)--(11.5,12.5)--(9.5,14.5);
\draw [dotted,thick](0.5,14.5)--node {\midarrow}(2,16);
\draw [dotted,thick](8,16)--node {\midarrow}(9.5,14.5);
\draw [dotted,thick](9.5,14.5)--node {\midarrow}(11.5,12.5);
\draw [black](3.5,17.5)--node {\midarrow}(5,16)--node {\midarrow}(6.5,14.5)--node {\midarrow}(8,13)--node {\midarrow}(9.5,11.5);
\draw [black](2,16)--node {\midarrow}(3.5,14.5)--node {\midarrow}(5,13)--node {\midarrow}(6.5,11.5)--node {\midarrow}(8,10);
\draw [black](3.5,14.5)--node {\midarrow}(5,16)--node {\midarrow}(6.5,17.5);
\draw [black](2,10)--node {\midarrow}(3.5, 11.5)--node {\midarrow}(5,13);
\draw [black](3.5,8.5)--node {\midarrow}(5,10);
\draw [black](5,10)--node {\midarrow}(6.5,8.5);
\draw [black](0.5,14.5)--node {\midarrow}(2,13)--node {\midarrow}(3.5,14.5);
\draw [black](7.5,15.5)--(8,16);
\draw [gray](9,14)--(9.5,14.5);
\draw [gray](10.5,12.5)--(11,13);
\draw [gray](0.5,11.5)--node{\midarrow}(2,13);
\node [above] at (5,19) {$(i,k)$};
\draw [fill] (5,19) circle (1pt);
\draw [green](-0.5,11.55)--(0,12.05);
\draw [green,dotted](0,12.05)--node {\midarrow}(1.5,13.55);
\draw [green](1.5,13.55)--node {\midarrow}(3,15.05)--node {\midarrow}(4.5,16.55)--(5.5,17.55) --(6,17.05)--node {\midarrow}(7.5, 15.55)-- node {\midarrow}(9,14.05);
\draw [green](10.5,12.55)--node {\midarrow}(11.5,11.55)--(5.5,5.55)--(-0.5,11.55);
\draw [red,dotted,thick](9,14)--node {\midarrow}(10.5,12.5);
\draw [red](-0.5,11.5)--node {\midarrow}(0,12);
\draw [red](1.5,13.5)--node {\midarrow}(3,15)--node {\midarrow}(4.5,16.5)--node {\midarrow}(5,16)--node {\midarrow}(6,17)--node {\midarrow}(7.5,15.5)--node {\midarrow}(9,14);
\draw [red](10.5,12.5)--node {\midarrow}(11.5,11.5);
\draw [red,dotted,thick](0,12)--node {\midarrow}(1.5,13.5);
\draw [red,dotted,thick](9,14)--node {\midarrow}(10.5,12.5);
\draw [red](1.5,13.5)--node {\midarrow}(3,15)--node {\midarrow}(5,13)--node {\midarrow}(7.5,15.5)--node {\midarrow}(9,14);
\draw [red](1.5,13.5)--node {\midarrow}(3.5, 11.5)--node {\midarrow}(5,10)--node {\midarrow}(6.5,11.5)--node {\midarrow}(9,14);
\draw[red](-0.5,11.5)--node {\midarrow}(0,12)--node {\midarrow}(2,10)--node {\midarrow}(3.5,8.5)--node {\midarrow}(5,7)--node {\midarrow}(6.5,8.5)--node {\midarrow}(8,10)--node {\midarrow}(10.5,12.5)--node {\midarrow}(11.5,11.5);
\draw [red](6.5,14.5)--node {\midarrow}(7.5,15.5);
\draw [red,dotted,thick](0,12)--node {\midarrow}(1.5,13.5);
\draw [red,dotted,thick](9,14)--node {\midarrow}(10.5,12.5);
\draw [fill](5,16) circle (2pt) (5,13) circle (2pt)(5,10) circle (2pt)(5,7) circle (2pt) (5.5,17.5) circle (1pt);
\draw [fill](-0.5,13.5) circle (1.5pt) (-0.5,11.5) circle (1.5pt)(11.5,12.5) circle (1.5pt)(11.5,11.5) circle (1.5pt);

\draw [blue, thick] (-0.5,17.5)--(11.5,17.5);
\draw [fill,blue](-0.5,17.5) circle (1pt) (0,17.5) circle (1pt) (1.5,17.5) circle (1pt) (3,17.5) circle (1pt) (4.5,17.5) circle (1pt) (6,17.5) circle (1pt)  (7.5,17.5) circle (1pt) (9,17.5) circle (1pt)  (10.5,17.5) circle (1pt) (11.5,17.5) circle (1pt);
\draw [fill] (0,12) circle (1pt) (4.5,16.5) circle (1pt)   (0.5,11.5) circle (1pt)  (6,17)  circle (1pt) (10.5,12.5)  circle (1pt) (9.5,11.5) circle (1pt);
\end{scope}
\node [above] at (5.4,17.5) {$(j,\overline{v})$};
\node [left] at (4.4,18.5) {$\sqrt{2}\ell$};
\node [left] at (2.9,17) {$\sqrt{2}\ell$};
\node [right] at (5.8,18.4) {$\sqrt{2}\ell$};
\node [right] at (7.3,16.9) {$\sqrt{2}\ell$};
\node at (5.8,17.3) {\tiny $\sqrt{2}\gamma'$};
\node at (4,17) {\scriptsize $\sqrt{2}\gamma$};
\node at (4.9,17.2) {\tiny $\sqrt{2}\gamma_0$};
\node [left] at (2.3,14.5) {$\sqrt{2}\ell$};
\node [left] at (1.2,15) {$\cdots$};
\node [right] at (8.3,14.8) {$\sqrt{2}\ell$};
\node [right] at (9,15) {$\cdots$};
\node [left] at (1,13) {$\cdots$};
\node [right] at (10,13.1) {$\cdots$};
\node [left] at (-0.5,13.5) {$A$};
\node [left] at (-0.5,11.5) {$C$};
\node [right] at (11.5,12.5) {$B$};
\node [right] at (11.5,11.5) {$D$};
\node[right] at (5,16) {$(i,k'_1)$};
\node[right] at (5,13) {$(i,k'_2)$};
\node[right] at (5,7) {\tiny$F_8 (i,k'_{r(i,j)})$};
\node [above] at (-0.5,17.5) {\scriptsize $E_1$};
\node [above] at (0,17.5) {\scriptsize $E_2$};
\node [above] at (4.5,17.5) {\scriptsize $E_3$};
\node [above] at (6,17.5)   {\scriptsize $E_4$};
\node [above] at (10.5,17.5)  {\scriptsize $E_5$};
\node [above] at (11.5,17.5) {\scriptsize $E_6$};
\node [left] at (0,12) {\tiny$F_1$};
\node[left] at (4.5,16.5) {\tiny$F_2$};
\node[above] at (5,16) {\tiny$F_3$};
\node [left] at (0.5,11.5) {\tiny$F_4$};
\node [right] at (6,17) {\tiny$F_5$};
\node [left] at (10.5,12.5) {\tiny$F_6$};
\node [left] at (9.5,11.5) {\tiny$F_7$};
\end{tikzpicture}
\end{minipage}}
\caption{$|j-i|+h(i,j)<2 \ell$. The black and green rectangles are ${\bf P}_{i,k}$ and ${\bf P}_{j,\overline{v}}$, respectively. The product of $Y_{i,k}$ and any one of the red paths is a dominant monomial, which can be obtained by translating paths in $\mathscr{P}_{i,k}$ to paths in $\mathscr{P}_{i,k'_t}$ with respect to ${\bf PS}(j,v)$, where $(i,k'_t)$ is one of the black bullet points.}\label{figure grid 1}
\end{figure}

Subcase 1.3. Assume that $p \in {\bf G}(i,k)$ and there exists a path $\widetilde{p}\in {\bf G}(j,\overline{v})$ such that $\mathfrak{m}(p)\mathfrak{m}(\widetilde{p})$ in $S_{(i, k)(j, \overline{v})}$ is a dominant monomial. Then either the path $p$ is $Y_{i,k}$ or it can be convert to $Y_{i,k}$ by raising moves of width $\ell$. Moreover, either the path $\widetilde{p}$ is one of the paths in $R_1$, where (we identify a path with its corresponding monomial)
\[
R_1=\{Y_{j-\gamma_0-(t-1) \ell,\overline{v}+\gamma_0+(t-1) \ell}Y_{i,k+2t \ell}^{-1}Y_{j+\gamma'+(t-1) \ell,\overline{v}+\gamma'+(t-1) \ell}: 1\leq t\leq r(i,j)\},
\]
see red paths in Figure \ref{figure grid 1}, or the path $\widetilde{p}$ can be converted to one of the paths in $R_1$ by lowering moves of width $\ell$. In fact, the product of $Y_{i,k}$ and any one of the paths in $R_1$ is a dominant monomial $m_t$, $1\leq t\leq r(i,j)$, which can be obtained by a translation of paths in $\mathscr{P}_{i,k}$ to paths in $\mathscr{P}_{i,k'_t}$ with respect to ${\bf PS}(j,v)$, where $k'_t$ is defined in (\ref{eq:def of rij}). That is, $m_t=Y_{a_t,\alpha_t}Y_{b_t,\beta_t}$, where $Y_{a_t,\alpha_t}Y_{b_t,\beta_t}$ is defined in (\ref{Eq:a_tb_t}).

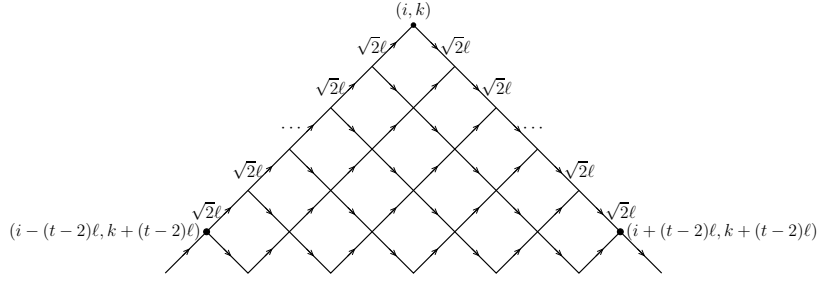
\begin{figure}
\centering
\resizebox{0.55\width}{0.55\height}{
\begin{minipage}[b]{0.95\textwidth}
\begin{tikzpicture}
\begin{scope}[thick, every node/.style={sloped,allow upside down}]
\draw (-0.5,5.5)--node {\midarrow}(0.5,6.5)--node {\midarrow}(1.5,7.5)--node {\midarrow}(2.5,8.5)--node {\midarrow}(3.5,9.5)--node {\midarrow}(4.5,10.5)--node {\midarrow}(5.5,11.5)--node {\midarrow}(6.5,10.5)--node {\midarrow}(7.5,9.5)--node {\midarrow}(8.5,8.5)--node {\midarrow}(9.5,7.5)--node {\midarrow}(10.5,6.5)--node {\midarrow}(11.5,5.5);
\draw (0.5,6.5)--node {\midarrow}(1.5,5.5)--node {\midarrow}(2.5,6.5)--node {\midarrow}(3.5,7.5)--node {\midarrow}(4.5,8.5)--node {\midarrow}(5.5,9.5)--node {\midarrow}(6.5,10.5);
\draw (1.5,7.5)--node {\midarrow}(2.5,6.5)--node {\midarrow}(3.5,5.5)--node {\midarrow}(4.5,6.5)--node {\midarrow}(5.5,7.5)--node {\midarrow}(6.5,8.5)--node {\midarrow}(7.5,9.5);
\draw (2.5,8.5)--node {\midarrow}(3.5,7.5)--node {\midarrow}(4.5,6.5)--node {\midarrow}(5.5,5.5)--node {\midarrow}(6.5,6.5)--node {\midarrow}(7.5,7.5)--node {\midarrow}(8.5,8.5);
\draw (3.5,9.5)--node {\midarrow}(4.5,8.5)--node {\midarrow}(5.5,7.5)--node {\midarrow}(6.5,6.5)--node {\midarrow}(7.5,5.5)--node {\midarrow}(8.5,6.5)--node {\midarrow}(9.5,7.5);
\draw (4.5,10.5)--node {\midarrow}(5.5,9.5)--node {\midarrow}(6.5,8.5)--node {\midarrow}(7.5,7.5)--node {\midarrow}(8.5,6.5)--node {\midarrow}(9.5,5.5)--node {\midarrow}(10.5,6.5);
\draw[fill] (5.5,11.5) circle (1.5pt) (0.5,6.5) circle (2pt) (10.5,6.5) circle (2pt);
\end{scope}
\node[above] at (5.5,11.5)  {$(i,k)$};
\node[left] at (5,11)   {$\sqrt{2}\ell$};
\node[left] at (4,10)   {$\sqrt{2}\ell$};
\node[left] at (3,9)   {$\cdots$};
\node[left] at (2,8)   {$\sqrt{2}\ell$};
\node[left] at (1,7)   {$\sqrt{2}\ell$};
\node[right] at (6,11)   {$\sqrt{2}\ell$};
\node[right] at (7,10)   {$\sqrt{2}\ell$};
\node[right] at (8,9)   {$\cdots$};
\node[right] at (9,8)   {$\sqrt{2}\ell$};
\node[right] at (10,7)   {$\sqrt{2}\ell$};
\node[left] at (0.5,6.5) {$(i-(t-2)\ell, k+(t-2) \ell)$};
\node[right] at (10.5,6.5) {$(i+(t-2)\ell, k+(t-2) \ell)$};
\end{tikzpicture}
\end{minipage}}
\caption{The paths obtained by lowering moves of $Y_{i,k}$.}\label{f:the paths obtained by lowering moves}
\end{figure}

In the following, we calculate the multiplicity of the dominant monomial $m_t$, $1\leq t\leq r(i,j)$. If $t=1$, then $\mathfrak{m}(p)=Y_{i,k}$ and $\mathfrak{m}(\widetilde{p})=Y_{j-\gamma,\overline{v}+\gamma}Y_{i,k+2 \ell}^{-1}Y_{j+\gamma',\overline{v}+\gamma'}$. Thus, the multiplicity of the dominant monomial $Y_{j-\gamma,\overline{v}+\gamma}Y_{j+\gamma',\overline{v}+\gamma'}$ is $1$. If $t=2$, then $\mathfrak{m}(p)=Y_{i,k}$ and $\mathfrak{m}(\widetilde{p})=Y_{j-\gamma- \ell,\overline{v}+\gamma+ \ell}Y_{i,k+4 \ell}^{-1}Y_{j+\gamma'+ \ell,\overline{v}+\gamma'+ \ell}$. Thus, the multiplicity of the dominant monomial $Y_{j-\gamma- \ell,\overline{v}+\gamma+ \ell}Y_{j+\gamma'+ \ell,\overline{v}+\gamma'+ \ell}$ is $1$. Assume that $t\geq 3$. Then the path $Y_{a_t,\alpha_t}Y^{-1}_{i,k'_t}Y_{b_t,\beta_t}\in \mathscr{P}_{j,\overline{v}}$ can be raised at points
\[
(i\pm s_u \ell, k'_t-(s_u+2u-1) \ell ), \ s_u \in \ZZ, \  0\leq s_u\leq t-2u-1, \ 1\leq u\leq \lceil \frac{t}2 \rceil-1,
\]
respectively. Moreover, the path $Y_{i,k}$ can be lowered at points
\[
(i\pm s_u \ell, k+(s_u+2u-1) \ell ), \ s_u \in \ZZ, \ 0\leq s_u\leq t-2u-1, \ 1\leq u\leq \lceil \frac{t}2 \rceil -1,
\]
respectively.

Therefore, for any $\widetilde{p}'$ obtained by raising moves of $Y_{a_t,\alpha_t}Y^{-1}_{i,k'_t}Y_{b_t,\beta_t}$, there exists a unique path $p'$ which is obtained by lowering moves of $Y_{i,k}$ such that $\mathfrak{m}(\widetilde{p}')\mathfrak{m}(p')=Y_{a_t,\alpha_t}Y_{b_t,\beta_t}=m_t$. To obtain the multiplicity of $m_t$, we only need to calculate the number of paths which is obtained by lowering moves of $Y_{i,k}$. That is, we need to calculate the number of paths from the point $(i-(t-2)\ell, k+(t-2) \ell)$ to the point $(i+(t-2)\ell, k+(t-2) \ell)$, see Figure \ref{f:the paths obtained by lowering moves} for example. This number is equal to $e(t) = \frac{\prod_{s=1}^{t-2} (t+s)}{(t-1)!}$. Hence, the multiplicity of the dominant monomial $m_t$, $1\leq t\leq r(i,j)$, is equal to $e(t)$. The sum of $\varepsilon$-characters of the modules $L(m_t)$, $1\leq t\leq r(i,j)$, is $\chi_{(i,k),(j,\overline{v})}$, as defined in Definition \ref{Def: chi_(i,k,j,v)}.

\textbf{Case 2}. $|j-i|+h(i,j)= 2\rho \ell$, $\rho \in \ZZ_{>0}$. We have that $\gamma=0$, $\gamma'=\ell$. Assume that $i=a \ell+a_0$ for some $a, a_0\in \mathbb{Z}_{\geq 0}$, $0\leq a_0< \ell$, and $n+1-i=a' \ell+a'_0$ for some $a', a_0'\in \mathbb{Z}_{\geq 0}$, $0\leq a'_0< \ell$. Let $y_0=i+k-a_0$ and
\begin{align*}
{\bf G}(i,k)=&\{((0,i+k),(a_0,y_0))\}\uplus \mathfrak{G}_{a_0,y_0,a, a'}(i,k)\\
&\uplus \{((n+1-a'_0, n+1-i+k-a'_0),(n+1,n+1-i+k))\} \subseteq \mathscr{P}_{i, k}.
\end{align*}
For example, in Figure \ref{F: figure grid 2}, ${\bf G}(i,k)$ is the set of all black or red paths from $A$ to $B$ and the sizes of these square boxes are $\sqrt{2}\ell\times \sqrt{2}\ell$. By the definition of $r(i,j)$ in (\ref{eq:def of rij}), if $r(i,j)=0$, then there is no translation of paths. Furthermore, if $r(i, j) > 0$, there exist $b_0, b'_0\in \mathbb{Z}_{\geq 0}$ such that $j-\gamma_0=b \ell+b_0$, where $b=\rho+r(i,j)-1$, and $n+1-j=b' \ell+b'_0$, where $b'=r(i,j)$. In Figure \ref{F: figure grid 2}, $r(i,j)=4$, $\rho=3$. Denote $E=(j,\overline{v})$. We have $|E_1E_2|=b_0$, $|E_3E|=\gamma_0$, and $|E_4E_5|=b'_0$. Let
\begin{align*}
{\bf G}(j,\overline{v})=\{((0,j+\overline{v}),(b_0,j+\overline{v}-b_0))\}\uplus\mathfrak{G}_{b_0,j+\overline{v}-b_0, b, b'}(j,\overline{v})\uplus \mathscr{G}^{j, \overline{v}, b', \gamma_0 }(j,\overline{v})
\subseteq \mathscr{P}_{j, \overline{v}}.
\end{align*}
For example, in Figure \ref{F: figure grid 2}, ${\bf G}(j,\overline{v})$ is the set of all red or black paths from $C$ to $D$ and the sizes of these square boxes are $\sqrt{2}\ell\times \sqrt{2}\ell$.

\begin{figure}
\centering
\resizebox{0.55\width}{0.55\height}{
\begin{minipage}[b]{0.7\textwidth}
\begin{tikzpicture}
\begin{scope}[thick, every node/.style={sloped,allow upside down}]
\draw[fill] (4,20.5) circle (1pt);
\draw (-0.5,16)--node {\midarrow}(1,17.5)--node {\midarrow}(2,18.5);
\draw[thick,dotted] (2,18.5)--node {\midarrow}(3,19.5);
\draw(3,19.5)--node {\midarrow}(4,20.5)--node {\midarrow}(5,19.5);
\draw[thick,dotted](5,19.5)--node {\midarrow}(6,18.5);
\draw(6,18.5)--node {\midarrow}(7,17.5)--node {\midarrow}(8,16.5)--node {\midarrow}(9,15.5)--node {\midarrow}(10,14.5)--node {\midarrow}(11,13.5);
\draw (-0.5,16)--node {\midarrow}(6.5,9) -- node {\midarrow}(11,13.5);
\draw[black](3,19.5)--node {\midarrow}(4,18.5)--node {\midarrow}(5,17.5)--node {\midarrow}(6,16.5)--node {\midarrow}(7,15.5)--node {\midarrow}(8,14.5)--node {\midarrow}(9,13.5)--node {\midarrow}(10,12.5);
\draw (8,14.5)--node {\midarrow}(9,15.5);
\draw (9,13.5)--node {\midarrow}(10,14.5);
\draw[black](2,18.5)--node {\midarrow}(3,17.5)--node {\midarrow}(4,16.5)--node {\midarrow}(5,15.5)--node {\midarrow}(6,14.5)--node {\midarrow}(7,13.5);
\draw[black](1,17.5)--node {\midarrow}(2,16.5)--node {\midarrow}(3,15.5)--node {\midarrow}(4,14.5)--node {\midarrow}(5,13.5)--node {\midarrow}(6,12.5)--node {\midarrow}(7,11.5)--node {\midarrow}(8,10.5);
\draw[black](0,16.5)--node {\midarrow}(1,15.5)--node {\midarrow}(2,14.5)--node {\midarrow}(3,13.5)--node {\midarrow}(4,12.5)--node {\midarrow}(5,11.5)--node {\midarrow}(6,10.5)--node {\midarrow}(7,9.5);
\draw[black](1,15.5)--node {\midarrow}(2,16.5)--node {\midarrow}(3,17.5)--node {\midarrow}(4,18.5)--node {\midarrow}(5,19.5);
\draw[black](2,14.5)--node {\midarrow}(3,15.5)--node {\midarrow}(4,16.5)--node {\midarrow}(5,17.5)--node {\midarrow}(6,18.5);
\draw[black](-0.5,10)--node {\midarrow}(0,10.5)--node {\midarrow}(1,11.5)--node {\midarrow}(2,12.5)--node {\midarrow}(3,13.5)--node {\midarrow}(4,14.5)--node {\midarrow}(5,15.5)--node {\midarrow}(6,16.5)--node {\midarrow}(7,17.5);
\draw[black](1,9.5)--node {\midarrow}(2,10.5)--node {\midarrow}(3,11.5)--node {\midarrow}(4,12.5)--node {\midarrow}(5,13.5)--node {\midarrow}(6,14.5)--node {\midarrow}(7,15.5)--node {\midarrow}(8,16.5);
\draw[black](2,8.5)--node {\midarrow}(3,9.5)--node {\midarrow}(4,10.5)--node {\midarrow}(5,11.5)--node {\midarrow}(6,12.5)--node {\midarrow}(7,13.5)--node {\midarrow}(8.5,15);
\draw[black](3,7.5)--node {\midarrow}(4,8.5)--node {\midarrow}(5,7.5);
\draw[black](4,6.5)--node {\midarrow}(5,7.5)--node {\midarrow}(6,8.5)--node {\midarrow}(7,9.5)--node {\midarrow}(8,10.5)--node {\midarrow}(9,11.5)--node {\midarrow}(10.5,13);
\node [above] at (4,20.5) {$(i,k)$};
\draw[fill] (6.5,17) circle (1pt);
\draw[green](-0.5,10.05)--(4,5.55) --(11,12.55);
\draw[green] (-0.5,10.05)--(1,11.55);
\draw[green,dotted,thick] (1,11.55)--node {\midarrow}(2,12.55);
\draw[green] (2,12.55)--(6.5,17.05)--(8.5,15.05);
\draw[green,dotted,thick](8.5,15.05)--(9.5,14.05);
\draw[green](9.5,14.05)--(11,12.55);
\draw[black](0,10.5)--(4,6.5);
\draw[black](1,11.5)--(5,7.5);
\draw[black](4,10.5)--node {\midarrow}(5,9.5)--node {\midarrow}(6,8.5);
\draw[black](5,11.5)--node {\midarrow}(6,10.5)--node {\midarrow}(7,9.5);
\draw[black](7,13.5)--node {\midarrow}(8,12.5)--node {\midarrow}(9,11.5);
\draw[red](-0.5,10)--(1,11.5);
\draw[red,dotted,thick] (1,11.5)--node {\midarrow}(2,12.5);
\draw[red](2,12.5)--node {\midarrow}(3,13.5)--node {\midarrow}(4,12.5)--node {\midarrow}(5,13.5)--node {\midarrow}(6,14.5)--node {\midarrow}(7,15.5)--node {\midarrow}(7.5,16)--(8.5,15);
\draw[red,dotted,thick](8.5,15)--node {\midarrow}(9.5,14);
\draw[red](9.5,14)--node {\midarrow}(10.5,13)--node {\midarrow}(11,12.5);
\draw[red](-0.5,10)--node {\midarrow}(1,11.5);
\draw[red](2,12.5)--node {\midarrow}(3,11.5)--node {\midarrow}(4,10.5)--node {\midarrow}(5,11.5)--node {\midarrow}(6,12.5)--node {\midarrow}(7,13.5)--node {\midarrow}(8.5,15);
\draw[red](9.5,14)--node {\midarrow}(11,12.5);
\draw[red,dotted,thick](8.5,15)--node {\midarrow}(9.5,14);
\draw[red,dotted,thick] (1,11.5)--node {\midarrow}(2,12.5);
\draw[red](-0.5,10)--node {\midarrow}(1,11.5)--node {\midarrow}(2,10.5)--node {\midarrow}(3,9.5)--node {\midarrow}(4,8.5)--node {\midarrow}(5,9.5)--node {\midarrow}(6,10.5)--node {\midarrow}(7,11.5)--node {\midarrow}(8,12.5)--node {\midarrow}(9.5,14)--node {\midarrow}(11,12.5);
\draw[red](-0.5,10)--node {\midarrow}(0,10.5)--node {\midarrow}(1,9.5)--node {\midarrow}(2,8.5)--node {\midarrow}(3,7.5)--node {\midarrow}(4,6.5)--node {\midarrow}(5,7.5)--node {\midarrow}(6,8.5)--node {\midarrow}(7,9.5)--node {\midarrow}(8,10.5)--node {\midarrow}(9,11.5)--node {\midarrow}(10.5,13)--node {\midarrow}(11,12.5);
\draw[fill](4,6.5) circle (2pt) (4,8.5) circle (2pt)(4,10.5) circle (2pt)(4,12.5) circle (2pt) (-0.5,16) circle (1pt) (11,13.5) circle (1pt)  (-0.5,10) circle (1pt)  (11,12.5) circle (1pt);
\draw[blue,thick] (-0.5,17)--(11,17);
\draw[fill,blue](-0.5,17) circle (1.2pt) (0,17) circle (1.2pt) (1,17) circle (1.2pt) (2,17) circle (1.2pt) (3,17) circle (1.2pt) (4,17) circle (1.2pt) (5,17) circle (1.2pt) (6,17) circle (1.2pt) (6.5,17) circle (1.2pt) (7.5,17) circle (1.2pt) (8.5,17) circle (1.2pt) (9.5,17) circle (1.2pt)  (10.5,17) circle (1.2pt) (11,17) circle (1.2pt);
\end{scope}
\node [above] at (6.6,17) {$(j,\overline{v})$};
\node [left] at (3.5,20) {$\sqrt{2} \ell$};
\node [left] at (2.5,19) {$\cdots$};
\node [left] at (1.5,18) {$\sqrt{2}\ell$};
\node [right] at (4.5,20) {$\sqrt{2}\ell$};
\node [right] at (5.5,19) {$\cdots$};
\node [right] at (6.5,18) {$\sqrt{2}\ell$};
\node  at (6.2,16.8) {\scriptsize $\sqrt{2}\gamma_0$};
\node [left] at (5.5,16) {$\sqrt{2}\ell$};
\node [left] at (4.5,15) {$\sqrt{2}\ell$};
\node [left] at (1.5,12) {$\cdots$};
\node [left] at (0.5,11) {$\sqrt{2}\ell$};
\node [right] at (6.7,16.7) {$\sqrt{2}\ell$};
\node [right] at (7.7,15.7) {$\sqrt{2}\ell$};
\node [right] at (8.7,14.7) {$\cdots$};
\node [right] at (9.7,13.7) {$\sqrt{2}\ell$};
\node[right] at (4,6.5) {$(i, k'_{r(i,j)})$};
\node[right] at (4,10.5) {$(i, k'_2)$};
\node[right] at (4,12.5) {$(i, k'_1)$};
\node[left] at (-0.5,16) {$A$};
\node[right] at (11,13.5) {$B$};
\node[left] at (-0.5,10) {$C$};
\node[right] at (11,12.5) {$D$};
\node [above] at(-0.5,17) {\scriptsize $E_1$};
\node [above] at(0,17)    {\scriptsize $E_2$};
\node [above] at (6,17)   {\scriptsize $E_3$};
\node [above] at (10.5,17) {\scriptsize $E_4$};
\node [above] at(11,17)    {\scriptsize $E_5$};
\end{tikzpicture}
\end{minipage}}
\caption{$|j-i|+h(i,j)= 2\rho \ell$. The black and green rectangles are ${\bf P}_{i,k}$ and ${\bf P}_{j,\overline{v}}$, respectively. The product of $Y_{i,k}$ and any one of the red paths is a dominant monomial, which can be obtained by translating paths in $\mathscr{P}_{i,k}$ to paths in $\mathscr{P}_{i,k'_t}$ with respect to ${\bf PS}(j,v)$, where $(i,k'_t)$ is one of the black bullet points.}\label{F: figure grid 2}
\end{figure}

Subcase 2.1. Similar to Subcase 1.1 and Subcase 1.2, we obtain that if $p \notin {\bf G}(i,k)$, then there exists no path $\widetilde{p}\in \mathscr{P}_{j, \overline{v}}$ such that $\mathfrak{m}(p)\mathfrak{m}(\widetilde{p}) \in S_{(i, k)(j, \overline{v})}$ is a dominant monomial; if $p \in {\bf G}(i,k)$, then there exists no path $\widetilde{p}\in \mathscr{P}_{j, \overline{v}}\setminus{\bf G}(j,\overline{v})$ such that $\mathfrak{m}(p)\mathfrak{m}(\widetilde{p}) \in S_{(i, k)(j, \overline{v})}$ is a dominant monomial.

Subcase 2.2. If $p \in {\bf G}(i,k)$ and there exists a path $\widetilde{p}\in {\bf G}(j, \overline{v})$ such that $\mathfrak{m}(p)\mathfrak{m}(\widetilde{p}) \in S_{(i, k)(j, \overline{v})}$ is a dominant monomial, then either the path $p$ is $Y_{i,k}$ or the path $p$ can be converted to $Y_{i,k}$ by raising moves of width $\ell$. Moreover, either the path $\widetilde{p}$ is one of the paths in $R_2$, where
\[
R_2=\{Y_{j-\rho \ell- \gamma_0-(t-1)\ell,\overline{v}+\rho \ell+\gamma_0+(t-1)\ell}Y_{i,k+2\rho \ell+2t \ell}^{-1}Y_{j+t \ell,\overline{v}+t \ell}: 1\leq t\leq r(i,j)\},
\]
see red paths in Figure \ref{F: figure grid 2}, or the path $\widetilde{p}$ can be converted to one of the paths in $R_2$ by lowering moves of width $\ell$. In fact, the product of $Y_{i,k}$ and any one of the paths in $R_2$ is a dominant monomial $m_t$, $1\leq t\leq r(i,j)$, which can be obtained by a translation of paths in $\mathscr{P}_{i,k}$ to paths in $\mathscr{P}_{i,k'_t}$ with respect to ${\bf PS}(j,v)$, where $k'_t$ is defined in (\ref{eq:def of rij}). That is, $m_t=Y_{a_t,\alpha_t}Y_{b_t,\beta_t}$, where $Y_{a_t,\alpha_t}Y_{b_t,\beta_t}$ is defined in (\ref{Eq:a_tb_t}). Using the same calculation method as Subcase 1.3, we obtain that the multiplicity of the dominant monomial $m_t$ is equal to $f(t) = \frac{ \rho \prod_{s=1}^{t-1}(\rho+t+s)}{t!}$. The sum of $\varepsilon$-characters of the modules $L(m_t)$, $1\leq t\leq r(i,j)$, is $\chi_{(i,k),(j,\overline{v})}$, which is defined in Definition \ref{Def: chi_(i,k,j,v)}.

\begin{figure}
\centering
\begin{minipage}[b]{0.48\textwidth}
\resizebox{0.5\width}{0.5\height}{
\begin{tikzpicture}
\begin{scope}[thick, every node/.style={sloped,allow upside down}]
\draw[fill] (5.5,40) circle (1pt) (-0.5,34) circle (1pt) (12.5,33) circle (1pt)  (-0.5,29) circle (1pt) (12.5,31) circle (1pt) (7,36.5) circle (1pt);
\draw (-0.5,34)--node {\midarrow}(1,35.5);
\draw[dotted,thick] (1,35.5)--node {\midarrow}(2.5,37);
\draw (2.5,37)--node {\midarrow}(4,38.5)--node {\midarrow}(5.5,40)--node {\midarrow}(7,38.5)--node {\midarrow}(8.5,37);
\draw[dotted,thick](8.5,37)--node {\midarrow}(10,35.5);
\draw(10,35.5)--node {\midarrow}(12.5,33)--(6.5,27) -- (-0.5,34);
\node [above] at (5.5,40) {$(i,k)$};
\draw (7,35.5)--node {\midarrow}(8.5,37);
\draw (8.5,34)--node {\midarrow}(10,32.5);
\draw (10,32.5)--node {\midarrow}(11.5,34);
\draw[green](-0.5,29)--(2,31.5);
\draw [green,thick,dotted](2,31.5) --(3.5,33);
\draw[green](3.5,33)--(7,36.5)--(9,34.5);
\draw[green,thick,dotted](9,34.5)--(10.5,33);
\draw[green](10.5,33)--(12.5,31)--(5,23.5) -- (-0.5,29);
\node [above] at (7,36.5) {\scriptsize $(j,\overline{v})$};
\draw [red] (-0.5,29)--node {\midarrow}(0.5,30)--node {\midarrow}(2,31.5);
\draw [red,thick,dotted](2,31.5) --node {\midarrow}(3.5,33);
\draw[red] (3.5,33)--node {\midarrow}(5,34.5)--node {\midarrow}(5.5,34)--node {\midarrow}(7.5,36)--node {\midarrow}(9,34.5);
\draw[red,thick,dotted](9,34.5)--node {\midarrow}(10.5,33);
\draw[red](10.5,33)--node {\midarrow}(12.5,31);
\draw[red](3.5,33)--node {\midarrow}(4,32.5)--node {\midarrow}(5.5,31)--node {\midarrow}(7,32.5)--node {\midarrow}(9,34.5);
\draw[red,thick,dotted](9,34.5)--node {\midarrow}(10.5,33);
\draw[red](10.5,33)--node {\midarrow}(12.5,31);
\draw [red] (-0.5,29)--node {\midarrow}(0.5,30)--node {\midarrow}(2,31.5)--node {\midarrow}(2.5,31)--node {\midarrow}(4,29.5)--node {\midarrow}(5.5,28)--node {\midarrow}(7,29.5)--node {\midarrow}(8.5,31)--node {\midarrow}(10.5,33)--(12.5,31);
\draw [red] (-0.5,29)--node {\midarrow}(0.5,30)--node {\midarrow}(1,29.5)--node {\midarrow}(2.5,28)--node {\midarrow}(4,26.5)--node {\midarrow}(5.5,25)--node {\midarrow}(7,26.5)--node {\midarrow}(8.5,28)--node {\midarrow}(10,29.5)--node {\midarrow}(12,31.5)--node {\midarrow}(12.5,31);
\draw[fill,black](5.5,34) circle (2pt) (5.5,31) circle (2pt)(5.5,28) circle (2pt)(5.5,25) circle (2pt);
\draw [black] (4,38.5)--node {\midarrow}(5.5,37)--node {\midarrow}(7,35.5)--node {\midarrow}(8.5,34)--node {\midarrow}(10,35.5);
\draw [black] (2.5,37)--node {\midarrow}(4,35.5)--node {\midarrow}(5,34.5);
\draw [black] (5.5,34)--node {\midarrow}(7,32.5)--node {\midarrow}(8.5,31)--node {\midarrow}(10,29.5);
\draw [black] (1,35.5)--node {\midarrow}(2.5,34)--node {\midarrow}(3.5,33);
\draw [black] (5.5,31)--node {\midarrow}(7,29.5)--node {\midarrow}(8.5,28);
\draw [black] (1,29.5)--node {\midarrow}(2.5,31)--node {\midarrow}(4,32.5)--node {\midarrow}(5.5,34);
\draw [black] (2.5,28)--node {\midarrow}(4,29.5)--node {\midarrow}(5.5,31);
\draw [black] (4,26.5)--node {\midarrow}(5.5,28)--node {\midarrow}(7,26.5);
\draw [black] (2.5,34)--node {\midarrow}(4,35.5)--node {\midarrow}(5.5,37)--node {\midarrow}(7,38.5);
\draw [black](6.5,27)--(7,26.5);
\draw [blue,thick] (-0.5,36.5)--(12.5,36.5);
\draw[fill,blue]  (-0.5,36.5) circle (1.2pt) (0.5,36.5) circle (1.2pt) (2,36.5) circle (1.2pt)  (3.5,36.5) circle (1.2pt) (5,36.5) circle (1.2pt) (6.5,36.5) circle (1.2pt)  (7,36.5) circle (1.2pt) (7.5,36.5) circle (1.2pt)  (9,36.5) circle (1.2pt) (10.5,36.5) circle (1.2pt)  (12,36.5) circle (1.2pt) (12.5,36.5) circle (1.2pt);
\node[left] at (-0.5,34) {$A$};
\node[right] at (12.5,33) {$B$};
\node[left] at (-0.5,29) {$C$};
\node[right] at (12.5,31) {$D$};
\node[left] at (4.75,39.25) {$\sqrt{2}\ell$};
\node[left] at (3.25,37.75) {$\sqrt{2}\ell$};
\node[left] at (1.75,36.25) {$\cdots$};
\node[right] at (6.25,39.25) {$\sqrt{2}\ell$};
\node[right] at (7.75,37.75) {$\sqrt{2}\ell$};
\node[right] at (9.25,36.25) {$\cdots$};
\node at (7.25,36.2) {\tiny $\sqrt{2}\gamma'$};
\node at (6.5,36.2) {\tiny $\sqrt{2}\gamma_0$};
\node[left] at (5,34.5) {$(a_1,\alpha_1)$};
\node[right] at (7.5,36) {$(b_1,\beta_1)$};
\node[left] at (3.5,33) {$(a_2,\alpha_2)$};
\node[right] at (9,34.5) {$(b_2,\beta_2)$};
\node[left] at (2.5,32) {$\cdots$};
\node[right] at (10,33.5) {$\cdots$};
\node[left] at (0.5,30) {$(a_{r(i,j)},\alpha_{r(i,j)})$};
\node[right] at (12,31.5) {$(b_{r(i,j)},\beta_{r(i,j)})$};
\node[right] at (5.5,34) {$(i,k'_1)$};
\node[right] at (5.5,31) {$(i,k'_2)$};
\node[right] at (5.5,25) {$(i,k'_{r(i,j)})$};
\node at (6,23) {$(a)$};
\node[above] at (-0.5,36.5) {\scriptsize $E_1$};
\node[above] at(0.5,36.5)  {\scriptsize $E_2$};
\node[above] at(6.5,36.5)  {\tiny $E_3$};
\node[above] at(7.5,36.5)  {\tiny $E_4$};
\node[above] at(12,36.5)  {\scriptsize $E_5$};
\node[above] at(12.5,36.5) {\scriptsize $E_6$};
\draw[fill] (5,34.5) circle (1.5pt)  (7.5,36) circle (1.5pt) (0.5,30) circle (1.5pt) (12,31.5) circle (1.5pt)  (3.5,33) circle (1.5pt)  (9,34.5) circle (1.5pt) (2,31.5) circle (1.5pt)  (10.5,33) circle (1.5pt);
\end{scope}
\end{tikzpicture} }
\end{minipage}
\begin{minipage}[b]{0.48\textwidth}
\resizebox{0.5\width}{0.5\height}{
\begin{tikzpicture}
\begin{scope}[thick, every node/.style={sloped,allow upside down}]
\draw[fill] (5.5,22) circle (1pt) (-0.5,16) circle (1pt) (12.5,15) circle (1pt)  (-0.5,11) circle (1pt) (12.5,13) circle (1pt) (7,18.5) circle (1pt);
\draw (-0.5,16)--(1,17.5);
\draw[dotted,thick] (1,17.5)--(2.5,19);
\draw (2.5,19)--(5.5,22)--(8,19.5);
\draw[dotted,thick](8,19.5)--(9.5,18);
\draw(9.5,18)--(12.5,15)--(6.5,9) -- (-0.5,16);
\node[above] at (5.5,22) {$(i,k)$};
\draw[green](-0.5,11.05)--(2,13.55);
\draw[green,thick,dotted](2,13.55) --(3.5,15.05);
\draw[green](3.5,15.05)--(7,18.55)--(8.5,17.05);
\draw[green,thick,dotted](8.5,17.05)--(10,15.55);
\draw[green](10,15.55)--(12.5,13.05)--(5,5.55) -- (-0.5,11.05);
\node[above] at (7,18.5) {$(j,\overline{v})$};
\draw[red](-0.5,11)--node {\midarrow}(2,13.5);
\draw[red,thick,dotted](2,13.5) --node {\midarrow}(3.5,15);
\draw[red](3.5,15)--node {\midarrow}(5,16.5)--node {\midarrow}(6.5,15)--node {\midarrow}(8.5,17);
\draw[red,thick,dotted](8.5,17)--node {\midarrow}(10,15.5);
\draw[red](10,15.5)--node {\midarrow}(11.5,14)--node {\midarrow}(12.5,13);
\draw[red](-0.5,16)--node {\midarrow}(1,17.5);
\draw[red,dotted,thick] (1,17.5)--node {\midarrow}(2.5,19);
\draw[red](2.5,19)--node {\midarrow}(4,20.5)--node {\midarrow}(5,19.5)--node {\midarrow}(6.5,21)--node {\midarrow}(8,19.5);
\draw[red,dotted,thick](8,19.5)--node {\midarrow}(9.5,18);
\draw[red](9.5,18)--node {\midarrow}(12.5,15);
\draw[cyan] (-0.55,16)--(1.05,17.5);
\draw[cyan,dotted,thick] (1.05,17.5)--(2.55,19);
\draw [cyan](2.55,19.05)--node {\midarrow}(3.55,18.05)--node {\midarrow}(5.05,19.55)--node {\midarrow}(6.55,21.05)--node {\midarrow}(8.05,19.55);
\draw[cyan,dotted,thick](8.05,19.55)--node {\midarrow}(9.55,18.05);
\draw[cyan](9.55,18.05)--node {\midarrow}(12.55,15.05);
\draw[cyan](-0.55,11.05)--node {\midarrow}(2.05,13.55);
\draw [cyan,thick,dotted](2.05,13.55) --node {\midarrow}(3.55,15.05);
\draw[cyan](3.55,15.05)--node {\midarrow}(5.05,13.55)--node {\midarrow}(6.55,12)--node {\midarrow}(8,13.5)--node {\midarrow}(10.05,15.5)--node {\midarrow}(12.55,13);
\draw [violet] (-0.4,11)--node {\midarrow}(2.1,13.5)--node {\midarrow}(3.6,12)--node {\midarrow}(5.1,10.5)--node {\midarrow}(6.6,9)--node {\midarrow}(8.1,10.5)--node {\midarrow}(9.6,12)--node {\midarrow}(11.6,14)--node {\midarrow}(12.6,13);
\draw (2,16.5)--node {\midarrow}(3.5,15);
\draw (8,16.5)--node {\midarrow}(9.5,15)--node {\midarrow}(11,16.5);
\draw [violet](-0.4,16)--node {\midarrow}(1.1,17.5)--node {\midarrow}(2.1,16.5)--node {\midarrow}(3.6,18)--node {\midarrow}(5.1,19.5)--node {\midarrow}(6.6,21)--node {\midarrow}(8.1,19.5);
\draw[violet,dotted,thick](8.1,19.5)--node {\midarrow}(9.6,18);
\draw[violet](9.6,18)--node {\midarrow}(11.1,16.5)--node {\midarrow}(12.6,15);
\draw [black](5,19.5)--node {\midarrow}(6.5,18)--node {\midarrow}(8,16.5);
\draw [black](3.5,18)--node {\midarrow}(5,16.5);
\draw [black](3.5,12)--node {\midarrow}(5,13.5)--node {\midarrow}(6.5,15);
\draw [black](5,10.5)--node {\midarrow}(6.5,12);
\draw [black] (6.5,18)--node {\midarrow}(8,19.5);
\draw [black] (6.5,15)--node {\midarrow}(8,13.5)--node {\midarrow}(9.5,12);
\draw [black] (6.5,12)--node {\midarrow}(8,10.5);
\draw [black] (8.5,17)--node {\midarrow}(9.5,18);
\draw [blue,thick] (-0.5,18.5)--(12.5,18.5);
\draw[fill,blue] (-0.5,18.5) circle (1.2pt) (2,18.5) circle (1.2pt) (3.5,18.5) circle (1.2pt) (5,18.5) circle (1.2pt) (6.5,18.5) circle (1.2pt) (7,18.5) circle (1.2pt)  (8.5,18.5) circle (1.2pt) (10,18.5) circle (1.2pt) (11.5,18.5) circle (1.2pt) (12.5,18.5) circle (1.2pt);
\end{scope}
\node[left] at (-0.5,16) {$A$};
\node[right] at (12.5,15) {$B$};
\node[left] at (-0.5,11) {$C$};
\node[right] at (12.5,13) {$D$};
\node [left] at (4.9,21.5) {$\sqrt{2}\ell$};
\node [left] at (3.4,20) {$\sqrt{2}\ell$};
\node [left] at (3,14.5) {$\cdots$};
\node  at (5.9,21.6) {$\sqrt{2}\gamma$};
\node [left] at (8.2,20.25) {$\sqrt{2}\ell$};
\node [right] at (9.5,16) {$\cdots$};
\node  at (6.6,18.2) {\scriptsize$\sqrt{2}\gamma_0$};
\node  at (5.25,16.25) {$\sqrt{2}\gamma'$};
\node  at (5.25,13.25) {$\sqrt{2}\gamma'$};
\node  at (5.25,10.3) {$\sqrt{2}\gamma'$};
\node  at (6,15.5) {$\sqrt{2}\gamma$};
\node at (6,5) {$(b)$};
\draw [fill,black](5.5,16) circle (2pt) (5.55,13) circle (2pt)(5.55,10) circle (2pt)(5.5,7) circle (2pt);
\node [right] at (5.5,16) {$(i,k'_1)$};
\node [right] at (5.5,13) {$(i,k'_2)$};
\node [right] at (5.5,7) {$(i,k'_{r(i,j)})$};
\draw [fill](5,16.5) circle (1.5pt) (3.5,15) circle (1.5pt) (2,13.5) circle (1.5pt) (0.5,12) circle (1.5pt);
\node [left] at (5,16.5) {$(a_1, \alpha_1)$};
\node [left] at (3.5,15){$(a_2,\alpha_2)$};
\node [left] at (0.5,12) {$(a_{r(i,j)}, \alpha_{r(i,j)})$};
\draw [fill] (7.5,18) circle (1.5pt) (9,16.5) circle (1.5pt) (10.5,15) circle (1.5pt) (12,13.5) circle (1.5pt);
\node [right] at (7.5,18)  {$(b_1,\beta_1)$};
\node [right] at (9,16.5)  {$(b_2,\beta_2)$};
\node [right] at (12,13.5) {$(b_{r(i,j)},\beta_{r(i,j)})$};
\node [above] at (-0.5,18.5)  {\scriptsize $E_1$};
\node [above] at (2,18.5)   {\scriptsize $E_2$};
\node [above] at (6.5,18.5)  {\tiny $E_3$};
\node [above] at (11.5,18.5)  {\scriptsize $E_4$};
\node [above] at (12.5,18.5) {\scriptsize $E_5$};
\end{tikzpicture}}
\end{minipage}
\caption{$2\rho \ell<|j-i|+h(i,j)< 2(\rho+1) \ell$, $\rho=1$. The black and green rectangles are ${\bf P}_{i,k}$ and ${\bf P}_{j,\overline{v}}$, respectively. (a) The product of the path $Y_{i,k}$ and any one of red paths in $\mathscr{P}_{j, \overline{v}}$ is a dominant monomial, which can be obtained by translating paths in $\mathscr{P}_{i,k}$ to paths in $\mathscr{P}_{i,k'_t}$ with respect to ${\bf PS}(j,v)$, where $(i,k'_t)$ is one of the black bullet points. (b) The product of the same color paths is a dominant monomial.} \label{F: figure grid 3}
\end{figure}
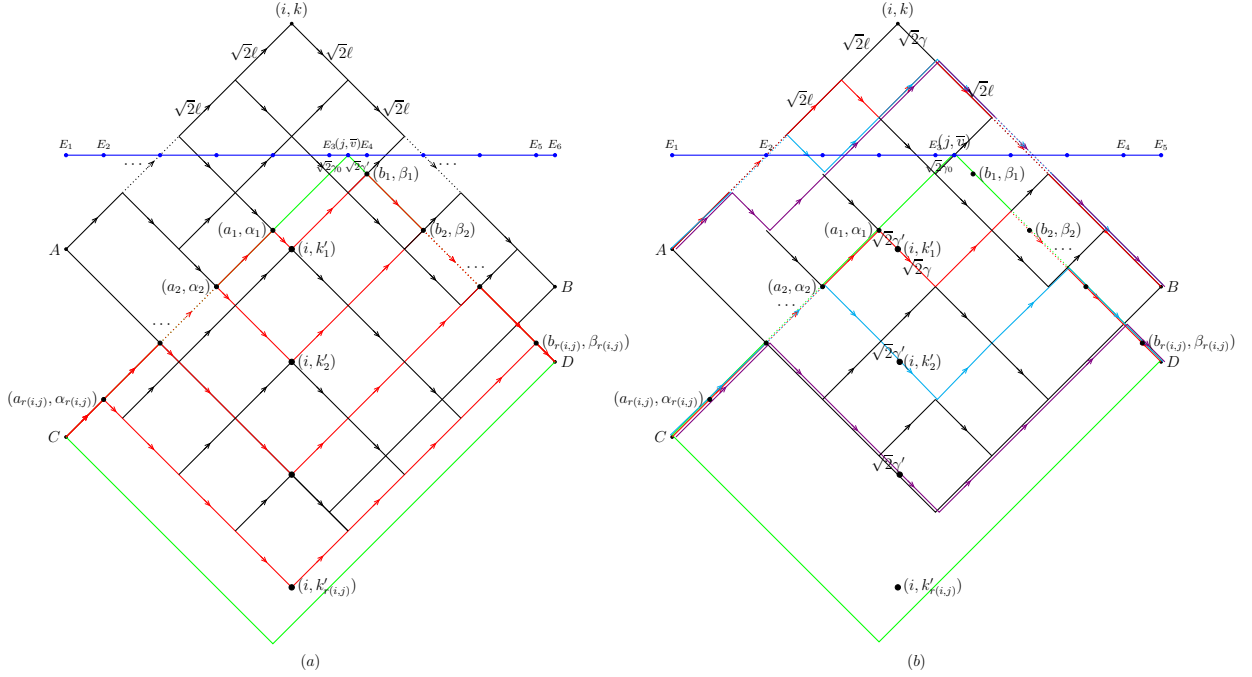

\textbf{Case 3}. $2\rho \ell <|j-i|+h(i,j)<2(\rho+1) \ell$, $\rho\in \ZZ_{>0}$.

Subcase 3.1. Assume that $i=a \ell+a_0$ for some $a, a_0\in \mathbb{Z}_{\geq 0}$, $0\leq a_0< \ell$, and $n+1-i=a' \ell+a'_0$ for some $a', a'_0\in \mathbb{Z}_{\geq 0}$, $0\leq a'_0< \ell$. Let $y_0=i+k-a_0$ and
\begin{align*}
{\bf G}(i,k)=&\{((0,i+k),(a_0,y_0))\}\uplus \mathfrak{G}_{a_0,y_0,a,a'}(i,k)\\
&\uplus \{((n+1-a'_0, n+1-i+k-a'_0),(n+1,n+1-i+k))\} \subseteq \mathscr{P}_{i,k}.
\end{align*}
For example, in Figure \ref{F: figure grid 3} $(a)$, ${\bf G}(i,k)$ is the set of all black or red paths from $A$ to $B$ and the sizes of these square boxes are $\sqrt{2}\ell\times \sqrt{2}\ell$.
By the definition of $r(i,j)$ in (\ref{eq:def of rij}), if $r(i,j)=0$, then there is no translation of paths. Furthermore, if $r(i, j) > 0$, there exist $b_0, b'_0\in \mathbb{Z}_{\geq 0}$ such that $j-\gamma_0=b \ell+b_0$, where $b=\rho+r(i,j)-1$, and $n+1-j-\gamma'=b' \ell+b'_0$, where $b'=r(i,j)-1$. In Figure \ref{F: figure grid 3} (a), $r(i,j)=4$, $\rho=1$. Denote $E=(j,\overline{v})$. We have $|E_1E_2|=b_0$, $|E_3E|=\gamma_0$, $|EE_4|=\gamma'$, and $|E_4E_5|=b'_0$. Let
\begin{align*}
{\bf G}(j,\overline{v})=\mathscr{G}_{b_0, j+\overline{v}-b_0, b, \gamma'}(j,\overline{v})\uplus\mathfrak{G}_{b_0+\gamma',j+\overline{v}-b_0+ \gamma', b, b'}(j,\overline{v})\uplus  \mathscr{G}^{j+\gamma', \overline{v}+\gamma', b', \gamma_0 }(j,\overline{v}) \subseteq \mathscr{P}_{j,\overline{v}}.
\end{align*}
For example, in Figure \ref{F: figure grid 3} $(a)$, ${\bf G}(j,\overline{v})$ is the set of all red or black paths from $C$ to $D$ and the sizes of these square boxes are $\sqrt{2}\ell\times \sqrt{2}\ell$.

On the one hand, similar to Subcase 1.2, we obtain that if $p \in {\bf G}(i,k)$, then there exists no path $\widetilde{p}\in \mathscr{P}_{j, \overline{v}}\setminus{\bf G}(j,\overline{v})$ such that $\mathfrak{m}(p)\mathfrak{m}(\widetilde{p}) \in S_{(i, k)(j, \overline{v})}$ is a dominant monomial.

On the other hand, if $p \in {\bf G}(i,k)$ and there exists a path $\widetilde{p}\in {\bf G}(j,\overline{v})$ such that $\mathfrak{m}(p)\mathfrak{m}(\widetilde{p})\in S_{(i, k)(j, \overline{v})}$ is a dominant monomial, then either the path $p$ is $Y_{i,k}$ or the path $p$ can be converted to $Y_{i,k}$ by raising moves of width $\ell$. Moreover, the path $\widetilde{p}$ is one of the paths in $R_3$, where
\begin{align*}
R_3=\{Y_{j-\rho \ell-\gamma_0-t \ell,\overline{v}+\rho \ell+\gamma_0+t \ell}Y_{i,k+2\rho \ell+2(t+1) \ell}^{-1}Y_{j+\gamma'+t \ell,\overline{v}+\gamma'+t \ell}: 0\leq t\leq r(i,j)-1\},
\end{align*}
see red paths in Figure \ref{F: figure grid 3} $(a)$, or the path $\widetilde{p}$ can be converted to one of the paths in $R_3$ by lowering moves of width $\ell$. In fact, the product of $Y_{i,k}$ and any one of the paths in $R_3$ is a dominant monomial $m_{t+1}$, $0\leq t\leq r(i,j)-1$, which can be obtained by a translation of paths in $\mathscr{P}_{i,k}$ to paths in $\mathscr{P}_{i,k'_{t+1}}$ with respect to ${\bf PS}(j,v)$, where $k'_{t+1}$ is defined in (\ref{eq:def of rij}). That is, $m_{t+1}=Y_{a_{t+1},\alpha_{t+1}}Y_{b_{t+1},\beta_{t+1}}$, where $Y_{a_{t+1},\alpha_{t+1}}Y_{b_{t+1},\beta_{t+1}}$ is defined in (\ref{Eq:a_tb_t}). Using the same calculation method as Subcase 1.3, we obtain that the multiplicity of the dominant monomial $m_{t+1}$ is equal to $g(t) = \frac{ (\rho+1)\prod_{s=2}^t (\rho+t+s)}{t!}$ if $1\leq t\leq r(i,j)-1$, and equal to $1$ if $t=0$. The sum of $\varepsilon$-characters of the modules $L(m_{t+1})$, $0\leq t\leq r(i,j)-1$, is the second term on the right-hand side of Equation $(\ref{D:the sum 3})$.

Subcase 3.2. Assume that $i=a \ell+a_0$ for some $a, a_0\in \mathbb{Z}_{\geq 0}$, $0\leq a_0< \ell$, and $n+1-i-\gamma=a' \ell+a'_0$ for some $a', a'_0\in \mathbb{Z}_{\geq 0}$, $0\leq a'_0< \ell$. Let
\begin{align*}
{\bf G'}(i,k)=&\mathscr{G}_{a_0, i+k-a_0, a, \gamma}(i,k)\uplus \mathfrak{G}_{a_0+\gamma, i+k-a_0+\gamma, a, a' }(i,k)\\
&\uplus \{((n+1-a'_0, n+1-i+k-a'_0),(n+1,n+1-i+k))\} \subseteq \mathscr{P}_{i,k}.
\end{align*}
For example, in Figure \ref{F: figure grid 3} $(b)$, ${\bf G'}(i,k)$ is the set of all red, black, violet, or cyan paths from $A$ to $B$.
By the definition of $r(i,j)$ in (\ref{eq:def of rij}), if $r(i,j)=0$, then there is no translation of paths. Assume that $r(i,j)>0$. If $a_{r(i,j)}-\gamma$, $b_{r(i,j)}+\gamma\in \hat{I}\cup\{n+1\}$, then there exist $b_0, b_0' \in \ZZ_{\ge 0}$ such that $j-\gamma_0=b \ell+b_0$, where $b=\rho+r(i,j)-1$, and $n+1-j=b' \ell+b'_0$, where $b'=r(i,j)$. If $a_{r(i,j)}-\gamma$ or $b_{r(i,j)}+\gamma\notin \hat{I}\cup\{n+1\}$, then there exist $b_0, b_0'\in \ZZ_{\geq 0}$ such that $j-\gamma_0=b \ell+b_0$, where $b=\rho+r(i,j)-2$, and $n+1-j=b' \ell+b'_0$, where $b'=r(i,j)-1$. In Figure \ref{F: figure grid 3} (b), $r(i,j)=4$, $\rho=1$. Denote $E=(j,\overline{v})$. We have $|E_1E_2|=b_0$, $|E_3E|=\gamma_0$, and $|E_4E_5|=b'_0$. Let
\begin{align*}
{\bf G'}(j,\overline{v})=\{((0,j+\overline{v}),(b_0,j+\overline{v}-b_0))\}\uplus\mathfrak{G}_{b_0,j+\overline{v}-b_0, b, b'}(j,\overline{v})\uplus \mathscr{G}^{j, \overline{v}, b', \gamma_0}(j,\overline{v}) \subseteq \mathscr{P}_{j,\overline{v}}.
\end{align*}
For example, in Figure \ref{F: figure grid 3} $(b)$, ${\bf G'}(j,\overline{v})$ is the set of all black, violet, cyan, or red paths from $C$ to $D$.

On the one hand, similar to Subcase 1.2, we obtain that if $p \in {\bf G'}(i,k)$, then there exists no path $\widetilde{p}\in \mathscr{P}_{j, \overline{v}}\setminus{\bf G'}(j,\overline{v})$ such that $\mathfrak{m}(p)\mathfrak{m}(\widetilde{p}) \in S_{(i, k)(j, \overline{v})}$ is a dominant monomial.

On the other hand, if $p$ is in ${\bf G'}(i,k)$ and there exists a path $\widetilde{p}\in {\bf G'}(j,\overline{v})$ such that $\mathfrak{m}(p)\mathfrak{m}(\widetilde{p}) \in S_{(i, k)(j, \overline{v})}$ is a dominant monomial, then either the path $p$ is one of the paths in $R'_3$, where
\begin{align*}
R'_3=&\{Y_{i-t \ell,k+t \ell}Y_{i+\gamma-t \ell,k+\gamma+ t\ell}^{-1}Y_{i+\gamma,k+\gamma}: 1\leq t\leq b'\},
\end{align*}
see the red, cyan, and violet paths in Figure \ref{F: figure grid 3} $(b)$, or the path $p$ can be converted to one of the paths in $R'_3$ by raising moves of width $\ell$. Moreover, either the path $\widetilde{p}$ is one of the paths in $R''_3$, where
\begin{align*}
R''_3=&\{Y_{i+\gamma-t\ell,k+\gamma+t \ell+2\rho \ell}Y_{i+\gamma,k+\gamma+2\rho \ell+2t \ell}^{-1}Y_{j+t \ell,\overline{v}+t \ell}: 1\leq t\leq b'\},
\end{align*}
see red, cyan, and violet paths in Figure \ref{F: figure grid 3} $(b)$, or the path $\widetilde{p}$ can be converted to one of the paths in $R''_3$ by lowering moves of width $\ell$. The product of one of the paths in $R'_3$ and corresponding path of $R''_3$ is a dominant monomial $m_t$, $1\leq t\leq b'$, which can be obtained by a translation of paths in $\mathscr{P}_{a_t, \alpha_t}$ to paths in $\mathscr{P}_{a_t,\overline{\alpha_t}}$ with respect to ${\bf PS}(b_t,\beta_t)$, where $a_t, \alpha_t, b_t,\beta_t$ are defined in (\ref{Eq:a_tb_t}), $\overline{\alpha_t}$ is defined in (\ref{Eq:small index}). Using the same calculation method as Subcase 1.3, we obtain that the multiplicity of the dominant monomial $m_t$ is equal to $f(t) = \frac{ \rho \prod_{s=1}^{t-1}(\rho+t+s)}{t!}$. The sum of $\varepsilon$-characters of the modules $L(m_t)$, $1\leq t\leq b'$, is the first term on the right-hand side of Equation $(\ref{D:the sum 3})$.

Similar to Subcase 1.1, if the path $p$ is not in ${\bf G}(i,k) \cup {\bf G'}(i,k)$, then there exists no path $\widetilde{p}$ in $\mathscr{P}_{j, \overline{v}}$ such that $\mathfrak{m}(p)\mathfrak{m}(\widetilde{p})$ in $S_{(i, k)(j, \overline{v})}$ is a dominant monomial.

In summary, all dominant monomials $m$ in $S_{(i, k)(j, \overline{v})}$ except the highest $l$-weight monomial can be obtained by path translations or raising and lowering moves of width $\ell$ from dominant monomials obtained by translations of paths. The sum of $\varepsilon$-characters of modules $L(m)$ is $\chi_{(i,k),(j,\overline{v})}$ which is defined in Definition \ref{Def: chi_(i,k,j,v)}.
\end{proof}

\begin{remark}\label{R:multiplicity}
According to the calculation of multiplicity of a dominant monomial $m\neq Y_{i,k}Y_{j, \overline{v}}$ in $S_{(i, k)(j, \overline{v})}$, we obtain that if the multiplicity of $m$ is $1$, then $m$ can be obtained by translations of paths. If the multiplicity of $m$ is greater than $1$, then a copy of $m$ can be obtained by translations of paths. This copy of $m$ is equal to $\mathfrak{m}(p_1)\mathfrak{m}(p_2)$ for some paths $p_1, p_2$. The other copies of $m$ can be obtained by raising and lowering moves from $\mathfrak{m}(p_1)\mathfrak{m}(p_2)$.
\end{remark}

\begin{lemma}\label{Le:dominant is obtained by lowering and raising moves}
Let $p_1$, $p_2$ be the paths in Remark \ref{R:multiplicity} and $m=\mathfrak{m}(p_1)\mathfrak{m}(p_2)$. If a copy of $m$ in $S_{(i, k)(j, \overline{v})}$ can be obtained by raising and lowering moves from $\mathfrak{m}(p_1)\mathfrak{m}(p_2)$, then the monomials of
$\chi_{\varepsilon}(L(m))$ for this copy of $m$ are contained in $S_{(i, k)(j, \overline{v})}$.
\end{lemma}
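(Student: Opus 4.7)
The plan is to transport the realization of $\chi_{\varepsilon}(L(m))$ inside $S_{(i,k)(j,\overline{v})}$ from the translation copy $\mathfrak{m}(p_1)\mathfrak{m}(p_2)$ to the new copy by applying the same paired width-$\ell$ raising and lowering moves, exploiting the fact that such moves leave the underlying monomial unchanged because $\varepsilon^{2\ell}=1$. By Remark \ref{R:multiplicity} the distinguished copy $\mathfrak{m}(p_1)\mathfrak{m}(p_2)$ is obtained by a sequence of path translations, so one of Lemmas \ref{Le: II translation of degree 2}, \ref{Le: two translations} or \ref{Le: dual two translations} applies; I would use these lemmas to produce, for each monomial $m'$ appearing in $\chi_{\varepsilon}(L(m))$, an explicit non-overlapping pair $(q_1,q_2)\in \mathscr{P}_{i,k}\times\mathscr{P}_{j,\overline{v}}$ with $\mathfrak{m}(q_1)\mathfrak{m}(q_2)=m'$, built from $(p_1,p_2)$ by lowering and raising moves of widths strictly less than $\ell$.

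Next, the given copy of $m$ is obtained from $(p_1,p_2)$ by matched width-$\ell$ lowering moves on $p_1$ and width-$\ell$ raising moves on $p_2$ whose $\mathbf{A}$-factors cancel, producing a non-overlapping pair $(p_1',p_2')$ with $\mathfrak{m}(p_1')\mathfrak{m}(p_2')=m$. I would apply the same sequence of paired width-$\ell$ moves to $(q_1,q_2)$, obtaining a pair $(q_1',q_2')$. The width-$\ell$ moves act on corners lying in rows that differ by $\ell$ from the sub-$\ell$-width corners affected when passing from $(p_i)$ to $(q_i)$, so the two sets of moves commute and the construction is well defined. The identity $\mathfrak{m}(q_1')\mathfrak{m}(q_2')=\mathfrak{m}(q_1)\mathfrak{m}(q_2)=m'$ follows again from $\varepsilon^{2\ell}=1$.

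The hard part will be verifying that $(q_1',q_2')$ is itself non-overlapping, which is the only ingredient needed to conclude $m'\in S_{(i,k)(j,\overline{v})}$. The hypothesis that $(p_1',p_2')$ is non-overlapping guarantees, column by column, a uniform vertical clearance between the lower envelope of $p_1'$ and the upper envelope of $p_2'$; the further sub-$\ell$ modifications that turn $p_i'$ into $q_i'$ are confined to the rectangle ${\bf P}_{i,k}$ (respectively a shifted copy of ${\bf P}_{j,\overline{v}}$) and cannot span more than $\ell-1$ vertical units, so they cannot swallow the clearance. I would make this precise by a case analysis in the three regimes $|j-i|+h(i,j)<2\ell$, $|j-i|+h(i,j)=2\rho\ell$, and $2\rho\ell<|j-i|+h(i,j)<2(\rho+1)\ell$, following the structure of Lemma \ref{Le:dominant is obtained by path translations or pairs of lowering and raising moves} and reading off the geometry from Figures \ref{figure grid 1}--\ref{F: figure grid 3}. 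Once this is established, every monomial of $\chi_{\varepsilon}(L(m))$ is realized by a non-overlapping pair in $\mathscr{P}_{i,k}\times \mathscr{P}_{j,\overline{v}}$, which is precisely the claim that $\chi_{\varepsilon}(L(m))$ for this copy lies in $S_{(i,k)(j,\overline{v})}$.
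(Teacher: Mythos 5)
Your proposal follows essentially the same route as the paper: take the non-overlapping realizations of the monomials of $\chi_{\varepsilon}(L(m))$ supplied by Lemmas \ref{Le: II translation of degree 2}--\ref{Le: dual two translations} for the translation copy, apply to them the same width-$\ell$ lowering and raising moves that produce the new copy of $m$, and reduce everything to checking that non-overlapping is preserved. The paper verifies this preservation by observing that the vertical displacement between $(i,k)$ and $(i,k'_t)$ matches that between the corresponding points of the lowest-weight realization (rather than by your clearance estimate), but this is a cosmetic difference, not a different method.
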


\begin{proof}
Suppose that a copy of $m$ in $S_{(i, k)(j, \overline{v})}$ can be obtained by raising and lowering moves from $\mathfrak{m}(p_1)\mathfrak{m}(p_2)$.
There are two ways to obtain $\mathfrak{m}(p_1)\mathfrak{m}(p_2)$. One way to obtain $\mathfrak{m}(p_1)\mathfrak{m}(p_2)$ is by a translation of paths in $\mathscr{P}_{i,k}$ to paths in $\mathscr{P}_{i,k'_t}$ with respect to ${\bf PS}(j,v)$ for some $1\leq t \leq r(i,j)$, where $k'_t$, $r(i,j)$ are defined in (\ref{eq:def of rij}). The other way is by a translation of paths in $\mathscr{P}_{a_t,\alpha_t}$ to paths in $\mathscr{P}_{a_t,\overline{\alpha_t}}$ with respect to ${\bf PS}(b_t,\beta_t)$ for some $1\leq t \leq b'$, where $a_t$, $\alpha_t$, $b_t$, $\beta_t$, $b'$ are defined in (\ref{Eq:a_tb_t}), $\overline{\alpha_t}$ is defined in Equation (\ref{Eq:small index}). We will prove the result for the first case. The proof of the result for the second case is similar.

Without loss of generality, we may assume that $i\leq j$. Since this copy of $m$ can be obtained by translating paths in $\mathscr{P}_{i,k}$ to paths in $\mathscr{P}_{i,k'_t}$ with respect to ${\bf PS}(j,v)$, where $k'_t=k+2c_t \ell$ for some $c_t\in \ZZ_{>0}$, $1\leq t\leq r(i,j)$, we can apply Lemma \ref{Le: II translation of degree 2}. This gives us $m=\mathfrak{m}(p_1)\mathfrak{m}(p_2)=Y_{i-r,k'_t-r}Y_{j+r,\overline{v}+r}$, where $\mathfrak{m}(p_1)=Y_{i,k}$, $\mathfrak{m}(p_2)=Y_{i-r,k'_t-r}Y_{i,k'_t}^{-1}Y_{j+r,\overline{v}+r}$, $r=\frac{k'_t-\overline{v}-h_0(i,j)}2+1$. In addition, the lowest $l$-weight monomial of the module $L(m)$ is given by
\[
\mathfrak{m}(\widetilde{p}_1)\mathfrak{m}(\widetilde{p}_2)=Y_{n-j-r+1,\overline{v}+n+r+1}^{-1}Y_{n-i+r+1,n+k'_t-r+1}^{-1},
\]
where $\widetilde{p}_1\in \mathscr{P}_{i,k}$, $\mathfrak{m}(\widetilde{p}_1)=Y_{n-j-r+1,\overline{v}+n+r+1-2c_t \ell}^{-1}Y_{n+1-j,n+\overline{v}+1-2c_t \ell}Y_{n-i+r+1,n+k'_t-r+1-2c_t \ell}^{-1}$ and $\widetilde{p}_2\in \mathscr{P}_{j,\overline{v}}$, $\mathfrak{m}(\widetilde{p}_2)=Y_{n+1-j,n+1+\overline{v}}^{-1}$.

Since the copy of $m$ in $S_{(i, k)(j, \overline{v})}$ can be obtained by raising and lowering moves from $\mathfrak{m}(p_1)\mathfrak{m}(p_2)$, we have that $m=\mathfrak{m}(p'_1)\mathfrak{m}(p'_2)$ for some $p_1'$, $p_2'$, where $p'_1$ is obtained from $p_1$ by some lowering moves, $p'_2$ is obtained from $p_2$ by some raising moves. Let $\widetilde{p}'_1$ and $\widetilde{p}'_2$ be the paths obtained from $\widetilde{p}_1$ and $\widetilde{p}_2$ by corresponding lowering and raising moves, respectively. Then $\mathfrak{m}(\widetilde{p}'_1)\mathfrak{m}(\widetilde{p}'_2)=\mathfrak{m}(\widetilde{p}_1)\mathfrak{m}(\widetilde{p}_2)$ is the lowest $l$-weight monomial of $L(m)$ for this copy of $m$. Since the vertical distance of points $(i,k)$ and $(i,k'_t)$ is equal to the vertical distance of points $(n+1-j,n+1+\overline{v}-2c_t \ell)$ and $(n+1-j,n+1+\overline{v})$, we have that if the paths $p'_1$ and $p'_2$ are non-overlapping, then the paths $\widetilde{p}'_1$ and $\widetilde{p}'_2$ are non-overlapping. 
Therefore, the highest $l$-weight and the lowest $l$-weight monomials of $\chi_{\varepsilon}(L(m))$ for this copy of $m$ are contained in $S_{(i,k)(j,v)}$. Similarly, we can obtain that the other monomials of $\chi_{\varepsilon}(L(m))$ for this copy of $m$ are contained in $S_{(i,k)(j,v)}$. Hence, the monomials of $\chi_{\varepsilon}(L(m))$ for this copy of $m$ are contained in $S_{(i, k)(j, \overline{v})}$.
\end{proof}

\subsection{Proof of Theorem \ref{Th:path description of degree 2} (2)}
Following Lemma \ref{Le:dominant is obtained by path translations or pairs of lowering and raising moves}, we know that all dominant monomials $m\neq Y_{i,k}Y_{j,\overline{v}}$ in $S_{(i, k)(j, \overline{v})}$ can be obtained by path translations or raising and lowering moves of width $\ell$ from dominant monomials which are obtained by translations of paths. The sum of the $\varepsilon$- characters of modules $L(m)$ is exactly $\chi_{(i,k),(j,\overline{v})}$, as defined in Definition \ref{Def: chi_(i,k,j,v)}. Following Lemma \ref{Le: II translation of degree 2}, Lemma \ref{Le: two translations}, and Lemma \ref{Le:dominant is obtained by lowering and raising moves}, we obtain that the monomials of $\chi_{\varepsilon}(L(m))$ are contained in $S_{(i, k)(j, \overline{v})}$. That is, $\chi_{(i,k),(j,\overline{v})}$ is contained in $S_{(i, k)(j,\overline{v})}$. Since the module $L(Y_{i,k}Y_{j,\overline{v}})$ is irreducible, we obtain that
\[
\chi_{\varepsilon}(L(Y_{i,k}Y_{j,\overline{v}}))=S_{(i,k)(j,\overline{v})}-\chi_{(i,k),(j,\overline{v})}.
\]
Following Theorem \ref{Th:simple modules are snake modules} and Definition \ref{def:small values of indices}, we have
\begin{align*}
\chi_{\varepsilon}(L(Y_{i,k}Y_{j,v}))=\chi_{\varepsilon}(L(Y_{i,k}Y_{j,\overline{v}}))
=S_{(i,k)(j,\overline{v})}-\chi_{(i,k) (j,\overline{v})}.
\end{align*}

\subsection*{Acknowledgments}
The authors would like to thank the reviewer for the valuable comments and suggestions, which have helped improve the quality of the paper. The work was partially supported by the National Natural Science Foundation of China (Nos. 12171213, 12271224) and the Fundamental Research Funds for the Central University of China (No. lzujbky-2023-ey06). JR Li is supported by the Austrian Science Fund (FWF): P-34602, Grant DOI: 10.55776/P34602, and PAT 9039323, Grant-DOI 10.55776/PAT9039323.}

\end{document}